\documentclass[1 [leqno,11pt]{amsart}
\usepackage{amsmath,mathrsfs}
\usepackage{amssymb, amsmath,fourier}
\usepackage{graphicx}

 \setlength{\oddsidemargin}{0mm}
\setlength{\evensidemargin}{0mm} \setlength{\topmargin}{-13mm}
\setlength{\textheight}{230mm} \setlength{\textwidth}{160mm}

\def\inte#1{
\displaystyle\mathop{#1\kern0pt}^\circ }


\let\pa=\partial

\let\f=\frac


\def\pa{\partial}

\def\fF{\mathfrak{F}}

\def\virgp{\raise 2pt\hbox{,}}
\def\cdotpv{\raise 2pt\hbox{;}}

\def\eqdefa{\buildrel\hbox{\footnotesize def}\over =}

\def\C{\mathop{\mathbb C\kern 0pt}\nolimits}
\def\DD{\mathop{\mathbb D\kern 0pt}\nolimits}
\def\EE{\mathop{{\mathbb E \kern 0pt}}\nolimits}
\def\K{\mathop{\mathbb K\kern 0pt}\nolimits}
\def\N{\mathop{\mathbb N\kern 0pt}\nolimits}
\def\Q{\mathop{\mathbb Q\kern 0pt}\nolimits}
\def\R{\mathop{\mathbb R\kern 0pt}\nolimits}
\def\SS{\mathop{\mathbb S\kern 0pt}\nolimits}
\def\NN{\mathop{\mathbb N\kern 0pt}\nolimits}
\def\ZZ{\mathop{\mathbb Z\kern 0pt}\nolimits}
\def\TT{\mathop{\mathbb T\kern 0pt}\nolimits}
\def\P{\mathop{\mathbb P\kern 0pt}\nolimits}

\newcommand{\Z}{{\ZZ}}



\def\na{\nabla}

\newcommand{\beq}{\begin{equation}}
\newcommand{\eeq}{\end{equation}}
\newcommand{\ben}{\begin{eqnarray}}
\newcommand{\een}{\end{eqnarray}}
\newcommand{\beno}{\begin{eqnarray*}}
\newcommand{\eeno}{\end{eqnarray*}}

\newtheorem{defi}{Definition}[section]
\newtheorem{thm}{Theorem}[section]
\newtheorem{lem}{Lemma}[section]
\newtheorem{rmk}{Remark}[section]
\newtheorem{col}{Corollary}[section]
\newtheorem{prop}{Proposition}[section]



\begin{document}
\title[Sharp bounds for Boltzmann and Landau  operators]
{Sharp bounds for Boltzmann and Landau   collision operators\medskip \\Estimations précises pour les opérateurs de collision de Boltzmann et de Landau}

\author[L.B. He]{Ling-Bing He}
\address[L.B. He]{Department of Mathematical Sciences, Tsinghua University\\
Beijing 100084,  P. R.  China.} \email{lbhe@math.tsinghua.edu.cn}

\maketitle
\begin{abstract}   
 The aim of the work is to provide a stable method to get sharp bounds for Boltzmann and Landau operators in weighted Sobolev spaces and in anisotropic spaces.  The results and proofs have the following main features and innovations:
\medskip

\noindent $\bullet$ All the sharp bounds are given for the original Boltzmann and Landau operators. 
The sharpness means the lower and upper bounds for the operators are consistent with the behavior of  the linearized operators. Moreover, we make clear the difference between the bounds for the original operators and those for the linearized ones. It will be useful for the well-posedness of the original equations.

\noindent $\bullet$ 
According to the Bobylev's formula, we introduce two types of dyadic decompositions performed in both phase and frequency spaces  to make full use of the interaction and the cancellation. It allows us to see clearly which part of the operator behaves like a Laplace type operator and which part is dominated by the anisotropic structure. It is the key point  to get the sharp bounds in weighted Sobolev spaces and in anisotropic spaces.
 
\noindent $\bullet$   
Based on the geometric structure of the elastic collision, we make a geometric decomposition to capture the anisotropic structure of the  collision operator. More precisely, we make it explicit that the fractional Laplace-Beltrami operator  really exists in the structure of the collision operator. It enables us to derive the sharp bounds  in anisotropic spaces and then complete the entropy dissipation estimates. 
   
\noindent $\bullet$   
The structures mentioned above are  so stable that we can apply them to the rescaled Boltzmann collision operator in the process of the grazing collisions limit. Then we get the sharp bounds for the Landau collision operator by passing to the limit.  We remark that our analysis used here will shed light on the investigation of the asymptotics from Boltzmann equation to Landau equation.

\medskip

 \noindent Résumé. L'objectif de ce travail est de fournir une méthode robuste pour obtenir des estimations précises pour les opérateurs de Boltzmann et de Landau dans des espaces de Sobolev à poids et des espaces anisotropes. Les résultats et leur démonstration font ressortir les innovations suivantes :
\medskip

 \noindent $\bullet$ Toutes les estimations précises concernent les opérateurs originaux de Boltzmann et de Landau. Le mot `précis' se réfère au fait que les estimations sont cohérentes avec le comportement des opérateurs linéarisés correspondants. Ceci est utile pour étudier le caractère bien posé des équations originales.

\noindent $\bullet$ En accord avec la formule de Bobylev, on introduit deux types de décomposition dyadique, dans l'espace des phases et dans celui des fréquences, afin d'utiliser au maximum les annulations. Cela nous permet de voir clairement quelle partie de l'opérateur se comporte comme un opérateur de type Laplacien, et quelle partie est dominée par la structure anisotrope. 

\noindent $\bullet$ En se basant sur la structure géométrique des collisions élastiques, on fait une décomposition géométrique pour capturer la structure anisotrope de l'opérateur de collision. Plus précisément, on explicite le fait que l'opérateur de Laplace-Beltrami apparaît bien dans l'opérateur de collision. Cela nous permet d'obtenir des estimations précises dans des espaces anisotropes et de finaliser les estimations sur la dissipation d'entropie.

\noindent $\bullet$ Les structures mentionnées ci-dessus sont si robustes qu'on peut les retrouver dans la limite des collisions rasantes. On obtient ainsi des estimations précises pour le noyau de collision de Landau en passant à la limite. On remarque que la présente analyse éclaire le passage à la limite de l'équation de Boltzmann vers celle de Landau.

\smallskip

Keywords: Boltzmann and Landau equations; anisotropic structure; grazing collisions limit.

Mots-clefs: les  équations de Boltzmann et de Landau; la structure anisotrope; la limite des collisions rasantes.

Class. math.: 	35Q20;35A23;35Q62.

\end{abstract}

\tableofcontents

\section{Introduction}
 The aim of the present work is  to provide a stable method to give a complete description of the behavior of the Boltzmann and Landau collision operators. We remark that it is related closely to the derivation of the Landau equation from the Boltzmann equation and also the asymptotics of the Boltzmann equation from  short-range interactions to  long-range interactions.
 
 We first recall that the Boltzmann equation reads:
\begin{eqnarray}\label{homb}
\partial _t f+v\cdot \na_x f  =Q (f ,f ),
 \label{bol}
\end{eqnarray}
where $f(t,x,v)\geq 0$ is a distribution
function   of  colliding particles which, at time
$t\geq 0$  and position $x\in  \TT^3$, move with velocity
$v\in\R^3$.  We remark that the Boltzmann equation is one of the fundamental equations of mathematical physics and is a cornerstone of statistical physics.

The Boltzmann collision operator $Q $ is a bilinear
operator which acts only on the velocity variable $v$, that is,
\beno Q(g,f)(v)\eqdefa
\int_{\R^3}\int_{\SS^{2}}B(v-v_*,\sigma)(g'_*f'-g_*f)d\sigma dv_*.
\eeno Here we use the standard shorthand  $f=f(t,x,v)$, $g_*=g(t,x, v_*)$,
$f'=f(t,x,v')$, $g'_*=g(t,x,v'_*)$  where $(v,v_*)$ and $(v',v_*')$ are the velocities of particles before and after the collision. Here $v'$ and $v_*'$ are given by
\begin{eqnarray}\label{e3}
v'=\frac{v+v_{*}}{2}+\frac{|v-v_{*}|}{2}\sigma\ ,\ \ \
v'_{*}=\frac{v+v_{*}}{2}-\frac{|v-v_{*}|}{2}\sigma\
,\qquad\sigma\in\SS^{2}.
\end{eqnarray}
The representation is consistent with the physical laws of the elastic collision: \beno
v+v_*&=&v'+v_*',\\ |v|^2+|v_*|^2&=&|v'|^2+|v'_*|^2. \eeno

In the definition of $Q$, $B $  is  called the Boltzmann collision kernel. It is always
  assumed that $B\ge0$ and that $B$ depends only on $|v-v_{*}|$ and $\frac{v-v_{*}}{|v-v_{*}|}\cdot\sigma$. Usually, we introduce the angle variable $\theta$ through
  $\cos\theta=\frac{v-v_{*}}{|v-v_{*}|}\cdot\sigma$.  Without loss of generality, we may
assume that $B(v-v_{*},\sigma)$ is supported in the set
 $0\leq\theta\leq\frac{\pi}{2}$ , i.e,
$\frac{v-v_{*}}{|v-v_{*}|}\cdot\sigma\ge0
$.   Otherwise, $B$ can be replaced by its symmetrized form:\ben\label{a4}
\bar{B} (v-v_{*},\sigma)=[B (v-v_{*},\sigma)+B(v-v_{*},-\sigma)]\mathbf{1}_{\{\frac{v-v_{*}}{|v-v_{*}|}\cdot\sigma\ge0\}}.\een
Here, $\mathbf{1}_A$ is the characteristic function of the set $A$.  In this paper, we consider the  collision kernel
satisfying the following assumptions:

 \smallskip
 
{\bf (A1).} The kernel $B(v-v_{*},\sigma)$ takes a product form 
\ben \label{a1} B(v-v_{*},\sigma)=\Phi(|v-v_*|)b(\cos\theta),\een
 where both $\Phi$ and $b$ are nonnegative functions.
 
 \smallskip
 
{\bf (A2).} The angular function $b(t)$  
satisfies  for $\theta\in [0, \pi/2]$, \begin{eqnarray}\label{a2} K\theta^{-1-2s} \le\sin\theta
b(\cos\theta)\le K^{-1}
 \theta^{-1-2s},  \quad
\mbox{with}\   0<s<1,\ K>0.
\end{eqnarray}

\smallskip

{\bf (A3).} The kinetic factor $\Phi$ takes the form \ben\label{a3}
\Phi(|v-v_*|)=|v-v_{*}|^{\gamma},\een
 where the parameter $\gamma$   verifies that  $\gamma+2s>-1$ and $\gamma\le 2$.

\begin{rmk} For inverse repulsive potential, it hold that
$\gamma=\frac{p-5}{p-1}$ and $s=\frac{1}{p-1}$ with $p>2$. It is easy to check
that $\gamma+4s=1$ which makes sense of the assumption
$\gamma+2s>-1$. Generally, the case $\gamma>0$, the case $\gamma=0$, and the case
$\gamma<0$ correspond to so-called hard, maxwellian, and soft
potentials respectively.
\end{rmk}

\begin{rmk} If we replace the assumption \eqref{a2} by
\ben\label{abc2}  K\theta^{-1-2s}\bigg(1-\psi\big(\f{\sin(\theta/2)}{\epsilon}\big)\bigg) \le\sin\theta
b(\cos\theta)\le K^{-1}
 \theta^{-1-2s}\bigg(1-\psi\big(\f{\sin(\theta/2)}{\epsilon}\big)\bigg),  \een
where $\psi$ is a  non-negative and smooth function  defined in \eqref{defpsivarphi}, then the mathematical problem of the asymptotics of the Boltzmann equation from short-range interactions to long-range interactions  can be formulated by the limit in which the parameter $\epsilon$ in \eqref{abc2} goes to zero. We remark that for fixed $\epsilon$, \eqref{abc2} corresponds to the famous Grad's cut off assumption for the kernel $B$.
\end{rmk}

The solutions of the Boltzmann equation \eqref{homb} enjoy the fundamental properties of the conservation of  mass,  momentum and the kinetic energy, that is, for all $t\ge0$,
\beno  \iint_{\TT^3\times\R^3} f(t,x,v)\phi(v)dvdx=\iint_{\TT^3\times\R^3} f(0,x, v)\phi(v)dvdx,\quad \phi(v)=1,v,|v|^2.\eeno
Moreover, if the entropy $H(f)$ is defined by  $$H(f)(t)\eqdefa \iint_{\TT^3\times\R^3} f \ln f  dvdx,$$ then the celebrated   Boltzmann's $H$-theorem predicts  that  the entropy is decreasing over time, which formally is
\beno \f{d}{dt}H(f)(t)= \iint_{\TT^3\times\R^3} Q (f ,f )\ln f   dvdx\le0.\eeno

  Before introducing the Landau equation, let us give the definition of the grazing collision limit. We introduce the rescaled Boltzmann's kernel $B^\epsilon$ which verifies the assumption   \smallskip
 
{\bf (B1)}.  The rescaled Boltzmann's kernel $B^\epsilon$ takes the simple product form \beno
  B^\epsilon(v-v_{*},\sigma)=\Phi(|v-v_*|)b^\epsilon(\cos\theta),\eeno where the kinetic factor $\Phi$ satisfies \eqref{a3} and
the angular function $b^\epsilon(t)$
satisfies  for $\theta\in [0, \pi/2], $\ben\label{ab2} \sin\theta
b^\epsilon(\cos\theta)=K'\epsilon^{2s-2}\psi\big(\f{\sin(\theta/2)}{\epsilon}\big)\theta^{-1-2s}, \een
with  $0<s<1$. Here $K'$ is a positive constant and the function $\psi$ is defined in \eqref{defpsivarphi}.
\smallskip
 
 The assumption \eqref{ab2} means that the deviation angles  between relative velocities  before  and after collisions are restricted to be less than $\epsilon$. Mathematically the grazing collision limit is defined by the process in which the parameter $\epsilon$ goes to zero.   
Thanks to  the full Taylor expansion,
by taking the limit  $\epsilon\rightarrow0$, the Boltzmann collision operator $Q^\epsilon$ with rescaled kernel $B^\epsilon$ will be reduced to a new operator, namely the Landau collision operator  $Q_L$,  defined by
\beno  Q_L(g,h)\eqdefa\nabla_v\cdot \bigg\{\int_{\R^3}a(v-v_*)[g(v_*)\nabla _v h(v)-\nabla _v g(v_*)h(v)]dv_*\bigg\}.\eeno
Here the nonnegative matrix $a$ is given by
\begin{equation}
a_{ij}(v)=\Lambda\left(\delta _{ij}-\frac{v_i v_j}{|v|^2}\right)|v|^{\gamma
+2}, \qquad \gamma \in [-3,1],  \label{e2}
\end{equation}
where $\Lambda$ is a positive constant and can be calculated by
\beno  \Lambda =\f{\pi}8 K'\int_0^{\pi/2} \psi(\theta)\theta^{1-2s}d\theta.\eeno 
Then the Landau equation can be written by
\ben\label{landau}
\pa_t f+v\cdot\na_x f =Q_L(f,f).
\een
 We remark that the equation was proposed by Landau in 1936 to model the   behavior
of a dilute plasma interacting through binary collisions.
We also mention that the Landau equation possesses all the properties known for the Boltzmann
equation, namely the conservation of mass, momentum and energy and the $H$-theorem.

\subsection{Motivation  and  short review   of the problem} 
The justification of the derivation of the Landau equation in the sense that the solution to the Boltzmann equation with rescaled kernel (see assumption {\bf  (B1)}) will converge to the solution to the corresponding Landau equation in the grazing collision limit  had been proved by several authors. We refer readers to \cite{villani1} and the  references therein to check details.  However, the physical problem of the justification is   formulated as a higher-order correction to the limit. In other words, we should establish some kind of the asymptotic formula to the solutions in the limit process.
Suppose that $f^\epsilon_B$ and $f_L$ represent the solutions to Boltzmann and Landau equations. In \cite{he}, for the homogeneous case,  that is, the solution does not depend on the position variable $x$, the author showed that the following asymptotic formula \ben\label{asymformula} f^\epsilon_B= f_L+O(\epsilon)
\een
holds globally or locally in Sobolev spaces
for almost all physical potentials except for Coulomb potential.  It shows that the Landau equation is a good approximation to the Boltzmann equation when the parameter $\epsilon$ is small enough. It  gives the validity of the Landau equation. However it is very difficult to extend the similar result to the inhomogeneous case even in the close-to-equilibrium setting. The main obstruction is  lack of a complete description of the asymptotic behavior of collision operator in the limit process since the behavior of the operator is very sensitive to the parameter $\epsilon$.   The same problem happens when we study the asymptotics of the Boltzmann equation from short-range interactions to long-range interactions under the assumption \eqref{abc2}.  We remark that the investigation of the second asymptotics is related closely to the construction of approximate solutions to the equation with long-range interactions and also to the jump phenomenon of the spectral gap of the linearized collision operator (see \eqref{linearizedcop} for the definition) for soft potentials ($-2s\le \gamma<0$) when $\epsilon$ goes to zero.

Motivated by these two asymptotic problems, in the present work,  we try to find out some stable structure inside the Boltzmann collision operator to obtain the sharp bounds and then extend them to the Landau operator via the grazing collision limit. Before going further, let us give a short review on the estimates of the Boltzmann collision operator. For simplicity, we only address the estimates for the maxwellian molecular case, that is, $\gamma=0$.  

In what follows, we assume that all the functions depend only  on the $v$ variable recalling that the collision operator $Q$ acts only on the $v$ variable. We will use the duality method to get the lower and upper bounds of the operator. The inner product of $f$ and $g$ over $\R^3_v$, namely $\langle f, g\rangle_v$,  is defined by  \beno \langle f, g\rangle_v\eqdefa \int_{\R^3} f(v)g(v)dv. \eeno
 Then by change of variables(see \cite{advw}),  we have
 \beno  \langle Q(g, f), f\rangle_v =\iint_{v,v_*\in\R^3,\sigma\in\SS^2} B(|v-v_*|,\sigma)g_*f(f'-f)d\sigma dv_*dv. \eeno
  It is easy to check 
\ben\label{QDEC}  \langle Q(g, f), f\rangle_v&=&-\underbrace{\f12 \iint_{v,v_*\in\R^3,\sigma\in\SS^2} B(|v-v_*|,\sigma)g_*(f' -f)^2d\sigma dv_*dv}_{\mathcal{E}_g(f)}\notag\\&&+\f12
\iint_{v,v_*\in\R^3,\sigma\in\SS^2} B(|v-v_*|,\sigma)g_*(f'^2-f^2)d\sigma dv_*dv. \een 
In \cite{advw}, the authors gave the first  coercivity estimate of $\mathcal{E}_g(f)$   which can be stated as
\beno \mathcal{E}_g(f)\ge C_g\|f\|_{H^s}^2-\|g\|_{L^1}\|f\|_{L^2}^2,\eeno
where $C_g$ is a constant depending on the lower bound of $\|g\|_{L^1}$ and the upper bounds of $\|g\|_{L^1_2}$ and $\|g\|_{L\log L}$ (see the definitions in Section 1.2). It gives a positive answer to the conjecture that the Boltzmann collision operator behaves like a fractional Laplace operator, that is,
\ben\label{behavqb} -Q(g, \cdot)\sim C_g (-\triangle_v)^s+LOT. \een
This conjecture was further confirmed by the upper bound for the collision operator. Mathematically, it reads
that if  $a,b\in\R$ with $a+b=2s$, then \ben\label{uboundWSP} |\langle Q(g, h), f\rangle_v |\lesssim \|g\|_{L^1_{2s}}\|h\|_{H^a_{s}}\|f\|_{H^b_{s}}, \een
which was proved in \cite{amuxy4}. The weighted Sobolev space $H^m_l$ is defined in Section 1.2. This upper bound is sharp in the sense that we have the freedom of choosing derivatives for functions $h$ and $f$. For the general potentials,
we refer readers to \cite{amuxy4,chenhe1,he1}  on the lower and upper bounds in weighted Sobolev spaces.

Combining the lower and upper bounds, one may find \ben\label{boundWSP} C_g\|f\|_{H^s}^2-\|g\|_{L^1}\|f\|_{L^2}^2 \le\langle -Q(g, f), f\rangle_v \lesssim \|g\|_{L^1_{2s}} \|f\|_{H^s_{s}}^2.\een We remark that additional condition for $f$ is imposed in the upper bound. The reason lies in the fact that some anisotropic structure is  hidden in the operator which  can not be observed in weighted Sobolev spaces.  Indeed, in \cite{chuh},   the authors show that the linearized collision operator   $\mathcal{L}_B$, which is
 defined by \ben\label{linearizedcop} \mathcal{L}_B f\eqdefa -\mu^{-\f12}\big(Q(\mu,\mu^{\f12}f)+Q(\mu^{\f12}f,\mu)\big),\quad \mu=\f1{(2\pi)^{3/2}} e^{-\f{|v|^2}2}, \een   is a self-adjoint operator and has explicit eigenvalues and eigenfunctions. In particular, the eigenfunction $E(v)$ takes the form  
 \beno   E(v)=f(|v|^2)Y(\sigma),\eeno where $v=|v|\sigma$, $f$ is a radial function and $Y$ is a real spherical harmonic. In \cite{villani2},  Villani proved that
 \beno  Q_L(f,f)= 3\na\cdot(\na f+vf)-\big(P_{ij}(f)\pa_{ij} f+\na\cdot (vf)\big)+\triangle_{\SS^2} f,\eeno
where $P_{ij}(f)=\int_{\R^3} fv_iv_jdv$ and   $(-\triangle_{\SS^2})$ is the Laplace-Beltrami operator on the unit sphere.  The special form of  the eigenfunction of $\mathcal{L}_B$ and the Laplace-Beltrami operator in the expression of $Q_L$ indicate that there should be some anisotropic structure inside the operator.

 Mathematically the first attempt to capture the anisotropic structure of  the operator were due to \cite{amuxy1} and \cite{grstr1}(see also \cite{amuxy2} and \cite{grstr2}). In fact,  to describe the behavior of the operator, they introduce two types of anisotropic norms which are defined by
\ben\label{be1}  \VERT f\VERT^2\eqdefa\|f\|_{L^2_{s}}^2+\iint_{v,v_*\in\R^3,\sigma\in\SS^2} b(\cos\theta)\mu_*(f'-f)^2 d\sigma dv_* dv\een 
and
\ben\label{be2}  \|f\|_{N^{s}}^2\eqdefa\|f\|_{L^2_{s}}^2+\underbrace{\iint_{v,v'\in\R^3}\langle v\rangle^{s+1/2}\langle v'\rangle^{s+1/2} \f{|f-f'|^2}{d(v,v')^2}\mathrm{1}_{d(v,v')\le1}dvdv'}_{\mathcal{A}(f)},\een 
where $d(v,v')=\sqrt{|v-v'|^2+\f14(|v|^2-|v'|^2)^2}$ and $\langle v\rangle=\sqrt{1+|v|^2}$. We remark the second term in the righthand side of the definition \eqref{be1} is exactly the term $\mathcal{E}_\mu(f)$ defined in \eqref{QDEC}. Then the estimates can be stated as
\ben\label{be5} \VERT f \VERT^2-\|f\|_{L^2}^2\lesssim \langle \mathcal{L}_B f,f \rangle_v\lesssim   \VERT f \VERT^2.\een Moreover,  the upper bound can be generalized to  the original collision operator $Q$: \beno  |\langle Q(g,h), f\rangle_v|\lesssim \|g\|_{L^2} \VERT h \VERT \VERT f \VERT.\eeno 
We emphasize that these two anisotropic norms are crucial to prove  global small solutions in the perturbation framework. Since they  are given in an implicit way, it does not help to understand the anisotropic property of the operator.   In the grazing collision limit, for $\mathcal{E}_\mu(f)$, on one hand,  it is stable  since it is given in an implicit way.  On the other hand, we  have no idea on the limit of this quantity. The similar problem occurs   for $\mathcal{A}(f)$.

Very recently, two groups gave explicit description of the anisotropic behavior of the linearized operator. Both of them started with the same point, that is, the well understanding of the behavior of the linearized Landau operator $\mathcal{L}_L$. In fact, they proved that
\beno  \mathcal{L}_L\sim (-\triangle+|v|^2/4)+(-\triangle_{\SS^2})\sim (-\triangle+|v|^2/4)+|D_v\times v|^2,\eeno
recalling that $-\triangle_{\SS^2}=\sum\limits_{1\le i<j\le 3}(v_i\pa_j-v_j\pa_i)^2$.
In \cite{ahl}, the authors show that
\ben\label{be3}  \langle \mathcal{L}_B f,f \rangle_v+\|f\|_{L^2}^2\sim \|f\|_{L^2_s}^2+\|f\|_{H^s}^2+\||D_v\times v|^sf\|_{L^2}^2,\een 
where $|D_v\times v|^s$ is a pseudo-differential operator with the symbol $|\xi\times v|^s$.   Thanks to \cite{chuh}, by comparing the eigenvalues between Boltzmann and Landau collision operators, in \cite{lmpx}, the authors show that
\ben\label{blconnection} \mathcal{L}_B\sim \mathcal{L}_L^s\een and
\ben \label{be4}    \langle \mathcal{L}_B f,f \rangle_v+\|f\|_{L^2}^2\sim \|f\|_{L^2_s}^2+\|f\|_{H^s}^2+\|(-\triangle_{\SS^2})^{s/2}f\|_{L^2}^2.\een   We remark that the strategy to obtain   \eqref{be3} and \eqref{be4}  depends heavily on the linearized structure, for instance, the symmetric property of the operator and the fine properties of the Maxellian state $\mu$. Therefore it  cannot be generalized to the original collision operator and to the rescaled operator under the assumption \eqref{abc2} or \eqref{ab2}.  
\medskip

The short review can be summarized as follows:
\begin{enumerate}
\item The previous results on the description of the behavior of the operator  are given in an implicit way or in an unstable way. It means that the  anisotropic  structure  is still mysterious and  not captured well.
\item  The upper bound of the collision operator is far away from the sharpness. For instance, recalling \eqref{blconnection}, the typical upper bound for the operator should be in the form 
\ben\label{sharpform}  |\langle Q(g,h), f\rangle_v|\lesssim  C(g) \|(\mathcal{L}_L^{a/2}+1) h\|_{L^2}\|(\mathcal{L}_L^{b/2} +1)f\|_{L^2},\een
where $a,b\in[0,2s]$ with $a+b=2s$.  
\item For the linearized collision operator $\mathcal{L}^\epsilon_B$ with rescaled kernel $B^\epsilon$ under the assumption \eqref{abc2} or \eqref{ab2}, we have no available results on the complete  description  of the operator.
we also have no idea on the sharp bounds of the original collision operator $Q^\epsilon$ in the process of the limit.

\end{enumerate}

We end this subsection by the remark that points $(1)$ and $(2)$ are related closely to the Cauchy problem for the original equation \eqref{bol}. And the point $(3)$ is related to  the investigation of two types of asymptotics mentioned before.

\subsection{Notations and main results} Before stating our main results,  we first introduce the function spaces 
which will be used throughout the paper.  

\begin{enumerate}
\item For
any integer $N\geq 0$, we define the Sobolev space $H^N $ by
\begin{equation*} H^N \eqdefa\bigg\{f( v)| \sum_{|\alpha | \leq N}\|
\partial _v^{\alpha }f\|_{L^2 }<+\infty\bigg\},
\end{equation*} where the multi-index $\alpha =(\alpha _1,\alpha _2,\alpha _3)$ with
$|\alpha |=\alpha _1+\alpha _2+\alpha _3$ and $\pa^\alpha_v=\pa^{\alpha_1}_{v_1}\pa^{\alpha_2}_{v_1}\pa^{\alpha_2}_{v_3}$.
\item For   real numbers $m, l $, we define the weighted Sobolev space $H^{m}_l$ by
\begin{equation*}
H^{m}_l \eqdefa\bigg\{f(v)|  \|f\|_{H^m_l}=\| \langle D\rangle^m \langle \cdot\rangle^l f
  \|_{L^2}<+\infty\bigg\},
\end{equation*}
where  $\langle v\rangle\eqdefa(1+|v|^2)^{\f12}$. $a(D)$ is a   pseudo-differential operator with the symbol
$a(\xi)$ and it is defined as
\beno  \big(a(D)f\big)(x)\eqdefa\f1{(2\pi)^3}\int_{\R^3}\int_{\R^3} e^{i(x-y)\xi}a(\xi)f(y)dyd\xi.\eeno
\item The general weighted Sobolev space $W^{N,p}_l$  with $p\in [1, \infty)$ is defined as 
\beno 
W^{N,p}_l\eqdefa \bigg\{f(v)|\|f\|_{W^{N,p}_l}=\sum_{|\alpha|\le N} \bigg(\int_{\R^3}  |\partial^\alpha_v f(v)|^p\langle v\rangle^{lp}dv \bigg)^{1/p}<\infty
\bigg\}.
\eeno
In particular, if $N=0$, we  introduce the weighted $L^p_l$ space defined as   
\beno 
L^p_l\eqdefa \bigg\{f(v)| \|f\|_{L^p_l}=\bigg(\int_{\R^3} |f(v)|^p\langle v
\rangle^{l p}dv\bigg)^{\f1{p}}<\infty \bigg\}. 
\eeno
\item The $L\log L$ space  is defined as
\beno L\log L\eqdefa \bigg\{f(v)|  \|f\|_{L\log L}=\int_{\R^3}
|f|\log (1+|f|)dv<\infty \bigg\}.\eeno
\end{enumerate}

  Next we list some notations which will be used in the paper. We write $a\lesssim b$ to indicate that  there is a
uniform constant $C,$ which may be different on different lines,
such that $a\leq Cb$.  We use the notation $a\sim b$ whenever $a\lesssim b$ and $b\lesssim
a$. The notation $a^+$ means the maximum value of $a$ and $0$. The weight function $W_l$ is defined by  $W_l(v)\eqdefa\langle v\rangle^l$ where $\langle v\rangle=\sqrt{1+|v|^2}$. 
Suppose $A$  and $B$ are two operators. Then the commutator $[A,B]$ between $A$ and $B$ is defined as follows: \beno [A, B]\eqdefa AB-BA.\eeno
We denote $C(\lambda_1,\lambda_2,\cdots, \lambda_n)$ by a constant depending on   parameters $\lambda_1,\lambda_2,\cdots, \lambda_n$.

\bigskip

Our first result is on the sharp upper bounds of the Boltzmann collision operator in weighted Sobolev spaces.

\begin{thm}\label{thmub}  Let $w_1,w_2\in \R, a,b\in [0,2s]$ with $w_1+w_2=\gamma+2s$ and  $a+b=2s$.  Then for  smooth functions $g, h$ and
$f$, we have
\begin{enumerate}
\item if $\gamma+2s>0$,     \ben\label{u1} |\langle Q(g,h),f\rangle_v|\lesssim
(\|g\|_{L^1_{\gamma+2s+(-w_1)^++(-w_2)^+}}+\|g\|_{L^2})\|h\|_{H^{a}_{w_1}}\|f\|_{H^{b }_{w_2}},
 \een
 \item if $\gamma+2s= 0$,     \ben\label{u2} |\langle Q(g,h),f\rangle_v|\lesssim
(\|g\|_{L^1_{w_3}}+\|g\|_{L^2})\|h\|_{H^{a}_{w_1}}\|f\|_{H^{b }_{w_2}},
 \een where $w_3=\max\{\delta,  (-w_1)^++(-w_2)^+\}$ with $\delta>0$ which is sufficiently small,
 \item if $-1<\gamma+2s<0$,     \ben\label{u3} |\langle Q(g,h),f\rangle_v|\lesssim
(\|g\|_{L^1_{w_4}}+\|g\|_{L^2_{-(\gamma+2s)}})\|h\|_{H^{a}_{w_1}}\|f\|_{H^{b }_{w_2}},
 \een where  $w_4=\max\{-(\gamma+2s), \gamma+2s+(-w_1)^++(-w_2)^+\}$.
\end{enumerate}
\end{thm}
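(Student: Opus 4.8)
The plan is to exploit the Bobylev identity to pass to the Fourier side, where the collision operator becomes
\[
\langle Q(g,h),f\rangle_v = \frac{1}{(2\pi)^3}\int_{\R^3}\int_{\SS^2} \widehat{\Phi b}\,(\text{something})\;\hat g(\xi-\xi^+)\bigl(\hat h(\xi^+)-\hat h(\xi)\bigr)\overline{\hat f(\xi)}\,d\sigma\,d\xi,
\]
where $\xi^+ = \frac{\xi+|\xi|\sigma}{2}$ and $|\xi^+| = |\xi|\cos(\theta/2)$. The singularity of $b(\cos\theta)\sim\theta^{-1-2s}$ near $\theta=0$ is compensated by the cancellation $\hat h(\xi^+)-\hat h(\xi)$, which vanishes like $|\xi^+-\xi|$ (or, after a Taylor expansion with the symmetrized kernel, like $|\xi^+-\xi|^2$). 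First I would perform the two dyadic decompositions announced in the introduction: one in the relative-velocity/frequency variable $|\xi|$ (splitting into dyadic shells $|\xi|\sim 2^j$) and one in the angular variable, effectively decomposing into pieces where $\sin(\theta/2)\sim 2^{-k}$. On each piece the deviation $|\xi-\xi^+|\sim 2^{j-k}$, and summing the $\theta$-integral against $\theta^{-1-2s}$ produces exactly the fractional-Laplacian weight $|\xi|^{2s}$, split as $|\xi^+|^a|\xi|^b$ with $a+b=2s$; this is what yields the freedom in distributing derivatives between $h$ and $f$.

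Next I would handle the kinetic factor $\Phi(|v-v_*|)=|v-v_*|^\gamma$, which on the Fourier side becomes a (singular when $\gamma<0$, polynomially growing when $\gamma>0$) convolution kernel acting through $\hat g$. This is the source of the three cases. When $\gamma+2s>0$ the net weight gained from the kinetic factor plus the $\theta$-integration is a positive power $\gamma+2s$, which must be absorbed partly by $\langle v\rangle^{w_1}$ on $h$, partly by $\langle v\rangle^{w_2}$ on $f$, and the remainder by a weight on $g$ in $L^1$; tracking which velocity carries the weight (and using $\langle v\rangle \lesssim \langle v'\rangle\langle v-v_*\rangle$-type inequalities, e.g. $\langle v\rangle^l \le C\langle v'\rangle^l\langle v-v'\rangle^{|l|}$) gives the exponent $\gamma+2s+(-w_1)^+ +(-w_2)^+$. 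In the borderline case $\gamma+2s=0$ the $|v-v_*|^\gamma$ convolution is a singular integral of order $0$; one loses an arbitrarily small amount of weight, which forces the $\delta>0$; here I would estimate the short-range part $|v-v_*|\le1$ by Hardy–Littlewood–Sobolev or a direct $L^p$ bound (the $\|g\|_{L^2}$ term) and the long-range part $|v-v_*|\ge1$ by an $L^1$ weighted bound, producing $w_3=\max\{\delta,(-w_1)^++(-w_2)^+\}$. For $-1<\gamma+2s<0$ the convolution kernel is genuinely singular of negative order; its singular part is controlled by $\|g\|_{L^2_{-(\gamma+2s)}}$ via Hardy's/HLS inequality after localizing $|v-v_*|\le 1$, while the far part is again $L^1$ with weight $w_4=\max\{-(\gamma+2s),\,\gamma+2s+(-w_1)^++(-w_2)^+\}$.

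The main obstacle I anticipate is the bookkeeping in the overlap region of the two dyadic decompositions — precisely the regime $2^{-k}\sim 2^{-j}$, i.e. $\theta\sim 1/|\xi|$ — where the cancellation is only partially used and where the anisotropic structure (the piece behaving like $|D_v\times v|^s$ rather than $(-\Delta)^s$) lives. For the weighted-Sobolev statement of this theorem one does not need to extract the anisotropic gain; it suffices to bound that region crudely by the isotropic weight $|\xi|^{2s}$ and pay with an $H^s_s$-type norm. But keeping the derivative counts $a$ and $b$ free (rather than symmetric) while simultaneously tracking all the velocity weights through the Fourier-side convolutions, and making sure the small-$|\xi|$ (low-frequency) shells and large-$|\xi|$ shells both sum, is the delicate part; I expect to split $\hat g$ further into low and high frequency and to use that $\Phi=|v-v_*|^\gamma$ times a cutoff has Fourier transform with controlled decay, invoking the explicit computations from \cite{amuxy4, he1} for the Maxwellian skeleton and then reinstating the $\langle v-v_*\rangle^{\gamma}$ factor via Leibniz on the weights. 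Once the three cases are assembled on the Fourier side, translating back gives the stated bounds \eqref{u1}–\eqref{u3}.
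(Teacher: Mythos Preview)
Your proposal misidentifies the two dyadic decompositions that drive the proof. You propose splitting in frequency $|\xi|\sim 2^j$ and in the deviation angle $\sin(\theta/2)\sim 2^{-k}$; the paper instead decomposes in \emph{phase space} (localizing $|v_*|\sim 2^j$ via the multipliers $\mathcal{P}_j$ and $|v-v_*|\sim 2^k$ via the pieces $Q_k$) and in \emph{frequency space} (Littlewood--Paley blocks $\mathfrak{F}_l$ applied to $g,h,f$). The angular variable is never dyadically decomposed; a single adaptive cutoff $\psi(2^l(v'-v))$ separates the angular integral according to the current frequency scale, and the four frequency-interaction terms $\mathfrak{W}^1,\ldots,\mathfrak{W}^4$ of \eqref{fsdecom} organize the cancellation. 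The output of this frequency analysis (Lemmas~\ref{lemub1}--\ref{lemub3}) is the block estimate $|\langle Q_k(g,h),f\rangle_v|\lesssim 2^{(\gamma+2s)k}\|g\|_{L^1}\|h\|_{H^a}\|f\|_{H^b}$ for $k\ge0$, with full freedom in $a+b=2s$ but \emph{no} weights yet on $h$ and $f$.

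The freedom in the weight split $w_1+w_2=\gamma+2s$ --- which is the point of the theorem over \eqref{uboundWSP} --- comes entirely from the phase-space layer, and this is the step your sketch does not supply. One sums the block estimate over the trichotomy $j\lessgtr k$ in \eqref{ubdecom}; the two key inputs are the profile $\|f\|_{H^m_l}^2\sim\sum_k 2^{2kl}\|\mathcal{P}_kf\|_{H^m}^2$ of Theorem~\ref{baslem3} and the localization bound $\|\mathcal{U}_{k+N_0}h\|_{H^a}\lesssim 2^{k(-w_1)^+}\|h\|_{H^a_{w_1}}$ (claim \eqref{u4}). These convert the geometric factor $2^{(\gamma+2s)k}$ into $\|h\|_{H^a_{w_1}}\|f\|_{H^b_{w_2}}$ and throw the residual exponent $\gamma+2s+(-w_1)^++(-w_2)^+$ onto $g$. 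Your proposed mechanism --- pointwise inequalities $\langle v\rangle^l\lesssim\langle v'\rangle^l\langle v-v'\rangle^{|l|}$ and ``Leibniz on the weights'' on the Fourier side --- cannot deliver this: on the Fourier side the weight $\langle v\rangle^{w_1}$ becomes $\langle D_\xi\rangle^{w_1}$, which does not commute with the frequency cutoffs you rely on, and the pointwise route forces the weight onto whichever function sits at the post-collisional variable, yielding at best the symmetric case $w_1=w_2$ of \eqref{uboundWSP}. Without the phase-space decomposition and Theorem~\ref{baslem3}, the simultaneous freedom in $(a,b)$ and $(w_1,w_2)$ that the statement claims is out of reach.
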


\begin{rmk} The estimates (\ref{u1}-\ref{u3}) are sharp in weighted Sobolev spaces in the sense that we have the freedom of choosing  derivatives and  weights  for functions $h$ and $f$. 
They will play a crucial role in solvability of the Cauchy problem for the equation \eqref{bol} in weighted Sobolev spaces. In particular, we will use them frequently to balance the energy estimates to close the argument. We will explain it in detail in our forthcoming paper \cite{HEJIANG16}. 
\end{rmk}
\begin{rmk}We refer readers to the very recent work   \cite{jiangliu}  and \cite{ms} on the similar results by using the Bobylev's formula( see \eqref{bobylev}) or using the Random transform. We mention that their results only have the freedom of choosing  derivatives   for functions $h$ and $f$. 
\end{rmk}

\begin{rmk} The estimates are not sharp with respect to the function $g$. For instance, for 
$\gamma+2s>0$ and $\gamma>-\f32$, we can drop $\|g\|_{L^2}$ in the estimate. For $\gamma>-\f52$, we can replace  $\|g\|_{L^2_l}$ by  $\|g\|_{L^{\f32}_l}$ in the estimates.
For the maxwellian molecular($\gamma=0$),    the estimate can be rewritten as
 \ben\label{u5} |\langle Q(g,h),f\rangle_v|\le C(a,b)
\|g\|_{L^1_{2s+(-w_1)^++(-w_2)^+}} \|h\|_{H^{a}_{w_1}}\|f\|_{H^{b }_{w_2}},
 \een
where  $a+b=2s$ and $w_1+w_2=2s$. Here we remove the restriction $a,b\in[0,2s]$.
 \end{rmk}

Next we will state our new coercivity estimates for the Boltzmann collision operator:

\begin{thm}\label{thmlb} Suppose  $g$ is a non-negative and smooth function verifying that \ben\label{lbc} \|g\|_{L^1}> \delta \quad \mbox{and}\quad \|g\|_{L^1_2}+\|g\|_{L\log L}<\lambda,\een and  let $\mathbf{A}=0,1$. Then for  sufficiently small $\eta>0$, there exist  
 constants $\mathbf{C}_1(\delta, \lambda,\eta^{-1}), \mathbf{C}_2(\lambda, \delta)$, $\mathbf{C}_3(\delta, \lambda, \eta^{-1})$, $\mathbf{C}_4(\lambda, \delta)$ and $\mathbf{C}_5(\lambda, \delta)$  such that
\begin{enumerate}
\item if $\gamma+2s\ge0$,
\beno   \langle -Q(g,f),f\rangle_v&\gtrsim&\mathbf{A} \bigg[\mathbf{C}_1(\delta,\lambda,\eta^{-1}) \big(\| (-\triangle_{\SS^2})^{s/2}f \|_{L^2_{\gamma/2}}^2+
 \|  f \|_{H^s_{\gamma/2}}^2\big)\notag\\ && \quad- \eta \mathbf{C}_2(\delta,\lambda)\|  f\|_{L^2_{\gamma/2+s}}^2-\mathbf{C}_3(\delta,\lambda,\eta^{-1})\|f\|_{L^2_{\gamma/2}}^2\bigg]\\
 &&+\mathbf{C}_4(\lambda, \delta) \|  f \|_{H^s_{\gamma/2}}^2-\mathbf{C}_5(\lambda, \delta)\|f\|_{L^2_{\gamma/2}}^2,
\eeno

\item  if $-1-2s<\gamma<-2s$,  \beno \langle -Q(g,f),f\rangle_v&\gtrsim& \mathbf{A}\bigg[\mathbf{C}_1(\delta,\lambda,\eta^{-1})\big(\| (-\triangle_{\SS^2})^{s/2}f \|_{L^2_{\gamma/2}}^2+
 \|  f \|_{H^s_{\gamma/2}}^2\big)- \eta \mathbf{C}_2(\delta,\lambda) \|  f\|_{L^2_{\gamma/2+s}}^2\notag\\ && - \mathbf{C}_3(\delta,\lambda, \eta^{-1}) (1+ \|g\|_{L^p_{|\gamma|}}^{\f{(\gamma+2s+3)p}{(\gamma+2s+3)p-3}})\|f\|_{L^2_{\gamma/2}}^2\bigg]\\
 &&+\mathbf{C}_4(\lambda, \delta) \|  f \|_{H^s_{\gamma/2}}^2-\mathbf{C}_5(\lambda, \delta)(1+\|g\|_{L^p_{|\gamma|}}^{\f{(\gamma+2s+3)p}{(\gamma+2s+3)p-3}})\|f\|_{L^2_{\gamma/2}}^2,\eeno
 with $p>\f3{\gamma+2s+3}$.
\end{enumerate}
\end{thm}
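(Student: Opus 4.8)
The plan is to use the decomposition \eqref{QDEC} as the starting point. Writing $\langle -Q(g,f),f\rangle_v = \mathcal{E}_g(f) - \frac12\iint B g_*(f'^2-f^2)\,d\sigma dv_* dv$, I would first dispose of the second term using a Cancellation lemma (the classical integral $\int_{\SS^2} B(v-v_*,\sigma)\,d\sigma = \Phi(|v-v_*|)\cdot$const after the $v\to v'$ change of variables produces a Jacobian correction), which reduces it to a convolution $(g*|\cdot|^\gamma)f^2$ that is absorbed into the lower-order terms $\|f\|_{L^2_{\gamma/2}}^2$ (for soft potentials invoking the $L^p_{|\gamma|}$-weighted Hardy–Littlewood–Sobolev estimate, hence the exponent $\frac{(\gamma+2s+3)p}{(\gamma+2s+3)p-3}$ and the restriction $p>\frac{3}{\gamma+2s+3}$). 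So the heart of the matter is bounding $\mathcal{E}_g(f)$ from below.

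The key new input — and the reason the statement has the $(-\triangle_{\SS^2})^{s/2}$ term with an $\mathbf{A}$ switch — is the geometric/anisotropic decomposition advertised in the introduction: making explicit that the fractional Laplace–Beltrami operator on the sphere genuinely sits inside $\mathcal{E}_g(f)$. I would proceed in two stages. Stage one: prove the $\mathbf A = 0$ part, i.e. the classical-type coercivity $\mathcal{E}_g(f) \gtrsim \mathbf{C}_4\|f\|_{H^s_{\gamma/2}}^2 - \mathbf{C}_5\|f\|_{L^2_{\gamma/2}}^2$ under only \eqref{lbc}. Here one localizes $g$ so that on a ball it is bounded below by a fixed Gaussian bump (using the $L^1$ lower bound together with the $L\log L$ and $L^1_2$ upper bounds to rule out concentration and escape to infinity — this is the standard de la Llave–type argument from \cite{advw}), replaces $g_*$ by this bump, and then runs the Bobylev/Fourier analysis: $\mathcal{E}_{\mu}(f) \sim \iint |\hat f(\xi)|^2 (\text{something like }\int_{\SS^2} b(\tfrac{\xi}{|\xi|}\cdot\sigma)(1-\cos(\text{angle}))\,d\sigma)$ which yields the $\|f\|_{H^s}^2$ norm after accounting for the $\gamma$-weight via a dyadic decomposition in phase space. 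Stage two: upgrade to $\mathbf A=1$ by extracting the spherical piece. On the Bobylev side, after the dyadic decomposition in both phase and frequency space (the two decompositions promised in the abstract), the angular integral $\int_{\SS^2} b(\cos\theta)\big(f(v) - f(v')\big)^2 d\sigma$, with $v' = \frac{v+v_*}{2}+\frac{|v-v_*|}{2}\sigma$, splits into a radial part controlling $(-\triangle_v)^s$ and a genuinely tangential part — parametrizing $\sigma$ near $\frac{v-v_*}{|v-v_*|}$ and Taylor expanding, the leading tangential contribution is $\int b(\cos\theta)\theta^{2s}|\nabla_{\SS}f|^2$-type, i.e. exactly $\|(-\triangle_{\SS^2})^{s/2}f\|_{L^2}^2$ up to weights. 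The parameters $\eta$ and the accompanying loss $-\eta\mathbf{C}_2\|f\|_{L^2_{\gamma/2+s}}^2$ arise because, to isolate the clean spherical operator centered at the origin, one must freeze coefficients / transfer weights, and the error in commuting $\langle v\rangle^{\gamma/2}$ through the nonlocal operator costs a small fraction of the top-order $L^2_{\gamma/2+s}$ norm — which one keeps with a tunable constant rather than absorbing, since in applications it will be reabsorbed against the gain $\mathbf{C}_1\|f\|_{H^s_{\gamma/2}}^2$ elsewhere.

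The main obstacle I expect is Stage two for $\gamma \neq 0$: combining the weight $\langle v\rangle^{\gamma/2}$ with the geometric decomposition. The Laplace–Beltrami term is most naturally produced by the translation-invariant Maxwellian model, and transplanting it to the weighted space with variable $v$ requires a careful dyadic decomposition in $|v|$ together with a frequency decomposition, controlling the interaction between the weight localization scale and the collision angle. One has to show that on each dyadic shell $|v|\sim 2^k$ the operator still dominates $2^{k\gamma}\big(\|(-\triangle_{\SS^2})^{s/2}f\|_{L^2}^2 + \|f\|_{H^s}^2\big)$ with the cross-shell errors summable and controlled by the $\eta$-term plus lower order. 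The soft-potential range $-1-2s<\gamma<-2s$ adds a second difficulty: the convolution $g*|\cdot|^\gamma$ in the cancellation term is genuinely singular, so the bound on it is no longer merely $\|g\|_{L^1}$-controlled but needs the $L^p_{|\gamma|}$ norm of $g$ via a weighted HLS inequality, which is precisely the source of the extra factor $1+\|g\|_{L^p_{|\gamma|}}^{\cdots}$ in that case; one then checks the exponent is finite exactly when $p>\frac{3}{\gamma+2s+3}$. Everything else — the localization lemma for $g$, the Fourier computation of the Maxwellian energy, the elementary weight manipulations — is routine and can be cited from \cite{advw} and the authors' earlier papers.
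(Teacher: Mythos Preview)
Your overall architecture is right --- split $\langle -Q(g,f),f\rangle_v=\tfrac12\mathcal{E}^\gamma_g(f)-\tfrac12\mathcal L$, kill $\mathcal L$ by the cancellation lemma and then (for $\gamma+2s<0$) a weighted Hardy--Littlewood--Sobolev bound on $\int|v-v_*|^\gamma g_*f^2$, which is exactly where the $L^p_{|\gamma|}$ norm and the exponent $\tfrac{(\gamma+2s+3)p}{(\gamma+2s+3)p-3}$ come from. Stage one (the $\mathbf A=0$ bound) is also correctly sketched.

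The gap is in Stage two. Extracting $\|(-\triangle_{\SS^2})^{s/2}f\|_{L^2_{\gamma/2}}^2$ by ``parametrizing $\sigma$ near $\tfrac{v-v_*}{|v-v_*|}$ and Taylor expanding'' does not work for fractional $s\in(0,1)$: a Taylor expansion produces integer-order derivatives, and $\int b(\cos\theta)\theta^{2s}|\nabla_{\SS}f|^2$ is not the right object. What the paper actually does is an \emph{exact} geometric splitting in physical space (not on the Bobylev side): writing $u=v-v_*=r\tau$, $u^+=\tfrac{u+|u|\sigma}{2}$ and $\varsigma=\tfrac{\tau+\sigma}{|\tau+\sigma|}$, one has
\[
f(v')-f(v)=\big[f(v_*+u^+)-f(v_*+|u|\tfrac{u^+}{|u^+|})\big]+\big[(T_{v_*}f)(r\varsigma)-(T_{v_*}f)(r\tau)\big].
\]
After the change of variable $\sigma\mapsto\varsigma$ (with $d\sigma=4\cos\phi\,d\varsigma$) and using $b(\sigma\cdot\tau)\sim|\varsigma-\tau|^{-2-2s}$, the second bracket produces \emph{exactly} the Gagliardo seminorm $\iint_{\SS^2\times\SS^2}\tfrac{|T_{v_*}f(r\varsigma)-T_{v_*}f(r\tau)|^2}{|\varsigma-\tau|^{2+2s}}\,d\varsigma d\tau$, which by the $L^2$ profile of the fractional Laplace--Beltrami operator equals $\|(-\triangle_{\SS^2})^{s/2}T_{v_*}f\|_{L^2}^2$ up to $\|f\|_{L^2}^2$. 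To descend from $T_{v_*}f$ to $f$ one needs the translation estimate
\[
\|(-\triangle_{\SS^2})^{s/2}T_{v_*}f\|_{L^2}\lesssim\langle v_*\rangle^{s}\big(\|(-\triangle_{\SS^2})^{s/2}f\|_{L^2}+\|f\|_{H^s}\big),
\]
which is \emph{not} routine (since $[-\triangle_{\SS^2},\nabla]\neq0$) and in the paper requires a tailored real-interpolation theorem for non-commuting operators. The first bracket (the ``radial'' error) is what actually generates the $\eta\|f\|_{L^2_{\gamma/2+s}}^2$ loss: it is bounded by $\|g\|_{L^1_s}\|f\|_{H^{s/2}_{s/2}}^2$ via a frequency dyadic argument, and then interpolated as $\eta^{-1}\|f\|_{H^s}^2+\eta\|f\|_{L^2_s}^2$ --- not from commuting weights through the nonlocal operator as you suggested.

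Finally, for $\gamma\neq0$ the paper does \emph{not} run a shell-by-shell dyadic analysis. Instead it proves a reduction lemma: $\mathcal E^0_\mu(W_{\gamma/2}f)\le C(\lambda,\delta)\big(\|f\|_{L^2_{\gamma/2}}^2+\mathcal E^\gamma_g(f)\big)$, obtained by localizing $|v|$ large vs.\ bounded and invoking the covering argument of \cite{advw}; the whole anisotropic lower bound then follows from the Maxwellian case $\gamma=0$ applied to $W_{\gamma/2}f$. Your proposed direct dyadic route may be workable, but you would still need the exact geometric decomposition and the translation estimate above to make it go through.
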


\begin{rmk} Here $(-\triangle_{\SS^2})^{s/2}$ is the fractional Laplace-Beltrami operator. One may check the  definition of the operator in  \eqref{defilb1} and \eqref{defilb2}.
\end{rmk}
  
\begin{rmk} Compared to the lower bound of the functional $\langle  \mathcal{L}_B f,f\rangle$(see \eqref{be4}),   we cannot  get the control of  $ \|  f\|_{L^2_{\gamma/2+s}}^2$ from the below of the functional   $\langle -Q(g,f),f\rangle_v$. In fact, it is false to get
\ben\label{falsee} \langle -Q(g,f),f\rangle_v\gtrsim   \|  f\|_{L^2_{\gamma/2+s}}^2-LOT.\een
 Suppose it is true, then  combining with upper bound(see Remark \ref{varub}), we   derive  that
\beno  \|  f\|_{L^2_{\gamma/2+s}}^2\lesssim \big(\| (-\triangle_{\SS^2})^{s/2}f \|_{L^2_{\gamma/2}}^2+
 \|  f \|_{H^s_{\gamma/2}}^2\big). \eeno
It is obvious that the radial function does not verify such kind of the estimate.  Then we get the contradiction. It shows on one hand the lower bounds  are sharp in anisotropic spaces. On the other hand, the behavior of   the original operator is different from that of the linearized operator $\mathcal{L}_B$.
\end{rmk}

\begin{rmk} We need an additional assumption  $ f\in L^2_{\gamma/2+s}$ to obtain the fractional Laplace-Beltrami derivative, $\| (-\triangle_{\SS^2})^{s/2}f \|_{L^2_{\gamma/2}}$, in the coercivity estimates.    We comment that it is a bad news to the Cauchy problem for the equation  \eqref{bol}.  It means the framework used in the close-to-equilibrium setting cannot be applied to the original equation since  it will bring the trouble to close the energy estimates, in particular, in the case of $\gamma+2s>0$. We will explain it in detail in our forthcoming paper \cite{HEJIANG16}. \end{rmk}

As a direct consequence, now we can complete the entropy dissipation estimate as follows:
\begin{thm}\label{entropyproduction} Suppose   $f$ is a non-negative function  verifying the condition \eqref{lbc}. Then it holds
\beno  D_B(f)+\|f\|_{L^1_{w}}\gtrsim C(\lambda, \delta)\bigg(\|\sqrt{f}\|^2_{H^s_{\gamma/2}}+\|(-\triangle_{\SS^2})^{s/2}\sqrt{f}\|^2_{L^2_{\gamma/2}}\bigg), \eeno
where $w=\max\{\gamma+2s,2\}$ and $D_B(f)=-\int_{\R^3} Q(f,f)\ln f dv$.
\end{thm}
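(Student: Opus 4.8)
The strategy is to reduce the entropy dissipation functional $D_B(f)$ to the coercivity estimate of Theorem \ref{thmlb} applied with $g=f$ and with the test function replaced by $\sqrt{f}$. The starting point is the classical observation, going back to the work on the entropy production (see \cite{advw}), that $D_B(f)$ controls the functional $\mathcal{E}_f(\sqrt f)$ defined in \eqref{QDEC}; more precisely one uses the elementary inequality $(\sqrt{a}-\sqrt{b})^2 \le \frac12 (a-b)(\ln a - \ln b)$, valid for $a,b>0$, applied with $a = f'f'_*$, $b=f f_*$ (or with the appropriate pair coming from the collision symmetry), together with the change of variables \eqref{e3} and the conservation identity $f'f'_* \cdot (\ln f' + \ln f'_*) - \cdots$, to obtain a pointwise bound that after integration yields
\beno
\mathcal{E}_f(\sqrt{f}) = \f12\iint B(|v-v_*|,\sigma)\, f_* (\sqrt{f}'-\sqrt{f})^2 \,d\sigma dv_* dv \;\lesssim\; D_B(f).
\eeno
Thus, writing $\phi\eqdefa\sqrt f$, it suffices to bound $\langle -Q(f,\phi),\phi\rangle_v$ from below by $\mathcal{E}_f(\phi)$ plus lower-order terms, since $\langle -Q(f,\phi),\phi\rangle_v = \mathcal{E}_f(\phi) - \frac12\iint B f_* (\phi'^2-\phi^2)\,d\sigma dv_*dv$ by \eqref{QDEC}, and the last term is a lower-order ``cancellation'' integral controllable by $\|f\|_{L^1_{\gamma^+}}\|\phi\|_{L^2_{\gamma/2}}^2 = \|f\|_{L^1_{\gamma^+}}\|f\|_{L^1_{\gamma/2}}$, which is absorbed into $\|f\|_{L^1_w}$ with $w=\max\{\gamma+2s,2\}$ and the left-hand side.

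Next I would invoke Theorem \ref{thmlb} directly with $g=f$, which requires checking the admissibility condition \eqref{lbc} — this is exactly the hypothesis on $f$ — and choosing the parameter $\mathbf{A}=1$ and $\eta$ small enough that the term $\eta\mathbf{C}_2\|\phi\|_{L^2_{\gamma/2+s}}^2$ can be handled. Here one must be careful: in the case $\gamma+2s\ge 0$ the bad term $\|\phi\|_{L^2_{\gamma/2+s}}^2$ cannot in general be absorbed by the good terms (Remark after Theorem \ref{thmlb} shows \eqref{falsee} is false), so instead I would use interpolation: $\|\phi\|_{L^2_{\gamma/2+s}}^2 \lesssim \|\phi\|_{H^s_{\gamma/2}}^{2\theta}\|\phi\|_{L^1_{\text{(some weight)}}}^{2(1-\theta)}$ type bounds, or more simply bound $\|\phi\|_{L^2_{\gamma/2+s}}^2 = \|f\|_{L^1_{\gamma+2s}} \le \|f\|_{L^1_w}$ pointwise — since $\phi^2 = f$ this is an exact identity, not an interpolation, and $\|f\|_{L^1_{\gamma+2s}}$ is exactly of the order $\|f\|_{L^1_w}$. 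This is the cleanest route and is why the statement is phrased with the extra $\|f\|_{L^1_w}$ on the left: the term $\eta\mathbf{C}_2\|f\|_{L^1_{\gamma+2s}}$ and the term $\mathbf{C}_3\|f\|_{L^1_\gamma}$ (in the $\gamma\ge -2s$ regime) and similarly $\mathbf{C}_5$ are all $\lesssim \|f\|_{L^1_w}$, hence moved to the left side. In the soft-potential case $-1-2s<\gamma<-2s$ one has the additional $\|g\|_{L^p_{|\gamma|}}$-type terms, but with $g=f$ non-negative, $\|f\|_{L^p_{|\gamma|}}^{\alpha}$ with the given exponent can be bounded by $C(\lambda,\delta)$ using the mass and entropy bounds \eqref{lbc} via the standard $L\log L \Rightarrow L^p_{loc}$ and weight estimates, so these also reduce to the $C(\lambda,\delta)\|f\|_{L^1_w}$ scale.

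Putting these together, from Theorem \ref{thmlb} and $\mathcal E_f(\phi)\le \langle -Q(f,\phi),\phi\rangle_v + C\|f\|_{L^1_w}\|f\|_{L^1_{\gamma/2}}$ we obtain $D_B(f) + C(\lambda,\delta)\|f\|_{L^1_w} \gtrsim C(\lambda,\delta)\big(\|\phi\|_{H^s_{\gamma/2}}^2 + \|(-\triangle_{\SS^2})^{s/2}\phi\|_{L^2_{\gamma/2}}^2\big)$, which is the claim after renaming constants and noting $\phi=\sqrt f$. The main obstacle I anticipate is not any single estimate but the bookkeeping of weights across the two potential regimes — verifying in each case that every lower-order term produced by Theorem \ref{thmlb} and by the cancellation integral is genuinely $\lesssim \|f\|_{L^1_w}$ with $w=\max\{\gamma+2s,2\}$, in particular that the weight $\gamma+2s$ appearing in $\|\phi\|_{L^2_{\gamma/2+s}}^2 = \|f\|_{L^1_{\gamma+2s}}$ never exceeds $w$ (which is automatic) and that in the soft case the polynomial-in-$\|f\|_{L^p}$ factors are tamed purely by the hypotheses \eqref{lbc} without needing extra regularity of $f$. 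A secondary technical point worth spelling out carefully is the justification of the pointwise log-inequality reduction of $D_B(f)$ to $\mathcal E_f(\sqrt f)$ when $f$ is merely non-negative (vanishing sets of $f$), which is standard but should be stated with the convention that the integrand vanishes where $f=0$.
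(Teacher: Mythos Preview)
Your argument is essentially correct for $\gamma+2s\ge 0$, and there the route through Theorem~\ref{thmlb} is only a cosmetic detour: since $\langle -Q(f,\phi),\phi\rangle_v=\tfrac12\mathcal{E}^\gamma_f(\phi)-\tfrac12\mathcal{L}$ and the proof of Theorem~\ref{thmlb} is itself ``lower bound on $\mathcal{E}^\gamma_g$ minus control of $\mathcal{L}$'', invoking the theorem and then reinserting $\mathcal{L}$ just reproduces the direct lower bound on $\mathcal{E}^\gamma_f(\sqrt f)$ via Lemma~\ref{reducmaxwellian} and Lemma~\ref{coermaxw1}, which is what the paper does. Your observation $\|\phi\|_{L^2_{\gamma/2+s}}^2=\|f\|_{L^1_{\gamma+2s}}\le\|f\|_{L^1_w}$ is the right way to dispose of the $\eta$-term.

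The genuine gap is in the very soft range $-1-2s<\gamma<-2s$. There Theorem~\ref{thmlb}(2) (and, underneath it, Lemma~\ref{grlimlem3}(4) for the cancellation term $\mathcal{R}$) requires $\|g\|_{L^p_{|\gamma|}}$ with $p>3/(\gamma+2s+3)>1$. Your claim that this is ``tamed purely by the hypotheses \eqref{lbc}'' is false: $L\log L$ does not embed into $L^p$ for any $p>1$, and the remaining hypotheses are only $L^1$-type bounds. So your chain breaks precisely where the remark after the theorem says the result is new, namely that no extra regularity on $f$ is needed for soft potentials.

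The paper's fix is not an estimate but a structural trick you are missing: for $\gamma<0$ one first passes to the symmetrized form
\[
D_B(f)=\tfrac14\iint |v-v_*|^\gamma b(\cos\theta)\,(f'f'_*-ff_*)\ln\frac{f'f'_*}{ff_*}\,d\sigma\,dv_*dv,
\]
whose integrand is nonnegative, and then replaces $|v-v_*|^\gamma$ by the smaller $\langle v-v_*\rangle^\gamma$. This removes the singularity at $v=v_*$ entirely, so that after the inequality $x\ln(x/y)-x+y\ge(\sqrt x-\sqrt y)^2$ both the resulting $\mathcal{E}$-type term and the cancellation term are controlled using only $L^1$ moments of $f$; one then feeds the (mollified) $\mathcal{E}$ term into the proof of Lemma~\ref{reducmaxwellian} to reach $\mathcal{E}^0_\mu(W_{\gamma/2}\sqrt f)$ and concludes with Lemma~\ref{coermaxw1}. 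That monotone replacement is the missing idea in your proposal.
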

\begin{rmk} Suppose that $f$ is a solution to the spatially homogeneous Boltzmann equation with initial data $f_0\in L^1_2\cap L\log L$. Then we obtain that for $\gamma+2s\le 2$, there exists a constant $C(f_0)$ such that 
\beno  D_B(f)+\|f_0\|_{L^1_{2}}\gtrsim C(f_0) \bigg(\|\sqrt{f}\|^2_{H^s_{\gamma/2}}+\|(-\triangle_{\SS^2})^{s/2}\sqrt{f}\|^2_{L^2_{\gamma/2}}\bigg). \eeno
 \end{rmk} 
\begin{rmk} Compared with  the entropy production estimates in \cite{grstr2}, our results do not need additional regularity assumption on $f$ for soft potentials. 
\end{rmk}

Finally let us give the sharp bounds  of the Boltzmann collision operator in  anisotropic spaces:
\begin{thm}\label{thumbanti}
Let  $ a, b\in [0,2s], a_1, b_1, w_1, w_2\in \R$ with $a+b=2s$, $a_1+b_1=s$ and $w_1+w_2=\gamma+s$.   Then for  smooth functions $g, h$ and
$f$, it hold that
\begin{enumerate}
\item
if $\gamma>0$
\beno  |\langle Q(g,h),f\rangle_v|&\lesssim& (\|g\|_{L^1_{\gamma+2s}}+\|g\|_{L^1_{\gamma+s+(-w_1)^++(-w_2)^+}}+\| g\|_{L^2})\bigg((\|(-\triangle_{\SS^2})^{a/2}  h\|_{L^2_{\gamma/2}}+\|  h\|_{H^a_{\gamma/2}})\\&&\quad\times
(\|(-\triangle_{\SS^2})^{b/2} f)\|_{L^2_{\gamma/2}}+\| f\|_{H^b_{\gamma/2}})+ \|h\|_{H^{a_1}_{w_1}}\|f\|_{H^{b_1}_{w_2}}
\bigg), \eeno
\item if $\gamma=0$, for any $\delta>0$,
\beno  |\langle Q(g,h),f\rangle_v|&\lesssim& (\|g\|_{L^1_{2s+\delta}}+\|g\|_{L^1_{s+(-w_1)^++(-w_2)^+}}+\| g\|_{L^2})\bigg((\|(-\triangle_{\SS^2})^{a/2}  h\|_{L^2}+\|  h\|_{H^a})\\&&\quad\times
(\|(-\triangle_{\SS^2})^{b/2} f)\|_{L^2}+\| f\|_{H^b})+ \|h\|_{H^{a_1}_{w_1}}\|f\|_{H^{b_1}_{w_2}}
\bigg),\eeno
\item if $\gamma<0$,
\beno  |\langle Q(g,h),f\rangle_v|&\lesssim& (\|g\|_{L^1_{-\gamma+2s}}+\|g\|_{L^1_{\gamma+s+(-w_1)^++(-w_2)^+}}+\| g\|_{L^2_{-\gamma}})\bigg((\|(-\triangle_{\SS^2})^{a/2}  h\|_{L^2_{\gamma/2}}+\|  h\|_{H^a_{\gamma/2}})\\&&\quad\times
(\|(-\triangle_{\SS^2})^{b/2} f)\|_{L^2_{\gamma/2}}+\| f\|_{H^b_{\gamma/2}})+ \|h\|_{H^{a_1}_{w_1}}\|f\|_{H^{b_1}_{w_2}}
\bigg). \eeno
\end{enumerate}
\end{thm}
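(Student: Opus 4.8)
The strategy is to combine the sharp weighted-Sobolev upper bound of Theorem~\ref{thmub} with the explicit geometric/anisotropic decomposition of $Q$ advertised in the introduction, interpolating between the two regimes. The first step is to isolate, via the Bobylev formula \eqref{bobylev} and the two dyadic decompositions in phase and frequency space, the portion of $Q(g,h)$ that genuinely carries a fractional Laplace--Beltrami behaviour $(-\triangle_{\SS^2})^{s}$ from the portion that is a plain weighted fractional Laplacian $|D_v|^{2s}$ plus lower-order terms. Concretely one writes $\langle Q(g,h),f\rangle_v$ as a sum over dyadic blocks indexed by the size of $|\xi|$ (frequency) and the size $2^{-k}$ of the angular localisation $\theta\sim 2^{-k}$; on each block the kinetic weight $|v-v_*|^\gamma$ is handled by the usual splitting $|v-v_*|^\gamma\lesssim \langle v\rangle^\gamma\langle v_*\rangle^{|\gamma|}+\langle v_*\rangle^\gamma\langle v\rangle^{|\gamma|}$ together with the support condition, which is exactly what produces the three different $g$-norms ($L^1_{\gamma+2s}$, $L^1_{\gamma+s+\cdots}$, $L^2$ or $L^2_{-\gamma}$) in the three cases $\gamma>0$, $\gamma=0$, $\gamma<0$.

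The second step is the bilinear estimate on the ``anisotropic part''. Here I would use the geometric decomposition based on the elastic-collision geometry to write the leading block as a quadratic form comparable to
\[
\big\langle (-\triangle_{\SS^2})^{a/2} W_{\gamma/2} h,\ (-\triangle_{\SS^2})^{b/2} W_{\gamma/2} f\big\rangle
\]
up to commutators between $(-\triangle_{\SS^2})^{a/2}$ and the weight $W_{\gamma/2}=\langle v\rangle^{\gamma/2}$ and up to the translation $v\mapsto v'$; the commutators are of order strictly less than $s$ in the spherical variable and are absorbed into the terms $\|h\|_{H^{a_1}_{w_1}}\|f\|_{H^{b_1}_{w_2}}$ with $a_1+b_1=s$, $w_1+w_2=\gamma+s$ — this is precisely why those ``remainder'' norms appear with half the total order and half the total weight. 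The splitting of the spherical operator $(-\triangle_{\SS^2})^{s/2}=(-\triangle_{\SS^2})^{a/2}(-\triangle_{\SS^2})^{b/2}$ across $h$ and $f$ is done by a standard duality/Cauchy--Schwarz after a Littlewood--Paley decomposition on $\SS^2$, using that $-\triangle_{\SS^2}=\sum_{i<j}(v_i\pa_j-v_j\pa_i)^2$ is self-adjoint and that the Boltzmann kernel, via Bobylev, generates exactly the symbol $|\xi|^{2s}$ minus an angular-cutoff piece that reconstructs $|\xi\times v|^{2s}\sim$ Laplace--Beltrami. The isotropic $H^a_{\gamma/2}$, $H^b_{\gamma/2}$ pieces come from the ``frequency-dominated'' block and are controlled directly by Theorem~\ref{thmub} applied with the weight split $w_1=w_2=\gamma/2$.

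The third step is to carry out the interpolation between $a=0,b=2s$ (no spherical derivative, everything in $H^{2s}_{\gamma/2}$, covered by Theorem~\ref{thmub}) and $a=2s,b=0$, and symmetrically in the weights, so as to reach general $a+b=2s$ and general $a_1+b_1=s$, $w_1+w_2=\gamma+s$. One must check that interpolation is legitimate for the anisotropic norm $\|(-\triangle_{\SS^2})^{a/2}W_{\gamma/2}\cdot\|_{L^2}+\|\cdot\|_{H^a_{\gamma/2}}$, which follows since this is (equivalent to) an $L^2$-based norm defined by a non-negative self-adjoint operator, so complex interpolation applies on the scale of its fractional powers. Finally the $\gamma=0$ case needs the familiar $\delta$-loss in the weight on $g$ (the singularity $\int_{|v_*-v|\le 1}|v-v_*|^{0}\,dv_*$ is fine but the low-frequency/large-$v_*$ interaction forces $L^1_{2s+\delta}$ rather than $L^1_{2s}$), exactly as in estimate \eqref{u2}.

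\textbf{Main obstacle.} The delicate point is the bookkeeping of the commutators between the \emph{non-local} spherical operator $(-\triangle_{\SS^2})^{a/2}$ (equivalently $|D_v\times v|^{a}$) and the polynomial weight $\langle v\rangle^{\gamma/2}$ together with the change of variables $v\leftrightarrow v'$ on the sphere $\SS^2$: one has to show these error terms genuinely have \emph{half} the spherical order ($a_1,b_1$ with $a_1+b_1=s$) and can be redistributed between $h$ and $f$ without losing the product structure, and that for soft potentials the extra $\langle v\rangle^{|\gamma|}$ factors land on the $L^1$-norm of $g$ and not on $h$ or $f$. Getting the weights to balance as $w_1+w_2=\gamma+s$ (rather than $\gamma+2s$ as in the isotropic case) is the quantitative heart of the ``sharpness in anisotropic spaces'' claim, and it is where the geometric decomposition must be used most carefully.
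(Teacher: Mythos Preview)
Your plan has the right outline, but two of your three steps rest on mechanisms that do not do what you claim, and you omit the actual technical crux.

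First, your proposed source of the remainder $\|h\|_{H^{a_1}_{w_1}}\|f\|_{H^{b_1}_{w_2}}$ is vacuous: the commutator $[(-\triangle_{\SS^2})^{a/2},W_{\gamma/2}]$ is identically zero, because $\langle v\rangle^{\gamma/2}$ is radial and $-\triangle_{\SS^2}$ acts only in the angular variable. In the paper the half-order remainder does not come from a weight commutator at all; it comes from the \emph{radial} piece of the geometric decomposition \eqref{geodecom}. Writing $f(v')-f(v)$ as a purely spherical difference plus the radial difference $f(v_*+u^+)-f(v_*+|u|\tfrac{u^+}{|u^+|})$, the latter satisfies $\bigl|u^+-|u|\tfrac{u^+}{|u^+|}\bigr|\sim\theta^2|u|$, so one full power of $\theta$ is saved relative to the spherical piece. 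This is what produces the factor $2^{(\gamma+s)k}2^{sp}$ on each block and hence exactly $a_1+b_1=s$, $w_1+w_2=\gamma+s$ after summation.

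Second, your Step~3 (complex interpolation between $a=0,b=2s$ and $a=2s,b=0$) is neither needed nor obviously available. The splitting $a+b=2s$ is obtained \emph{directly}, for all $a,b$ at once: after the change of variable $\sigma\mapsto\varsigma$ on the spherical piece (cf.\ \eqref{geocha1}) the integral is symmetric in $\varsigma,\tau$ and, by the Addition Theorem (Lemma~\ref{antin5}), diagonalizes in spherical harmonics; Corollary~\ref{antin6} then gives the bilinear bound $\|(1-\triangle_{\SS^2})^{a/2}h\|_{L^2(\SS^2)}\|(1-\triangle_{\SS^2})^{b/2}f\|_{L^2(\SS^2)}$ for every $a+b=2s$. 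No interpolation on the mixed scale $\|(-\triangle_{\SS^2})^{a/2}W_{\gamma/2}\cdot\|_{L^2}+\|\cdot\|_{H^a_{\gamma/2}}$ is used, and it would be delicate since weights, $\langle D\rangle^a$ and $(-\triangle_{\SS^2})^{a/2}$ do not form a commuting family.

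What you do not mention is the step that actually carries the difficulty: after the geometric decomposition the spherical integral involves $T_{v_*}h$ and $T_{v_*}f$, and one must show
\[
\|(-\triangle_{\SS^2})^{s/2}T_{v_*}f\|_{L^2}\lesssim\langle v_*\rangle^{s}\bigl(\|(-\triangle_{\SS^2})^{s/2}f\|_{L^2}+\|f\|_{H^s}\bigr)
\]
(this is \eqref{labelint}, Lemma~\ref{antin3}). This is nontrivial precisely because $-\triangle_{\SS^2}$ does not commute with $\nabla_v$; the paper builds a dedicated real-interpolation result (Theorem~\ref{intp}) for operators satisfying the commutation relations of $\Omega_{ij}$ and $\partial_k$ to prove it. With this in hand, the phase-space blocks $\mathfrak U_4,\mathfrak U_5,\mathfrak U_6$ of \eqref{ubdecom} are summed via Theorem~\ref{baslem3} and Lemma~\ref{antin7}; the three cases $\gamma\gtrless 0$ arise from summing $2^{\gamma k}$ against the phase-space localization of $g$, not from a pointwise splitting of $|v-v_*|^\gamma$.
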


\begin{rmk} Thanks to the interpolation inequality \beno \|h\|_{H^{a_1}_{w_1}}^2\le \|h\|_{H^{2a_1}_{\gamma/2}} \|h\|_{L^2_{2w_1-\gamma/2}},\eeno  take $a_1=a/2, b_1=b/2$ and $w_1=\gamma/2+a/2, w_2=\gamma/2+b/2$,  then we have
\beno   |\langle Q(g,h),f\rangle_v|&\lesssim& C(g)
(\|(-\triangle_{\SS^2})^{a/2}  h\|_{L^2_{\gamma/2}}+\|  h\|_{H^a_{\gamma/2}}+\|h\|_{L^2_{\gamma/2+a}})\\&&\quad\times
(\|(-\triangle_{\SS^2})^{b/2} f)\|_{L^2_{\gamma/2}}+\| f\|_{H^b_{\gamma/2}}+\|f\|_{L^2_{\gamma/2+b}})\\
&\sim& C(g)\|(\mathcal{L}_L^{a/2}+1)h\|_{L^2}\|(\mathcal{L}_L^{b/2}+1)f\|_{L^2},\eeno
where in the last equivalence we use  the facts \eqref{blconnection} and  \eqref{be4}.  In other words, \eqref{sharpform} is proved. \end{rmk}

\begin{rmk}\label{varub} By taking $a=b=s$, $a_1=s, a_2=0$ $w_1=\gamma/2, w_2=\gamma/2+s$, we   deduce that for any $\eta>0$,
\beno  |\langle Q(g,h),f\rangle_v|&\lesssim& C(g)
(\|(-\triangle_{\SS^2})^{s/2}  h\|_{L^2_{\gamma/2}}+\eta^{-1}\|  h\|_{H^s_{\gamma/2}}) \\&&\quad\times
(\|(-\triangle_{\SS^2})^{s/2} f)\|_{L^2_{\gamma/2}}+\| f\|_{H^s_{\gamma/2}}+\eta\|f\|_{L^2_{\gamma/2+s}}).\eeno
Thanks to the symmetric property for functions $h$ and $f$ in the estimates, we also have 
\beno  |\langle Q(g,h),f\rangle_v|&\lesssim& C(g)
(\|(-\triangle_{\SS^2})^{s/2}  h\|_{L^2_{\gamma/2}}+\|  h\|_{H^s_{\gamma/2}}+\eta\|h\|_{L^2_{\gamma/2+s}}) \\&&\quad\times
(\|(-\triangle_{\SS^2})^{s/2} f)\|_{L^2_{\gamma/2}}+\eta^{-1}\| f\|_{H^s_{\gamma/2}}).\eeno
We remark that both estimates improve the previous upper bounds in the two senses: the first one is that we only need to assume that  one of $h$ and $f$ is in the space $L^2_{\gamma/2+s}$; the second one is  the free choice of the constant $\eta$ in the estimates which enables us to prove that  \eqref{falsee} is false. \end{rmk}

Thanks to the grazing collisions limit, we can extend the above estimates to the Landau collision operator.  

\begin{thm}\label{thmblandau}  Let  $w_1,w_2,w_3,w_4,a_1, b_1\in \R, a,b\in [0,2]$  with $w_1+w_2=\gamma+2$, $w_3+w_4=\gamma+1$,  $a+b=2$  and  $a_1+b_1=1$.  Then for  smooth functions $g, h$ and
$f$, it hold
\begin{enumerate}
\item if $\gamma+2>0$,     \ben\label{u1L} |\langle Q_L(g,h),f\rangle_v|\lesssim
(\|g\|_{L^1_{\gamma+2+(-w_1)^++(-w_2)^+}}+\|g\|_{L^2})\|h\|_{H^{a}_{w_1}}\|f\|_{H^{b }_{w_2}},
 \een
 \item if $\gamma+2= 0$,     \ben\label{u2L} |\langle Q_L(g,h),f\rangle_v|\lesssim
(\|g\|_{L^1_{w_5}}+\|g\|_{L^2})\|h\|_{H^{a}_{w_1}}\|f\|_{H^{b }_{w_2}},
 \een where $w_5=\max\{\delta,  (-w_1)^++(-w_2)^+\}$ with $\delta>0$ which is sufficiently small,
 \item if $\gamma+2<0$,     \ben\label{u3L} |\langle Q_L(g,h),f\rangle_v|\lesssim
(\|g\|_{L^1_{w_6}}+\|g\|_{L^2_{-(\gamma+2s)}})\|h\|_{H^{a}_{w_1}}\|f\|_{H^{b }_{w_2}},
 \een where  $w_6=\max\{-(\gamma+2), \gamma+2+(-w_1)^++(-w_2)^+\}$,
 
\item
if $\gamma>0$
\beno  |\langle Q_L(g,h),f\rangle_v|&\lesssim& (\|g\|_{L^1_{\gamma+2}}+\|g\|_{L^1_{\gamma+1+(-w_3)^++(-w_4)^+}}+\| g\|_{L^2})\bigg((\|(-\triangle_{\SS^2})^{a/2}  h\|_{L^2_{\gamma/2}}+\|  h\|_{H^a_{\gamma/2}})\\&&\quad\times
(\|(-\triangle_{\SS^2})^{b/2} f)\|_{L^2_{\gamma/2}}+\| f\|_{H^b_{\gamma/2}})+ \|h\|_{H^{a_1}_{w_3}}\|f\|_{H^{b_1}_{w_4}}
\bigg), \eeno
\item if $\gamma=0$,
\beno  |\langle Q_L(g,h),f\rangle_v|&\lesssim& (\|g\|_{L^1_{2+\delta}}+\|g\|_{L^1_{\gamma+1+(-w_3)^++(-w_4)^+}}+\| g\|_{L^2})\bigg((\|(-\triangle_{\SS^2})^{a/2}  h\|_{L^2}+\|  h\|_{H^a})\\&&\quad\times
(\|(-\triangle_{\SS^2})^{b/2} f)\|_{L^2}+\| f\|_{H^b})+  \|h\|_{H^{a_1}_{w_3}}\|f\|_{H^{b_1}_{w_4}}\bigg), \eeno
\item if $\gamma<0$,
\beno  |\langle Q_L(g,h),f\rangle_v|&\lesssim& (\|g\|_{L^1_{-\gamma+2}}+\|g\|_{L^1_{\gamma+1+(-w_3)^++(-w_4)^+}}+\| g\|_{L^2_{-\gamma}})\bigg((\|(-\triangle_{\SS^2})^{a/2}  h\|_{L^2_{\gamma/2}}+\|  h\|_{H^a_{\gamma/2}})\\&&\quad\times
(\|(-\triangle_{\SS^2})^{b/2} f)\|_{L^2_{\gamma/2}}+ \|h\|_{H^{a_1}_{w_3}}\|f\|_{H^{b_1}_{w_4}} \bigg). \eeno
\end{enumerate}
\end{thm}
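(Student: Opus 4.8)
The strategy is to deduce Theorem~\ref{thmblandau} from Theorems~\ref{thmub} and~\ref{thumbanti} by passing to the grazing collisions limit. By assumption \textbf{(B1)}, the Landau operator $Q_L$ is the limit, as $\epsilon\to0$, of the rescaled Boltzmann operator $Q^\epsilon$ whose kernel $B^\epsilon$ has angular part \eqref{ab2}; thus a bound for $Q_L$ will follow once one has (i) a bound for $Q^\epsilon$ of the same shape but uniform in $\epsilon\in(0,1]$, and (ii) the weak convergence $\langle Q^\epsilon(g,h),f\rangle_v\to\langle Q_L(g,h),f\rangle_v$ on a dense class of test functions.

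First, and this is the main step, I would establish, for $Q^\epsilon$ and with constants independent of $\epsilon\in(0,1]$, precisely the inequalities asserted in Theorem~\ref{thmblandau} but with $Q^\epsilon$ in place of $Q_L$. Equivalently, these are the estimates of Theorems~\ref{thmub} and~\ref{thumbanti} with the parameter $s$ replaced by $1$ in the derivative and weight budgets (so $\gamma+2s$ becomes $\gamma+2$, each range $[0,2s]$ becomes $[0,2]$, the constraint $a_1+b_1=s$ becomes $a_1+b_1=1$, and $w_1+w_2=\gamma+s$ becomes $w_3+w_4=\gamma+1$), an explicit $s$ inherited from the rescaled kernel surviving only in the lower-order weight $\|g\|_{L^2_{-(\gamma+2s)}}$ of part (3). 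Since $B^\epsilon$ does not satisfy \textbf{(A2)} — both the amplitude and the angular range of the singularity depend on $\epsilon$ — Theorems~\ref{thmub} and~\ref{thumbanti} do not apply to $Q^\epsilon$ directly; instead one re-runs their proofs (Bobylev's formula \eqref{bobylev}, the two dyadic decompositions in phase and frequency, and the geometric decomposition producing the Laplace--Beltrami part) with $b^\epsilon$ in place of $b$. The angular variable enters those proofs only through finitely many integrals of $\sin\theta\,b^\epsilon(\cos\theta)$ against powers of $\theta$ and dyadic cutoffs in $\theta$, and the normalization $\epsilon^{2s-2}$ in \eqref{ab2} makes every such quantity bounded, uniformly in $\epsilon\in(0,1]$ and in the relevant phase or frequency scale, by its counterpart for a kernel of order $s=1$. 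This uniform robustness of the decompositions is exactly the ``stability of the structures'' emphasized in the introduction.

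Second, I would invoke the convergence, for $g,h,f\in C_c^\infty(\R^3)$,
\beno \langle Q^\epsilon(g,h),f\rangle_v \longrightarrow \langle Q_L(g,h),f\rangle_v \qquad (\epsilon\to0), \eeno
which is the classical content of the grazing collisions limit (\cite{villani1} and the references therein; see also the discussion preceding \eqref{landau}): writing $\langle Q^\epsilon(g,h),f\rangle_v=\iint B^\epsilon\,g_*\,h\,(f'-f)\,d\sigma\,dv_*\,dv$ and Taylor-expanding $f'-f$ to second order in $v'-v$, the first- and second-order contributions converge, through the convergence of the angular moments of $b^\epsilon$ to the constant $\Lambda$ of \eqref{e2}, to the Landau bilinear form, while the third-order remainder carries the factor $\int_0^{\pi/2}\sin\theta\,b^\epsilon(\cos\theta)\,\theta^{3}\,d\theta=O(\epsilon)$; the smoothness and rapid decay of $g,h,f$ control all remainders. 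Combining the two steps: for $g,h,f\in C_c^\infty$ the right-hand side of the sought inequality is finite, the quantities $|\langle Q^\epsilon(g,h),f\rangle_v|$ are bounded by it uniformly in $\epsilon$, and they converge to $|\langle Q_L(g,h),f\rangle_v|$, whence the inequality for $Q_L$ on $C_c^\infty$. One then removes the compact-support restriction by truncation: given smooth $g,h,f$ for which the right-hand side is finite, apply the bound to $g_R=\chi(\cdot/R)g$, $h_R$, $f_R$ (with $\chi\in C_c^\infty$, $\chi\equiv1$ near the origin) and let $R\to\infty$; since $Q_L(g,h)(v)=\nabla_v\cdot\big\{\int a(v-v_*)[g(v_*)\nabla_v h(v)-\nabla_v g(v_*)h(v)]\,dv_*\big\}$ is local in $v$ up to convolution with $a(v-v_*)$, one gets $\langle Q_L(g_R,h_R),f_R\rangle_v\to\langle Q_L(g,h),f\rangle_v$, while the right-hand side converges to the one in the statement.

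I expect the genuine difficulty to be concentrated in the first step, i.e. in the $\epsilon$-uniformity of the decompositions. The delicate mechanism is the interplay between the angular window $\theta\lesssim\epsilon$ in \eqref{ab2} and the dyadic scales in frequency: at frequencies $\lesssim\epsilon^{-1}$ the operator $Q^\epsilon$ still supplies genuinely fractional ($s$-order) gains, whereas at higher frequencies it is essentially loss-free with a coefficient of size $\epsilon^{-2}$, and one must organize the phase/frequency splitting (and, for the anisotropic bounds, the geometric splitting of the sphere) so that both regimes are absorbed into the single, $\epsilon$-independent family of weighted Sobolev and fractional Laplace--Beltrami norms appearing in Theorem~\ref{thmblandau}. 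Once this bookkeeping is carried out the passage to the limit is soft; the payoff, as stressed in the introduction, is that the same analysis should also yield quantitative-in-$\epsilon$ control of the intermediate operators $Q^\epsilon$.
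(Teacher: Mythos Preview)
Your approach is correct and matches the paper's: obtain $\epsilon$-uniform bounds for $Q^\epsilon$ by re-running the proofs of Theorems~\ref{thmub} and~\ref{thumbanti} with $b^\epsilon$ in place of $b$, then pass to the limit using the known convergence $\langle Q^\epsilon(g,h),f\rangle_v\to\langle Q_L(g,h),f\rangle_v$ for smooth functions (which the paper cites from \cite{he1}).

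One point worth correcting in your assessment of the difficulty: the paper observes that since $\int_0^{\pi/2} b^\epsilon(\cos\theta)\sin\theta\,\theta^2\,d\theta\sim 1$ uniformly in $\epsilon$, the auxiliary cutoff $\psi$ used in Lemmas~\ref{lemub1}--\ref{lemub3} and in the estimate of $\mathcal{E}_{l,p}$ is simply \emph{dropped} for $b^\epsilon$; the Taylor expansion to second order already closes without separating small and large angles. Hence the $\epsilon$-uniform bounds are in fact \emph{easier} than the original Boltzmann estimates, not harder, and the two-regime frequency bookkeeping you describe (below and above $\epsilon^{-1}$) is not needed for the upper bounds in Theorem~\ref{thmblandau}. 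That mechanism does appear, but only in Lemma~\ref{lbplim}, which the paper uses for the anisotropic part to show that the spherical integral $\epsilon^{2s-2}\int |\sigma-\tau|^{-2-2s}\mathrm{1}_{|\sigma-\tau|\le\epsilon}(\cdots)$ is dominated uniformly by the $s=1$ Laplace--Beltrami norm; once that single lemma is in hand, the rest of the anisotropic argument goes through verbatim with $s=1$.
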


 \begin{thm}\label{thmlblandau} 
Suppose  $g$ is a non-negative and smooth function verifying \eqref{lbc} and let $\mathbf{A}=0,1$.   Then for  sufficiently small $\eta>0$, there exist  
 constants $\mathbf{C}_1(\delta, \lambda,\eta^{-1}), \mathbf{C}_2(\lambda, \delta), \mathbf{C}_3(\delta, \lambda, \eta^{-1}), \mathbf{C}_4(\lambda, \delta)$ and $ \mathbf{C}_5(\lambda, \delta)$  such that
 \begin{enumerate}
\item if $\gamma+2>0$,
\beno   \langle -Q_L(g,f),f\rangle_v&\gtrsim&\mathbf{A}\bigg[ \mathbf{C}_1(\delta,\lambda,\eta^{-1})\big(\| (-\triangle_{\SS^2})^{1/2}f \|_{L^2_{\gamma/2}}^2+
 \|  f \|_{H^1_{\gamma/2}}^2\big)\notag\\ && \quad- \eta \mathbf{C}_2(\delta,\lambda)\|  f\|_{L^2_{\gamma/2+1}}^2-\mathbf{C}_3(\delta,\lambda,\eta^{-1})\|f\|_{L^2_{\gamma/2}}^2\big]\\
&& + \mathbf{C}_4(\lambda, \delta)\|  f \|_{H^1_{\gamma/2}}^2-\mathbf{C}_5(\delta,\lambda)\|f\|_{L^2_{\gamma/2}}^2,\eeno
\item if $\gamma=-2$,
\beno   |\langle Q_L(g,f),f\rangle_v|&\gtrsim& \mathbf{A}\bigg[\mathbf{C}_1(\delta,\lambda,\eta^{-1})\big(\| (-\triangle_{\SS^2})^{1/2}f \|_{L^2_{\gamma/2}}^2+
 \|  f \|_{H^1_{\gamma/2}}^2\big)- \eta \mathbf{C}_2(\delta,\lambda) \|  f\|_{L^2_{\gamma/2+1}}^2\notag\\ && \quad- \exp\big\{\big(\mathbf{C}_3(\delta,\lambda,\eta^{-1})(1+w^{-1}\|g\|_{L^1_{w+2}})\big)^{\f{2+w}{w}}\big\}\|f\|_{L^2_{\gamma/2}}^2\bigg]\\
 \\&& + \mathbf{C}_4(\lambda, \delta)\|  f \|_{H^1_{\gamma/2}}^2-  \exp\big\{\big(\mathbf{C}_5(\delta,\lambda)(1+w^{-1}\|g\|_{L^1_{w+2}})\big)^{\f{2+w}{w}}\big\}\|f\|_{L^2_{\gamma/2}}^2,
\eeno  where $w>0$,
\item  if $-3<\gamma<-2$,  \beno |\langle Q_L(g,f),f\rangle_v|&\gtrsim&\mathbf{A} \bigg[\mathbf{C}_1(\delta,\lambda,\eta^{-1})\big(\| (-\triangle_{\SS^2})^{1/2}f \|_{L^2_{\gamma/2}}^2+
 \|  f \|_{H^1_{\gamma/2}}^2\big)- \eta \mathbf{C}_2(\delta,\lambda)\|  f\|_{L^2_{\gamma/2+1}}^2\notag\\ && - \mathbf{C}_3(\delta,\lambda,\eta^{-1}) (1+\|g \|_{L^p_{|\gamma|}}^{\f{(\gamma+5)p}{(\gamma+5)p-3}})\|f\|_{L^2_{\gamma/2}}^2\bigg]\\&&+ \mathbf{C}_4(\lambda, \delta)\|  f \|_{H^1_{\gamma/2}}^2-\mathbf{C}_5(\delta,\lambda)(1+\|g \|_{L^p_{|\gamma|}}^{\f{(\gamma+5)p}{(\gamma+5)p-3}})\|f\|_{L^2_{\gamma/2}}^2,\eeno
 with $p>\f3{\gamma+5}$.
\end{enumerate}
\end{thm}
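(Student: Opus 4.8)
The plan is to obtain the coercivity estimates for the Landau operator $Q_L$ as the grazing-collision limit of the corresponding estimates for the Boltzmann operator $Q^\epsilon$ proved in Theorem~\ref{thmlb}. The starting point is the observation that the rescaled Boltzmann kernel $B^\epsilon$ of assumption \textbf{(B1)} satisfies the hypotheses \textbf{(A1)}--\textbf{(A3)} \emph{uniformly in} $\epsilon$: indeed, by \eqref{ab2} the angular part obeys $K\theta^{-1-2s}\le\sin\theta\, b^\epsilon(\cos\theta)\le K^{-1}\theta^{-1-2s}$ with constants $K, K^{-1}$ independent of $\epsilon$ (this is exactly why one normalizes by $\epsilon^{2s-2}$), and the kinetic factor $\Phi$ is unchanged. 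Hence Theorem~\ref{thmlb} applies to $Q^\epsilon(g,\cdot)$ with constants $\mathbf{C}_1,\dots,\mathbf{C}_5$ that can be chosen \emph{uniform in} $\epsilon\in(0,1]$, since those constants depend only on $\delta,\lambda,\eta,K,\gamma,s$ and (for the soft-potential cases) on $\|g\|_{L^p_{|\gamma|}}$. The key analytic input — that each piece of the dyadic/geometric decomposition passes to the limit — must already have been established in the part of the paper treating the grazing limit for the upper bounds; I would invoke it here to say that the fractional Laplace--Beltrami term $\|(-\triangle_{\SS^2})^{s/2}f\|_{L^2_{\gamma/2}}^2$ appearing on the right of the Boltzmann estimate is \emph{stable} (it does not depend on $\epsilon$ at all), while the $H^s$ term converges, after suitable rescaling in the grazing parameter, to the $H^1$ term.

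The core step is the limit passage $\langle -Q^\epsilon(g,f),f\rangle_v \to \langle -Q_L(g,f),f\rangle_v$. For fixed smooth, rapidly decaying $g$ and $f$, this is the classical grazing-collision convergence (as in \cite{villani1}), which I would either cite or re-derive via the full Taylor expansion of $f'-f$ in \eqref{QDEC}: writing $f' - f = (v'-v)\cdot\nabla f + \tfrac12 (v'-v)\otimes(v'-v):\nabla^2 f + \cdots$ and using \eqref{e3} together with the concentration of $b^\epsilon$ near $\theta=0$, the leading quadratic term reproduces exactly the Landau bilinear form with the matrix $a$ of \eqref{e2} and constant $\Lambda = \tfrac\pi8 K'\int_0^{\pi/2}\psi(\theta)\theta^{1-2s}\,d\theta$, and all higher-order terms carry extra powers of $\epsilon$. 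One must check this convergence holds for $f$ merely in the relevant weighted Sobolev class (not just Schwartz), which is done by a density argument once the uniform bounds of the previous step are in hand. The translation of the exponents is then purely bookkeeping: $2s\rightsquigarrow 2$, $s\rightsquigarrow 1$, the weight budgets $w_1+w_2=\gamma+2s\rightsquigarrow\gamma+2$ and $w_3+w_4=\gamma+s\rightsquigarrow\gamma+1$, which matches the statement; the ranges $\gamma+2s\ge0$, $-1-2s<\gamma<-2s$ become $\gamma+2>0$, $\gamma=-2$, $-3<\gamma<-2$ respectively, with the boundary case $\gamma=-2$ split off because there $\gamma+2=0$ sits exactly at the borderline (mirroring the $\gamma+2s=0$ borderline which in the Boltzmann theorem forces the logarithmically-lossy weight $w_3=\max\{\delta,\dots\}$).

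The point I expect to require the most care is the $\gamma=-2$ case, where the constant in front of $\|f\|_{L^2_{\gamma/2}}^2$ is the iterated exponential $\exp\{(\mathbf{C}_3(1+w^{-1}\|g\|_{L^1_{w+2}}))^{(2+w)/w}\}$ rather than a polynomial in a Lebesgue norm of $g$. This is the signature of the critical weight at $\gamma+2=0$: to absorb the singular kinetic factor $|v-v_*|^{-2}$ one interpolates $\|g\|_{L^1_{loc}}$-type quantities against $\|g\|_{L^1_{w+2}}$ with a loss that, optimized over the split parameter, produces a superpolynomial (essentially $\exp$ of a power) dependence — the Landau analogue of the $\delta>0$ loss in \eqref{u2}. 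I would derive it by running the Boltzmann soft-potential estimate with $p=1+\epsilon'$ and letting the auxiliary parameter degenerate, tracking how $\|g\|_{L^p_{|\gamma|}}^{(\gamma+2s+3)p/((\gamma+2s+3)p-3)}$ blows up as $p\to 1$ at $\gamma+2s+3 = 1$, i.e. $\gamma=-2-2s+2 \to -2$ in the limit, and re-summing; alternatively, one works directly with $Q_L$ using the explicit divergence form and a Hardy-type inequality for the $|v-v_*|^{-2}$ singularity. For the other two cases ($\gamma+2>0$ and $-3<\gamma<-2$) the limit is routine once uniform $\epsilon$-bounds are secured, and the exponent $\frac{(\gamma+5)p}{(\gamma+5)p-3}$ with $p>\frac3{\gamma+5}$ is simply the image of $\frac{(\gamma+2s+3)p}{(\gamma+2s+3)p-3}$, $p>\frac3{\gamma+2s+3}$, under $2s\rightsquigarrow2$.
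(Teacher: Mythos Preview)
Your central claim---that $b^\epsilon$ satisfies assumption \textbf{(A2)} with constants independent of $\epsilon$---is false, and this invalidates the whole strategy of applying Theorem~\ref{thmlb} as a black box. From \eqref{ab2},
\[
\sin\theta\, b^\epsilon(\cos\theta)=K'\epsilon^{2s-2}\,\psi\!\Big(\tfrac{\sin(\theta/2)}{\epsilon}\Big)\,\theta^{-1-2s},
\]
so the amplitude carries the divergent factor $\epsilon^{2s-2}$ (recall $s<1$), while the support is confined to $\theta\lesssim\epsilon$. Neither the lower nor the upper bound in \eqref{a2} holds uniformly: for fixed $\theta>0$ the kernel vanishes once $\epsilon<\theta$, and for $\theta\sim\epsilon$ it is of size $\epsilon^{-3}$, not $\theta^{-1-2s}$. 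Consequently the constants $\mathbf{C}_i$ produced by Theorem~\ref{thmlb} for $Q^\epsilon$ would depend on $\epsilon$ in an uncontrolled way, and there is no reason the right-hand side should converge to the $H^1$/$(-\triangle_{\SS^2})^{1/2}$ quantities rather than the $H^s$/$(-\triangle_{\SS^2})^{s/2}$ ones. Your sentence ``the $H^s$ term converges, after suitable rescaling in the grazing parameter, to the $H^1$ term'' has no mechanism behind it if Theorem~\ref{thmlb} is used as stated.

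What the paper actually does is to \emph{re-run} the coercivity proof for $Q^\epsilon$, exploiting the one feature of $b^\epsilon$ that \emph{is} uniform: the second angular moment $\int_0^{\pi/2} b^\epsilon(\cos\theta)\sin\theta\,\theta^2\,d\theta\sim 1$. This is exactly what makes the Taylor-remainder terms finite without any dyadic angular cutoff, and it is why all the exponents effectively become those of the case $s=1$ (e.g.\ $|\mathcal{E}^{0,\epsilon}_{2,g}(f)|\lesssim\|g\|_{L^1_1}\|f\|_{H^{1/2}_{1/2}}^2$). For the anisotropic part $\mathcal{E}^{0,\epsilon}_{1,g}$ the paper invokes the structural Lemma~\ref{lbplim}, which shows that the $\epsilon$-localized spherical difference quotient is equivalent to $\|W^\epsilon((-\triangle_{\SS^2})^{1/2})f\|_{L^2(\SS^2)}^2$; only after deriving the $\epsilon$-explicit inequality \eqref{coerLandau} does one let $\epsilon\to0$ via Fatou's lemma, at which point $W^\epsilon\to\langle\cdot\rangle$ and the full Laplace--Beltrami term emerges. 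Your bookkeeping ``$2s\rightsquigarrow2$'' is the correct \emph{outcome}, but it is produced by this rescaled-moment mechanism, not by a uniform application of Theorem~\ref{thmlb}. Finally, the exponential constant at $\gamma=-2$ is not obtained by degenerating $p\to1$ in the soft-potential estimate; it comes directly from the $\gamma+2\varpi=0$ case of Lemma~\ref{grlimlem3} with $\varpi=1$, where the $L\log L$ splitting forces the $\exp\{(\cdot)^{(2+w)/w}\}$ dependence.
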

\medskip

\begin{thm}\label{thmeplandau}
Suppose    $f$ is a non-negative  and smooth function verifying \eqref{lbc}. Then we have
  \beno  D_L(f)+\|f\|_{L^1_{w}}\gtrsim C(\lambda, \delta) \bigg(\|\sqrt{f}\|^2_{H^1_{\gamma/2}}+\|(-\triangle_{\SS^2})^{1/2}\sqrt{f}\|^2_{L^2_{\gamma/2}}\bigg), \eeno
where $w=\max\{\gamma+2,2\}$ and $D_L(f)=-\int_{\R^3} Q_L(f,f)\ln f dv$.
\end{thm}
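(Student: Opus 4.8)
The plan is to derive Theorem \ref{thmeplandau} from the coercivity estimate for the Landau operator (Theorem \ref{thmlblandau}) by the classical substitution $g=h=f=\sqrt{f}$ combined with the functional relation between $D_L(f)$ and $\langle -Q_L(\sqrt f,\sqrt f),\sqrt f\rangle_v$, exactly mirroring how Theorem \ref{entropyproduction} is obtained from Theorem \ref{thmlb}. First I would recall the standard identity for the entropy dissipation of the Landau operator: integrating by parts in the definition $D_L(f)=-\int_{\R^3}Q_L(f,f)\ln f\,dv$ and using $\nabla f=2\sqrt f\,\nabla\sqrt f$, one writes $D_L(f)$ as a nonnegative quadratic form in $\nabla\sqrt f$ against the matrix $a(v-v_*)$, and then a comparison (pointwise in the collision geometry, as in Desvillettes--Villani / Villani) shows $D_L(f)\gtrsim \langle -Q_L(\sqrt f,\sqrt f),\sqrt f\rangle_v - C\|\sqrt f\|_{L^2_{\gamma/2}}^2\cdot(\text{mass/energy of }f)$, i.e. up to lower-order terms controlled by $\|f\|_{L^1_2}$, the entropy dissipation dominates the bilinear coercivity functional with $g=h=f=\sqrt f$. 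The mass, energy and entropy bounds \eqref{lbc} assumed on $f$ translate into the admissibility condition \eqref{lbc} for $g=\sqrt f$ via $\|\sqrt f\|_{L^1}^2\le \|f\|_{L^1}\cdot\|1\|$ on bounded support arguments — more precisely the relevant quantities $\|\sqrt f\|_{L^1}$, $\|\sqrt f\|_{L^1_2}$ are controlled by $\|f\|_{L^1}$, $\|f\|_{L^1_2}$ and $\|f\|_{L\log L}$, so the constants in Theorem \ref{thmlblandau} become constants depending only on $\lambda,\delta$.

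Next I would apply Theorem \ref{thmlblandau} with $\mathbf{A}=1$ to the function $\sqrt f$. In the regime $\gamma+2>0$ this immediately gives
\[
\langle -Q_L(\sqrt f,\sqrt f),\sqrt f\rangle_v\gtrsim \mathbf{C}_1\big(\|(-\triangle_{\SS^2})^{1/2}\sqrt f\|_{L^2_{\gamma/2}}^2+\|\sqrt f\|_{H^1_{\gamma/2}}^2\big)-\eta\mathbf{C}_2\|\sqrt f\|_{L^2_{\gamma/2+1}}^2-\mathbf{C}_3\|\sqrt f\|_{L^2_{\gamma/2}}^2,
\]
and the offending term $\eta\mathbf{C}_2\|\sqrt f\|_{L^2_{\gamma/2+1}}^2$ must be absorbed. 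The key observation is that $\|\sqrt f\|_{L^2_{\gamma/2+1}}^2=\|f\|_{L^1_{\gamma+2}}$, which is precisely a moment of $f$; choosing $w=\max\{\gamma+2,2\}$ means the term $\|f\|_{L^1_w}$ on the left-hand side of Theorem \ref{thmeplandau} controls it (after fixing $\eta$ small). The term $\mathbf{C}_3\|\sqrt f\|_{L^2_{\gamma/2}}^2=\mathbf{C}_3\|f\|_{L^1_\gamma}$ is a lower moment, dominated by $\|f\|_{L^1_w}$ as well since $\gamma\le w$. For the soft regimes $-3<\gamma\le -2$, the same scheme works but one must additionally convert the factors $\|g\|_{L^p_{|\gamma|}}$ or $\exp\{\cdots\|g\|_{L^1_{w+2}}\cdots\}$ appearing in $\mathbf{C}_3,\mathbf{C}_5$ with $g=\sqrt f$ into quantities controlled by the hypotheses: $\|\sqrt f\|_{L^1_{w+2}}$ is a moment of $f$ controlled by $\|f\|_{L^1_2}$ when $w$ is chosen appropriately, and $\|\sqrt f\|_{L^p_{|\gamma|}}^p=\|f\|_{L^{p/2}_{p|\gamma|/2}}$, which for the admissible $p$ is controlled by interpolation between $\|f\|_{L^1}$ (or $\|f\|_{L^1_2}$) and $\|f\|_{L\log L}$ — here I would invoke the standard fact that $L^1\cap L\log L$ embeds into $L^q$ for $q$ slightly above $1$. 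After these substitutions all the "bad" prefactors become constants $C(\lambda,\delta)$ and the estimate closes.

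The main obstacle I anticipate is not the algebra of absorbing moments but the first step: establishing rigorously that $D_L(f)$ dominates $\langle -Q_L(\sqrt f,\sqrt f),\sqrt f\rangle_v$ up to controlled lower-order terms. This requires care because $Q_L(\sqrt f,\sqrt f)$ involves products like $\sqrt{f_*}\nabla\sqrt f$ and $\nabla\sqrt{f_*}\sqrt f$, and one must reorganize $D_L(f)=\frac12\iint a(v-v_*)\big(\tfrac{\nabla f}{\sqrt f}-\tfrac{\nabla f_*}{\sqrt{f_*}}\big)\cdot\big(\tfrac{\nabla f}{\sqrt f}-\tfrac{\nabla f_*}{\sqrt{f_*}}\big)\sqrt{f}\sqrt{f_*}\,dv_*dv$ (after a symmetrization) and compare it term by term with the expansion of the bilinear functional; the cross terms produce exactly a moment-type remainder $\lesssim \|f\|_{L^1_{|\gamma|+2}}\|\sqrt f\|_{L^2}^2$ or similar. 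A secondary subtlety is the regularity/positivity needed to justify the integrations by parts and the manipulations of $\sqrt f$; since the theorem assumes $f$ is non-negative and smooth (and for the application to solutions one works with the a priori bounds coming from mass/energy/entropy conservation), this is a technical but not conceptual difficulty, and can be handled by the usual approximation $f\leadsto f+\epsilon$ followed by $\epsilon\to 0$, using the lower-order terms' stability under this limit. I would also note, as in the remark following Theorem \ref{entropyproduction}, that for a genuine solution the hypothesis \eqref{lbc} is automatically propagated from $f_0\in L^1_2\cap L\log L$, so the final constant depends only on the initial data.
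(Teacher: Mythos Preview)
Your overall strategy --- reduce $D_L(f)$ to a coercivity statement for $\sqrt f$ and absorb the weight term $\|\sqrt f\|_{L^2_{\gamma/2+1}}^2=\|f\|_{L^1_{\gamma+2}}$ into $\|f\|_{L^1_w}$ --- is reasonable, but the execution has two concrete gaps and differs substantially from the paper's route.

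\textbf{The choice $g=\sqrt f$ is wrong.} The functional you should be comparing $D_L(f)$ with is $\langle -Q_L(f,\sqrt f),\sqrt f\rangle_v$ (or, equivalently, the elliptic part $\mathcal{E}^\gamma_f(\sqrt f)$ in the Landau limit), with first argument $g=f$, not $\sqrt f$. Your claim that $\sqrt f$ inherits condition \eqref{lbc} is false: in general $\sqrt f\notin L^1(\R^3)$ even when $f\in L^1_2\cap L\log L$ (take $f(v)=\langle v\rangle^{-4}$). With $g=f$ the hypothesis \eqref{lbc} is satisfied directly and no such argument is needed. The comparison $D_L(f)\gtrsim \langle -Q_L(f,\sqrt f),\sqrt f\rangle_v$ up to a moment remainder is then a short computation (expand both sides via $\partial_i f=2\sqrt f\,\partial_i\sqrt f$).

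\textbf{The soft-potential closure does not go through.} Even with the correct $g=f$, invoking Theorem \ref{thmlblandau} as a black box for $\gamma\le -2$ produces constants involving $\|f\|_{L^p_{|\gamma|}}$ (or $\|f\|_{L^1_{w+2}}$) with $p>1$, and your claimed control of this by interpolation between $L^1$ and $L\log L$ is incorrect: $L^1\cap L\log L$ does not embed into any $L^p$, $p>1$. The paper sidesteps this entirely: in the proof of Theorem \ref{entropyproduction} for $\gamma<0$ it replaces $|v-v_*|^\gamma$ by $\langle v-v_*\rangle^\gamma$ (legitimate since the integrand in the symmetrized $D$-formula is nonnegative) and then applies directly the lemmas behind the coercivity (Lemma \ref{reducmaxwellian} and Lemma \ref{coermaxw1}), never invoking the $L^p$ norm of $g$.

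\textbf{Different route in the paper.} The paper does not work at the Landau level at all: it uses $D_L(f)=\lim_{\epsilon\to 0}D_B^\epsilon(f)$, applies the Boltzmann-side pointwise inequality $x\ln(x/y)-x+y\ge(\sqrt x-\sqrt y)^2$ to get $D_B^\epsilon(f)\gtrsim \mathcal{E}^{0,\epsilon}_\mu(W_{\gamma/2}\sqrt f)-\|f\|_{L^1_2}$ (via Lemma \ref{reducmaxwellian}), combines with the coercivity estimate from the proof of Theorem \ref{thmlblandau} in the form \eqref{coerLandau}, and passes to the limit by Fatou. This avoids both obstacles above and requires no direct comparison of $D_L$ with $\langle -Q_L(\cdot,\sqrt f),\sqrt f\rangle$.
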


\subsection{The new strategy: dyadic and geometric decompositions}  In this subsection, we will illustrate how to catch the structure of the Boltzmann collision operator to get the sharp bounds. Roughly speaking, the new strategy relies on the two types of the dyadic decomposition performed in both phase and frequency spaces and also the geometric structure of the elastic collision.

\subsubsection{Dyadic decompositions in phase and frequency spaces} 
 We first introduce two types of the dyadic decomposition. 
Let $B_{\frac{4}{3}}\eqdefa  \{\xi\in \mathrm{R}^3 ~|~ |\xi|\leq \frac{4}{3} \} $
and $ \emph{C}\eqdefa     \{\xi\in \mathrm{R}^3 ~|~ \frac{3}{4} \leq |\xi|\leq \frac{8}{3} \} $. Then one may introduce two radial functions $ \psi \in
C_0^{\infty}(B_{\frac{4}{3}})$ and $ \varphi \in C_0^{\infty}(\emph{C} )$  which satisfy
\begin{eqnarray}\label{defpsivarphi} 0\le\psi, \varphi \le0,\quad\mbox{and}\quad \psi(\xi) + \sum_{j\geq 0} \varphi(2^{-j} \xi) =1,~~\xi \in \mathrm{R}^3.  \end{eqnarray}

The first decomposition is performed in the phase space. We introduce the dyadic operator $ \mathcal{P}_j$  defined as
  \begin{eqnarray*} \mathcal{P}_{-1}f(x) =  \psi(x)f(x),~~~~
\mathcal{P}_{j}f(x) = \varphi(2^{-j}x)f(x)  ,~(j\geq 0). \end{eqnarray*}
 We also introduce operators $ \tilde{ \mathcal{P}}_{j}$ and $\mathcal{U}_{j}$  which are related to  $ \mathcal{P}_j$:
\beno \tilde{ \mathcal{P}}_{j}f(x)=\sum_{|k-j|\le N_0}\mathcal{P}_{k}f(x)  \quad\mbox{and}\quad   \mathcal{U}_{j}f(x) = \sum_{k\le j}\mathcal{P}_{k}f(x). \eeno
Here $N_0$ is a integer which will be chosen in the later.
Then     for any $f \in L^2(\mathbb{R}^3)$, it holds
\begin{eqnarray*} f = \mathcal{P}_{-1} f +\sum_{j\geq 0} \mathcal{P}_j f. \end{eqnarray*}

 We
set \ben\label{DefPhi} \Phi_k^\gamma(v)\eqdefa
\left\{\begin{aligned} & |v|^\gamma \varphi(2^{-k}|v|), \quad\mbox{if}\quad k\ge0;\\
& |v|^\gamma \psi( |v|),\quad\mbox{if}\quad k=-1.\end{aligned}\right.\een
 Then we derive that \beno
\langle Q(g, h), f \rangle_v=\sum_{k=-1}^\infty \langle Q_k(g, h), f \rangle_v=\sum_{k=-1}^\infty\sum_{j=-1}^\infty \langle Q_k(\mathcal{P}_jg, h), f \rangle_v,  \eeno
where
\beno
  Q_{k}(g, h)=\iint_{\sigma\in \SS^2,v_*\in \R^3} \Phi_k^\gamma(|v-v_*|)b(\cos\theta) (g'_*h'-g_*h)d\sigma dv_*. \eeno

\medskip

The second decomposition is performed in the frequency space. In fact, it is the standard Littlewood-Paley theory.
We denote $ \tilde{m}\overset {def} =\mathcal{F} ^{-1} \psi $ and $\phi \overset {def} = \mathcal{F}^{-1} \varphi $, where they are the inverse Fourier Transform of $\varphi$ and $\psi$.
If we set $\phi_j(x)\eqdefa2^{3j}\phi(2^{j}x)$, then the dyadic operators $ \mathfrak{F}_j$ can be defined as
follows \begin{eqnarray*} \mathfrak{F}_{-1}f(x) = \int_{\mathrm{R}^3}\tilde{m}(x-y) f(y)dy,~~~~
\mathfrak{F}_{j}f(x) =  \int_{\mathrm{R}^3} \phi_j(x-y) f(y)dy,~(j\geq 0).  \end{eqnarray*} We also introduce  operators $  \tilde{\mathfrak{F}}_{j}$ and $ \mathcal{S}_{j}$ which are related to  $ \mathfrak{F}_j$:
\beno    \tilde{\mathfrak{F}}_{j}f(x)=\sum_{|k-j|\le 3N_0}\mathfrak{F}_{k}f(x)   
&&\mbox{and}\quad \mathcal{S}_{j}f(x) = \sum_{k\le j}\mathfrak{F}_{k}f . \eeno
Then for any $f \in \mathbf{S}'(\mathbb{R}^3)$, it holds 
\begin{eqnarray*} f = \mathfrak{F}_{-1} f +\sum_{j\geq 0} \mathfrak{F}_j f. \end{eqnarray*}
It yields that 
\beno \langle Q_k(g, h), f \rangle_v & =&\sum_{p=-1}^\infty\sum_{l=-1}^\infty \langle Q_k(\mathfrak{F}_p g, \mathfrak{F}_l h), f \rangle_v.\eeno

Let us give some remarks on  these two decompositions:
\begin{enumerate}
\item The main purpose of the introduction of  the dyadic decomposition in the frequency space is to make full use of the interaction and the cancellation between the different parts of  frequency of functions $h$ and $f$. It will enable us to have the freedom of choosing  derivatives for  functions $h$ and $f$. However it is not enough to get the sharp bounds considering the fact  that  additional weight  is paid in the upper bound compared to that in the lower bound. Obviously it is due to the anisotropic structure of the operator.

\item To  clarify where the additional weight comes from, we introduce the dyadic decomposition in the phase space.  By careful analysis, we can distinguish which part  of the operator is the worst term that brings the additional weight to $h$ and $f$. In fact,  the worst situation happens in the case that  functions $h$ and $f$ are localized in the same region both in phase and frequency spaces and at the same time the relative velocity $|v-v_*|$ is far away from the zero.  In other words, in such a situation the collision operator is dominated by the anisotropic structure. It is the key point to obtain the estimate \eqref{sharpform} in anisotropic spaces.

\item These two  dyadic decompositions are consistent with the new profiles of the weighted Sobolev spaces(see Theorem \ref{baslem3} in Section 5).

\item The decompositions are very stable in the grazing collision limit. In fact, we can apply them to the rescaled Boltzmann operator to get the upper bounds. By taking the grazing collision limit, these upper bounds turn to be the sharp upper bounds of the Landau operator in weighted Sobolev spaces. The reader may check details in Section 4.
\end{enumerate}

\subsubsection{Key observation: geometric decomposition}  The key observation which enables us to get the new sharp bounds for the original collision operator is due to the geometric structure of the elastic collision. To explain it clearly, in what follows, we only focus on the maxwellian molecular case($\gamma=0$). 

We revisit the quantity $\mathcal{E}_g(f)$. In particular, we look for  a new decomposition for the term $f'-f$ contained in $\mathcal{E}_g(f)$.    Our main observation can be depicted schematically as follows:

\centerline{\includegraphics[width = 3in]{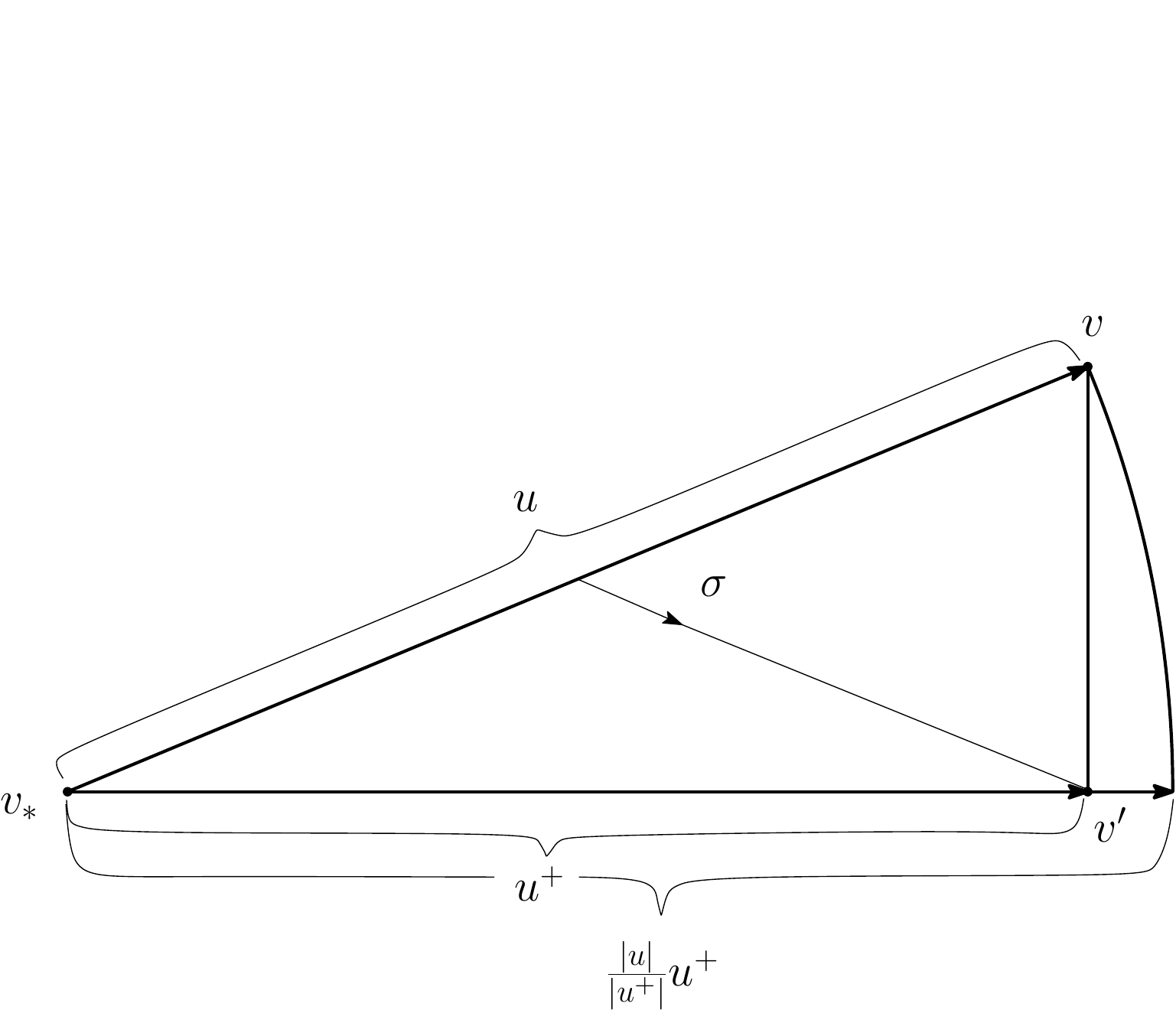}}

\medskip

We first note that $v'=v_*+u^+$ and $v=v_*+u$,
where $u=v-v_*$ and $u^+=\f{u+|u|\sigma}2$. Now assuming $u=r\tau$ with $r=|u|$ and $\tau\in \SS^2$, we obtain that
\beno v=v_*+r\tau, v'=v_*+r\f{\tau+\sigma}2. \eeno
Let $\varsigma=\f{\tau+\sigma}{|\tau+\sigma|}\in \SS^2$. Then we have the geometric decomposition:
\ben\label{geodecom} f(v')-f(v)&=&\big(f(v_*+\f{|\tau+\sigma|}2r\varsigma)-f(v_*+r\varsigma)\big)+\big(f(v_*+r\varsigma)-f(v_*+r\tau)\big)\nonumber\\
&=&\big(f(v_*+u^+)-f(v_*+|u|\f{u^+}{|u^+|})\big)+\big((T_{v_*}f)(r\varsigma)-(T_{v_*}f)(r\tau)\big), \een
where $T_v f\eqdefa f(v+\cdot)$.
Applying it to the functional $\langle Q(g,h), f\rangle$,  we  have 
\beno \langle Q(g,h), f\rangle&=&\underbrace{\iint_{\sigma\in \SS^2, u,v_*\in \R^3}   b(\cos\theta) g_*(T_{v_*}h)(u)\big(f(v_*+u^+)-f(v_*+|u|\f{u^+}{|u^+|})\big)d\sigma dv_*du}_{=\langle \mathcal{Q}_gh, f\rangle}\\&&+\underbrace{\iint_{\sigma\in \SS^2, u,v_*\in \R^3}   b(\cos\theta) g_*(T_{v_*}h)(u)\big((T_{v_*}f)(r\varsigma)-(T_{v_*}f)(r\tau)\big)d\sigma dv_*du}_{=\langle\mathcal{G}_g h, f\rangle_v}.\eeno

Notice that  $  |u^+-|u|\f{u^+}{|u^+|}|\sim \theta^2 |u|$. Then by technical argument(see the proof of Theorem \ref{thmlb}), the operator $\mathcal{Q}_g$ behaves like
\beno \mathcal{Q}_g \sim C_g\langle v\rangle^{s}(-\triangle_v)^{s/2}. \eeno Recalling the rough behavior of $Q(g, \cdot)$(see \eqref{behavqb}), we may regard $\mathcal{Q}_g$ as the lower order term. 

Now we concentrate on the functional $\langle\mathcal{G}_g h, f\rangle_v$. By \eqref{a4}, it is easy to check  
\beno  \langle\mathcal{G}_g h, f\rangle_v=\iint_{r>0,\sigma,\tau\in \SS^2,  v_*\in \R^3}  b(\sigma\cdot \tau)\mathrm{1}_{\sigma\cdot \tau\ge0} g_*(T_{v_*}h)(r\tau) \big((T_{v_*}f)(r\varsigma)-(T_{v_*}f)(r\tau))r^{2}d\sigma dv_*d\tau dr.\eeno
For fixed $v_*$, $\tau$ and $r$, if $\tau$ is chosen to be the polar direction, one has
\beno d\sigma= \sin\theta d\theta d\SS^1, d\varsigma=\sin \phi d\phi d\SS^1, \eeno
where $\theta=2\phi$. We deduce that
\beno d\sigma=4\cos\phi d\varsigma. \eeno
Then by change of  the variable from $\sigma$ to $\varsigma$, we derive that 
 \ben\label{geocha1} \langle\mathcal{G}_g h, f\rangle_v&= &\iint_{ r>0,\varsigma,\tau\in \SS^2,  v_*\in \R^3}  b\big(2(\varsigma\cdot \tau)^2-1\big)\mathrm{1}_{\varsigma\cdot \tau\ge\sqrt{2}/2} 4(\varsigma\cdot \tau) g_*(T_{v_*}h)(r\tau)\notag \\&&\times\big((T_{v_*}f)(r\varsigma)-(T_{v_*}f)(r\tau))r^{2}d\varsigma d\tau drdv_*.\een
 By the symmetric property of $\tau$ and $\varsigma$, we get
 \beno   \langle\mathcal{G}_g h, f\rangle_v&= &\f12\iint_{r>0,\varsigma,\tau\in \SS^2,  v_*\in \R^3}  b\big(2(\varsigma\cdot \tau)^2-1\big)\mathrm{1}_{\varsigma\cdot \tau\ge\sqrt{2}/2} 4(\varsigma\cdot \tau) g_*\big((T_{v_*}h)(r\tau)-(T_{v_*}h)(r\varsigma))\notag \\&&\times\big((T_{v_*}f)(r\varsigma)-(T_{v_*}f)(r\tau)\big)r^{2}d\varsigma d\tau drdv_*.\eeno
Thus to give the estimate  of $\mathcal{G}_{g}$, it suffices to consider the functional
\beno  \mathfrak{G}(h,f)\eqdefa\int_{\varsigma,\tau\in\SS^2}\big( h(\tau)-h(\varsigma)\big)\big( f(\varsigma)-f(\tau)\big)H(\varsigma\cdot \tau)d\varsigma d\tau,\eeno
 where $H(\varsigma\cdot \tau)=b\big(2(\varsigma\cdot \tau)^2-1\big)4(\varsigma\cdot \tau)\mathrm{1}_{\varsigma\cdot \tau\ge\sqrt{2}/2} $.
Recall that the assumption \eqref{a2} is equivalent to 
$b(\sigma\cdot\tau)\sim |\sigma-\tau|^{-2-2s}$, then by the fact $|\sigma-\tau|\sim |\varsigma-\tau|$, we   get  \beno  H(\varsigma\cdot \tau)\sim |\varsigma-\tau|^{-2-2s}\mathrm{1}_{|\varsigma-\tau|^2\le 2-\sqrt{2}}. \eeno

Let us first consider the lower bound of the operator. Thanks to Lemma \ref{antin1}, we get
\ben\label{geocha3}- \mathfrak{G}(f,f)+\|f\|_{L^2(\SS^2)}^2&\sim& \int_{\sigma,\tau\in \SS^2} \f{|f(\sigma)-f(\tau)|^2}{|\sigma-\tau|^{2+2s}}d\sigma d\tau+\|f\|_{L^2(\SS^2)}^2\\&&\sim \| (-\triangle_{\SS^2})^{s/2}f\|_{L^2(\SS^2)}+\|f\|_{L^2(\SS^2)}^2.\notag\een
 Then we have  
\beno  \langle-\mathcal{G}_g f, f\rangle_v\gtrsim \int_{\R^3} g_*\|(-\triangle_{\SS^2})^{s/2}T_{v_*}f\|_{L^2}^2dv_*-LOT,\eeno
which almost yields  
\beno \langle -Q(g, f), f\rangle_v\gtrsim C_g \|(-\triangle_{\SS^2})^{s/2}f\|_{L^2}^2-LOT.\eeno

Now what remains is to   prove 
\ben\label{labelint}  \|(-\triangle_{\SS^2})^{s/2}T_{v_*}f\|_{L^2}\lesssim  \langle v_*\rangle^s(\|(-\triangle_{\SS^2})^{s/2}f\|_{L^2}+\|f\|_{H^s}).\een  
It is easy to check that it holds for $s=0$ and $s=1$. Unfortunately because $-\triangle_{\SS^2}$ does not commutate with $\na$, the standard real interpolation method cannot be applied directly.  To solve the problem, we develop a new interpolation theory(which is  of independent interest) to overcome the difficulty. We refer readers to check the theory in Section 5. 
Then by combining the coercivity estimate in Sobolev spaces (see \eqref{boundWSP}), roughly speaking, we derive that
\beno \langle -Q(g, f), f\rangle_v\gtrsim C_g (\|(-\triangle_{\SS^2})^{s/2}f\|_{L^2}^2+\|f\|_{H^s}^2)-LOT.\eeno
We remark that the estimate is sharp since it is consistent with the behavior of the linearized operator \eqref{be5}.

Next we turn to the upper bounds.  We will show that our new observation can be used to prove \eqref{sharpform}. Indeed, by Cauchy-Schwartz inequality, we have
\ben\label{geocha2} |\mathfrak{G}(h,f)|\lesssim  \bigg(\int_{\sigma,\tau\in \SS^2} \f{|h(\sigma)-h(\tau)|^2}{|\sigma-\tau|^{2+2s}}d\sigma d\tau\bigg)^{\f12} \bigg(\int_{\sigma,\tau\in \SS^2} \f{|f(\sigma)-f(\tau)|^2}{|\sigma-\tau|^{2+2s}}d\sigma d\tau\bigg)^{\f12}.\een
Thanks to the Addition Theorem(see the statement in Section 5), we deduce that for constants $a, b\in\R$ with $a+b=2s$,
\beno |\mathfrak{G}(h,f)|\lesssim \|(1-\triangle_{\SS^2})^{a/2}h\|_{L^2} \|(1-\triangle_{\SS^2})^{b/2}f\|_{L^2},  \eeno
which implies
\beno  \langle-\mathcal{G}_g h, f\rangle_v\lesssim\bigg(\int_{\R^3} g_*\|(-\triangle_{\SS^2})^{a/2}T_{v_*}h\|_{L^2}^2dv_*\bigg)^{\f12}\bigg(\int_{\R^3} g_*\|(-\triangle_{\SS^2})^{b/2}T_{v_*}f\|_{L^2}^2dv_*\bigg)^{\f12}+LOT.\eeno
From which together with the sharp upper bounds  in weighted Sobolev spaces and \eqref{labelint}, we finally arrive at \eqref{sharpform}.

\medskip

Some remarks are in order:
\begin{enumerate}
\item The geometric decomposition plays essential role to catch the anisotropic structure of the operator in the lower and upper bounds. Since it does not use the symmetry and regularity   of the function $g$, it is more robust than the previous work. 

\item The geometric decomposition is stable in the process of the grazing collision limit. Actually we can give an explicit description of the asymptotic behavior of the anisotropic structure in the limit (see Lemma \ref{lbplim}). Roughly speaking, in the process of the limit, the behavior of collision operator depends on the parameter $\epsilon$.  If the eigenvalue of $-\triangle_{\SS^2}$ is less than $\epsilon^{-2}$, then the operator behaves like $-\triangle_{\SS^2}$. While if  the eigenvalue of $-\triangle_{\SS^2}$ is bigger than $\epsilon^{-2}$,
 then the operator behaves like $\epsilon^{2s-2}(-\triangle_{\SS^2})^{s}$. It reflects the strong connection between Boltzmann and Landau collision operators.
 
 \item  We remark that the geometric decomposition can also be applied to the operator in frequency space (recalling \eqref{bobylev}) to catch the anisotropic structure. It will give an alternative  proof to the lower and upper  bounds  of the operator in   anisotropic spaces. But it  only works well for the maxwellian case ($\gamma=0$) because  of the simplicity of  \eqref{bobylev} in this case. 
  \end{enumerate}

\subsection{Organization of the paper}   In Section 2, based on two types of the dyadic decomposition performed both in phase and frequency spaces, we give a complete proof to the sharp bounds of the collision operator in weighted Sobolev spaces.  

Based on the geometric decomposition, we  give a proof to the sharp  bounds of the collision operator in  anisotropic spaces in Section 3. 

In Section 4, we show that the strategy is so stable that we can extend all the estimates to the Landau collision operator by taking the grazing collision limit.  We also show the asymptotic behavior of the anisotropic structure of collision operator in the process of the grazing collision limit. 

In Section 5, we list some important lemmas which are of independent interest to the proof of the main theorems.  We first give some auxiliary lemmas on the new profiles  of the weighted Sobolev Spaces, new version of the interpolation theory and the basic properties of the real spherical harmonics. Then in the next we give the $L^2$  profile of  the fractional Laplace-Beltrami operator.  Finally we give the proof to \eqref{labelint}.  

At the end of the paper, we  give the conclusions and perspectives.
\setcounter{equation}{0}

\section{Upper bound for the Boltzmann collision operator in weighted Sobolev Spaces}

In this section, we will make full use of dyadic decompositions which are performed in both phase and  frequency spaces to give the precise estimates of the collision operator in  weighted Sobolev spaces.   Of course these estimates are not optimal. But they are still interesting. In fact, they have two advantages.   The first one is that we have the freedom of choosing  derivatives  and  weights for the functions compared to the previous work. The
second one is that these two decompositions used in the proof enable us to find out where the additional weight comes from in the upper bound.  It will be crucial to improve the upper bound of the operator in  anisotropic spaces.
\medskip

We first show how to reduce the estimate of the functional $\langle Q(g,h), f\rangle_v$ with the aid of the two types of  the decomposition.  For $ Q_{k}$, relative velocity $v-v_*$ is localized in the ring $\{\f34 2^k\le |v-v_*|\le \f83 2^k\}$.  Suppose that  $g$ is localized in the ring $\{ \f34 2^j\le |v_*|\le \f83 2^j\}$. Then thanks to the fact $\f{\sqrt{2}}2|v-v_*|\le |v'-v_*|\le |v-v_*|$, we have
\begin{itemize}
\item If $j\le k-N_0$, then  $|v|, |v'|\in [(\f34-\f832^{-N_0})2^k, \f83(1+2^{-N_0})2^k]$;
\item If $j\ge k+N_0$, then $|v|\in [(\f34-\f832^{-N_0})2^j, \f83(1+2^{-N_0})2^j]$ and $|v'|\in  [(\f{\sqrt{2}}2\f34-\f832^{-N_0})2^j, \f83(1+2^{-N_0})2^j]$;
\item If $|j-k|< N_0$, then $|v|\le 22^{k+N_0}, |v'|\le 22^{k+N_0}.$
\end{itemize}
From which together with the fact 
\beno  \langle Q_k(g,h), f\rangle_v=\iint_{\sigma\in\SS^2,v_*,v\in \R^3}\Phi_k^\gamma(|v-v_*|)b(\cos\theta) g_*h(f'-f)d\sigma dv_*dv ,\eeno we deduce that there exists a integer $N_0\in \N$ such that
 \ben\label{ubdecom} \langle Q(g,h), f\rangle_v&=&\sum_{k\ge-1}\sum_{j\ge-1}\langle Q_k(\mathcal{P}_jg, h), f \rangle_v\notag\\&=&\sum_{j\le k-N_0}\langle Q_k(\mathcal{P}_j g, \tilde{\mathcal{P}}_kh), \tilde{\mathcal{P}}_kf \rangle_v +
\sum_{j\ge k+N_0}\langle Q_k(\mathcal{P}_{j} g, \tilde{\mathcal{P}}_jh), \tilde{\mathcal{P}}_jf \rangle_v\notag\\&&\quad+\sum_{|j-k|\le N_0}\langle Q_k( \mathcal{P}_{j} g, \mathcal{U}_{k+N_0}h), \mathcal{U}_{k+N_0}f \rangle_v\\&=&\sum_{k\ge N_0-1}\langle Q_k(\mathcal{U}_{k-N_0} g, \tilde{\mathcal{P}}_kh), \tilde{\mathcal{P}}_kf \rangle_v +
\sum_{j\ge k+N_0}\langle Q_k(\mathcal{P}_{j} g, \tilde{\mathcal{P}}_jh), \tilde{\mathcal{P}}_jf \rangle_v\notag\\&&\quad+\sum_{|j-k|\le N_0}\langle Q_k( \mathcal{P}_{j} g, \mathcal{U}_{k+N_0}h), \mathcal{U}_{k+N_0}f \rangle_v.\notag \een

 We recall that   the Bobylev's formula of the operator can be stated as
\ben\label{bobylev}&& \qquad\langle\mathcal{F}\big( Q_k(g, h)\big), \mathcal{F}f \rangle\\&&=\iint_{\sigma\in \SS^2, \eta_*,\xi\in \R^3} b(\f{\xi}{|\xi|}\cdot \sigma)\big[ \mathcal{F}(\Phi_k^\gamma ) (\eta_*-\xi^{-})-\mathcal{F}(\Phi_k^\gamma)(\eta_*)\big](\mathcal{F}g)(\eta_*)(\mathcal{F}h)(\xi-\eta_*)\overline{(\mathcal{F}f)}(\xi)d\sigma d\eta_* d\xi,\nonumber \een
where $\mathcal{F}f$ denotes the Fourier transform of $f$ and $\xi^-=\f{\xi-|\xi|\sigma}2$.  Suppose that functions $g$ and $h$ are localized in rings $\{\f34 2^p\le |\xi|\le \f83 2^p\}$ and $\{\f34 2^l\le |\xi|\le \f83 2^l\}$ respectively in the frequency space.  Due to  \eqref{bobylev}, we have $\f34 2^p\le |\eta_*|\le \f83 2^p$ and $\f34 2^l\le |\xi-\eta_*|\le \f83 2^l$. Then \begin{itemize}
\item If $l\le p-N_0$, then $|\xi|\in [(\f34-\f832^{-N_0})2^p, \f83(1+2^{-N_0})2^p]$. Set $\xi^+\eqdefa \xi-\xi^-$, then it verifies $|\xi^+| \in [\f{\sqrt{2}}2(\f34-\f832^{-N_0})2^p, \f83(1+2^{-N_0})2^p]$. Notice that $|\eta_*-\xi^-|=|(\eta_*-\xi)+\xi^+|$, then one has $|\eta_*-\xi^-|\in [\f{\sqrt{2}}2(\f34-(1+\sqrt{2})\f832^{-N_0})2^p, \f83(1+22^{-N_0})2^p]  $.
\item If $l\ge p+N_0$, then $|\xi|\in [(\f34-\f832^{-N_0})2^l, \f83(1+2^{-N_0})2^l]$ and $|\eta_*-\xi^-|,|\eta_*|\le (1+2\f832^{-N_0})2^l$.
\item If $|l-p|< N_0$, then $|\xi|\le 2\f832^{p+N_0}$.  Now let $|\xi|\in[\f342^m,\f832^m]$. In the case of $|m-p|\le 2N_0$, one has $|\xi|\in[2^{-2N_0}\f342^p,2^{2N_0}\f832^p]$. While in the case of $m<p-2N_0$, one has $|\eta_*|, |\eta_*-\xi^-|\in[(2^{-N_0}\f34-\f832^{-2N_0})2^p, (\f832^{2N_0}+\f832^{-2N_0})2^p]$.
\end{itemize}
Then we have
\beno \langle Q_k(g, h), f \rangle_v & =&\sum_{p,l\ge-1} \langle Q_k(\mathfrak{F}_p g, \mathfrak{F}_l h), f \rangle_v \\
 &=& \sum_{l\le p-N_0} \mathfrak{W}_{k,p,l}^1  +\sum_{l\ge p+N_0} \mathfrak{W}_{k,p,l}^2+\sum_{|l-p|< N_0}\big(\sum_{|m-p|\le 2N_0} \mathfrak{W}_{k,p,l,m}^3+\sum_{m<p-2N_0} \mathfrak{W}_{k,p,l,m}^4\big), \eeno
where
\beno   \mathfrak{W}_{k,p,l}^1&\eqdefa&\iint_{\sigma\in \SS^2,v_*,v\in \R^3} \big(\tilde{ \mathfrak{F}}_{p}\Phi_k^\gamma\big)(|v-v_*|)b(\cos\theta) (\mathfrak{F}_{p}g)_*(\mathfrak{F}_{l}h)\big[
(\tilde{ \mathfrak{F}}_{p}f)'-
\tilde{ \mathfrak{F}}_{p}f\big]d\sigma dv_* dv,\\
\mathfrak{W}_{k,p,l}^2&\eqdefa&\iint_{\sigma\in \SS^2,v_*,v\in \R^3} \Phi_k^\gamma(|v-v_*|)b(\cos\theta)  (\mathfrak{F}_{p}g)_*(\mathfrak{F}_{l}h)\big[
(\tilde{ \mathfrak{F}}_{l}f)'-
\tilde{ \mathfrak{F}}_{l}f\big]d\sigma dv_* dv,\\
\mathfrak{W}_{k,p,l,m}^3&\eqdefa&\iint_{\sigma\in \SS^2,v_*,v\in \R^3} \Phi_k^\gamma(|v-v_*|)b(\cos\theta)  (\mathfrak{F}_{p}g)_*(\mathfrak{F}_{l}h)\big[
(  \mathfrak{F}_{m}f)'-
 \mathfrak{F}_{m}f\big]d\sigma dv_* dv,\\
\mathfrak{W}_{k,p,l,m}^4&\eqdefa&\iint_{\sigma\in \SS^2,v_*,v\in \R^3}\big(\tilde{ \mathfrak{F}}_{p}\Phi_k^\gamma\big)(|v-v_*|)b(\cos\theta) (\mathfrak{F}_{p}g)_*(\mathfrak{F}_{l}h)\big[
( \mathfrak{F}_{m}f)'-
 \mathfrak{F}_{m}f\big]d\sigma dv_* dv.
\eeno
We remark that  we use the fact that the Fourier transform maps a radial function into a radial function.

By simple calculation, we arrive at
 \ben\label{fsdecom} &&\langle Q_k(g, h), f \rangle_v  \notag\\
&&= \sum_{l\le p-N_0} \mathfrak{W}_{k,p,l}^1  +\sum_{l\ge-1 } \mathfrak{W}_{k,l}^2+\sum_{p\ge-1}  \mathfrak{W}_{k,p}^3+\sum_{m<p-N_0} \mathfrak{W}_{k,p,m}^4 , \een
where \beno
\mathfrak{W}_{k,l}^2&\eqdefa&\iint_{\sigma\in \SS^2,v_*,v\in \R^3} \Phi_k^\gamma(|v-v_*|)b(\cos\theta)  (\mathcal{S}_{l-N_0}g)_*(\mathfrak{F}_{l}h)\big[
(\tilde{ \mathfrak{F}}_{l}f)'-
\tilde{ \mathfrak{F}}_{l}f\big]d\sigma dv_* dv,\\
\mathfrak{W}_{k,p}^3&\eqdefa&\iint_{\sigma\in \SS^2,v_*,v\in \R^3} \Phi_k^\gamma(|v-v_*|)b(\cos\theta)  (\mathfrak{F}_{p}g)_*(\tilde{\mathfrak{F}}_{p}h)\big[
(  \tilde{\mathfrak{F}}_{p}f)'-
 \tilde{\mathfrak{F}}_{p}f\big]d\sigma dv_* dv,\\
\mathfrak{W}_{k,p,m}^4&\eqdefa&\iint_{\sigma\in \SS^2,v_*,v\in \R^3} \big(\tilde{ \mathfrak{F}}_{p}\Phi_k^\gamma\big)(|v-v_*|)b(\cos\theta)  (\mathfrak{F}_{p}g)_*(\tilde{\mathfrak{F}}_{p}h)\big[
(   \mathfrak{F}_{m}f)'-
 \mathfrak{F}_{m}f\big]d\sigma dv_* dv.
\eeno
Now the estimate of the functional $\langle Q(g,h), f\rangle_v$ is reduced to the estimates of the terms in the righthand sides of \eqref{ubdecom} and \eqref{fsdecom}.
\medskip

 \medskip

We begin with two useful propositions.
\begin{prop}\label{baslem5} Suppose $\varpi\in (0,1]$. Then  for $|y|\neq 0$,  one has \beno  \big| |x|^{2\varpi}-|y|^{2\varpi}\big|\lesssim \left\{\begin{aligned} & |x-y| |y|^{2\varpi-1}, \quad\mbox{if}\quad 0<\varpi\le\f12;\\
&  |x-y| |y|^{2\varpi-1}+|x-y|^{2\varpi},\quad\mbox{if}\quad \f12< \varpi\le1.\end{aligned}\right.\eeno
\end{prop}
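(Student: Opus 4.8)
\textbf{Proof plan for Proposition \ref{baslem5}.}

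The plan is to reduce everything to the one-dimensional elementary inequality for the scalar function $t\mapsto t^{2\varpi}$ on $[0,\infty)$ and then handle the passage from scalars $|x|,|y|$ to the quantities actually appearing. First I would set $r=|x|$, $\rho=|y|>0$, so that by the triangle inequality $|r-\rho|\le |x-y|$, and it suffices to bound $\big||r|^{2\varpi}-|\rho|^{2\varpi}\big|=|r^{2\varpi}-\rho^{2\varpi}|$ in terms of $|r-\rho|$ and $\rho$. The point is that once the scalar estimate $|r^{2\varpi}-\rho^{2\varpi}|\lesssim |r-\rho|\rho^{2\varpi-1}$ (for $0<\varpi\le\tfrac12$) or $|r^{2\varpi}-\rho^{2\varpi}|\lesssim |r-\rho|\rho^{2\varpi-1}+|r-\rho|^{2\varpi}$ (for $\tfrac12<\varpi\le1$) is in hand, replacing $|r-\rho|$ by the larger quantity $|x-y|$ only weakens the right-hand side, so the vector statement follows immediately.

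For the scalar estimate I would split according to the size of $r$ relative to $\rho$. In the regime $r\le 2\rho$ (equivalently $|r-\rho|\le\rho$, up to constants), I would use the mean value theorem: $r^{2\varpi}-\rho^{2\varpi}=2\varpi\,\xi^{2\varpi-1}(r-\rho)$ for some $\xi$ between $r$ and $\rho$. When $\varpi\le\tfrac12$ the exponent $2\varpi-1\le0$, so $\xi^{2\varpi-1}\le(\text{const})\,\rho^{2\varpi-1}$ because $\xi\gtrsim\rho$ is \emph{false} in general — here I must be a little careful: if $r$ is much smaller than $\rho$ then $\xi$ can be small and $\xi^{2\varpi-1}$ large. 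So instead, in the subregime $r\le\rho$ I would write directly $0\le\rho^{2\varpi}-r^{2\varpi}\le\rho^{2\varpi}$ and note $\rho^{2\varpi}=\rho^{2\varpi-1}\cdot\rho\le\rho^{2\varpi-1}(\rho-r)\cdot(\rho/(\rho-r))$... which again fails when $r$ is close to $\rho$. The clean way is concavity: for $0<2\varpi\le1$ the function $t\mapsto t^{2\varpi}$ is concave on $[0,\infty)$ and $0$ at the origin, hence subadditive, so $\rho^{2\varpi}=( (\rho-r)+r)^{2\varpi}\le (\rho-r)^{2\varpi}+r^{2\varpi}$, giving $\rho^{2\varpi}-r^{2\varpi}\le(\rho-r)^{2\varpi}=|r-\rho|^{2\varpi}$; symmetrically $r^{2\varpi}-\rho^{2\varpi}\le|r-\rho|^{2\varpi}$ when $r>\rho$. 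This already yields $\big||x|^{2\varpi}-|y|^{2\varpi}\big|\le|x-y|^{2\varpi}$ for all $\varpi\in(0,\tfrac12]$, and when $|x-y|\le|y|$ one has $|x-y|^{2\varpi}=|x-y|\cdot|x-y|^{2\varpi-1}\le|x-y|\,|y|^{2\varpi-1}$ since $2\varpi-1\le0$; when $|x-y|>|y|$, $|y|^{2\varpi-1}\ge|x-y|^{2\varpi-1}$ fails, so here I instead just bound $\big||x|^{2\varpi}-|y|^{2\varpi}\big|\le|x|^{2\varpi}+|y|^{2\varpi}\lesssim|x-y|^{2\varpi}+|y|^{2\varpi}\lesssim|x-y|\,|y|^{2\varpi-1}$... — this needs the observation $|x-y|^{2\varpi}\le|x-y|\,|y|^{2\varpi-1}$, true iff $|x-y|\ge|y|$, which holds in this subcase; and $|y|^{2\varpi}=|y|\,|y|^{2\varpi-1}\le|x-y|\,|y|^{2\varpi-1}$ likewise. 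So the case $0<\varpi\le\tfrac12$ closes.

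For $\tfrac12<\varpi\le1$ I would write $2\varpi=1+2(\varpi-\tfrac12)$ with $2(\varpi-\tfrac12)\in(0,1]$ and use the factorization-type identity $r^{2\varpi}-\rho^{2\varpi}=r\cdot r^{2\varpi-1}-\rho\cdot\rho^{2\varpi-1}=(r-\rho)r^{2\varpi-1}+\rho(r^{2\varpi-1}-\rho^{2\varpi-1})$; the second factor is handled by the already-proved case with exponent $\varpi'=\varpi-\tfrac12\in(0,\tfrac12]$, giving $|r^{2\varpi-1}-\rho^{2\varpi-1}|\lesssim|r-\rho|^{2\varpi-1}+|r-\rho|\,\rho^{2\varpi-2}$ (after again splitting on $|r-\rho|\lessgtr\rho$), so that $\rho|r^{2\varpi-1}-\rho^{2\varpi-1}|\lesssim\rho|r-\rho|^{2\varpi-1}+|r-\rho|\rho^{2\varpi-1}$, and in the term $\rho|r-\rho|^{2\varpi-1}$ one uses $\rho\lesssim\max(|r-\rho|,r)$... when $|r-\rho|\le\rho$ this is not automatic — but then $\rho\sim r$, so $\rho|r-\rho|^{2\varpi-1}\lesssim r\,|r-\rho|^{2\varpi-1}\lesssim|r-\rho|\,r^{2\varpi-2}\cdot r\sim|r-\rho|\,\rho^{2\varpi-1}$ using $|r-\rho|^{2\varpi-2}\ge\rho^{2\varpi-2}$; when $|r-\rho|>\rho$, $\rho|r-\rho|^{2\varpi-1}\le|r-\rho|^{2\varpi}$. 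Combining with the term $|r-\rho|\,r^{2\varpi-1}$ (where again $r\lesssim\rho$ or $r\lesssim|r-\rho|$) produces exactly the claimed bound $|r-\rho|\,\rho^{2\varpi-1}+|r-\rho|^{2\varpi}$, and replacing $|r-\rho|$ by $|x-y|$ finishes. \emph{The main obstacle} is purely bookkeeping: keeping straight the several subcases $|x-y|\lessgtr|y|$ (and correspondingly $|x|\sim|y|$ or not), since the "natural" mean-value bound degrades precisely when $x$ and $y$ are far apart relative to $|y|$, and one must peel off the $|x-y|^{2\varpi}$ term exactly in that regime. No deep idea is needed beyond concavity/subadditivity of $t^{2\varpi}$ for $2\varpi\le1$ and one induction step (from $\varpi'=\varpi-\tfrac12$ to $\varpi$) for the upper range.
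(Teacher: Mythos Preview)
There is a genuine error in the case $0<\varpi\le\tfrac12$. From subadditivity you correctly obtain $\big||x|^{2\varpi}-|y|^{2\varpi}\big|\le|x-y|^{2\varpi}$, but the subsequent step ``when $|x-y|\le|y|$ one has $|x-y|^{2\varpi}\le|x-y|\,|y|^{2\varpi-1}$ since $2\varpi-1\le0$'' has the inequality reversed: since $t\mapsto t^{2\varpi-1}$ is \emph{decreasing}, $|x-y|\le|y|$ gives $|x-y|^{2\varpi-1}\ge|y|^{2\varpi-1}$, hence $|x-y|^{2\varpi}\ge|x-y|\,|y|^{2\varpi-1}$. Concretely, for $\varpi=\tfrac14$, $|y|=1$, $|x-y|=\epsilon\ll1$ one has $|x-y|^{2\varpi}=\epsilon^{1/2}\gg\epsilon=|x-y|\,|y|^{2\varpi-1}$. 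So the subadditivity bound is simply too weak in the near regime $|x-y|\le|y|$, and the argument does not close there.

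The remedy is exactly the mean value argument you abandoned. The worry that ``$\xi\gtrsim\rho$ is false in general'' is misplaced once you split correctly: fix $b\in(0,1)$ and distinguish $|x|\le b|y|$ from $|x|>b|y|$. In the first subcase $|x-y|\ge(1-b)|y|$, so the crude bound $\big||x|^{2\varpi}-|y|^{2\varpi}\big|\le|y|^{2\varpi}\lesssim|x-y|\,|y|^{2\varpi-1}$ is immediate. In the second subcase every convex combination $(1-\theta)|x|+\theta|y|\ge b|y|$, so the integral mean value form
\[
\big||x|^{2\varpi}-|y|^{2\varpi}\big|\lesssim|x-y|\int_0^1\big[(1-\theta)|x|+\theta|y|\big]^{2\varpi-1}\,d\theta\lesssim|x-y|\,|y|^{2\varpi-1}
\]
goes through because $2\varpi-1\le0$. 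This is precisely the paper's route. For $\tfrac12<\varpi\le1$ the paper's argument is also much shorter than your induction: from the same integral mean value and $(1-\theta)|x|+\theta|y|\le|y|+|x-y|$ one gets $\big[(1-\theta)|x|+\theta|y|\big]^{2\varpi-1}\lesssim|y|^{2\varpi-1}+|x-y|^{2\varpi-1}$, which multiplied by $|x-y|$ gives the two claimed terms in one line.
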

\begin{proof} We first treat the case $2\varpi\le1$.  Suppose that $|x|\le b|y|$ with $0<b<1$. Then one has \beno (1-b)|y|\le|x-y|\le (1+b)|y|,\eeno which implies
\beno  \big| |x|^{2\varpi}-|y|^{2\varpi}\big|\lesssim |y|^{2\varpi}\lesssim |x-y||y|^{2\varpi-1}.\eeno
Next we handle the case $|x|> b|y|$.  If $\theta\in[0,1]$, we have \beno (1-\theta)|x|+\theta |y|\ge [(1-\theta)b+\theta]|y|.  \eeno
From which together with the fact
\ben\label{meva} \big| |x|^{2\varpi}-|y|^{2\varpi}\big|\lesssim |x-y|\int_0^1 [(1-\theta)|x|+\theta |y|]^{2\varpi-1}d\theta,   \een  we get the desired result.

When $2\varpi>1$, the proposition is easily followed from \eqref{meva} and the fact \beno (1-\theta)|x|+\theta |y|\le |y|+|x-y|.  \eeno \end{proof}

\begin{prop}\label{baslem6}  Suppose $\varpi\in (0,1]$ and $N\in \N$. Recall that   $\Phi_k^\gamma$ is defined in \eqref{DefPhi}.
\begin{enumerate}
\item  Set $A_k^{\varpi}(v)\eqdefa (\tilde{ \mathfrak{F}}_{p}\Phi_k^\gamma\big)(v)|v|^{2\varpi}.$  Then if $k\ge0$, we have
 \beno
\|A_{k}^{\varpi}\|_{L^\infty}&\lesssim& 2^{k(\gamma+\f32-N)}2^{-pN}\|\Phi_0^\gamma\|_{H^{N+2}} \|\varphi\|_{W^{2,\infty}_N}.
\eeno If $\gamma=0$ and $k= -1$,
\beno  \|A_{-1}^{\varpi}\|_{L^\infty}\lesssim 2^{-pN}\|\psi\|_{H^{N+2}} \|\varphi\|_{W^{2,\infty}_N}.\eeno

\item  Set $B_{-1}^{\varpi}(v)\eqdefa (\tilde{ \mathfrak{F}}_{p}\Phi_{-1}^\gamma\big)(v)|v|^{2\varpi}-(\tilde{ \mathfrak{F}}_{p}\Phi_{-1}^{\gamma+2\varpi}\big)(v) .$  Then if $\gamma+2\varpi>0$,  there exists a   constant  $\eta_1=\min\{\gamma+2\varpi, 1\}$   such that
\beno  |B_{-1}^{\varpi}|\le |B_1|+|B_2|,\eeno
where \beno  \|B_1\|_{L^2}\le 2^{-(\eta_1+\f32)p}\quad\mbox{and}\quad
\|B_2\|_{L^\infty}\lesssim 2^{-\eta_1 p}.\eeno
If $\gamma+2\varpi >-1$, then there exists a constant $\eta_2$ such that
\beno  \|B_{-1}^{\varpi}\|_{L^2}\lesssim 2^{-(\eta_2+\f12) p}, \eeno
where
\beno  \eta_2= \left\{\begin{aligned} & \f12 , \quad\mbox{if}\quad \gamma+2\varpi>-\f12;\\
&  \f12-(\log_2 p)/p,\quad\mbox{if}\quad \gamma+2\varpi=-\f12;\\
&\gamma+2\varpi+1,\quad\mbox{if}\quad -1<\gamma+2\varpi<-\f12. \end{aligned}\right.\eeno
 \end{enumerate}
\end{prop}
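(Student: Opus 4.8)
\textbf{Proof proposal for Proposition \ref{baslem6}.}

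The plan is to reduce everything to estimates on the convolution kernel $\tilde{\mathfrak F}_p$ applied to truncated powers of $|v|$, tracking how many derivatives the Fourier truncation at scale $2^p$ can absorb. For part (1), I would write $A_k^\varpi(v) = \langle v\rangle^{-M}\,[\langle v\rangle^M|v|^{2\varpi}(\tilde{\mathfrak F}_p\Phi_k^\gamma)(v)]$ for a large even $M$, and for the $L^\infty$ bound use the standard fact that $\tilde{\mathfrak F}_p$ is convolution against an $L^1$-normalized bump at frequency $2^p$, so that for any $N$ one has $\|\tilde{\mathfrak F}_p\,F\|_{L^\infty}\lesssim 2^{-pN}\|F\|_{H^{N+2}}$ (using $H^{N+2}\hookrightarrow$ a space controlling $2^{-pN}$-decay in frequency, with the two extra derivatives giving the Sobolev embedding into $L^\infty$). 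It then remains to estimate $\||v|^{2\varpi}\Phi_k^\gamma\|_{H^{N+2}}$ after multiplying by the cutoff coming from $\varphi$; for $k\ge 0$ a rescaling $v = 2^k w$ converts $\Phi_k^\gamma$ into $2^{k\gamma}\Phi_0^\gamma(w)\varphi$-type terms, and each derivative costs a factor $2^{-k}$, while $d^3v = 2^{3k}\,d^3w$ contributes the $2^{3k/2}$ in the $L^2$-based norm. Collecting powers gives exactly $2^{k(\gamma+3/2-N)}2^{-pN}\|\Phi_0^\gamma\|_{H^{N+2}}\|\varphi\|_{W^{2,\infty}_N}$; the $k=-1$, $\gamma=0$ case is the same computation without rescaling, where $\Phi_{-1}^0 = \psi$.

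For part (2), the point is that $|v|^{2\varpi}\Phi_{-1}^\gamma(v) = |v|^{\gamma+2\varpi}\psi(|v|)$ differs from $\Phi_{-1}^{\gamma+2\varpi}(v) = |v|^{\gamma+2\varpi}\psi(|v|)$ — these are actually equal, so $B_{-1}^\varpi = \tilde{\mathfrak F}_p(|v|^{\gamma+2\varpi}\psi) - \tilde{\mathfrak F}_p(\Phi_{-1}^{\gamma+2\varpi})$ vanishes? No: one must read $\Phi_{-1}^{\gamma+2\varpi}(v) = |v|^{\gamma+2\varpi}\psi(|v|)$ while $|v|^{2\varpi}\Phi_{-1}^\gamma(v) = |v|^{2\varpi}|v|^\gamma\psi(|v|)$, so in fact $B_{-1}^\varpi$ measures nothing about the radial cutoff and is genuinely zero — which cannot be the intent. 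The correct reading must be that the weight $|v|^{2\varpi}$ in the definition of $A_k^\varpi$ is meant to model $\langle v\rangle^{2\varpi}$-type Sobolev weights, so $B_{-1}^\varpi$ is the commutator-type error between putting the weight $\langle\cdot\rangle^{2\varpi}$ outside versus folding $|\cdot|^{2\varpi}$ inside, modulo the low-frequency piece. I would therefore split $B_{-1}^\varpi = \tilde{\mathfrak F}_p\big((|v|^{2\varpi}-\langle v\rangle^{2\varpi})\Phi_{-1}^\gamma\big) + \text{(smooth localized remainder)}$ and apply Proposition \ref{baslem5} to control $||v|^{2\varpi}-\langle v\rangle^{2\varpi}|$ — which is bounded near the origin (since $\psi$ localizes) and behaves like $\langle v\rangle^{2\varpi-2}$ at infinity — obtaining a function that is $L^2$ with a gain. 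The split into $B_1\in L^2$ with norm $2^{-(\eta_1+3/2)p}$ and $B_2\in L^\infty$ with norm $2^{-\eta_1 p}$ reflects separating the part of $(|v|^{2\varpi}-\langle v\rangle^{2\varpi})\Phi_{-1}^\gamma$ that is smooth enough to earn $\eta_1+2$ derivatives in $H$ (hence the $2^{-(\eta_1+3/2)p}$ via the $L^\infty$-from-$H^{3/2+}$ mechanism, rephrased as $L^2$) versus a singular-at-origin part that only earns $\eta_1$ derivatives.

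For the second assertion of part (2) (the $\eta_2$ bound with $\gamma+2\varpi>-1$), I expect the main obstacle: here $|v|^{\gamma+2\varpi}$ can be genuinely non-$L^2_{loc}$-with-extra-regularity at the origin, so the naive "$N$ derivatives cost $2^{-pN}$" fails and one must instead interpolate/optimize the number of derivatives against the integrability of $|v|^{\gamma+2\varpi-N}$ near $0$. The strategy is: write $B_{-1}^\varpi = \tilde{\mathfrak F}_p F$ with $F(v) = (|v|^{2\varpi}-\langle v\rangle^{2\varpi})|v|^\gamma\psi$, decompose $F = F\mathbf 1_{|v|\le 2^{-p}} + F\mathbf 1_{|v|> 2^{-p}}$, estimate the first piece directly in $L^2$ (its $L^2$ norm near the origin scales like $2^{-p(\gamma+2\varpi+3/2)}$ when $\gamma+2\varpi+3/2>0$) and estimate the second piece by putting $\eta_2+1$ derivatives and balancing, which produces the trichotomy: clean $2^{-p/2}$ gain when $\gamma+2\varpi>-1/2$, a logarithmically-corrected $2^{-p(1/2-(\log_2 p)/p)}$ exactly at the borderline $\gamma+2\varpi=-1/2$ coming from a $\log$-divergent frequency integral, and the $2^{-p(\gamma+2\varpi+3/2)}$-type rate (i.e. $\eta_2 = \gamma+2\varpi+1$) when $-1<\gamma+2\varpi<-1/2$ where the origin contribution dominates. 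The delicate point throughout is keeping the cutoffs $\psi,\varphi$ and the truncation parameter $N_0$ from interfering, and making sure the $\tilde{\mathfrak F}_p$ (which is supported in a fattened annulus, not a ball) genuinely kills the relevant frequency regime; this is where I would spend the most care.
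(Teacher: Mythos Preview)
Your plan for part (1) is roughly in the right direction, though your description drifts between having the weight $|v|^{2\varpi}$ outside and inside the operator $\tilde{\mathfrak F}_p$. The paper keeps the weight outside and simply bounds $\|A_k^\varpi\|_{L^\infty}\le \|\tilde{\mathfrak F}_p\Phi_k^\gamma\|_{L^\infty_2}$ (since $|v|^{2\varpi}\le\langle v\rangle^2$ for $\varpi\le 1$), then estimates that weighted $L^\infty$ norm on the Fourier side via $\|(-\Delta)\mathcal F(\tilde{\mathfrak F}_p\Phi_k^\gamma)\|_{L^1}+\|\mathcal F(\tilde{\mathfrak F}_p\Phi_k^\gamma)\|_{L^1}\lesssim \|\mathcal F(\tilde{\mathfrak F}_p\Phi_k^\gamma)\|_{H^2_2}$ together with the scaling identity $\mathcal F(\Phi_k^\gamma)(\xi)=2^{(\gamma+3)k}\mathcal F(\Phi_0^\gamma)(2^k\xi)$.

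Your treatment of part (2), however, rests on a genuine misreading. In the definition $A_{-1}^\varpi(v)=(\tilde{\mathfrak F}_p\Phi_{-1}^\gamma)(v)\,|v|^{2\varpi}$ the factor $|v|^{2\varpi}$ is a pointwise multiplication \emph{after} applying $\tilde{\mathfrak F}_p$; it is not folded into the argument of $\tilde{\mathfrak F}_p$. Hence $B_{-1}^\varpi$ is not zero: it is precisely the commutator
\[
B_{-1}^\varpi(v)=|v|^{2\varpi}(\tilde{\mathfrak F}_p\Phi_{-1}^\gamma)(v)-\tilde{\mathfrak F}_p\big(|\cdot|^{2\varpi}\Phi_{-1}^\gamma\big)(v)
=\int_{\R^3}\tilde\phi_p(v-y)\,\Phi_{-1}^\gamma(y)\big(|v|^{2\varpi}-|y|^{2\varpi}\big)\,dy,
\]
where $\tilde\phi_p=\sum_{|m-p|\le N_0}\phi_m$ is the convolution kernel of $\tilde{\mathfrak F}_p$. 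Your attempt to reinterpret the statement in terms of $\langle v\rangle^{2\varpi}$ is therefore off target; there is no Japanese bracket in the definition.

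Once the commutator form is recognized, the proof is direct: apply Proposition~\ref{baslem5} to bound $\big||v|^{2\varpi}-|y|^{2\varpi}\big|$ by $|v-y|\,|y|^{2\varpi-1}$ (plus $|v-y|^{2\varpi}$ when $\varpi>\tfrac12$), producing integrals of the type $\int|\tilde\phi_p(v-y)|\,\Phi_{-1}^{\gamma+2\varpi-1}(y)|v-y|\,dy$ and $\int|\tilde\phi_p(v-y)|\,\Phi_{-1}^{\gamma}(y)|v-y|^{2\varpi}\,dy$. Each is then split according to $|y|\le 2^{-p}$ versus $|y|\ge 2^{-p}$ and estimated by Young's inequality, using that $\tilde\phi_p$ is $L^1$-normalized and concentrated at scale $2^{-p}$, so that $\int|\tilde\phi_p(z)||z|^\alpha\,dz\sim 2^{-\alpha p}$. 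The $L^\infty$ pieces on $\{|y|\ge 2^{-p}\}$ give $B_2$, the $L^2$ pieces give $B_1$; the trichotomy for $\eta_2$ comes from whether $\Phi_{-1}^{\gamma+2\varpi-1}\mathbf 1_{|y|\ge 2^{-p}}$ lies in $L^1$, in $L^2$, or requires the borderline logarithmic correction. Your instinct about the $|y|\lessgtr 2^{-p}$ split in the last paragraph is exactly right, but it must be executed on this commutator representation, not on the object you wrote down.
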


\begin{proof} (i). For $k\ge0$,  we recall that $\Phi_k^\gamma(v)=|v|^\gamma \varphi(2^{-k}v)$. Then by
 direct calculation, we have  \ben\label{Phi3} \mathcal{F}(\Phi_k^\gamma)(\xi)=2^{(\gamma+3)k}\mathcal{F}(\Phi_0^{\gamma})(2^k\xi),\een
which yields \beno  \|\tilde{ \mathfrak{F}}_{p}\Phi_k^\gamma\|_{L^\infty_{2}}&\lesssim&  \|(-\triangle)\mathcal{F}\big( \tilde{ \mathfrak{F}}_{p}\Phi_k^\gamma\big)\|_{L^1}+\|\mathcal{F}(\tilde{ \mathfrak{F}}_{p}\Phi_k^\gamma)\|_{L^1}\lesssim \| \mathcal{F}(\tilde{ \mathfrak{F}}_{p}\Phi_k^\gamma)\|_{H^2_2}\\
&\lesssim& 2^{k(\gamma+\f32-N)}2^{-pN}\|\Phi_0^\gamma\|_{H^{N+2}} \|\varphi\|_{W^{2,\infty}_N}.
  \eeno
From which, we obtain that
\beno  \|A_k^{\varpi}\|_{L^\infty}\lesssim2^{k(\gamma+\f32-N)}2^{-pN}\|\Phi_0^\gamma\|_{H^{N+2}} \|\varphi\|_{W^{2,\infty}_N}
.\eeno

If $\gamma=0$ and $k=-1$, by the definition, we have
\beno  \|A_{-1}^{\varpi}\|_{L^\infty}\lesssim \|(-\triangle)\mathcal{F}\big(\tilde{ \mathfrak{F}}_{p}\psi\big)\|_{L^1}+\|\mathcal{F}( \mathfrak{F}_{p}\psi)\|_{L^1}\lesssim 2^{-pN}\|\psi\|_{H^{N+2}} \|\varphi\|_{W^{2,\infty}_N}. \eeno

(ii).  Let $\tilde{\phi}_p=\sum_{|m-p|\le N_0}  \phi_m$. Then by the definition of $\tilde{ \mathfrak{F}}_{p}$, we have  
\beno |B_{-1}^{\varpi}(v)|&=&\bigg|\int_{\R^3} \tilde{\phi}_p(v-y)\Phi_{-1}^\gamma(y)(|v|^{2\varpi}-|y|^{2\varpi})dy\bigg|. \eeno
Thanks to Proposition \ref{baslem5},  it can be reduced to bound the terms $B_{-1}^1$ and $B_{-1}^2$ which are defined by
\beno  B_{-1}^1&\eqdefa&\int_{\R^3} |\tilde{\phi}_p(v-y)|\Phi_{-1}^{\gamma+2\varpi-1}(y)|v-y|dy, \\
\mbox{and} \quad B_{-1}^2&\eqdefa&\int_{\R^3} |\tilde{\phi}_p(v-y)|\Phi_{-1}^\gamma(y)|v-y|^{2\varpi}dy.\eeno
We remind the reader that the term $B_{-1}^2$ is only needed to be considered in the case of  $2\varpi> 1$.

 We begin with the estimate of $B^{1}_{-1}$. Observe that
\beno  B^{1}_{-1}&=&\int_{\R^3} |\tilde{\phi}_p(v-y)|\Phi_{-1}^{\gamma+2\varpi-1}(y)\mathrm{1}_{|y|\le 2^{-p}}|v-y|dy+\int_{\R^3} |\tilde{\phi}_p(v-y)|\Phi_{-1}^{\gamma+2\varpi-1}(y)\mathrm{1}_{|y|\ge 2^{-p}}|v-y|dy\\&\eqdefa& B_{-1}^{1,1}+B_{-1}^{1,2}.
\eeno
It is easy to check \beno  \|B_{-1}^{1,1}\|_{L^2}\lesssim \bigg( \int_{\R^3} |\tilde{\phi}_p|^2|y|^2 dy\bigg)^{1/2}2^{-(\gamma+2\varpi-1+3)p}\lesssim 2^{-(\gamma+2\varpi+\f32)p}.\eeno We turn to the estimate of $B_{-1}^{1,2}$. To make full use of the structure, we separate the estimate into several cases.
\begin{itemize}
\item Case 1: $\gamma+2\varpi>0$.
Then it holds
\beno  \|B_{-1}^{1,2}\|_{L^\infty}\lesssim   2^{-\eta p}, \eeno
where $\eta=\min \{1, \gamma+2\varpi\}$. On the other hand, by Young inequality, we also have
\beno  \|B_{-1}^{1,2}\|_{L^2}\lesssim   2^{-p}.  \eeno
\item Case 2: $-\f12\le\gamma+2\varpi\le0$.  Then by Young inequality, one has that if $\gamma+2s>-\f12$,
\beno \|B_{-1}^{1,2}\|_{L^2}&\lesssim& 2^{-p}. \eeno  While if $\gamma+2s=-\f12$, we have
\beno \|B_{-1}^{1,2}\|_{L^2}&\lesssim& 2^{-p}p\lesssim 2^{-p(1-(\log_2p)/p)}.\eeno
\item Case 3: $-1<\gamma+2\varpi<-\f12$. We have
\beno \|B_{-1}^{1,2}\|_{L^2}&\lesssim& 2^{-p}\bigg(\int_{2^{-p}\le |y|\le 2} |y|^{2(\gamma+2\varpi-1)}dy\bigg)^{\f12}\\&\lesssim& 2^{-(\gamma+2\varpi+\f32)p}.\eeno
 \end{itemize}

The similar argument can be applied to $B^2_{-1}$ with $2\varpi> 1$. Notice that
\beno   B^{2}_{-1}&=&\int_{\R^3} |\tilde{\phi}_p(v-y)|\Phi_{-1}^{\gamma}(y)\mathrm{1}_{|y|\le 2^{-p}}|v-y|^{2\varpi}dy\\&&+\int_{\R^3} |\tilde{\phi}_p(v-y)|\Phi_{-1}^{\gamma}(y)\mathrm{1}_{|y|\ge 2^{-p}}|v-y|^{2\varpi}dy\\&\eqdefa& B_{-1}^{2,1}+B_{-1}^{2,2}.
\eeno
It is easy to check \beno  \|B_{-1}^{2,1}\|_{L^2}\lesssim  \bigg( \int |\tilde{\phi}_p|^2|y|^{4\varpi} dy\bigg)^{1/2}2^{-(\gamma+3)p}\lesssim 2^{-(\gamma+2\varpi+\f32)p}.\eeno  Similarly,   the estimate of $B_{-1}^{2,2}$ falls in several cases.
\begin{itemize}
\item Case 1: $\gamma\ge0$.
Then it holds
\beno  \|B_{-1}^{2,2}\|_{L^\infty}\lesssim  \int_{\R^3} |\tilde{\phi}_p||y|^{2\varpi} dy \lesssim 2^{-2\varpi p}. \eeno
\item Case 2: $\gamma+2\varpi>0$ and $\gamma<0$.  Use the fact $|\Phi_{-1}^{\gamma}(y)\mathrm{1}_{|y|\ge 2^{-p}}|\le 2^{-\gamma p}$ , then one has
\beno \|B_{-1}^{2,2}\|_{L^\infty}&\lesssim&2^{-\gamma p}  \int_{\R^3} |\tilde{\phi}_p||y|^{2\varpi} dy\lesssim  2^{-(\gamma+2\varpi)p}. \eeno  
\item Case 3:  $ \gamma\ge -\f32$. By Young inequality, we have
\beno \|B_{-1}^{2,2}\|_{L^2}&\lesssim&\bigg( \int_{\R^3} |\Phi_{-1}^\gamma|^2\mathrm{1}_{|y|\ge 2^{-p}} dy\bigg)^{1/2}\big(\int_{\R^3} |\tilde{\phi}_p||y|^{2\varpi} dy\big) \lesssim  2^{-(\eta+\f12)p},\eeno
where $\eta=2\varpi-\f12$ if $\gamma>-\f32$ and $\eta=2\varpi-\f12-(\log_2p)/p$ if $\gamma=-\f32$.
\item Case 4:  $\gamma<-\f32$.  Again by Young inequality, we have
\beno \|B_{-1}^{2,2}\|_{L^2}&\lesssim& \bigg( \int_{\R^3} |\Phi_{-1}^\gamma|^2\mathrm{1}_{|y|\ge 2^{-p}} dy\bigg)^{1/2}\big(\int_{\R^3} |\tilde{\phi}_p||y|^{2\varpi} dy\big) \lesssim 2^{-(\gamma+2\varpi+\f32)p}.\eeno
\end{itemize}
Notice that there are only two types of the estimates,  $L^2$ and $L^\infty$ estimates, in the proof, then the proposition is easily obtained by patching together all the estimates.
\end{proof}
\medskip

\subsection{Estimates of $\mathfrak{W}_{k,p,l}^1$ and $\mathfrak{W}_{k,p,m}^4$  defined in \eqref{fsdecom}}
We first give the estimate to $\mathfrak{W}_{k,p,l}^1$.
\begin{lem}\label{lemub1}   Suppose $N\in \N$. For $k\ge0$, it holds
\beno |\mathfrak{W}_{k,p,l}^1|&\lesssim&2^{k(\gamma+\f52-N)}(2^{-p(N-2s)}2^{2s(l-p)}+2^{-(N-\f52)p}2^{\f32(l-p)})\\&&\times\|\Phi_0^\gamma\|_{H^{N+2}}\|\varphi\|_{W^{2,\infty}_N} \|\mathfrak{F}_{p}g\|_{L^1}\|\mathfrak{F}_{l}h\|_{L^2}\|\tilde{ \mathfrak{F}}_{p}f\|_{L^2}.\eeno
If $k=-1$, we have
\begin{enumerate}
\item if $\gamma=0$, \beno |\mathfrak{W}_{-1,p,l}^1|&\lesssim& \big(2^{-p(N-2s)}2^{2s(l-p)} +2^{-(N-1)p}2^{\f32 l} \big)\\&&\times\|\psi\|_{H^{N+2}}\|\varphi\|_{W^{2,\infty}_N}\|\mathfrak{F}_{p}g\|_{L^1}\|\mathfrak{F}_{l}h\|_{L^2}\|\tilde{ \mathfrak{F}}_{p}f\|_{L^2},\eeno
\item if $\gamma+2s>0$ and $\gamma>-\f32$, \beno |\mathfrak{W}_{-1,p,l}^1|\lesssim
  2^{\tilde{\eta}(l-p)}2^{(2s-\eta_1)l}\|\mathfrak{F}_{p}g\|_{L^1}\|\mathfrak{F}_{l}h\|_{L^2}\|\tilde{ \mathfrak{F}}_{p}f\|_{L^2}, \eeno
   \item  if  $ \gamma+2s>-1$ and $\gamma>-\f52$, \beno |\mathfrak{W}_{-1,p,l}^1|\lesssim \ 2^{\tilde{\eta}(l-p)}2^{(2s-\eta_2)l} \|\mathfrak{F}_{p}g\|_{L^{\f32}} \|\mathfrak{F}_{l}h\|_{L^2}\|\tilde{ \mathfrak{F}}_{p}f\|_{L^2},  \eeno
 \item  if  $ \gamma+2s>-1$, \beno |\mathfrak{W}_{-1,p,l}^1|&\lesssim&
( 2^{2sl}2^{-(\f12+\eta_2)p}+2^{\f32l}2^{-(\gamma+3)p})\|\mathfrak{F}_{p}g\|_{L^{2}} \|\mathfrak{F}_{l}h\|_{L^2}\|\tilde{ \mathfrak{F}}_{p}f\|_{L^2}\\
&\lesssim&2^{\tilde{\eta}(l-p)}2^{(2s-\f12-\eta_2)l} \|\mathfrak{F}_{p}g\|_{L^{2}} \|\mathfrak{F}_{l}h\|_{L^2}\|\tilde{ \mathfrak{F}}_{p}f\|_{L^2},  \eeno
\end{enumerate}
where $\tilde{\eta}$ is a positive constant which depends only on $\gamma$ and $s$ and varies for different cases. We remark that  constants $\eta_1$ and $\eta_2$ are stated in Proposition \ref{baslem6} and functions $\psi$ and $\varphi$ are defined in \eqref{defpsivarphi}.
\end{lem}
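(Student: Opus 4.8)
The plan is to treat $\mathfrak{W}^1_{k,p,l}$ as a weighted cancellation estimate: recall that $\mathfrak{W}^1_{k,p,l}$ involves the factor $\big(\tilde{\mathfrak{F}}_p\Phi^\gamma_k\big)(|v-v_*|)$ together with the difference $(\tilde{\mathfrak{F}}_p f)'-\tilde{\mathfrak{F}}_p f$, and that the frequency of $\tilde{\mathfrak{F}}_p f$ is of size $2^p$ while that of $\mathfrak{F}_l h$ is of size $2^l$ with $l\le p-N_0$. The first step is a Taylor expansion on $(\tilde{\mathfrak{F}}_pf)'-\tilde{\mathfrak{F}}_p f$ along the collision: writing $v'-v=\tfrac{|v-v_*|}{2}(\sigma-\tfrac{v-v_*}{|v-v_*|})$, so that $|v'-v|\sim |v-v_*|\sin(\theta/2)\lesssim \theta|v-v_*|$, one splits $b(\cos\theta)$ into the grazing part $\theta\le \theta_0$ and the non-grazing part $\theta\ge\theta_0$ (for a threshold $\theta_0$ to be optimized). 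On the non-grazing part, $\int_{\theta\ge\theta_0}b(\cos\theta)\sin\theta\,d\theta\lesssim \theta_0^{-2s}$, and each of $(\tilde{\mathfrak{F}}_pf)'$ and $\tilde{\mathfrak{F}}_pf$ is estimated separately in $L^2$, using the change of variables $v\mapsto v'$ (Jacobian bounded, $\cos\theta\ge 0$); on the grazing part one uses the first-order Taylor formula $(\tilde{\mathfrak{F}}_pf)'-\tilde{\mathfrak{F}}_pf=\int_0^1\nabla\tilde{\mathfrak{F}}_pf(v(t))\cdot(v'-v)\,dt$, so that the $\theta$-integral converges and produces a gain, and the Bernstein inequality $\|\nabla\tilde{\mathfrak{F}}_pf\|_{L^2}\lesssim 2^p\|\tilde{\mathfrak{F}}_pf\|_{L^2}$ (together with $|v'-v|\lesssim\theta|v-v_*|\lesssim \theta 2^k$ on the support of $\Phi^\gamma_k$) is applied. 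Optimizing $\theta_0$ between the two contributions — the non-grazing one scaling like $\theta_0^{-2s}\cdot(\text{size of }\tilde{\mathfrak{F}}_p\Phi^\gamma_k)$ and the grazing one like $\theta_0^{2-2s}\cdot 2^p 2^k\cdot(\text{size of }\tilde{\mathfrak{F}}_p\Phi^\gamma_k)$ — yields the factor $2^{2s(l-p)}$ or $2^{\frac32(l-p)}$ depending on whether one balances against the $L^\infty$ or $L^2$ norm of $\tilde{\mathfrak{F}}_p\Phi^\gamma_k$ times $\|\mathfrak{F}_l h\|_{L^2}$.

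Second, I would insert the bounds on $\tilde{\mathfrak{F}}_p\Phi^\gamma_k$ (and on $A^\varpi_k$, $B^\varpi_{-1}$) supplied by Proposition 2.3. For $k\ge 0$ this directly gives the decay $2^{k(\gamma+\frac32-N)}2^{-pN}$ in the first displayed bound; combining with the collisional gain $2^{2s(l-p)}$ or $2^{\frac32(l-p)}$ and the Hölder pairing $\|\mathfrak{F}_pg\|_{L^1}\|\mathfrak{F}_lh\|_{L^2}\|\tilde{\mathfrak{F}}_pf\|_{L^2}$ (the factor $g_*$ is pulled out in $L^\infty_{v_*}$-after-$L^1$ via $\|\mathfrak{F}_pg\|_{L^1}$ controlling $\int g_*\,dv_*$ on the localized support) gives the stated estimate. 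For $k=-1$, one cannot afford the naive $L^\infty$ bound on $\Phi^\gamma_{-1}(|v-v_*|)=|v-v_*|^\gamma\psi(|v-v_*|)$ when $\gamma<0$; this is where the refined decomposition $\Phi^\varpi_{-1}$ versus $\Phi^{\gamma+2\varpi}_{-1}$ from part (ii) of Proposition 2.3 enters. Here the trick is that the weight $\langle v\rangle$ (or rather $|v|$-weights) produced when passing from $\tilde{\mathfrak{F}}_pf$ to a weighted norm is absorbed by converting powers of $|v-v_*|$ into powers of $|v|$ and $|v_*|$, at the cost of extra $\|g\|_{L^{3/2}}$ or $\|g\|_{L^2}$ factors — this is exactly why cases (2)–(4) trade $\gamma>-\frac32$ / $\gamma>-\frac52$ / no restriction against $\|\mathfrak{F}_pg\|_{L^1}$ / $\|\mathfrak{F}_pg\|_{L^{3/2}}$ / $\|\mathfrak{F}_pg\|_{L^2}$. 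The constants $\eta_1,\eta_2$ and $\tilde\eta$ come out of combining the $\Phi$-decay exponents of Proposition 2.3(ii) with the collisional gain exponents $2s$ or $\frac32$; in each case one checks $\tilde\eta>0$ by plugging in the relevant admissible range of $(\gamma,s)$.

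The routine bookkeeping is the optimization of $\theta_0$ and the repeated use of Bernstein/Hölder, which I will not grind through. The main obstacle I expect is in the $k=-1$, soft-potential cases (parts 2–4): one must carefully split $\Phi^\gamma_{-1}(|v-v_*|)$ near $|v-v_*|=0$, where $|v-v_*|^\gamma$ is singular, and track how the singular part contributes (it forces the $L^{3/2}$ or $L^2$ norm on $g$ and is responsible for the loss from $\eta_2=\frac12$ down to $\eta_2=\gamma+2\varpi+1$ when $\gamma+2\varpi<-\frac12$, with the logarithmic borderline at $\gamma+2\varpi=-\frac12$), while simultaneously keeping the decay $2^{-(\eta_2+\frac12)p}$ and converting it, via $l\le p-N_0$, into a convergent factor $2^{\tilde\eta(l-p)}2^{(2s-\eta_2)l}$ that can be summed over $l$ and $p$ in the subsequent assembly of $\langle Q(g,h),f\rangle_v$. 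Getting the exact exponents in Proposition 2.3(ii) to line up with the claimed $\eta_1,\eta_2$ — in particular the logarithmic corrections — is the delicate point; everything else is a standard grazing-collision cancellation argument.
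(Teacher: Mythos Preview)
Your plan puts the Taylor expansion on the wrong factor and misses two structural ingredients that drive the lemma.

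The integrand in $\mathfrak{W}^1_{k,p,l}$ contains $(\mathfrak{F}_l h)\big[(\tilde{\mathfrak{F}}_p f)'-\tilde{\mathfrak{F}}_p f\big]$ with $l\le p-N_0$. If you expand the difference of $\tilde{\mathfrak{F}}_p f$ and apply Bernstein to $\nabla\tilde{\mathfrak{F}}_p f$ (or $\nabla^2\tilde{\mathfrak{F}}_p f$), the derivative costs $2^p$ (or $2^{2p}$), and the index $l$ never enters your balance between grazing and non-grazing pieces. Your optimization therefore produces a factor in $p$ and $k$ only, not the claimed off-diagonal decay $2^{2s(l-p)}$; you assert that factor without deriving it. Separately, a \emph{first}-order Taylor expansion gives the grazing integral $\int_0^{\theta_0}\theta^{-1-2s}\theta\,d\theta$, which diverges for $s\ge 1/2$; you write the second-order exponent $\theta_0^{2-2s}$ while saying ``first-order'', and in any case second order on $f$ still lands derivatives on the high-frequency factor.

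The paper instead first transfers the difference onto the low-frequency factor,
\[
(\mathfrak{F}_l h)\big[(\tilde{\mathfrak{F}}_p f)'-\tilde{\mathfrak{F}}_p f\big]
=\big[(\mathfrak{F}_l h)-(\mathfrak{F}_l h)'\big](\tilde{\mathfrak{F}}_p f)'
+\big[(\mathfrak{F}_l h)'(\tilde{\mathfrak{F}}_p f)'-(\mathfrak{F}_l h)(\tilde{\mathfrak{F}}_p f)\big]
=:\mathfrak{D}_k^1+\mathfrak{D}_k^2.
\]
For $\mathfrak{D}_k^1$ one expands $(\mathfrak{F}_l h)(v)-(\mathfrak{F}_l h)(v')$ around $v'$; the linear term is annihilated by the angular null structure (the identity that $\int b(\cos\theta)\,w(|v'-v|)(v-v')\rho(v')\,d\sigma\,dv=0$ after the change of variable $v\mapsto v'$), so one passes directly to second order. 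This puts $\nabla^2\mathfrak{F}_l h$ into play, Bernstein gives $2^{2l}$, and the angular cutoff at scale $2^{-l}|v-v_*|^{-1}$ produces exactly the $2^{2sl}$ in the bound --- hence $2^{2s(l-p)}2^{-p(N-2s)}$ once combined with $\|A_k^s\|_{L^\infty}$ from Proposition~\ref{baslem6}. For $\mathfrak{D}_k^2$ the Cancellation Lemma of \cite{advw} collapses the pre/post-collisional difference to a finite multiple of $g_*(\mathfrak{F}_l h)(\tilde{\mathfrak{F}}_p f)$; the factor $2^{\frac32 l}$ is then simply $\|\mathfrak{F}_l h\|_{L^\infty}\lesssim 2^{\frac32 l}\|\mathfrak{F}_l h\|_{L^2}$. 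Neither the $2^{2s(l-p)}$ nor the $2^{\frac32(l-p)}$ gain is reachable without this decomposition, and without them the sum over $l\le p-N_0$ in the proof of Theorem~\ref{thmub} does not close. Your discussion of the $k=-1$ cases via Proposition~\ref{baslem6} is on the right track for the kernel side, but it sits on top of the same structural gap.
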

\begin{proof}  We recall that $l$ and $p$  verify the condition $l<p-N_0$. To make full use of the cancellation   and to handle  the singularity caused by the angular function,    we make the following decomposition:
\beno    \mathfrak{W}_{k,p,l}^1= \mathfrak{D}_{k}^1+\mathfrak{D}_k^2,
\eeno where \beno
\mathfrak{D}_{k}^1&\eqdefa&\iint_{\sigma\in \SS^2,v_*,v\in \R^3} \big(\tilde{ \mathfrak{F}}_{p}\Phi_k^\gamma\big)(|v-v_*|)b(\cos\theta) (\mathfrak{F}_{p}g)_*\big[(\mathfrak{F}_{l}h)-
(\mathfrak{F}_{l}h)'\big](\tilde{ \mathfrak{F}}_{p}f)'d\sigma dv_* dv,\\
\mathfrak{D}_{k}^2&\eqdefa&\iint_{\sigma\in \SS^2,v_*,v\in \R^3} \big(\tilde{ \mathfrak{F}}_{p}\Phi_k^\gamma\big)(|v-v_*|)b(\cos\theta) (\mathfrak{F}_{p}g)_*\big[(\mathfrak{F}_{l}h)'
(\tilde{ \mathfrak{F}}_{p}f)'-(\mathfrak{F}_{l}h)
\tilde{ \mathfrak{F}}_{p}f\big]d\sigma dv_* dv. \eeno
The proof falls in several steps.

\medskip

\noindent{\it Step 1: Estimate of  $\mathfrak{D}_k^1$.}  Observe the facts
\ben\label{taylor1}  &&(\mathfrak{F}_{l}h)(v)-
(\mathfrak{F}_{l}h)(v')\nonumber\\&&=(v-v')\cdot(\na \mathfrak{F}_lh)(v')+\f12\int_0^1(1-\kappa)(v-v')\otimes(v-v'): (\na^2\mathfrak{F}_lh)(\kappa(v))d\kappa,\een
where $\kappa(v)=v'+\kappa(v-v'),$
 and  \ben\label{vequ}  &&\iint_{\sigma\in\SS^2,v\in\R^3} \Gamma(|v-v_*|) b(\f{v-v_*}{|v-v_*|}\cdot\sigma)w(|v'-v|)(v-v')\rho(v')d\sigma dv\notag\\&&=4\iint_{\sigma\in\SS^2,v\in\R^3} \Gamma(|T_\sigma(v')-v_*|) b(\f{T_\sigma(v')-v_*}{|T_\sigma(v')-v_*|}\cdot\sigma)w(|v'-T_\sigma(v')|)\f{T_\sigma(v')-v'}{ \big(\f{v'-v_*}{|v'-v_*|}\cdot\sigma\big)^2}\rho(v')d\sigma dv'=0,\een
where $w$ and $\rho$ are smooth functions and $T_\sigma$ represents the  transform  such that $ T_\sigma(v')=v$. We refer   readers to  \cite{advw} or \cite{av1} to check the change of the variable from $v$ to $v'$.
Then in order to get the optimal estimate,  we follow  the idea in \cite{chenhe1} to introduce the function $\psi$ (defined in \eqref{defpsivarphi}) to decompose $\mathfrak{D}_k^1$ into  angular cutoff part  and  angular non cutoff part, that is,
\beno  \mathfrak{D}_k^1=\mathfrak{D}_k^{1,1}+\mathfrak{D}_k^{1,2},\eeno
where
\beno \mathfrak{D}_k^{1,1}
&\eqdefa& \f12\int_0^1 \iint_{\sigma\in \SS^2,v_*,v\in \R^3} A_k^s(|v-v_*|)b(\cos\theta) \psi(2^l(v'-v)) (\mathfrak{F}_{p}g)_*\\&&\qquad\qquad\times |v-v_*|^{-2s}\big((v-v')\otimes(v-v'): (\na^2 \fF_l h)(\kappa(v))\big)(\tilde{ \mathfrak{F}}_{p}f)'d\sigma dv_* dvd\kappa,\\
 \mathfrak{D}_k^{1,2}&\eqdefa&\iint_{\sigma\in \SS^2,v_*,v\in \R^3} A_k^s(|v-v_*|)b(\cos\theta)(1-\psi(2^l(v'-v))) (\mathfrak{F}_{p}g)_*\\&&\qquad\qquad\times |v-v_*|^{-2s}\big[(\mathfrak{F}_{l}h)-
(\mathfrak{F}_{l}h)'\big](\tilde{ \mathfrak{F}}_{p}f)'d\sigma dv_* dv,\eeno
where  \eqref{taylor1} and \eqref{vequ} are used and  $A_k^s$ is defined in   Proposition \ref{baslem6}.

\medskip

\noindent{\it Step 1.1: Estimate of $\mathfrak{D}_k^{1,1}$.} 
We divide the estimate into three cases. 

{\it Case 1: $k\ge0$.}
By Cauchy-Schwartz inequality, one has
\beno |\mathfrak{D}_k^{1,1}|
&\lesssim& \|A^s_k\|_{L^\infty} \bigg(\int_0^1 \iint_{\sigma\in \SS^2,v_*,v\in \R^3}   |(\mathfrak{F}_{p}g)_*|b(\cos\theta)\sin^2(\theta/2)|\psi(2^l(v'-v))|\\&&\times|v-v_*|^{2-2s}|(\tilde{ \mathfrak{F}}_{p}f)'|^2d\sigma dv_* dvd\kappa\bigg)^\f12\bigg(\int_0^1 \iint_{\sigma\in \SS^2,v_*,v\in \R^3} |(\mathfrak{F}_{p}g)_*|b(\cos\theta)\\&&\times\sin^2(\theta/2)|\psi(2^l(v'-v))||v-v_*|^{2-2s}\big|(\na^2 \fF_l h)(\kappa(v))\big|^2d\sigma dv_* dvd\kappa\bigg)^\f12,
\eeno
where we use the fact  $|v-v'|=|v-v_*|\sin(\theta/2)$.

Then we follow the change of variables: $(v_*,v)\rightarrow (v_*,u_1=v')$ and $(v_*,v)\rightarrow \big(v_*,u_2=\kappa(v)\big)$.
 Thanks to the fact
\begin{eqnarray}\label{Jacobi} |\frac{\partial u_2}{\partial v}| =(1-\frac{\kappa}{2})^2 \Big\{ (1-\frac{\kappa}{2})+\frac{\kappa}{2} \f{v-v_*}{|v-v_*|}\cdot\sigma\Big\},
\end{eqnarray}
  we derive that
\beno
|\mathfrak{D}_k^{1,1}|
&\lesssim&  \|A^s_k\|_{L^\infty}\bigg(\int_0^1 \iint_{\sigma\in \SS^2,v_*,u_1\in \R^3}  |(\mathfrak{F}_{p}g)_*|b(\cos\theta)\sin^2(\theta/2)|\psi(2^l|v-v_*|\sin(\theta/2))|\\&&\times|u_1-v_*|^{2-2s}|(\tilde{ \mathfrak{F}}_{p}f)(u_1)|^2d\sigma dv_*du_1d\kappa\bigg)^\f12\bigg(\int_0^1 \iint_{\sigma\in \SS^2,v_*,u_2\in \R^3} |(\mathfrak{F}_{p}g)_*|b(\cos\theta)\\&&\times\sin^2(\theta/2)|\psi(2^l|v-v_*|\sin(\theta/2))||u_2-v_*|^{2-2s}\big|(\na^2 \fF_l h)(u_2))\big|^2d\sigma dv_* du_2d\kappa\bigg)^\f12,
\eeno
where we use the fact $|v-v_*|\sim |u_1-v_*|\sim |u_2-v_*|$.
It is not difficult to check
\ben\label{angular2}  &&|v-v_*|^{2-2s}\int_{\sigma\in\SS^2} b(\cos\theta)\sin^2(\theta/2) \psi(2^{l}|v-v_*|\sin(\theta/2)) d\sigma\notag\\
&&\lesssim  |u_2-v_*|^{2-2s}\int_{0}^{\f832^{-l} |u_2-v_*|^{-1}} b(\cos\theta)\theta^2 \sin\tilde{\theta}  d\tilde{\theta}  \notag\\
&&\lesssim 2^{-(2-2s)l},\een
where   $\tilde{\theta}$ verifies $\cos\tilde{\theta}=\f{u_2-v_*}{|u_2-v_*|}\cdot \sigma$ and $\theta/2\le\tilde{\theta}\le \theta$.
By Bernstein inequalities (\ref{bern1}-\ref{bern2}), it is easy to derive that  \beno  |\mathfrak{D}_k^{1,1}| &\lesssim& 2^{2sl}\|A_k\|_{L^\infty}
 \|\mathfrak{F}_{p}g\|_{L^1}\|\fF_l h\|_{L^2}\|\tilde{ \mathfrak{F}}_{p}f\|_{L^2}\\
 &\lesssim& 2^{k(\gamma+\f32-N)}2^{-p(N-2s)}2^{2s(l-p)}\|\Phi_0^\gamma\|_{H^{N+2}}\|\varphi\|_{W^{2,\infty}_N}\|\mathfrak{F}_{p}g\|_{L^1}\|\fF_l h\|_{L^2}\|\tilde{ \mathfrak{F}}_{p}f\|_{L^2}.\eeno

{\it Case 2: $k=-1$ and $\gamma=0$.}  Thanks to Proposition \ref{baslem6}, we easily get
\beno |\mathfrak{D}_{-1}^{1,1}|&\lesssim& 2^{-p(N-2s)}2^{2s(l-p)}\|\psi\|_{H^{N+2}}\|\varphi\|_{W^{2,\infty}_N}\|\mathfrak{F}_{p}g\|_{L^1}\|\fF_l h\|_{L^2}\|\tilde{ \mathfrak{F}}_{p}f\|_{L^2}.\eeno

{\it Case 3: $k=-1$ and $\gamma\neq0$.} For the general case, following the decomposition in Proposition \ref{baslem6}:
\beno  A^s_{-1}=B^s_{-1}+\tilde{ \mathfrak{F}}_{p}\Phi_{-1}^{\gamma+2s}, \eeno
 one has the corresponding decomposition: \beno \mathfrak{D}_{-1}^{1,1}=\mathfrak{D}_{-1,1}^{1,1}+\mathfrak{D}_{-1,2}^{1,1} . \eeno
 We first have
\beno |\mathfrak{D}_{-1,2}^{1,1}|
&\lesssim& \bigg(\int_0^1 \iint_{\sigma\in \SS^2,v_*,v\in \R^3} \big|\big(\tilde{ \mathfrak{F}}_{p}\Phi_{-1}^{\gamma+2s}\big)(|v-v_*|)\big|^2b(\cos\theta)\sin^2(\theta/2)|\psi(2^l(v'-v))|\\&&\times|v-v_*|^{2-2s}|(\tilde{ \mathfrak{F}}_{p}f)'|^2d\sigma dv_* dvd\kappa\bigg)^\f12\bigg(\int_0^1 \iint_{\sigma\in \SS^2,v_*,v\in \R^3} |(\mathfrak{F}_{p}g)_*|^2b(\cos\theta)\\&&\times\sin^2(\theta/2)|\psi(2^l(v'-v))||v-v_*|^{2-2s}\big|(\na^2 \fF_l h)(\kappa(v))\big|^2d\sigma dv_* dvd\kappa\bigg)^\f12.
\eeno
By change of variables from $(v_*, v)$ to $(u_1=v-v_*, u_2=v')$ and from $(v_*,v)$ to $(v_*, u_3=\kappa(v))$, we  get
\beno
|\mathfrak{D}_{-1,2}^{1,1}|
&\lesssim& \bigg(\int_0^1 \iint_{\sigma\in \SS^2,u_1,u_2\in \R^3} \big|\big(\tilde{ \mathfrak{F}}_{p}\Phi_{-1}^{\gamma+2s}\big)(|u_1|)\big|^2b(\cos\theta)\sin^2(\theta/2)|\psi(2^l|u_1|\sin(\theta/2))|\\&&\times|u_1|^{2-2s}|(\tilde{ \mathfrak{F}}_{p}f)(u_2)|^2d\sigma du_1 du_2d\kappa\bigg)^\f12\bigg(\int_0^1 \iint_{\sigma\in \SS^2,v_*,u_3\in \R^3} |(\mathfrak{F}_{p}g)_*|^2b(\cos\theta)\\&&\times\sin^2(\theta/2)|\psi(2^l|v-v_*|\sin(\theta/2))||u_3-v_*|^{2-2s}\big|(\na^2 \fF_l h)(u_3))\big|^2d\sigma dv_* du_3d\kappa\bigg)^\f12.
\eeno
Then we have 
\beno  |\mathfrak{D}_{-1,2}^{1,1}|\lesssim 2^{2sl} \|\tilde{ \mathfrak{F}}_{p}\Phi_{-1}^{\gamma+2s}\|_{L^2}\|\mathfrak{F}_{p}g\|_{L^2}\|\fF_l h\|_{L^2}\|\tilde{ \mathfrak{F}}_{p}f\|_{L^2}.\eeno

Due to the fact (see \cite{amuxy1}) that for  $\gamma>-3$ and the multi-index $\alpha$ with $|\alpha|=k\in\N$, 
\ben\label{Phi2} |\big(\pa^\alpha_\xi \mathcal{F}(\Phi_{-1}^\gamma)\big)(\xi)|\lesssim \langle \xi\rangle^{-3-\gamma-k}, \een   we have 
\beno \|\tilde{\mathfrak{F}}_{p}\Phi_{-1}^{\gamma+2s}\|_{L^2}^2&\lesssim& \int_{\R^3} |\varphi_p(\xi)|^2|\xi|^{-2(\gamma+2s)-6}d\xi
\\&\lesssim& 2^{-2(\gamma+2s+\f32)p}.\eeno
We deduce that if $\gamma+2s>-1$,
\beno  |\mathfrak{D}_{-1,2}^{1,1}|\lesssim 2^{2sl}2^{-(\gamma+2s+\f32)p} \|\mathfrak{F}_{p}g\|_{L^2}\|\fF_l h\|_{L^2}\|\tilde{ \mathfrak{F}}_{p}f\|_{L^2}.\eeno
Due to the Bernstein inequalities (\ref{bern1}-\ref{bern2}),  if $\gamma+2s>0$, then we have
\beno  |\mathfrak{D}_{-1,2}^{1,1}|\lesssim 2^{2sl}2^{-(\gamma+2s)p} \|\mathfrak{F}_{p}g\|_{L^1}\|\fF_l h\|_{L^2}\|\tilde{ \mathfrak{F}}_{p}f\|_{L^2}.\eeno
 
Next we turn to the estimate of $\mathfrak{D}_{-1,1}^{1,1}$. Notice that $B^{s}_{-1}$  can be separated into two parts $B_1$ and $B_2$ which   can be controlled in $L^2$ and $L^\infty$   spaces thanks to Proposition \ref{baslem6}. Thus  we may copy the argument for   $\mathfrak{D}_{-1,2}^{1,1}$  to $\mathfrak{D}_{-1,1}^{1,1}$  when $B_1$ is bounded in  $L^2$ space and apply the argument for $\mathfrak{D}_{k}^{1,1}$   to  $\mathfrak{D}_{-1,1}^{1,1}$  when $B_2$ is controlled in $L^\infty$ space. Finally we obtain that  in the case of
 $\gamma+2s>0$,
\beno   |\mathfrak{D}_{-1,1}^{1,1}|&\lesssim& 2^{2sl}
(\|B_{1}\|_{L^2}\|\mathfrak{F}_{p}g\|_{L^2}+\|B_2\|_{L^\infty}\|\mathfrak{F}_{p}g\|_{L^1})\|\fF_l h\|_{L^2}\|\tilde{ \mathfrak{F}}_{p}f\|_{L^2} \\
&\lesssim&2^{2sl}2^{-\eta_1 p}\|\mathfrak{F}_{p}g\|_{L^{1}}\|\fF_l h\|_{L^2}\|\tilde{ \mathfrak{F}}_{p}f\|_{L^2}.\eeno
While in  the case of  $\gamma+2s>-1$,  
\beno   |\mathfrak{D}_{-1,1}^{1,1}|
&\lesssim&  2^{2sl}2^{-(\f12+\eta_2)p}\|\mathfrak{F}_{p}g\|_{L^{2}}\|\fF_l h\|_{L^2}\|\tilde{ \mathfrak{F}}_{p}f\|_{L^2}. \eeno

\noindent{\it Step 1.2: Estimate of $\mathfrak{D}_k^{1,2}$.} We separate the estimate into several cases.  

{\it Case 1: $k\ge0$}.
By Cauchy-Schwartz inequality,  one has \beno |\mathfrak{D}_k^{1,2}|
&\lesssim& \|A^s_k\|_{L^\infty}\bigg(  \iint_{\sigma\in \SS^2,v_*,v\in \R^3}  |(\mathfrak{F}_{p}g)_*|b(\cos\theta)|1-\psi(2^l(v'-v))|\\&&\times|v-v_*|^{-2s}|(\tilde{ \mathfrak{F}}_{p}f)'|^2d\sigma dv_* dv \bigg)^\f12\bigg( \iint_{\sigma\in \SS^2,v_*,v\in \R^3} |(\mathfrak{F}_{p}g)_*| b(\cos\theta)\\&&\times|1-\psi(2^l(v'-v))||v-v_*|^{-2s}\big|( \fF_l h)(v)\big|^2d\sigma dv_* dv \bigg)^\f12\\
&&+\|A^s_k\|_{L^\infty}\bigg( \iint_{\sigma\in \SS^2,v_*,v\in \R^3} |(\mathfrak{F}_{p}g)_*|b(\cos\theta)|1-\psi(2^l(v'-v))|\\&&\times|v-v_*|^{-2s}|(\tilde{ \mathfrak{F}}_{p}f)'|^2d\sigma dv_* dv \bigg)^\f12\bigg(\int_0^1 \iint_{\sigma\in \SS^2,v_*,v\in \R^3}  |(\mathfrak{F}_{p}g)_*|b(\cos\theta)\\&&\times|1-\psi(2^l(v'-v))||v-v_*|^{-2s}\big|( \fF_l h)(v')\big|^2d\sigma dv_* dv \bigg)^\f12.\eeno
Follow the change of variables:   $(v_*,v)\rightarrow  (v_*,u_2=v')$  and the fact \eqref{Jacobi}, then we get
 \beno |\mathfrak{D}_k^{1,2}|
&\lesssim&   \|A^s_k\|_{L^\infty}\bigg(  \iint_{\sigma\in \SS^2,v_*,u_2\in \R^3} \big| (\mathfrak{F}_{p}g)_*|b(\cos\theta)|1-\psi(2^l|v-v_*|\sin(\theta/2))|\\&&\times|v-v_*|^{-2s}|(\tilde{ \mathfrak{F}}_{p}f)(u_2)|^2d\sigma dv_*du_2 \bigg)^\f12\bigg(  \iint_{\sigma\in \SS^2,v_*,v\in \R^3} |(\mathfrak{F}_{p}g)_*|b(\cos\theta)\\&&\times|1-\psi(2^l|v-v_*|\sin(\theta/2))||v-v_*|^{-2s}\big|( \fF_l h)(v)\big|^2d\sigma dv_* dv \bigg)^\f12\\
&&+ \|A^s_k\|_{L^\infty}\bigg(  \iint_{\sigma\in \SS^2,v_*,u_2\in \R^3}  |(\mathfrak{F}_{p}g)_*|b(\cos\theta)|1-\psi(2^l|v-v_*|\sin(\theta/2))|\\&&\times|v-v_*|^{-2s}|(\tilde{ \mathfrak{F}}_{p}f)(u_2)|^2d\sigma dv_* du_2\bigg)^\f12\bigg(  \iint_{\sigma\in \SS^2,v_*,u_2\in \R^3} |(\mathfrak{F}_{p}g)_*|b(\cos\theta)\\&&\times|1-\psi(2^l|v-v_*|\sin(\theta/2))||u_2-v_*|^{-2s}\big|( \fF_l h)(u_2)\big|^2d\sigma dv_* du_2\bigg)^\f12.\eeno
Notice
\ben \label{ub2}  &&|v-v_*|^{-2s}\int_{\SS^2} b(\cos\theta) (1-\psi(2^{l}|v-v_*|\sin(\theta/2)) )d\sigma\notag\\
&& \lesssim\left\{\begin{aligned} &|v-v_*|^{-2s}\int_{\f432^{-l}|v-v_*|^{-1}}^{\pi/2} b(\cos\theta)\sin\theta d\theta\\
& |u_2-v_*|^{-2s}\int_{\f432^{-l}|u_2-v_*|^{-1}}^{\pi/2} b(\cos\theta) \sin\tilde{\theta}  d\tilde{\theta} \end{aligned}\right.\notag \\&&
\lesssim 2^{2sl},\een
where $\tilde{\theta}$ verifies $\cos\tilde{\theta}=\f{u_2-v_*}{|u_2-v_*|}\cdot \sigma$  and $\theta/2\le\tilde{\theta}\le \theta$.
Finally we get the estimate to $\mathfrak{D}_k^{1,2}$, that is, for any $N\in \N$,
\beno |\mathfrak{D}_k^{1,2}|\lesssim  2^{(\gamma+\f32-N)k}2^{2s(l-p)} 2^{(2s-N)p}\|\Phi_0^{\gamma}\|_{H^{N+2}}\|\varphi\|_{W^{2,\infty}_N} \|\mathfrak{F}_{p}g\|_{L^1}\|\fF_l h\|_{L^2}\|\tilde{ \mathfrak{F}}_{p}f\|_{L^2}.\eeno

 {\it Case 2: $k=-1$.}  Following the similar argument used in the previous step, we conclude that in the case of $\gamma=0$,
  \beno |\mathfrak{D}_{-1}^{1,2}|\lesssim  2^{2s(l-p)} 2^{(2s-N)p}\|\psi\|_{H^{N+2}}\|\varphi\|_{W^{2,\infty}_N}\|\mathfrak{F}_{p}g\|_{L^1}\|\fF_l h\|_{L^2}\|\tilde{ \mathfrak{F}}_{p}f\|_{L^2},\eeno
in the case of $\gamma+2s>0$,
\beno  |\mathfrak{D}_{-1}^{1,2}| &\lesssim& 2^{2sl}
(\|B_{1}\|_{L^2}\|\mathfrak{F}_{p}g\|_{L^2}+\|B_2\|_{L^\infty}\|\mathfrak{F}_{p}g\|_{L^1})\|\fF_l h\|_{L^2}\|\tilde{ \mathfrak{F}}_{p}f\|_{L^2}
 \\&\lesssim&2^{2sl}2^{-\eta_1 p}\|\mathfrak{F}_{p}g\|_{L^{1}}\|\fF_l h\|_{L^2}\|\tilde{ \mathfrak{F}}_{p}f\|_{L^2},\eeno
and in  the case of  $\gamma+2s>-1$,
\beno  |\mathfrak{D}_{-1}^{1,2} |
&\lesssim& 2^{2sl}2^{-(\f12+\eta_2)p}\|\mathfrak{F}_{p}g\|_{L^{2}}\|\fF_l h\|_{L^2}\|\tilde{ \mathfrak{F}}_{p}f\|_{L^2}. \eeno
\medskip

{\it Step 2: Estimate  of $\mathfrak{D}_k^2$.} Thanks to the Cancellation Lemma in \cite{advw}, we obtain that
\beno \mathfrak{D}_k^2&=&|\SS^1|\iint_{\theta\in [0,\pi/2],v_*,v\in \R^3} \bigg[\big(\tilde{ \mathfrak{F}}_{p}\Phi_k^\gamma\big)\bigg(\f{|v-v_*|}{\cos\f{\theta}2}\bigg)\f1{\cos^3\f{\theta}2}-\big(\tilde{ \mathfrak{F}}_{p}\Phi_k^\gamma\big)(|v-v_*|)\bigg]b(\cos\theta)\sin\theta\\ &&\quad\times(\mathfrak{F}_{p}g)_*(\mathfrak{F}_{l}h)
(\tilde{ \mathfrak{F}}_{p}f)d\theta dv_* dv.\eeno

 Notice that
\beno  C_k(\theta, \xi)&\eqdefa&\f1{\cos^3\f{\theta}2}\mathcal{F}\bigg(\big(\tilde{ \mathfrak{F}}_{p}\Phi_k^\gamma\big)\big(\f{\cdot}{\cos\f{\theta}2}\big)\bigg)(\xi)-\mathcal{F}\bigg( \big(\tilde{ \mathfrak{F}}_{p}\Phi_k^\gamma\big)(\cdot)\bigg)(\xi)\\
&=&\varphi_p(\xi)2^{(\gamma+3)k} \bigg(\mathcal{F}\big(\Phi_0^{\gamma}\big)(\cos\f{\theta}2 2^k\xi)- (\mathcal{F}\big(\Phi_0^{\gamma}\big)(2^k\xi)\bigg),\eeno
where we use \eqref{Phi3}. We split the estimate into several cases.

{\it Case 1: $k\ge0$ or $k=-1$ with $\gamma=0$.} Thanks to   the mean value theorem,
we have \beno  \|C_k(\theta,\cdot)\|_{L^2}\lesssim   \theta^2 2^{(\gamma+\f52-N)k}2^{(1-N)p}\|  \Phi_{0}^\gamma \|_{H^N}\|\varphi\|_{W^{2,\infty}_N}\eeno
and \beno  \|C_{-1}(\theta,\cdot)\|_{L^2}\lesssim   \theta^2 2^{(\gamma+\f52-N)k}2^{(1-N)p}\|  \psi\|_{H^N}\|\varphi\|_{W^{2,\infty}_N}.\eeno
Then by Bernstein inequalities (\ref{bern1}-\ref{bern2}), for any $N\in \N$, we get   
\beno  |\mathfrak{D}_k^2|&\lesssim &\int_{\SS^2} b(\cos\theta)\|C_k(\theta, \cdot)\|_{L^2}\|\mathcal{F}\big(\mathfrak{F}_{p}g\big)\|_{L^\infty}
\|\mathcal{F}\big(\mathfrak{F}_{l}h\tilde{ \mathfrak{F}}_{p}f\big)\|_{L^2} d\sigma\\
&\lesssim& 2^{(\gamma+\f52-N)k}2^{(1-N)p}\|\varphi\|_{W^{2,\infty}_N}\|  \Phi_{0}^\gamma\|_{H^N}\|\mathfrak{F}_{p}g\|_{L^1}
\|\mathfrak{F}_{l}h\|_{L^\infty} \|\tilde{ \mathfrak{F}}_{p}f\|_{L^2}\\
&\lesssim& 2^{(\gamma+\f52-N)k}2^{(1-N)p}2^{\f32 l}\|\varphi\|_{W^{2,\infty}_N}\|  \Phi_{0}^\gamma\|_{H^N}\|\mathfrak{F}_{p}g\|_{L^1}
\|\mathfrak{F}_{l}h\|_{L^2} \|\tilde{ \mathfrak{F}}_{p}f\|_{L^2}
\eeno
and \beno |\mathfrak{D}_{-1}^2|&\lesssim & 2^{(1-N)p}2^{\f32 l}\|\varphi\|_{W^{2,\infty}_N}\| \psi\|_{H^N}\|\mathfrak{F}_{p}g\|_{L^1}
\|\mathfrak{F}_{l}h\|_{L^2} \|\tilde{ \mathfrak{F}}_{p}f\|_{L^2}.\eeno

{\it Case 2: $k=-1$ with general potentials.} Due to the fact \eqref{Phi2}, it is easy to check that
\beno  |C_{-1}(\theta, \xi)|&\lesssim& \theta^2|\xi||\varphi_p(\xi)|\|\na \mathcal{F}(\Phi_{-1}^\gamma)\|_{L^\infty} \\
&\lesssim& 2^{-(\gamma+3)p}\theta^2.\eeno
Thanks to   Plancherel theorem and Bernstein inequalities (\ref{bern1}-\ref{bern2}),    one has
\beno  |\mathfrak{D}_{-1}^{2}|&\lesssim&2^{-(\gamma+3)p}\|\mathfrak{F}_{p}g\|_{L^2}\|(\mathfrak{F}_{l}h)
(\tilde{ \mathfrak{F}}_{p}f)\|_{L^2}\\
&\lesssim& 2^{-(\gamma+3)p}2^{\f32 l}\|\mathfrak{F}_{p}g\|_{L^2}\|\mathfrak{F}_{l}h\|_{L^2}\|
\tilde{ \mathfrak{F}}_{p}f\|_{L^2}.\eeno
Thus   we have
\begin{enumerate}
\item  if $\gamma+2s>0 $ and $\gamma>-\f32$,
\beno |\mathfrak{D}_{-1}^{2}|&\lesssim&  2^{(l-p)(\gamma+\f32)}2^{(2s-(\gamma+2s))l} 
\|\mathfrak{F}_{p}g\|_{L^1}\|\mathfrak{F}_{l}h\|_{L^2}\|
\tilde{ \mathfrak{F}}_{p}f\|_{L^2},\eeno
\item  if $-1<\gamma+2s\le0 $ and $\gamma>-\f52$,
\beno |\mathfrak{D}_{-1}^{2}|&\lesssim&
  2^{(l-p)(\gamma+\f52)}2^{-(\gamma+2s+1)l}2^{2sl} 
\|\mathfrak{F}_{p}g\|_{L^\f32}\|\mathfrak{F}_{l}h\|_{L^2}\|
\tilde{ \mathfrak{F}}_{p}f\|_{L^2},\eeno
\item if $-1<\gamma+2s$,
\beno |\mathfrak{D}_{-1}^{2}|&\lesssim&
 2^{(l-p)(\gamma+3)}2^{-(\gamma+2s+1)l}2^{(2s-\f12)l}\|\mathfrak{F}_{p}g\|_{L^2}\|\mathfrak{F}_{l}h\|_{L^2}\|
\tilde{ \mathfrak{F}}_{p}f\|_{L^2}.\eeno
\end{enumerate}
\bigskip

Now combine all the estimates in    {\it Step 1} and {\it Step 2} and use  Bernstein inequalities (\ref{bern1}-\ref{bern2}), then we finally get the desired results in the lemma.
 \end{proof}
\medskip

Next we have
\begin{lem}\label{lemub2} Suppose $N\in \N$.   For $k\ge0$, it holds
\beno  |\mathfrak{W}_{k,p,m}^4| & \lesssim& 2^{2s(m-p)} 2^{(\gamma+\f32-N)k}2^{-p(N-\f52)}\|\Phi_0^\gamma\|_{H^{N+2}}\|\varphi\|_{W^{2,\infty}_N} \|\mathfrak{F}_{p}g\|_{L^1}\|
\tilde{ \mathfrak{F}}_{p}h\|_{L^2}\|\mathfrak{F}_{m}f\|_{L^2}.\eeno
For $k=-1$, we have
\begin{enumerate}
\item if $\gamma=0$,  \beno  |\mathfrak{W}_{-1,p,m}^4| & \lesssim&   2^{2s(m-p)}  2^{-p(N-\f52)}\|\psi\|_{H^{N+2}}\|\varphi\|_{W^{2,\infty}_N} \|\mathfrak{F}_{p}g\|_{L^1}\|
\tilde{ \mathfrak{F}}_{p}h\|_{L^2}\|\mathfrak{F}_{m}f\|_{L^2},\eeno
\item if $\gamma+2s>0 $, \beno |\mathfrak{W}_{-1,p,m}^4|\lesssim
  2^{\tilde{\eta}(m-p)}2^{(2s-\eta_1)m}\|\mathfrak{F}_{p}g\|_{L^1}\|\tilde{ \mathfrak{F}}_{p}h\|_{L^2}\|\mathfrak{F}_{m}f\|_{L^2}, \eeno
  \item if $\gamma+2s>-1 $, \beno   |\mathfrak{W}_{-1,p,m}^4|&\lesssim& 2^{2sm}2^{-(\f12+\eta_2)p}\|\mathfrak{F}_{p}g\|_{L^{2}}\|\tilde{ \mathfrak{F}}_{p}h\|_{L^2}\|\mathfrak{F}_{m}f\|_{L^2}
\\  &\lesssim& 2^{\eta(m-p)}2^{-\f12p}2^{(2s-\eta_2)m}\|\mathfrak{F}_{p}g\|_{L^{2}}\|\tilde{ \mathfrak{F}}_{p}h\|_{L^2}\|\mathfrak{F}_{m}f\|_{L^2}, \eeno
\end{enumerate}
where $\tilde{\eta}$ is a positive constant which depends only on $\gamma$ and $s$. We remark that  constants 
   $\eta_1$ and $\eta_2$ are stated in Proposition \ref{baslem6} and functions $\psi$ and $\varphi$ are defined in \eqref{defpsivarphi}.
\end{lem}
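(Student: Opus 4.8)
The proof follows the template of the proof of Lemma \ref{lemub1}, with the roles of $h$ and $f$ exchanged. The structural simplification is that in $\mathfrak{W}_{k,p,m}^{4}$ (see \eqref{fsdecom}) the difference $(\mathfrak{F}_{m}f)'-\mathfrak{F}_{m}f$ already falls on the \emph{low}-frequency factor $f$, localised at frequency $2^{m}$ with $m<p-N_0$; hence, unlike for $\mathfrak{W}_{k,p,l}^{1}$, there is no need to rearrange the integrand, and no Cancellation Lemma term is produced -- one simply Taylor-expands this difference. Concretely, I would first insert the smooth cut-off $\psi(2^{m}(v'-v))$ and write $\mathfrak{W}_{k,p,m}^{4}$ as the sum of a \emph{grazing} piece carrying $\psi(2^{m}(v'-v))$ (where $|v'-v|\lesssim 2^{-m}$, i.e.\ $\theta\lesssim 2^{-m}|v-v_*|^{-1}$) and a \emph{non-grazing} piece carrying $1-\psi(2^{m}(v'-v))$.

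For the non-grazing piece one does not expand: bound $|(\mathfrak{F}_{m}f)'-\mathfrak{F}_{m}f|\le|(\mathfrak{F}_{m}f)'|+|\mathfrak{F}_{m}f|$, apply the Cauchy-Schwartz inequality in $(v_*,v,\sigma)$ splitting the positive kernel so that $|\tilde{\mathfrak{F}}_{p}h(v)|$ sits in one factor and $|\mathfrak{F}_{m}f|$ in the other, change variables $v\mapsto v'$ (Jacobian \eqref{Jacobi}) in the primed term, and use the angular estimate of the type \eqref{ub2}, namely $\int_{\SS^{2}}b(\cos\theta)\big(1-\psi(2^{m}|v-v_*|\sin(\theta/2))\big)\,d\sigma\lesssim 2^{2sm}|v-v_*|^{2s}$. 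Since the residual kinetic weight is then $(\tilde{\mathfrak{F}}_{p}\Phi_{k}^{\gamma})(|v-v_*|)|v-v_*|^{2s}=A_{k}^{s}(|v-v_*|)$, Proposition \ref{baslem6}(i) supplies the factor $\|A_{k}^{s}\|_{L^{\infty}}$, and after the Bernstein inequalities (\ref{bern1}-\ref{bern2}) one is left with $2^{2sm}\|A_{k}^{s}\|_{L^{\infty}}\|\mathfrak{F}_{p}g\|_{L^{1}}\|\tilde{\mathfrak{F}}_{p}h\|_{L^{2}}\|\mathfrak{F}_{m}f\|_{L^{2}}$.

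For the grazing piece I would use, as in \eqref{taylor1}, $(\mathfrak{F}_{m}f)(v')-(\mathfrak{F}_{m}f)(v)=(v'-v)\cdot(\na\mathfrak{F}_{m}f)(v)+\frac12\int_{0}^{1}(1-\kappa)(v'-v)\otimes(v'-v):(\na^{2}\mathfrak{F}_{m}f)\big(v+\kappa(v'-v)\big)\,d\kappa$. For the second-order term one applies Cauchy-Schwartz, changes variables $v\mapsto v+\kappa(v'-v)$ (Jacobian bounded above and below as in \eqref{Jacobi}), and uses the angular estimate of the type \eqref{angular2}, $|v-v_*|^{2-2s}\int_{\SS^{2}}b(\cos\theta)\sin^{2}(\theta/2)\psi(2^{m}|v-v_*|\sin(\theta/2))\,d\sigma\lesssim 2^{-(2-2s)m}$; combined with $\|\na^{2}\mathfrak{F}_{m}f\|_{L^{2}}\lesssim 2^{2m}\|\mathfrak{F}_{m}f\|_{L^{2}}$ this reproduces exactly the gain $2^{-(2-2s)m}2^{2m}=2^{2sm}$, again carrying $\|A_{k}^{s}\|_{L^{\infty}}$. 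The first-order term is where something slightly new is required, and it is the main obstacle: it does \emph{not} vanish through the change-of-variables identity \eqref{vequ} used in Lemma \ref{lemub1}, because the undifferenced factor $\tilde{\mathfrak{F}}_{p}h$ is evaluated at $v$ rather than at $v'$. I would instead integrate in $\sigma$ first: by the azimuthal symmetry of $b$ and of $\psi$, the vector $\int_{\SS^{2}}b(\cos\theta)\psi(2^{m}|v'-v|)(v'-v)\,d\sigma$ is a scalar multiple of $\tfrac{v-v_*}{|v-v_*|}$ of size $\lesssim|v-v_*|^{2s-1}2^{-(2-2s)m}$ (using $|v'-v|=|v-v_*|\sin(\theta/2)$ and $1-\cos\theta\sim\theta^{2}$), so by Young's inequality this term is $\lesssim 2^{(2s-1)m}\|(\tilde{\mathfrak{F}}_{p}\Phi_{k}^{\gamma})(\cdot)|\cdot|^{2s-1}\|_{L^{\infty}}\|\mathfrak{F}_{p}g\|_{L^{1}}\|\tilde{\mathfrak{F}}_{p}h\|_{L^{2}}\|\na\mathfrak{F}_{m}f\|_{L^{2}}$; since $\|\na\mathfrak{F}_{m}f\|_{L^{2}}\lesssim 2^{m}\|\mathfrak{F}_{m}f\|_{L^{2}}$, $2^{(2s-1)m}\le 2\cdot 2^{2sm}$ for $m\ge-1$, and $\|(\tilde{\mathfrak{F}}_{p}\Phi_{k}^{\gamma})(\cdot)|\cdot|^{2s-1}\|_{L^{\infty}}$ is controlled exactly as $A_{k}^{s}$ in Proposition \ref{baslem6} (directly for $2s\ge1$, and using the localisation of $\tilde{\mathfrak{F}}_{p}\Phi_{k}^{\gamma}$ near $|v|\sim 2^{k}$ for $2s<1$), this term is of lower order.

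It remains to assemble the pieces. For $k\ge0$ the bound $\|A_{k}^{s}\|_{L^{\infty}}\lesssim 2^{k(\gamma+\frac32-N)}2^{-pN}\|\Phi_{0}^{\gamma}\|_{H^{N+2}}\|\varphi\|_{W^{2,\infty}_{N}}$ of Proposition \ref{baslem6}(i) gives the stated estimate, the exponent $2^{-p(N-5/2)}$ coming from bounding $2s$ crudely (recall $N$ is arbitrary); for $\gamma=0$ and $k=-1$ the same argument runs verbatim with $\Phi_{0}^{\gamma}$ replaced by $\psi$. For $k=-1$ and general $\gamma$, I would replace the uniform control of $A_{-1}^{s}$ by the decomposition $A_{-1}^{s}=B_{-1}^{s}+\tilde{\mathfrak{F}}_{p}\Phi_{-1}^{\gamma+2s}$ with $B_{-1}^{s}=B_{1}+B_{2}$ of Proposition \ref{baslem6}(ii): the $L^{2}$ pieces $B_{1}$ and $\tilde{\mathfrak{F}}_{p}\Phi_{-1}^{\gamma+2s}$ (the latter estimated by $\|\tilde{\mathfrak{F}}_{p}\Phi_{-1}^{\gamma+2s}\|_{L^{2}}\lesssim 2^{-(\gamma+2s+3/2)p}$ via \eqref{Phi2}) are paired with $\|\mathfrak{F}_{p}g\|_{L^{2}}$, while the $L^{\infty}$ piece $B_{2}$ -- together with, when $\gamma+2s>0$, the Bernstein bound applied to $\tilde{\mathfrak{F}}_{p}\Phi_{-1}^{\gamma+2s}$ -- is paired with $\|\mathfrak{F}_{p}g\|_{L^{1}}$. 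Collecting these $L^{2}/L^{\infty}$ alternatives exactly as in Proposition \ref{baslem6} produces cases (2) and (3), and patching all contributions together yields the lemma.
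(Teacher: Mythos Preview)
Your overall strategy matches the paper's: you introduce the same cut-off $\psi(2^{m}(v'-v))$, split into $\mathfrak{E}_{k}^{1}$ (grazing) and $\mathfrak{E}_{k}^{2}$ (non-grazing), Taylor-expand $\mathfrak{E}_{k}^{1}$ into a first-order piece $\mathfrak{E}_{k}^{1,1}$ and a second-order piece $\mathfrak{E}_{k}^{1,2}$, and observe that $\mathfrak{E}_{k}^{1,2}$ and $\mathfrak{E}_{k}^{2}$ are handled exactly as $\mathfrak{D}_{k}^{1,1}$ and $\mathfrak{D}_{k}^{1,2}$ in Lemma~\ref{lemub1}. You also correctly identify the first-order term $\mathfrak{E}_{k}^{1,1}$ as the new difficulty and the right idea: integrate in $\sigma$ first and use azimuthal symmetry to produce a kernel bounded by $|v-v_*|^{-1}$ times $A_{k}^{s}$, i.e.\ $A_{k}^{s-1/2}$, exactly as in the paper's estimate \eqref{angular1} and the function $U$.

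However, your treatment of $\mathfrak{E}_{k}^{1,1}$ has gaps in two places. First, for $k\ge0$ and $s<\tfrac12$, your claim that $\|A_{k}^{s-1/2}\|_{L^{\infty}}$ is controlled ``using the localisation of $\tilde{\mathfrak{F}}_{p}\Phi_{k}^{\gamma}$ near $|v|\sim 2^{k}$'' is wrong: $\tilde{\mathfrak{F}}_{p}$ is a \emph{Fourier} projection, so $\tilde{\mathfrak{F}}_{p}\Phi_{k}^{\gamma}$ is not compactly supported in physical space, and the weight $|v|^{2s-1}$ is genuinely singular at $0$. The paper instead bounds $\|A_{k}^{s-1/2}\|_{L^{2}}\le\|\tilde{\mathfrak{F}}_{p}\Phi_{k}^{\gamma}\|_{H^{1-2s}}$ via the Hardy inequality, pairs this with $\|\mathfrak{F}_{p}g\|_{L^{2}}$, and then returns to $\|\mathfrak{F}_{p}g\|_{L^{1}}$ by Bernstein (at the cost of $2^{3p/2}$, which is why the final exponent is $N-\tfrac52$ rather than $N-2s$). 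Second, for $k=-1$ with general $\gamma$, your final paragraph decomposes $A_{-1}^{s}$ but does not explain how the residual $|v-v_*|^{-1}$ from $U$ is absorbed in $\mathfrak{E}_{-1}^{1,1}$. The paper handles this by Cauchy--Schwarz and the Hardy--Littlewood--Sobolev inequality, placing $\nabla\mathfrak{F}_{m}f$ in $L^{6}$ (or in $L^{6(1+\delta)/(1+7\delta)}$ for the $B_{2}$ piece) and then using Bernstein to recover $2^{2m}\|\mathfrak{F}_{m}f\|_{L^{2}}$; simply ``pairing the $L^{2}$ pieces with $\|\mathfrak{F}_{p}g\|_{L^{2}}$'' is not enough to close.
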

\begin{proof} Noticing the fact $m<p-N_0$,  we follow the similar decomposition used in Lemma \ref{lemub1} to get
\beno   \mathfrak{W}_{k,p,m}^4&=&\iint_{\sigma\in \SS^2,v_*,v\in \R^3} \big(\tilde{ \mathfrak{F}}_{p}\Phi_k^\gamma\big)(|v-v_*|)b(\cos\theta) \psi(2^m(v'-v)) (\mathfrak{F}_{p}g)_*(\tilde{\mathfrak{F}}_{p}h)\\&&\times\big[
(   \mathfrak{F}_{m}f)'-
 \mathfrak{F}_{m}f\big]d\sigma dv_* dv+\iint_{\sigma\in \SS^2,v_*,v\in \R^3} \big(\tilde{ \mathfrak{F}}_{p}\Phi_k^\gamma\big)(|v-v_*|)b(\cos\theta) \\
&&\times\big(1-\psi(2^m(v'-v))\big)(\mathfrak{F}_{p}g)_*(\tilde{\mathfrak{F}}_{p}h)\big[
(   \mathfrak{F}_{m}f)'-
 \mathfrak{F}_{m}f\big]d\sigma dv_* dv\\
&\eqdefa& \mathfrak{E}_{k}^1+\mathfrak{E}_k^2. \eeno

Observe the fact
\ben\label{taylor2}  &&(\mathfrak{F}_{m}f)(v')-
(\mathfrak{F}_{m}f)(v)\nonumber\\&&=(v'-v)\cdot (\na\mathfrak{F}_mf)(v)+\f12\int_0^1(1-\kappa)(v'-v)\otimes(v'-v): (\na^2\mathfrak{F}_mf)(\kappa(v))d\kappa,\een
where $\kappa(v)=v+\kappa(v'-v)$, then we have the further decomposition:
\beno \mathfrak{E}_{k}^1&=&\mathfrak{E}_{k}^{1,1}+\mathfrak{E}_{k}^{1,2},\eeno
where \beno \mathfrak{E}_{k}^{1,1}&=&\iint_{\sigma\in \SS^2,v_*,v\in \R^3}  \big(\tilde{ \mathfrak{F}}_{p}\Phi_k^\gamma\big)(|v-v_*|)b(\cos\theta) \psi(2^m(v'-v)) (\mathfrak{F}_{p}g)_*(\tilde{\mathfrak{F}}_{p}h)\\&&\times(v'-v)\cdot (\na\mathfrak{F}_mf)(v)d\sigma dv_* dv,\\
 \mathfrak{E}_{k}^{1,2}&=&\int_0^1\iint_{\sigma\in \SS^2,v_*,v\in \R^3} A^s_k(|v-v_*|)b(\cos\theta) \psi(2^m(v'-v)) (\mathfrak{F}_{p}g)_*(\tilde{\mathfrak{F}}_{p}h)\\&&\times |v-v_*|^{-2s}\big[
(v'-v)\otimes(v'-v): (\na^2\mathfrak{F}_mf)(\kappa(v))\big]d\sigma dv_* dvd\kappa,\eeno
where $A_k^s$ is defined in   Proposition \ref{baslem6}.

It is not difficult to check the main structures of $\mathfrak{E}_{k}^{1,2}$  and  $\mathfrak{E}_{k}^{2}$ are almost as the same as those of
$\mathfrak{D}_{k}^{1,1}$ and $\mathfrak{D}_{k}^{1,2}$. We conclude
that for any $N\in \N$,
 \beno  |\mathfrak{E}_{k}^{1,2}|+|\mathfrak{E}_{k}^{2}|\lesssim\left\{\begin{aligned} & 2^{(\gamma+\f32-N)k} 2^{(2s-N)p} 2^{2s(m-p)}\|\Phi_0^\gamma\|_{H^{N+2}}\|\varphi\|_{W^{2,\infty}_N}\|\mathfrak{F}_{p}g\|_{L^1}\|\tilde{ \mathfrak{F}}_{p}h\|_{L^2}\|\fF_m f\|_{L^2}, \mbox{if}\, k\ge0,\\
  &2^{2s(m-p)} 2^{(2s-N)p}\|\psi\|_{H^{N+2}} \|\varphi\|_{W^{2,\infty}_N} \|\mathfrak{F}_{p}g\|_{L^1}\|\tilde{ \mathfrak{F}}_{p}h\|_{L^2}\|\fF_m f\|_{L^2}, \mbox{if}\, k=-1\,\mbox{and}\, \gamma=0,
 \\&2^{2sm}2^{-\eta_1 p}\|\mathfrak{F}_{p}g\|_{L^{1}}\|\tilde{ \mathfrak{F}}_{p}h\|_{L^2}\|\fF_m f\|_{L^2}, \,\mbox{if}\,\,k=-1\,\,\mbox{and}\,\,\gamma+2s>0,\\&2^{2sm}2^{-(\f12+\eta_2) p}
  \|\mathfrak{F}_{p}g\|_{L^{ 2}}\|\tilde{ \mathfrak{F}}_{p}h\|_{L^2}\|\fF_m f\|_{L^2}, \,\mbox{if}\,\,k=-1\,\,\mbox{and}\,\,\gamma+2s>-1.
 \end{aligned}\right.\eeno

Now we only need to give the bound to  $\mathfrak{E}_{k}^{1,1}$. 
Thanks to the fact
\beno &&\int_{\SS^2}  b(
\frac{v-v_*}{|v-v_*|}\cdot \sigma )(v-v')\psi(2^m|v-v'|)d\sigma\\
&&\quad=\int_{\SS^2}  b(
\frac{v-v_*}{|v-v_*|}\cdot\sigma )\frac{v-v'}{|v-v'|}\cdot\frac{v-v_*}{|v-v_*|}|v-v'|\psi (|2^m|v-v'|)\frac{v-v_*}{|v-v_*|}d\sigma\\
&&\quad=\int_{\SS^2}  b(  \frac{v-v_*}{|v-v_*|}\cdot\sigma )
 \f{1-\langle
\frac{v-v_*}{|v-v_*|},\sigma\rangle}{2} \psi (2^m|v-v'|)d\sigma
(v-v_*),
 \eeno
one has  \ben\label{angular1} &&2^{(2-2s)m}|v-v_*|^{-2s}\bigg|\int_{\SS^2}  b(
\frac{v-v_*}{|v-v_*|}\cdot\sigma )(v-v')\psi (2^m|v-v'|)d\sigma\bigg|\nonumber\\
&&\lesssim 2^{(2-2s)m}|v-v_*|^{ -2s}\int_{\sqrt{\f{1-
\frac{v-v_*}{|v-v_*|}\cdot\sigma}{2}}\lesssim 2^{-m}|v-v_*|^{-1}}
b(  \frac{v-v_*}{|v-v_*|}\cdot\sigma ) \f{1-
\frac{v-v_*}{|v-v_*|}\cdot\sigma}{2}d\sigma
|v-v_*|\nonumber\\
&&\lesssim 2^{(2-2s)m}|v-v_*|^{-2s}\int_0^{2^{-m}|v-v_*|^{-1}} \theta^{1-2s}d\theta |v-v_*| \nonumber\\
&&\lesssim |v-v_*|^{-1}.
 \een

 In other words, if we set
 \beno U(v)\eqdefa 2^{(2-2s)m}|v|^{-2s}v\int_{\SS^2}  b(  \frac{v }{|v |}\cdot\sigma )
 \f{1-\langle
\frac{v }{|v |},\sigma\rangle}{2} \psi \bigg(2^{m-1}|v|\sqrt{ 1-\langle
\frac{v }{|v |},\sigma\rangle}\sqrt{2}\bigg) d\sigma, \eeno then  \eqref{angular1} yields $|U(v)|\lesssim |v|^{-1}$.
  
  Due to this observation, we have
\beno |\mathfrak{E}_{k}^{1,1}|&=& \bigg|2^{(2s-2)m}\iint_{v,v_*\in\R^3}  A^s_k(|v-v_*|)  (\mathfrak{F}_{p}g)_*(\tilde{\mathfrak{F}}_{p}h)(v)U(v-v_*)\cdot\na\mathfrak{F}_mf(v)dv_*dv\bigg|.\eeno
We divide the estimate into three cases.

{\it Case 1: $k\ge0$.}  By  the definition, we have $|A^s_kU|\lesssim A^{s-\f12}_k$.  If $s\ge 1/2$, then 
\beno |\mathfrak{E}_{k}^{1,1}|&\lesssim&  2^{(2s-2)m}\|A^{s-\f12}_k\|_{L^\infty}\|\mathfrak{F}_{p}g\|_{L^1}\|
\tilde{ \mathfrak{F}}_{p}h\|_{L^2}\|\na\mathfrak{F}_{m}f\|_{L^2}\\&\lesssim& 2^{2s(m-p)}2^{(\gamma+\f32-N)k}2^{-p(N-2s)}\|\Phi_0^\gamma\|_{H^{N+2}}\|\varphi\|_{W^{2,\infty}_N} \|\mathfrak{F}_{p}g\|_{L^1}\|
\tilde{ \mathfrak{F}}_{p}h\|_{L^2}\|\mathfrak{F}_{m}f\|_{L^2}.\eeno
 In the case of $s<1/2$, one has
 \beno |\mathfrak{E}_{k}^{1,1}|&\lesssim&  2^{(2s-2)m}\|A^{s-\f12}_k\|_{L^2}\|\mathfrak{F}_{p}g\|_{L^2}\|
\tilde{ \mathfrak{F}}_{p}h\|_{L^2}\|\na\mathfrak{F}_{m}f\|_{L^2}\\&\lesssim&2^{2s(m-p)} 2^{(\gamma+\f32-N)k}2^{-p(N-2s-\f32)}\|\Phi_0^\gamma\|_{H^{N+2}}\|\varphi\|_{W^{2,\infty}_N} \|\mathfrak{F}_{p}g\|_{L^1}\|
\tilde{ \mathfrak{F}}_{p}h\|_{L^2}\|\mathfrak{F}_{m}f\|_{L^2},\eeno
where we use the Hardy inequality to get
\beno  \|A^{s-\f12}_k\|_{L^2}\le \|\tilde{ \mathfrak{F}}_{p}\Phi_k^{\gamma}\|_{H^{1-2s}}\lesssim 2^{(\gamma+\f32-N)k}2^{-pN}\|\Phi_0^\gamma\|_{H^{N+2}}\|\varphi\|_{W^{2,\infty}_N}.\eeno

{\it  Case 2: $k=-1$ and $\gamma=0$.} In this case, we only need to copy the argument in $Case\, 1$ to get
\beno   |\mathfrak{E}_{-1}^{1,1}| &\lesssim&2^{2s(m-p)}  2^{-p(N-\f52)}\|\psi\|_{H^{N+2}}\|\varphi\|_{W^{2,\infty}_N} \|\mathfrak{F}_{p}g\|_{L^1}\|
\tilde{ \mathfrak{F}}_{p}h\|_{L^2}\|\mathfrak{F}_{m}f\|_{L^2}.\eeno

{\it  Case 3: $k=-1$ with general potentials.} 
Following the decomposition \beno  A^s_{-1}= \tilde{ \mathfrak{F}}_{p}\Phi_{-1}^{\gamma+2s}+B_{-1}^s,\eeno we split $\mathfrak{E}_{-1}^{1,1}$ into two parts: $ \mathfrak{E}_{-1}^{1,1,1}$ and  $ \mathfrak{E}_{-1}^{1,1,2}$ which are defined by 
\beno  \mathfrak{E}_{-1}^{1,1,1}&=&2^{(2s-2)m}\iint_{v,v_*\in\R^3}  \big(\tilde{ \mathfrak{F}}_{p}\Phi_{-1}^{\gamma+2s}\big)(|v-v_*|) (\mathfrak{F}_{p}g)_*(\tilde{\mathfrak{F}}_{p}h)(v)U(v-v_*)\cdot\na\mathfrak{F}_mf(v)dv_*dv,\\
  \mathfrak{E}_{-1}^{1,1,2}&=&2^{(2s-2)m}\iint_{v,v_*\in\R^3}  B_{-1}^s(|v-v_*|) (\mathfrak{F}_{p}g)_*(\tilde{\mathfrak{F}}_{p}h)(v)U(v-v_*)\cdot\na\mathfrak{F}_mf(v)dv_*dv.  \eeno

Thanks to Cauchy-Schwartz inequality, we have 
\beno  |\mathfrak{E}_{-1}^{1,1,1}|&\le& 2^{(2s-2)m} \bigg(\iint_{v,v_*\in\R^3}   |(\tilde{ \mathfrak{F}}_{p}\Phi_k^{\gamma+2s})(|v-v_*|)|^2|(\mathfrak{F}_{p}g)_*||\tilde{\mathfrak{F}}_{p} h | dvdv_*\bigg)^{\f12}\\&&\times\bigg(\iint_{v,v_*\in\R^3}    |v-v_*|^{-2}|(\mathfrak{F}_{p}g)_*||\tilde{\mathfrak{F}}_{p} h ||\na\mathfrak{F}_mf|^2 dvdv_*\bigg)^{\f12}\\
&\lesssim& 2^{(2s-2)m} \|\tilde{ \mathfrak{F}}_{p}\Phi_k^{\gamma+2s}\|_{L^2}\|\mathfrak{F}_{p}g\|_{L^2}\|\tilde{\mathfrak{F}}_{p} h\|_{L^2} \|\na\mathfrak{F}_mf\|_{L^6}\\
&\lesssim& 2^{2sm}2^{-(\gamma+2s+\f32)p} \|\mathfrak{F}_{p}g\|_{L^2}\|\tilde{\mathfrak{F}}_{p} h\|_{L^2} \|\mathfrak{F}_mf\|_{L^2},\eeno
where we use Young and Hardy-Littlewood-Sobolev inequalities and Lemma \ref{Bernstein-Ineq}.

Finally we turn to   the estimate of $\mathfrak{E}_{-1}^{1,1,2}$.  If $\gamma+2s>0$, thanks to Proposition \ref{baslem6},  Young and Hardy-Littlewood-Sobolev inequalities, we have for  $\delta<\eta_1/3$ where $\eta_1$ is defined in Proposition \ref{baslem6},
 \beno  |\mathfrak{E}_{-1}^{1,1,2}|&\lesssim&    2^{(2s-2)m} \| B_1\|_{L^2}\|\mathfrak{F}_{p}g\|_{L^2}\|\tilde{\mathfrak{F}}_{p} h\|_{L^2} \|\na\mathfrak{F}_mf\|_{L^6}\\&& + 2^{(2s-2)m}\|B_{2}\|_{L^\infty} \|\mathfrak{F}_{p}g\|_{L^{1+\delta}}\|\tilde{ \mathfrak{F}}_{p}h\|_{L^2} \|\na\mathfrak{F}_{m}f\|_{L^{\f{6(1+\delta)}{1+7\delta}}}
\\
 &\lesssim& 2^{2sm}2^{-\eta_1p} \|\mathfrak{F}_{p}g\|_{L^1}\|\tilde{\mathfrak{F}}_{p} h\|_{L^2} \|\mathfrak{F}_mf\|_{L^2}\\
 &&+ 2^{-(\eta_1-\f{3\delta}{1+\delta})p}2^{(2s-\f{3\delta}{1+\delta})m} \|\mathfrak{F}_{p}g\|_{L^1}\|\tilde{\mathfrak{F}}_{p} h\|_{L^2} \|\mathfrak{F}_mf\|_{L^2}\\
 &\lesssim&(2^{2sm}2^{-\eta_1p}+2^{(2s-\eta_1)m}2^{(\eta_1-\f{3\delta}{1+\delta})(m-p)}) \|\mathfrak{F}_{p}g\|_{L^1}\|\tilde{\mathfrak{F}}_{p} h\|_{L^2} \|\mathfrak{F}_mf\|_{L^2}.\eeno

While in the case of $\gamma+2s>-1$, with the help of Proposition \ref{baslem6},  follow the similar argument  applied to $\mathfrak{E}_{-1}^{1,1,1}$, then we get  \beno |\mathfrak{E}_{-1}^{1,1,2}| 
&\lesssim&  2^{(2s-2)m} \| B_{-1}^s\|_{L^2}\|\mathfrak{F}_{p}g\|_{L^2}\|\tilde{\mathfrak{F}}_{p} h\|_{L^2} \|\na\mathfrak{F}_mf\|_{L^6}\\
&\lesssim& 2^{2sm}2^{-(\f12+\eta_2)p} \|\mathfrak{F}_{p}g\|_{L^2}\|\tilde{\mathfrak{F}}_{p} h\|_{L^2} \|\mathfrak{F}_mf\|_{L^2}.  \eeno
 
 Now patch together all the estimates, then we are led to the desired results. \end{proof}

\subsection{Estimates  of $\mathfrak{W}_{k,l}^2$ and $\mathfrak{W}_{k,p}^3$  defined in \eqref{fsdecom}} Since $\mathfrak{W}_{k,l}^2$ enjoys almost the same structure as that of $\mathfrak{W}_{k,p}^3$, it suffices to give the estimate to $\mathfrak{W}_{k,p}^3$.

\begin{lem}\label{lemub3} If $k\ge0$, we have
\beno |\mathfrak{W}_{k,l}^2|&\lesssim& 2^{(\gamma+2s)k}2^{2sl}\|\mathcal{S}_{l-N_0}g\|_{L^1}\| \mathfrak{F}_{l} h\|_{L^2}\|\tilde{ \mathfrak{F}}_{l}f\|_{L^2}, \\
 |\mathfrak{W}_{k,p}^3|&\lesssim& 2^{(\gamma+2s)k}2^{2sp}\|\mathfrak{F}_{p}g\|_{L^1}\| \tilde{\mathfrak{F}}_{p}h \|_{L^2}\|\tilde{ \mathfrak{F}}_{p}f\|_{L^2}.\eeno
If $ k=-1$, we have
\beno |\mathfrak{W}_{-1,l}^2|&\lesssim& \left\{\begin{aligned} & 2^{2sl}\|\mathcal{S}_{l-N_0}g\|_{L^1}\|  \mathfrak{F}_{l} h\|_{L^2}\|\tilde{ \mathfrak{F}}_{l}f\|_{L^2}, \quad\mbox{if}\,\, \gamma+2s>0,\\
&  2^{2sl}\|\mathcal{S}_{l-N_0}g\|_{L^2}\| \mathfrak{F}_{l} h\|_{L^2}\|\tilde{ \mathfrak{F}}_{l}f\|_{L^2},\quad\mbox{if}\,\, \gamma+2s\le0, \end{aligned}\right. \\
 |\mathfrak{W}_{-1,p}^3|&\lesssim& \left\{\begin{aligned} & 2^{2sp}\|\mathfrak{F}_{p}g\|_{L^1}\| \tilde{\mathfrak{F}}_{p}h \|_{L^2}\|\tilde{ \mathfrak{F}}_{p}f\|_{L^2}, \quad\mbox{if}\,\, \gamma+2s>0,\\
& 2^{2sp}\|\mathfrak{F}_{p}g\|_{L^2}\| \tilde{\mathfrak{F}}_{p}h \|_{L^2}\|\tilde{ \mathfrak{F}}_{p}f\|_{L^2},\quad\mbox{if}\,\, \gamma+2s\le0.\end{aligned}\right. \eeno
\end{lem}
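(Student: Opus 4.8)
The plan is to estimate $\mathfrak{W}_{k,p}^3$ first (since $\mathfrak{W}_{k,l}^2$ has the identical structure with $\mathcal{S}_{l-N_0}g$ in place of $\mathfrak{F}_p g$ and $\mathfrak F_l h$ in place of $\tilde{\mathfrak F}_p h$), and the crucial point is that here $h$ and $f$ are localized in \emph{comparable} frequency rings (the indices $p$ and the blurred index $\tilde p$), so there is no gain to extract from the interaction of two separated frequencies; we simply need the basic coercivity-type bound for the quadratic form. Concretely, I would use the classical splitting of $\mathfrak{W}_{k,p}^3$ into a ``regular'' piece and a ``cancellation'' piece by writing $(\tilde{\mathfrak F}_p f)'-\tilde{\mathfrak F}_p f$ and symmetrizing, exactly as in \eqref{QDEC}: the energy part $\mathcal E_g$ and the cancellation part handled by the Cancellation Lemma of \cite{advw}. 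For the energy part, Cauchy--Schwarz in $(\sigma,v_*,v)$ together with the pre-post collisional change of variables $(v_*,v)\mapsto(v_*,v')$ with Jacobian \eqref{Jacobi} reduces matters to the angular integral $\int_{\SS^2} b(\cos\theta)\sin^2(\theta/2)\,d\sigma$, which converges and contributes $O(1)$ after absorbing the truncation of $|v-v_*|$ to the ring $\{|v-v_*|\sim 2^k\}$ — this produces the weight $\Phi_k^\gamma\sim 2^{\gamma k}$ and an extra $2^{2sk}$ from balancing $|v-v_*|^{2-2s}$ against $|v-v_*|^{-2s}$ using $|v-v'|=|v-v_*|\sin(\theta/2)$, giving the announced $2^{(\gamma+2s)k}2^{2sp}$.

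More precisely, the scale $2^{2sp}$ arises as follows: after Cauchy--Schwarz one of the two factors carries a second-order Taylor remainder $|v-v'|^2(\na^2 \tilde{\mathfrak F}_p f)$, and since $\tilde{\mathfrak F}_p f$ has frequency $\sim 2^p$, Bernstein inequalities \eqref{bern1}--\eqref{bern2} convert $\|\na^2\tilde{\mathfrak F}_p f\|_{L^2}$ into $2^{2p}\|\tilde{\mathfrak F}_p f\|_{L^2}$; but only the angular region $\sin(\theta/2)\lesssim 2^{-p}2^{-k}$ (where the Taylor expansion is efficient) contributes here, while the complementary region $\sin(\theta/2)\gtrsim 2^{-p}2^{-k}$ is handled directly without Taylor expansion — combining the two gives the net power $2^{2sp}$. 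Then $g$ enters only through $\|(\mathfrak F_p g)_*\|$, which by Cauchy--Schwarz on the $v_*$-integral is controlled in $L^1$ when $\Phi_k^\gamma$ is bounded (i.e.\ $\gamma\ge0$, and all $k\ge0$), and this is why for $k=-1$ we must distinguish: if $\gamma+2s>0$ then $\Phi_{-1}^\gamma$ times the angular weight $|v-v_*|^{-2s}$ is integrable near the origin so $L^1$ control of $g$ still suffices, whereas if $\gamma+2s\le0$ the singularity of $|v-v_*|^{\gamma-2s}$ at $v=v_*$ forces us to put $g$ in $L^2$ (via Cauchy--Schwarz distributing $|v-v_*|^{\gamma}$ differently) and we lose the $2^{(\gamma+2s)k}$ prefactor because $k=-1$ is fixed. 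The cancellation piece is even simpler: by the Cancellation Lemma it becomes a single integral $\iint \big[\Phi_k^\gamma(\tfrac{|v-v_*|}{\cos(\theta/2)})\cos^{-3}(\theta/2)-\Phi_k^\gamma(|v-v_*|)\big]b(\cos\theta)\sin\theta\, g_*(\tilde{\mathfrak F}_p h)(\tilde{\mathfrak F}_p f)$, the bracket is $O(\theta^2)\,|v-v_*|^\gamma$ on the ring, so the angular singularity is beaten and one obtains $2^{\gamma k}\|g\|_{L^1\text{ or }L^2}\|\tilde{\mathfrak F}_p h\|_{L^2}\|\tilde{\mathfrak F}_p f\|_{L^2}$, a lower-order contribution with no $2^{2sp}$ needed.

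The main obstacle I anticipate is bookkeeping the precise combination of Bernstein inequalities and change-of-variables Jacobians so that the final exponent of $2^p$ (resp.\ $2^l$) comes out exactly $2s$ and not something larger: one has to be careful that the $\na^2$ from the Taylor remainder only acts on the localized frequency band and that the angular cutoff at scale $2^{-p-k}$ is distributed correctly between the two Cauchy--Schwarz factors, otherwise one would pick up $2^{2p}$ instead of $2^{2sp}$. A secondary subtlety is that $\tilde{\mathfrak F}_p$ (blurring over $|k-j|\le 3N_0$) rather than $\mathfrak F_p$ appears on $h$ and $f$; this is harmless since $\tilde{\mathfrak F}_p$ is again a Fourier multiplier supported in a ring of size $\sim 2^p$ with uniformly bounded symbol, so all Bernstein estimates apply verbatim with constants independent of $p$. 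Once $\mathfrak{W}_{k,p}^3$ is settled, $\mathfrak{W}_{k,l}^2$ follows by the same computation after noting $\|\mathcal S_{l-N_0}g\|_{L^1}\le\|g\|_{L^1}$ and $\|\mathcal S_{l-N_0}g\|_{L^2}\lesssim\|g\|_{L^2}$, so no separate argument is required.
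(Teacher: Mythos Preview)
Your paragraph~2 is the paper's argument: split $\mathfrak{W}_{k,p}^3$ by the angular cutoff $\psi(2^p(v'-v))$ versus $1-\psi(2^p(v'-v))$, Taylor-expand $(\tilde{\mathfrak F}_p f)'-\tilde{\mathfrak F}_p f$ on the cutoff region (first-order term controlled by \eqref{angular1}, second-order remainder by \eqref{angular2}), bound the non-cutoff region directly via \eqref{ub2}, and combine Cauchy--Schwarz, the Jacobian \eqref{Jacobi}, and Bernstein to land on $2^{(\gamma+2s)k}2^{2sp}$. The $k=-1$ dichotomy you describe is also right in spirit: for $\gamma+2s>0$ the effective kernel $|v-v_*|^{\gamma+2s}\mathrm 1_{|v-v_*|\lesssim 1}$ is bounded and $L^1$ of $g$ suffices; for $\gamma+2s\le 0$ one redistributes by Cauchy--Schwarz to put $g$ in $L^2$.

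Two corrections, both organizational rather than fatal. First, the appeal to \eqref{QDEC} is a misnomer: that identity is the polarization $g_*f(f'-f)=-\tfrac12 g_*(f'-f)^2+\tfrac12 g_*(f'^2-f^2)$, which needs the \emph{same} function in both slots and does not apply to $g_*h(f'-f)$ with $h\neq f$. What you presumably intend is the add-and-subtract $h(f'-f)=\big((hf)'-hf\big)-(h'-h)f'$, routing the first piece through the Cancellation Lemma. That works, but the paper does not take this detour: it never invokes the Cancellation Lemma in this lemma and handles $(\tilde{\mathfrak F}_p f)'-\tilde{\mathfrak F}_p f$ directly via the angular cutoff alone. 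Second, your paragraph~1 accounting is off as written: the bare integral $\int_{\SS^2}b(\cos\theta)\sin^2(\theta/2)\,d\sigma=O(1)$ paired with $|v-v_*|^2$ and Bernstein on $\nabla^2\tilde{\mathfrak F}_p f$ yields $2^{(\gamma+2)k}2^{2p}$, not $2^{(\gamma+2s)k}2^{2sp}$. The correct exponents appear only after the cutoff at $\theta\lesssim 2^{-p-k}$ (giving \eqref{angular2} on the small-angle piece and \eqref{ub2} on the complement), exactly as you say in paragraph~2; paragraph~1 should be rewritten to match.
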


\begin{proof}  We introduce the function $\psi$ defined in \eqref{defpsivarphi} to decompose $\mathfrak{W}_{k,p}^3$ into two parts:
\beno   \mathfrak{W}_{k,p}^3&=&\iint_{\sigma\in \SS^2,v_*,v\in \R^3}  \Phi_k^\gamma (|v-v_*|)b(\cos\theta) \psi(2^p(v'-v)) (\mathfrak{F}_{p}g)_*(\tilde{\mathfrak{F}}_{p}h)\\&&\times\big[
(   \tilde{\mathfrak{F}}_{p}f)'-
 \tilde{\mathfrak{F}}_{p}f\big]d\sigma dv_* dv+\iint_{\sigma\in \SS^2,v_*,v\in \R^3}   \Phi_k^\gamma (|v-v_*|)b(\cos\theta) \\
&&\times\big(1-\psi(2^p(v'-v))\big)(\mathfrak{F}_{p}g)_* (\tilde{\mathfrak{F}}_{p}h)\big[
(  \tilde{\mathfrak{F}}_{p}f)'-
 \tilde{\mathfrak{F}}_{p}f\big]d\sigma dv_* dv. \eeno
By using Taylor expansion \eqref{taylor2}, the facts \eqref{angular2}, \eqref{angular1} and H\"{o}lder inequality, we deduce that for $k\ge0$,
\beno |\mathfrak{W}_{k,p}^3|\lesssim 2^{(\gamma+2s)k}2^{2sp}\|\mathfrak{F}_{p}g\|_{L^1}\| \tilde{\mathfrak{F}}_{p}h \|_{L^2}\|\tilde{ \mathfrak{F}}_{p}f\|_{L^2},\eeno
and
\beno |\mathfrak{W}_{-1,p}^3|\lesssim \left\{\begin{aligned} & 2^{2sp}\|\mathfrak{F}_{p}g\|_{L^1}\| \tilde{\mathfrak{F}}_{p}h \|_{L^2}\|\tilde{ \mathfrak{F}}_{p}f\|_{L^2}, \quad\mbox{if}\,\, \gamma+2s>0,\\
& 2^{2sp}\|\mathfrak{F}_{p}g\|_{L^2}\| \tilde{\mathfrak{F}}_{p}h \|_{L^2}\|\tilde{ \mathfrak{F}}_{p}f\|_{L^2},\quad\mbox{if}\,\, \gamma+2s\le0.\end{aligned}\right. \eeno
The similar results hold for $\mathfrak{W}_{k,l}^2$. We complete the proof of the lemma.
\end{proof}

\medskip
\subsection{ Proof of Theorem \ref{thmub} }      Now we are ready to give the proof to Theorem \ref{thmub}. \begin{proof} 
Let $a_1,b_1\in \R$ with $a_1+b_1=2s$. Then by Lemma \ref{lemub1}, Lemma \ref{lemub2} and Lemma \ref{lemub3}, we conclude that  for $k\ge0$ or $\gamma=0$ with $k\ge-1$,
 \beno \sum_{l\le p-N_0} |\mathfrak{W}_{k,p,l}^1|+\sum_{m<p-N_0} |\mathfrak{W}_{k,p,m}^4|&\lesssim&  C(a_1,b_1)\|g\|_{L^1}\|h\|_{H^{a_1 }}\|f\|_{H^{b_1 }}, \\
   \sum_{l\ge-1 } |\mathfrak{W}_{k,l}^2|+\sum_{p\ge-1}  |\mathfrak{W}_{k,p}^3|&\lesssim&2^{(\gamma+2s)k} \|g\|_{L^1} \|h\|_{H^{a_1 }}\|f\|_{H^{b_1 }},\eeno
Then we are led to that if $k\ge0$ or $\gamma=0$ with $k\ge-1$,
\ben\label{estqk1}  |\langle Q_k(g, h), f \rangle_v|  &\lesssim&C(a_1,b_1) 2^{(\gamma+2s) k} \|g\|_{L^1} \|h\|_{H^{a_1 }}\|f\|_{H^{b_2 }}. \een

  Let $a,b\in [0,2s]$ with $a+b=2s$.   If  $k=-1$, then by Lemma \ref{lemub1}, Lemma \ref{lemub2} and Lemma \ref{lemub3}, we have
 \beno &&\sum_{l\le p-N_0} |\mathfrak{W}_{k,p,l}^1|+\sum_{l\ge-1 } |\mathfrak{W}_{k,l}^2|+\sum_{p\ge-1}  |\mathfrak{W}_{k,p}^3|
+\sum_{m<p-N_0} |\mathfrak{W}_{k,p,m}^4|\\&&\lesssim  (\|g\|_{L^1}+\|g\|_{L^2})\|h\|_{H^{a }}\|f\|_{H^{b }},\eeno which yields
\ben\label{estqk2}  |\langle Q_{-1}(g, h), f \rangle_v|  &\lesssim&  (\|g\|_{L^1}+\|g\|_{L^2})\|h\|_{H^{a }}\|f\|_{H^{b }}. \een

Now we are in a position to give the upper bound for  the collision operator in weighted Sobolev space. Let $w_1,w_2\in\R$ with $w_1+w_2=\gamma+2s$. Recalling
\eqref{ubdecom},
we infer from \eqref{estqk1} and \eqref{estqk2},
\beno  | \langle Q ( g, h), f \rangle_v|&\lesssim& \sum_{k\ge N_0 -1} 2^{(\gamma+2s)k} \|\mathcal{U}_{k-N_0} g\|_{L^1} \|\tilde{\mathcal{P}}_kh\|_{H^{a }}\|\tilde{\mathcal{P}}_kf  \|_{H^{b }}\\&&+\bigg( \sum_{j\ge k+N_0,k\ge0}2^{(\gamma+2s)k}  \|\mathcal{P}_{j} g\|_{L^1} \|\tilde{\mathcal{P}}_jh\|_{H^{a }}\|\tilde{\mathcal{P}}_jf   \|_{H^{b }}\\&&+ \sum_{j\ge -1+N_0}  (\|\mathcal{P}_{j} g\|_{L^1}+\|\mathcal{P}_{j}g\|_{L^2})\|\tilde{\mathcal{P}}_jh\|_{H^{a }}\|\tilde{\mathcal{P}}_jf   \|_{H^{b }}\bigg)\\
&&+\bigg(\sum_{k\ge0, |j-k|\le N_0} 2^{(\gamma+2s)k}\|\mathcal{P}_{j} g  \|_{L^1} \| \mathcal{U}_{k+N_0}h\|_{H^{a }}\| \mathcal{U}_{k+N_0}f  \|_{H^{b }}
\\ &&\quad+(\|g\|_{L^1}+\|g\|_{L^2})\|\mathcal{U}_{N_0}h\|_{H^{a}}\|\mathcal{U}_{N_0}f  \|_{H^{b}}\bigg) \\ &\eqdefa& \mathfrak{U}_1+\mathfrak{U}_2+\mathfrak{U}_3.\eeno

For the term $\mathfrak{U}_1$, thanks to Theorem \ref{baslem3}, one has \beno   \mathfrak{U}_1\lesssim  \|g\|_{L^1} \|h\|_{H^{a}_{w_1}}\|f\|_{H^{b }_{w_2}}.\eeno

For the term $\mathfrak{U}_2$, we separate the estimate into three cases. If $\gamma+2s>0$, we have
\beno \mathfrak{U}_2&\lesssim &  \sum_{j\ge -1}2^{(\gamma+2s)j} (\|\mathcal{P}_{j} g\|_{L^1}+\|\mathcal{P}_{j}g\|_{L^2})\|\tilde{\mathcal{P}}_jh\|_{H^{a }}\|\tilde{\mathcal{P}}_jf   \|_{H^{b }}\\&\lesssim&  (\|g\|_{L^1}+\|g\|_{L^2})\|h\|_{H^{a}_{w_1}}\|f\|_{H^{b }_{w_2}}.\eeno
In the case of $\gamma+2s=0$, it holds for any $\delta>0$,
\beno \mathfrak{U}_2&\lesssim &  \sum_{j\ge -1} (\|g\|_{L^1_\delta}+\|g\|_{L^2})\|\tilde{\mathcal{P}}_jh\|_{H^{a }}\|\tilde{\mathcal{P}}_jf   \|_{H^{b }}\\&\lesssim&  (\|g\|_{L^1_\delta}+\|g\|_{L^2 })\|h\|_{H^{a}_{w_1}}\|f\|_{H^{b }_{w_2}}.\eeno
While in the case of $\gamma+2s<0$, we have
\beno \mathfrak{U}_2&\lesssim &  \sum_{j\ge -1} (\|\mathcal{P}_{j} g\|_{L^1}+\|\mathcal{P}_{j}g\|_{L^2})\|\tilde{\mathcal{P}}_jh\|_{H^{a }}\|\tilde{\mathcal{P}}_jf   \|_{H^{b }}\\&\lesssim&  (\|g\|_{L^1_{-(\gamma+2s) }}+\|g\|_{L^2_{-(\gamma+2s) }})\|h\|_{H^{a}_{w_1}}\|f\|_{H^{b }_{w_2}}.\eeno
We remark that in each case Theorem \ref{baslem3} is used in the last inequality.

Now we turn to the term $\mathfrak{U}_3$.
We  first claim that it holds
\ben\label{u4} \|\mathcal{U}_{k+N_0}h\|_{H^{a}}\lesssim 2^{k(-w_1)^+} \|h\|_{H^a_{w_1}}. \een
By the definition of $\mathcal{U}$,  
we have
\beno (\mathcal{U}_{k+N_0}h)(v)=\big[ \sum_{j\le k+N_0} \varphi(2^{-j}v) +\psi(v)\big]h(v)\eqdefa \tilde{\psi}_{k+N_0}(v)h(v).\eeno
Thanks to Lemma \ref{baslem2} and the facts  if $w_1\ge 0$,
\beno \pa^{\alpha}_v\big(\tilde{\psi}_{k+N_0}\langle v\rangle^{-w_1}\big)\lesssim \langle v\rangle^{-w_1-|\alpha|}\lesssim \langle v\rangle^{-|\alpha|},  \eeno
and if $w_1<0$,
\beno \pa^{\alpha}_v\big(2^{kw_1}\tilde{\psi}_{k+N_0}\langle v\rangle^{-w_1}\big)\lesssim \langle v\rangle^{-|\alpha|},  \eeno
we deduce that
\beno \|\mathcal{U}_{k+N_0}h\|_{H^{a}}&\lesssim& \|\tilde{\psi}_{k+N_0}W_{-w_1}(W_{w_1}h)\|_{H^{a}}\\
&\lesssim& 2^{k(-w_1)^+} \|h\|_{H^a_{w_1}},\eeno
  which completes the proof to the claim.
Now we apply the claim to the estimate of $\mathfrak{U}_3$. It is easy to get
\beno \mathfrak{U}_3\lesssim (\|g\|_{L^1_{\gamma+2s+(-w_1)^++(-w_2)^+}}+\|g\|_{L^2})\|h\|_{H^{a}_{w_1}}\|f\|_{H^{b }_{w_2}}. \eeno

Now patching together all the estimates for $\mathfrak{U}_1$, $\mathfrak{U}_2$ and $\mathfrak{U}_3$, we obtain the desired results.
For the special case $\gamma=0$, by the estimate \eqref{estqk1} and   the similar argument, we can easily get \eqref{u5}.
\end{proof}

\setcounter{equation}{0}

\section{Lower and upper bounds for the Boltzmann collision operator in anisotropic spaces}
In this section, we will give the proof to the sharp bounds for the collision operator in anisotropic spaces. The main idea is to use the geometric decomposition explained in the introduction and also the $L^2$ profile of the fractional Laplace-Beltrami operator  in Section 5.

\subsection{Proof of Theorem \ref{thmlb}}   
We take the following decomposition: \ben\label{sharpdec}
\langle  -Q(g,f), f\rangle_v&=&-\int_{\R^6}dv_*dv\int_{\SS^2}
B (|v-v_*|,\sigma)g_*f(f'-f)d\sigma\notag \\
&=&-\f12\underbrace{\int_{\R^6}dv_*dv\int_{\SS^2}
B (|v-v_*|,\sigma)g_*(f'^2-f^2)d\sigma}_{\mathcal{L}}\notag \\&&\quad+\f12
\underbrace{\int_{\R^6}dv_*dv
\int_{\SS^2}B (|v-v_*|,\sigma)g_*(f'-f)^2d\sigma}_{\mathcal{E}^\gamma_{g}(f)}.\een

 By change of variables, we have
\beno |\mathcal{L}|&=& |\SS^1|
\bigg|\int_{\R^6}\int_0^\frac{\pi}{2}
\sin\theta\bigg(\f1{\cos^3\frac{\theta}{2}}B(\frac{|v-v_*|}{\cos\frac{\theta}{2}},\cos\theta)-B(|v-v_*|,\cos\theta)\bigg)g_*f^2d\theta
dv_*dv\bigg|\\
&\lesssim& \int_{\R^6} |v-v_*|^\gamma g_*f^2 dv_*dv\eqdefa \mathcal{R}.
  \eeno

We  first show that  $\mathcal{R}$ can be estimated by the following lemma:

\begin{lem}\label{grlimlem3}  Let $\varpi\in (0,1]$. For  smooth functions $g   $ and
$f$, there exists a sufficiently small constant $\eta$ and a universal constant $a\in(0,1)$ such that
\begin{enumerate}
\item if $\gamma\ge 0$,     \beno  |\mathcal{R}|\lesssim \|g\|_{L^1_\gamma}\|f\|_{L^2}^2+\|g\|_{L^1}\|f\|_{L^2_{\gamma/2}}^2,\eeno
 \item if $-2\varpi<\gamma<0$,\beno |\mathcal{R}|\lesssim \eta^{\f{\gamma}{\gamma+2\varpi}}\f{\gamma+2\varpi}{2\varpi} \|f\|_{L^2_{\gamma/2}}^2+\eta \|f\|_{H^\varpi_{\gamma/2}}^2,\eeno
 \item if $\gamma+2\varpi= 0$,   \beno |\mathcal{R}|&\lesssim&\big[\|g\|_{L^1_{|\gamma|}}+ \exp ([\eta^{-1}(1-a)\|g\|_{L\log L}+2\eta^{-1}(1-a)^{-1}\|g\|_{L^1_{2\varpi/a}}]^{\f1{1-a}})\big]\|f\|_{L^2_{\gamma/2}}^2+ \eta\|f\|_{H^\varpi_{\gamma/2}}^2,\eeno
  \item if $-1\le \gamma+2\varpi<0$ and $p> 3/2$, \beno |\mathcal{R}|\lesssim  \eta^{-\f3{(\gamma+2\varpi+3)p-3}} \|g\langle\cdot\rangle^{|\gamma|}\|_{L^p}^{\f{(\gamma+2\varpi+3)p}{(\gamma+2\varpi+3)p-3}}\|f\|_{L^2_{\gamma/2}}^2+\eta \|f\|_{H^\varpi_{\gamma/2}}^2.  \eeno
\end{enumerate}
\end{lem}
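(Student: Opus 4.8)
The plan is to estimate $\mathcal{R}=\int_{\R^6}|v-v_*|^\gamma g_* f^2\,dv_*dv$ by splitting the $v_*$-integral according to whether the relative velocity $|v-v_*|$ is large or small, and in each regime extracting either a weight in $g$ or a small fraction of a derivative norm of $f$. First I would write $\mathcal{R}=\mathcal{R}_{\mathrm{far}}+\mathcal{R}_{\mathrm{near}}$ where $\mathcal{R}_{\mathrm{far}}$ carries the cutoff $|v-v_*|\ge 1$ (say) and $\mathcal{R}_{\mathrm{near}}$ carries $|v-v_*|\le 1$. On the far part, $|v-v_*|^\gamma\le \langle v-v_*\rangle^{|\gamma|}\lesssim \langle v\rangle^{|\gamma|}\langle v_*\rangle^{|\gamma|}$ when $\gamma<0$ (and $|v-v_*|^\gamma\lesssim\langle v\rangle^\gamma+\langle v_*\rangle^\gamma$ when $\gamma\ge0$), so $\mathcal{R}_{\mathrm{far}}\lesssim \|g\|_{L^1_{|\gamma|}}\|f\|_{L^2_{\gamma/2}}^2$ in the soft case and $\lesssim \|g\|_{L^1_\gamma}\|f\|_{L^2}^2+\|g\|_{L^1}\|f\|_{L^2_{\gamma/2}}^2$ in the hard case, which already handles case (1) entirely. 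The heart of the matter is $\mathcal{R}_{\mathrm{near}}$ for $\gamma<0$, where $|v-v_*|^\gamma$ is a local singularity in $v$ of order $\gamma\in(-3,0)$.

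For the near part, fix $v_*$ in the support region and use $\langle v\rangle^{\gamma/2}\sim\langle v_*\rangle^{\gamma/2}$ on $\{|v-v_*|\le1\}$, so that $\mathcal{R}_{\mathrm{near}}\lesssim \int_{\R^3} g(v_*)\langle v_*\rangle^{\gamma}\big(\int_{|v-v_*|\le1}|v-v_*|^\gamma (W_{-\gamma/2}f)^2(v)\,dv\big)dv_*$ after inserting and removing the weight $\langle v\rangle^{\gamma/2}$. Set $F=W_{-\gamma/2}f$, so $\|F\|_{H^\varpi}\lesssim\|f\|_{H^\varpi_{\gamma/2}}$ and $\|F\|_{L^2}=\|f\|_{L^2_{\gamma/2}}$. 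The inner integral is a convolution $(|\cdot|^\gamma\mathbf 1_{|\cdot|\le1})*|F|^2$ evaluated near $v_*$. In case (2), $-2\varpi<\gamma<0$: by Hardy–Littlewood–Sobolev / fractional Sobolev embedding, $\int |v-v_*|^\gamma|F|^2\lesssim \|F\|_{\dot H^{|\gamma|/2}}^2\lesssim\|F\|_{H^\varpi}^{|\gamma|/\varpi}\|F\|_{L^2}^{2-|\gamma|/\varpi}$ by interpolation since $|\gamma|/2<\varpi$; then Young's inequality $xy\le \eta^{-p'/p}x^{p'}/p'+\eta y^p/p$ with the split exponent $p=2\varpi/|\gamma|=2\varpi/(2\varpi+\gamma)$ produces exactly $\eta^{\gamma/(\gamma+2\varpi)}\frac{\gamma+2\varpi}{2\varpi}\|f\|_{L^2_{\gamma/2}}^2+\eta\|f\|_{H^\varpi_{\gamma/2}}^2$ after integrating the bounded factor $\int g\langle v_*\rangle^\gamma dv_*\lesssim\|g\|_{L^1}$. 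For case (4), $-1\le\gamma+2\varpi<0$ so $|\gamma|/2\ge\varpi$ and the pure HLS bound by $\|F\|_{\dot H^{|\gamma|/2}}$ is no longer controlled by $\|F\|_{H^\varpi}$; instead I would use Hölder in $v$ on the inner integral, $\int_{|v-v_*|\le1}|v-v_*|^\gamma|F|^2\lesssim \||\cdot|^\gamma\mathbf 1_{|\cdot|\le1}\|_{L^{q'}}\|F^2\|_{L^q}=C\|F\|_{L^{2q}}^2$ for $q'<3/|\gamma|$, i.e. $q>3/(3+\gamma)$, and then apply the Gagliardo–Nirenberg/Sobolev interpolation $\|F\|_{L^{2q}}\lesssim\|F\|_{H^\varpi}^\theta\|F\|_{L^2}^{1-\theta}$ with $\theta=\frac{3}{2\varpi}(1-\frac1q)$; one checks that $2\theta<2$ precisely when $q<\frac{3}{3-2\varpi}$, hence a valid $q$ exists iff $\gamma+2\varpi>-1$, and then Young's inequality with the appropriate exponent — after incorporating the $g$-factor via Hölder so that $\|g\langle\cdot\rangle^{|\gamma|}\|_{L^p}$ appears — yields the stated bound with exponent $\frac{(\gamma+2\varpi+3)p}{(\gamma+2\varpi+3)p-3}$. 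The condition $p>3/2$ is what makes the $g$-side Hölder pairing close.

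For the borderline case (3), $\gamma+2\varpi=0$, the exponent in (4) degenerates, and the gain of a full derivative is replaced by a logarithmically-refined bound. Here I would not use a power of $\|g\|_{L^p}$ but rather exploit the $L\log L$ norm of $g$ together with a careful dyadic decomposition of the singularity $|v-v_*|^\gamma\mathbf 1_{|v-v_*|\le1}=\sum_{j\ge0}|v-v_*|^\gamma\mathbf 1_{|v-v_*|\sim 2^{-j}}$. On each dyadic shell the kernel is $\sim 2^{-j\gamma}=2^{2\varpi j}$ times a bounded function of mass $\sim 2^{-3j}$, and summing against $\|F\|_{L^\infty}^{2a}\|F\|_{L^2}^{2(1-a)}$-type bounds (with $a\in(0,1)$ small, $\|F\|_{L^\infty}$ controlled by $\|F\|_{H^\varpi}$ up to a loss since $\varpi$ may be $<3/2$ — so one actually uses $\|F\|_{L^r}$ for large finite $r$ and $\|g\|_{L^1_{2\varpi/a}}$ to absorb the tail) produces a geometric-type series whose sum, after optimizing the truncation level $j$ and using the elementary inequality $\sum 2^{2\varpi j}e^{-cj}\lesssim \exp((c^{-1}\cdots)^{1/(1-a)})$, gives the exponential factor $\exp\big([\eta^{-1}(1-a)\|g\|_{L\log L}+2\eta^{-1}(1-a)^{-1}\|g\|_{L^1_{2\varpi/a}}]^{1/(1-a)}\big)$ in front of $\|f\|_{L^2_{\gamma/2}}^2$, plus the $\eta\|f\|_{H^\varpi_{\gamma/2}}^2$ remainder. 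I expect the borderline case (3) to be the main obstacle: balancing the three competing quantities ($L\log L$ norm, high-$L^p$ norm of $g$, and the $H^\varpi$ norm of $f$) through the dyadic sum to land precisely on the stated exponential expression requires the delicate choice of the auxiliary parameter $a$ and a sharp form of the "$\log$-interpolation" inequality, whereas cases (1), (2), (4) are essentially HLS plus Young's inequality with bookkeeping of weights.
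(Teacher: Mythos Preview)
Your overall structure---near/far split on $|v-v_*|$ and the weight transfer to $G=g\langle\cdot\rangle^{|\gamma|}$, $F=f\langle\cdot\rangle^{\gamma/2}$---matches the paper, and cases~(1)--(2) go through essentially as there (Hardy/HLS, interpolation, Young). Two small slips: you should set $F=W_{\gamma/2}f$, not $W_{-\gamma/2}f$ (with your choice $\|F\|_{L^2}=\|f\|_{L^2_{|\gamma|/2}}$, not $\|f\|_{L^2_{\gamma/2}}$), and the weight left on $g$ after the transfer is $\langle v_*\rangle^{|\gamma|}$, so the factor in case~(2) is $\|g\|_{L^1_{|\gamma|}}$, not $\|g\|_{L^1}$. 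For case~(4) the paper takes a different route: instead of Gagliardo--Nirenberg on $F$, it thresholds $G$ at level $M$, bounds the small part by $M\|F\|_{L^2}^2$, applies HLS on the large part to get $\|G\mathbf 1_{|G|\ge M}\|_{L^{3/(\gamma+2\varpi+3)}}\|F\|_{H^\varpi}^2$, uses $\|G\mathbf 1_{|G|\ge M}\|_{L^q}\le M^{1-p/q}\|G\|_{L^p}^{p/q}$, and optimizes in $M$. Your Young-convolution/Gagliardo--Nirenberg route can reach the stated exponent too, but only with a specific choice of the kernel exponent $r$ (namely $1/r=1-\tfrac{\gamma+3}{(\gamma+2\varpi+3)p}$); your sketch does not pin this down, and the ``valid $q$ exists iff $\gamma+2\varpi>-1$'' claim is off without the $L^p$ pairing on $g$.

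The genuine gap is case~(3). A dyadic decomposition in the scale $|v-v_*|\sim 2^{-j}$ does not close at the borderline $\gamma+2\varpi=0$: on each shell the sharpest H\"older/Sobolev bound contributes a term of fixed size (the shell exponent $2\varpi-3(1-2/r)$ vanishes exactly at the endpoint $r=6/(3-2\varpi)$ of $H^\varpi\hookrightarrow L^r$), so the $j$-sum diverges logarithmically, and there is no mechanism producing the $e^{-cj}$ decay you invoke. The paper's key idea is to threshold in the \emph{magnitude of $G$}, not in the scale: write $G=G\mathbf 1_{|G|<M}+G\mathbf 1_{|G|\ge M}$; the small part gives $M\|F\|_{L^2}^2$ since $\int_{|u|\le 1}|u|^{-2\varpi}du<\infty$, while on the large part the Hardy inequality $\int|v-v_*|^{-2\varpi}F^2\,dv\lesssim\|F\|_{H^\varpi}^2$ yields $\|G\mathbf 1_{|G|\ge M}\|_{L^1}\|F\|_{H^\varpi}^2$. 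This tail mass is controlled by $(\log M)^{-(1-a)}\|G(\log G)^{1-a}\|_{L^1}$, which is precisely where $\|g\|_{L\log L}$ and $\|g\|_{L^1_{2\varpi/a}}$ enter; choosing $M$ so that this coefficient equals $\eta$ produces the stated exponential.
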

\begin{proof}

It is easy to check that if $\gamma>0$, we have \beno  |\mathcal{R}|\lesssim \|g\|_{L^1_\gamma}\|f\|_{L^2}^2+\|g\|_{L^1}\|f\|_{L^2_{\gamma/2}}^2.\eeno

For $\gamma<0$, we observe that
\beno  |\mathcal{R}| &=&\bigg| \iint_{v_*, v\in \R^3} |v-v_*|^\gamma \big(\langle v\rangle \langle v_*\rangle^{-1}\big)^{|\gamma|}  g_*\langle v_*\rangle^{|\gamma|}(f\langle v\rangle^{\gamma/2})^2 dv_*dv\bigg|\\
&=&\bigg| \iint_{v_*, v\in \R^3} |v-v_*|^\gamma \big(\langle v\rangle\langle v_*\rangle^{-1}\big)^{|\gamma|} G_* F^2 dv_*dv\bigg|\\
&\lesssim& \iint_{v_*, v\in \R^3}\f{\langle v-v_*\rangle^{|\gamma|} } {|v-v_*|^{|\gamma|}} |G_*| F^2 dv_*dv , \eeno
where $G=g\langle v\rangle^{|\gamma|}$, $F=f\langle v\rangle^{\gamma/2}$.

In the case of $-2\varpi<\gamma<0$,  by using  Hardy inequality, we get
 \beno |\mathcal{R}|
&\lesssim& \|g\|_{L^1_{|\gamma|}} \|f\|_{H^{|\gamma|/2}_{\gamma/2}}^2.\eeno
Thanks to the interpolation inequality and the condition $\gamma+2\varpi>0$, we  derive that
\beno |\mathcal{R}|\lesssim \eta^{\f{\gamma}{\gamma+2\varpi}}\f{\gamma+2\varpi}{2\varpi} \|g\|_{L^1_{|\gamma|}}^{2\varpi/|\gamma|}\|f\|_{L^2_{\gamma/2}}^2+\eta \|f\|_{H^\varpi_{\gamma/2}}^2.\eeno

For the case of $\gamma+2\varpi=0$, we have
\beno   |\mathcal{R}|&\lesssim& \|G\|_{L^1}\|F\|_{L^2}^2+ M\|F\|_{L^2}^2+\iint_{v,v_*\in\R^3} |v-v_*|^{-2\varpi} \mathrm{1}_{|v-v_*|\le1}(G\mathrm{1}_{|G|\ge M})_* F^2dv_*dv\\&\lesssim&\|g\|_{L^1_{|\gamma|}}\|f\|_{L^2_{\gamma/2}}^2+ M\|f\|_{L^2_{\gamma/2}}^2+\|G\mathrm{1}_{|G|\ge M}\|_{L^1} \|F\|_{H^\varpi}^2.
\eeno
where  the Hardy inequality is used in the last step.
 Choose $a\in (0,1)$,  then we get \beno  \|G\mathrm{1}_{|G|\ge M}\|_{L^1}&\lesssim& (\log M)^{-(1-a)}\|G(\log G)^{1-a}\|_{L^1}\\
&\lesssim& (\log M)^{-(1-a)}[\|g\|_{L\log L}^{1-a}\|g\|_{L^1_{2\varpi/a}}^a+(1-a)^{-1}|\gamma|\|g\|_{L^1_{2\varpi/a}}].
\eeno
It yields \beno |\mathcal{R}|&\lesssim& (M+\|g\|_{L^1_{|\gamma|}})\|f\|_{L^2_{\gamma/2}}^2+ (\log M)^{-(1-a)}[\|g\|_{L\log L}^{1-a}\|g\|_{L^1_{2\varpi/a}}^{a}+(1-a)^{-1}|\gamma|\|g\|_{L^1_{2\varpi/a}}]\|f\|_{H^\varpi_{\gamma/2}}^2.\eeno
Thus we have
 \beno |\mathcal{R}|&\lesssim&\big[\|g\|_{L^1_{|\gamma|}}+ \exp ([\eta^{-1}\|g\|_{L\log L}^{1-a}\|g\|_{L^1_{2\varpi/a}}^a+\eta^{-1}(1-a)^{-1}|\gamma|\|g\|_{L^1_{2\varpi/a}}]^{\f1{1-a}})\big]\|f\|_{L^2_{\gamma/2}}^2+ \eta\|f\|_{H^\varpi_{\gamma/2}}^2.\eeno
In particular, if $\varpi\in (0,1)$, then  for $a\in (\varpi,1)$,
\beno |\mathcal{R}|&\lesssim& \big[\|g\|_{L^1_2}+\exp ([\eta^{-1}(1-a)\|g\|_{L\log L}+\eta^{-1}(1-a)^{-1}|\gamma|\|g\|_{L^1_{2}})]^{\f1{1-a}})\big]\|f\|_{L^2_{-\varpi}}^2+ \eta\|f\|_{H^\varpi_{-\varpi}}^2
. \eeno
If $\varpi=1$,  then for any $\delta>0$, \beno 
|\mathcal{R}|&\lesssim& \big[\|g\|_{L^1_2}+\exp ([\eta^{-1}\|g\|_{L\log L}+\eta^{-1}\delta^{-1}\|g\|_{L^1_{2+\delta}})]^{\f{2+\delta}{\delta}})\big]\|f\|_{L^2_{-1}}^2+ \eta\|f\|_{H^\varpi_{-1}}^2.\eeno

Finally we handle the case $-1<\gamma+2\varpi<0$. By Hardy-Littlewood-Sobolev inequality, we have
\beno |\mathcal{R}|&\lesssim& M\|f\|_{L^2_{\gamma/2}}^2+\|G\mathrm{1}_{|G|\ge M}\|_{L^{\f3{\gamma+2\varpi+3}}} \|F\|_{H^\varpi}^2.\eeno
Since $p> 3/2$, we have
\beno \|G\mathrm{1}_{|G|\ge M}\|_{L^{\f3{\gamma+2\varpi+3} }}\lesssim M^{-\f{(\gamma+2\varpi+3)p-3}3}\|G\|_{L^p}^{\f{(\gamma+2\varpi+3)p}3}, \eeno
Thus we get
\beno |\mathcal{R}|\lesssim  \eta^{-\f3{(\gamma+2\varpi+3)p-3}} \|g \|_{L^p_{|\gamma|}}^{\f{(\gamma+2\varpi+3)p}{(\gamma+2\varpi+3)p-3}}\|f\|_{L^2_{\gamma/2}}^2+\eta \|f\|_{H^\varpi_{\gamma/2}}^2. \eeno
We complete the proof of the lemma.
\end{proof}

From now on, we focus on the estimate of the elliptic part $\mathcal{E}^\gamma_{g}$. We begin with two useful lemmas to deal with the simple case $\gamma=0$ and then extend the results to the general cases.
\medskip

\begin{lem}\label{coermaxw1} Suppose  $g$ is a   non-negative  and smooth function. Then  for any $\eta>0$,
\ben\label{cm1} \mathcal{E}^0_g(f)&\gtrsim& \mathcal{C}_4(g)\big(\|(-\triangle_{\SS^2})^{s/2} f\|_{L^2}^2+
 \|f\|_{H^s}^2\big) -\eta\|g\|_{L^1_{s}}\|f\|_{L^2_s}^2-(\|g\|_{L^1_{s}}\eta^{-1}\mathcal{C}_3(g)^{-1}+1)\|f\|_{L^2}^2, \een
where $$\mathcal{C}_4(g)\eqdefa\frac{\min\{\mathcal{C}_3(g),\|g\|_{L^1_{-s}}\}}{\|g\|_{L^1_{s}}\eta^{-1}\mathcal{C}_3(g)^{-1}+2}.$$
Here $\mathcal{C}_3(g)\eqdefa \min\{\mathcal{C}_1(g),\mathcal{C}_2(g),1 \}$ and  $\mathcal{C}_1(g)$ and $\mathcal{C}_2(g)$ are defined as follows:
\beno \mathcal{C}_1(g)&\eqdefa& 2\sin^2 \varepsilon\bigg[ |g|_{L^1}-\f{|g|_{L^1_1}}{r}-\sup_{|A|<4\varepsilon(2r)^2+\f{2\varepsilon}{\pi}(2r)^3 }\int_{A} g(v)dv\bigg],\eeno
where $\varepsilon$ and $r$ are chosen in such a way that this quantity is positive, and
\beno
\mathcal{C}_2(g)&\eqdefa& 2\vartheta^2\inf_{|\xi|\le1} \bigg|\f{\sin^2(\vartheta|\xi|)}{\vartheta^2|\xi|^2}\bigg|\bigg[ |g|_{L^1}-\f{|g|_{L^1_1}}{r}-\sup_{|A|<4\vartheta(2r)^3(\f1\pi+\f{1}r)}\int_{A} g(v)dv\bigg],\eeno
where $\vartheta$ and $r$ are chosen in such a way that this quantity is positive.
 
If the function $g$ verifies the condition \eqref{lbc}, then due to the definition of $\mathcal{C}_4(g)$, there exists a constant $C(\delta, \lambda,\eta^{-1})$   such that
\ben\label{cm2} \mathcal{C}_4(g)\ge C(\delta, \lambda,\eta^{-1}). \een
\end{lem}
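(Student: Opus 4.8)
The plan is to obtain \eqref{cm1} by superposing two lower bounds for $\mathcal{E}^0_g(f)$ and then interpolating. The first is an $H^s$‑coercivity bound $\mathcal{E}^0_g(f)\gtrsim \mathcal{C}_3(g)\|f\|_{H^s}^2-\|g\|_{L^1}\|f\|_{L^2}^2$, obtained by re‑running the Bobylev/Fourier argument of \cite{advw} while keeping the constant explicit in the quantitative form $\mathcal{C}_3(g)=\min\{\mathcal{C}_1(g),\mathcal{C}_2(g),1\}$: the constant $\mathcal{C}_1(g)$ comes from restricting collisions to non‑grazing angles $\theta\ge\varepsilon$, localizing $v_*$ to a ball $\{|v_*|\le r\}$, and discarding a set of measure $\le 4\varepsilon(2r)^2+\tfrac{2\varepsilon}{\pi}(2r)^3$ on which $g$ might concentrate (this removal is needed precisely because we only assume $g\in L^1\cap L\log L$, not $L^1\cap L^2$), while $\mathcal{C}_2(g)$ comes from the Fourier‑multiplier lower bound $\sin^2(\vartheta|\xi|)/(\vartheta|\xi|)^2$ on bounded frequencies. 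The second bound extracts the fractional Laplace–Beltrami term and is where the geometric decomposition enters.

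For the second bound I would insert $f'-f=\Delta_1+\Delta_2$ from \eqref{geodecom}, with $\Delta_2=(T_{v_*}f)(r\varsigma)-(T_{v_*}f)(r\tau)$ the spherical difference at fixed radius $r=|v-v_*|$ and $\Delta_1$ the radial difference (of length $O(\theta^2|v-v_*|)$). From $(\Delta_1+\Delta_2)^2\ge\f12\Delta_2^2-2\Delta_1^2$ we get $\mathcal{E}^0_g(f)\ge\f12\int b\,g_*\Delta_2^2-2\int b\,g_*\Delta_1^2$. Applying the change of variable $\sigma\mapsto\varsigma$ of \eqref{geocha1}, symmetrizing in $(\tau,\varsigma)$, and invoking the spherical coercivity \eqref{geocha3} (Lemma \ref{antin1}) fibrewise on each sphere $\{|v-v_*|=r\}$, then integrating against $r^2\,dr$, turns $\int b\,g_*\Delta_2^2$ into $\gtrsim\int g_*\|(-\triangle_{\SS^2})^{s/2}T_{v_*}f\|_{L^2}^2\,dv_*$ minus a multiple of $\|g\|_{L^1}\|f\|_{L^2}^2$. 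The translation estimate \eqref{labelint}, applied to $T_{v_*}f$ with shift $-v_*$, gives $\|(-\triangle_{\SS^2})^{s/2}f\|_{L^2}\lesssim\langle v_*\rangle^{s}(\|(-\triangle_{\SS^2})^{s/2}T_{v_*}f\|_{L^2}+\|f\|_{H^s})$, hence converts the last quantity into $\gtrsim\|g\|_{L^1_{-s}}\|(-\triangle_{\SS^2})^{s/2}f\|_{L^2}^2-\|g\|_{L^1}\|f\|_{H^s}^2$; the error $\int b\,g_*\Delta_1^2$, after the elementary fractional‑difference estimate and an interpolation, is bounded by $\eta\|g\|_{L^1_s}\big(\|f\|_{H^s}^2+\|f\|_{L^2_s}^2\big)+C(\eta)\|g\|_{L^1}\|f\|_{L^2}^2$. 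Since both the $\Delta_1$‑error and the loss $\|g\|_{L^1}\|f\|_{H^s}^2$ are of the same order as the gain in the $H^s$ bound, I would combine the two bounds with weights $t$ and $1-t$, choosing $t$ so as to absorb these terms; optimizing the split leaves precisely the coefficient $\mathcal{C}_4(g)=\min\{\mathcal{C}_3(g),\|g\|_{L^1_{-s}}\}\big/\big(\eta^{-1}\|g\|_{L^1_s}\mathcal{C}_3(g)^{-1}+2\big)$ in front of $\|(-\triangle_{\SS^2})^{s/2}f\|_{L^2}^2+\|f\|_{H^s}^2$, with residual $-\eta\|g\|_{L^1_s}\|f\|_{L^2_s}^2-(\cdots)\|f\|_{L^2}^2$, which is \eqref{cm1}.

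For the final claim \eqref{cm2}: under \eqref{lbc} pick $r>2\lambda/\delta$; since $\|g\|_{L^1_1}\le\|g\|_{L^1_2}<\lambda$, Chebyshev gives $\|g\|_{L^1}-\|g\|_{L^1_1}/r\ge\int_{|v_*|\le r}g\,dv_*>\delta/2$. The measure $g\,dv$ is uniformly absolutely continuous with a modulus controlled by $\|g\|_{L\log L}<\lambda$ (de la Vallée–Poussin), so choose $\varepsilon$ and $\vartheta$ small enough (depending only on $\delta,\lambda$) that any set of the relevant measure carries $g$‑mass $<\delta/4$; then the brackets in $\mathcal{C}_1(g)$ and $\mathcal{C}_2(g)$ exceed $\delta/4$, and since $\inf_{|\xi|\le1}|\sin^2(\vartheta|\xi|)/(\vartheta^2|\xi|^2)|=\sin^2\vartheta/\vartheta^2$, we get $\mathcal{C}_1(g),\mathcal{C}_2(g)\ge C(\delta,\lambda)>0$, hence $\mathcal{C}_3(g)\ge C(\delta,\lambda)$. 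Finally $\|g\|_{L^1_{-s}}\ge\langle r\rangle^{-s}\int_{|v_*|\le r}g\,dv_*\ge C(\delta,\lambda)$ and $\|g\|_{L^1_s}\le\|g\|_{L^1_2}<\lambda$ (as $0<s<1$), so substituting into the formula for $\mathcal{C}_4(g)$ yields $\mathcal{C}_4(g)\ge C(\delta,\lambda,\eta^{-1})>0$. The main obstacle is the middle step: the $\Delta_1$‑error and the $\|g\|_{L^1}\|f\|_{H^s}^2$ loss produced by \eqref{labelint} are \emph{not} lower order relative to the $H^s$‑coercivity gain, so one must split off a carefully chosen fraction of $\mathcal{E}^0_g(f)$ — keeping the $\langle v_*\rangle$‑weights compatible and ending with exactly the stated $\mathcal{C}_4(g)$ — while simultaneously re‑deriving the \cite{advw} estimate with the explicit constants $\mathcal{C}_1(g),\mathcal{C}_2(g)$; this constant bookkeeping is the technical heart of the argument.
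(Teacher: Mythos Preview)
Your overall plan matches the paper's proof: geometric decomposition into spherical and radial pieces, spherical coercivity via Lemma~\ref{antin1}, the translation estimate~\eqref{labelint} (Lemma~\ref{antin3}), and combination with the Alexandre--Desvillettes--Villani--Wennberg $H^s$ bound. Two points, however, need correction.

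First, the interpolation you claim for the radial error has the wrong $\eta$-dependence: you cannot get $\eta$ in front of \emph{both} $\|f\|_{H^s}^2$ and $\|f\|_{L^2_s}^2$. The correct bound is $\|f\|_{H^{s/2}_{s/2}}^2\lesssim\eta^{-1}\|f\|_{H^s}^2+\eta\|f\|_{L^2_s}^2$, and it is precisely this $\eta^{-1}$ on the $H^s$ term that, when absorbed by the ADVW coercivity, forces the factor $\|g\|_{L^1_s}\eta^{-1}\mathcal{C}_3(g)^{-1}$ into the denominator of $\mathcal{C}_4(g)$. With $\eta$ on both terms the formula for $\mathcal{C}_4(g)$ would make no sense.

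Second, and more substantively, the bound on $\int b\,g_*\Delta_1^2$ is not an ``elementary fractional-difference estimate''. In the paper this is Step~2 and occupies most of the proof: one localizes $|u|\sim2^k$, expands $f=\sum_l\mathfrak{F}_lf$ in Littlewood--Paley pieces, splits each cross term $\mathcal{E}_{l,p}$ by an angular cutoff $\psi(2^{(k+l)/2}\theta)$, Taylor-expands the near-angle part and performs the change of variable $u\mapsto u^+(\kappa+(1-\kappa)\cos^{-1}(\theta/2))$ with Jacobian~\eqref{Jacobi3}, and only then obtains $|\mathcal{E}_{l,p}|\lesssim2^{(l-p)s/2}2^{ks}\|g\|_{L^1}\|\mathfrak{F}_pf\|_{H^{s/2}}\|\mathfrak{F}_lf\|_{H^{s/2}}$. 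The sum over $l,p$ closes, and the remaining $2^{ks}$ is converted into the weight via the phase-space trichotomy ($|v_*|\ll2^k$, $|v_*|\gg2^k$, $|v_*|\sim2^k$) together with Theorem~\ref{baslem3}, yielding $|\mathcal{E}^0_2|\lesssim\|g\|_{L^1_s}\|f\|_{H^{s/2}_{s/2}}^2$. A one-step difference estimate either costs a full derivative or fails to produce the correct weight; the $H^{s/2}_{s/2}$ endpoint is exactly what is needed for the interpolation above. Your treatment of \eqref{cm2} is correct.
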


\begin{proof} 
By the geometric decomposition \eqref{geodecom} with  $u=r\tau$ and $\varsigma=\f{\sigma+\tau}{|\sigma+\tau|}\in \SS^2$, one has
\ben\label{defe20} \mathcal{E}^0_g(f)&\ge&  \f12\iint_{u,v_*\in\R^3,\sigma\in\SS^2} g_* b(\cos\theta)\big((T_{v_*}f)(r\varsigma)-(T_{v_*}f)(r\tau)\big)^2  d\sigma dv_*du\notag\\
&&\quad-
 \iint_{u,v_*\in\R^3,\sigma\in\SS^2} g_* b(\cos\theta)\big(f(v_*+u^+)-f(v_*+|u|\f{u^+}{|u^+|})\big)^2 d\sigma dv_*du\notag \\
&\eqdefa& \mathcal{E}^0_1-\mathcal{E}^0_2. \een

{\it Step 1: Estimate of $\mathcal{E}^0_1$. }
By change of variables, we have
\beno  \mathcal{E}^0_1\gtrsim \iint_{r>0,\tau,\sigma\in\SS^2,v_*\in\R^3} g_* b(\sigma\cdot \tau)\mathrm{1}_{\sigma\cdot \tau\ge0}\big((T_{v_*}f)(r\varsigma)-(T_{v_*}f)(r\tau))^2 r^2 d\sigma d\tau drdv_*. \eeno
For fixed $v_*$, $\tau\in \SS^2$ and $r$, if $\tau$ is chosen to be the polar direction, one has
\beno d\sigma= \sin\theta d\theta d\SS^1, d\varsigma=\sin \phi d\phi d\SS^1, \eeno
where $\theta=2\phi$. We deduce that
\beno d\sigma =4\cos\phi d\varsigma. \eeno
Thanks to the facts $b(\tau\cdot\sigma)\sim |\sigma-\tau|^{-(2+2s)}$ and $ |\sigma-\tau| \sim|\varsigma-\tau| $, we get
\beno  \mathcal{E}^0_1&\gtrsim& \iint_{v_*\in\R^3,r>0,\tau,\varsigma\in\SS^2} g_* |\varsigma-\tau|^{-(2+2s)}\big((T_{v_*}f)(r\varsigma)-(T_{v_*}f)(r\tau))^2 (4\varsigma\cdot\tau)\mathrm{1}_{|\varsigma-\tau|^2\le 2-\sqrt{2}} r^2 d\varsigma d\tau drdv_*\\
&\gtrsim& \iint_{v_*\in\R^3,r>0,\tau,\varsigma\in\SS^2} g_* |\varsigma-\tau|^{-(2+2s)}\big((T_{v_*}f)(r\varsigma)-(T_{v_*}f)(r\tau))^2\mathrm{1}_{|\varsigma-\tau|^2\le 2-\sqrt{2}} r^2 d\varsigma d\tau drdv_*\eeno
Thanks to Lemma \ref{antin2} and Lemma \ref{antin3}, we obtain that 
\beno \mathcal{E}^0_1&\gtrsim&  \int_{\R^3} g_*\|(-\triangle_{\SS^2})^{s/2}T_{v_*}f\|_{L^2}^2dv_*-\|g\|_{L^1}\|f\|_{L^2}^2\\
&\gtrsim&  \|g\|_{L^1_{-2s}} \|(-\triangle_{\SS^2})^{s/2} f\|_{L^2}^2-\|g\|_{L^1}\|f\|_{H^s}^2. \eeno

{\it Step 2: Estimate of $\mathcal{E}^0_2$. }  We introduce the dyadic decomposition in the frequency space.
Set \beno
\mathcal{E}^0_{2,k}(g,f)&\eqdefa& 2
 \iint_{u,v_*\in\R^3,\sigma\in\SS^2} g_* \varphi_k(u) b(\cos\theta)\big(f(v_*+u^+)-f(v_*+|u|\f{u^+}{|u^+|})\big)^2 d\sigma dv_*du \\
 &=& 2\sum_{l,p=-1}^\infty
 \iint_{u,v_*\in\R^3,\sigma\in\SS^2} g_* \varphi_k(u) b(\cos\theta) \big((\mathfrak{F}_{l}f)(v_*+u^+)-(\mathfrak{F}_{l}f)(v_*+|u|\f{u^+}{|u^+|})\big)
 \\&&\times\big((\mathfrak{F}_{p}f)(v_*+u^+)-(\mathfrak{F}_{p}f)(v_*+|u|\f{u^+}{|u^+|})\big)  d\sigma dv_*du\\
 &=& 2 \big(\sum_{l\le p}\mathcal{E}_{l,p}+\sum_{l> p}\mathcal{E}_{l,p}\big). \eeno
 By the symmetric property of $\mathcal{E}_{l,p}$, without loss of the generality, we assume $l\le p$.
 It is easy to check that \ben\label{elp}\mathcal{E}_{l,p}=\mathcal{E}_{l,p}^1+\mathcal{E}_{l,p}^2,  \een
where
\beno  \mathcal{E}_{l,p}^1&=&\iint_{u,v_*\in\R^3,\sigma\in\SS^2} g_* \varphi_k(u) b(\cos\theta) \big((\mathfrak{F}_{l}f)(v_*+u^+)-(\mathfrak{F}_{l}f)(v_*+|u|\f{u^+}{|u^+|})\big)
 \\&&\quad\times (\mathfrak{F}_{p}f)(v_*+u^+)   d\sigma dv_*du,\\
 \mathcal{E}_{l,p}^2&=&\iint_{u,v_*\in\R^3,\sigma\in\SS^2} g_* \varphi_k(u) b(\cos\theta) \big((\mathfrak{F}_{l}f)(v_*+u^+)-(\mathfrak{F}_{l}f)(v_*+|u|\f{u^+}{|u^+|})\big)
 \\&&\quad\times (\mathfrak{F}_{p}f)(v_*+|u|\f{u^+}{|u^+|}) d\sigma dv_*du.
  \eeno

{ \it Step 2.1:  Estimate of $\mathcal{E}_{l,p}^1$.} We introduce the function $\psi$ to split $\mathcal{E}_{l,p}^1$ into two parts:
$\mathcal{E}_{l,p}^{1,1}$ and $\mathcal{E}_{l,p}^{1,2}$ which are defined by
\beno \mathcal{E}_{l,p}^{1,1}&=&\iint_{u,v_*\in\R^3,\sigma\in\SS^2} g_* \varphi_k(u) b(\cos\theta) \big((\mathfrak{F}_{l}f)(v_*+u^+)-(\mathfrak{F}_{l}f)(v_*+|u|\f{u^+}{|u^+|})\big)
 \\&&\quad\times (\mathfrak{F}_{p}f)(v_*+u^+)\psi(2^{k/2}2^{l/2}\sqrt{1-\f{u}{|u|}\cdot \sigma})   d\sigma dv_*du,\\
   \mathcal{E}_{l,p}^{1,2}&=&\iint_{u,v_*\in\R^3,\sigma\in\SS^2} g_* \varphi_k(u) b(\cos\theta) \big((\mathfrak{F}_{l}f)(v_*+u^+)-(\mathfrak{F}_{l}f)(v_*+|u|\f{u^+}{|u^+|})\big)
 \\&&\quad\times (\mathfrak{F}_{p}f)(v_*+u^+)\big(1-\psi(2^{k/2}2^{l/2}\sqrt{1-\f{u}{|u|}\cdot \sigma})\big)   d\sigma dv_*du.\eeno
Observe that \ben &&\big|(\mathfrak{F}_{l}f)(v_*+u^+)-(\mathfrak{F}_{l}f)(v_*+|u|\f{u^+}{|u^+|})\big|\nonumber\\
&& \lesssim
\int_0^1d\kappa |\na (\mathfrak{F}_{l}f)|(v_*+u^+(\kappa+(1-\kappa)\cos^{-1}(\theta/2)))|u^+||1-\cos^{-1}(\theta/2)|.\label{taylor3}
\een
We have \beno |\mathcal{E}_{l,p}^{1,1}|&\lesssim& \bigg(\iint_{\kappa\in[0,1],u,v_*\in\R^3,\sigma\in\SS^2} |g_*| \varphi_k(u) b(\cos\theta) |\na (\mathfrak{F}_{l}f)|^2(v_*+u^+(\kappa+(1-\kappa)\cos^{-1}(\theta/2)))\\
&&\qquad\times|u^+| |1-\cos^{-1}\theta| \mathrm{1}_{|\theta|\lesssim 2^{-k/2}2^{-l/2}}d\kappa d\sigma dv_*du
\bigg)^{\f12} \\
&&\quad\times  \bigg(\iint_{\kappa\in[0,1],u,v_*\in\R^3,\sigma\in\SS^2}| g_* |\varphi_k(u) b(\cos\theta) |u^+| |1-\cos^{-1}(\theta/2)||(\mathfrak{F}_{p}f)(v_*+u^+)|^2\\
&&\qquad\times\mathrm{1}_{|\theta|\lesssim 2^{-k/2}2^{-l/2}}d\kappa d\sigma dv_*du\bigg)^{\f12}.
\eeno
Let $\tilde{u}=u^+(\kappa+(1-\kappa)\cos^{-1}(\theta/2))$. Then  by change of the variable from $u$ to $\tilde{u}$,  one gets
\ben\label{Jacobi3} |\f{d \tilde{u}}{du}|=|\f{d \tilde{u}}{du^+}||\f{d u^+}{du}|\sim 1. \een
Moreover, we have $|u|\sim |\tilde{u}|$. Thanks to this observation, we get
\beno |\mathcal{E}_{l,p}^{1,1}|&\lesssim& 2^{ k}\bigg(\iint_{\kappa\in[0,1],\tilde{u},v_*\in\R^3,\sigma\in\SS^2} |g_*|  |\na (\mathfrak{F}_{l}f)|^2(v_*+\tilde{u})b(\cos\theta)\theta^{2}
 \mathrm{1}_{|\theta|\lesssim 2^{-k/2}2^{-l/2}}d\kappa d\sigma dv_*d\tilde{u}
\bigg)^{\f12} \\
&&\quad\times  \bigg(\iint_{\kappa\in[0,1],\tilde{u},v_*\in\R^3,\sigma\in\SS^2} |g_*|   |(\mathfrak{F}_{p}f)(v_*+\tilde{u})|^2 b(\cos\theta)\theta^{2} \mathrm{1}_{|\theta|\lesssim 2^{-k/2}2^{-l/2}}d\kappa d\sigma dv_*d\tilde{u}\bigg)^{\f12}\\
&\lesssim& 2^{(s-1)k}2^{(s-1)l}2^k\|g\|_{L^1}\|\mathfrak{F}_{p}f\|_{L^2}\|\na \mathfrak{F}_{l}f\|_{L^2}\\&\lesssim&
 2^{(l-p)s/2}2^{ks}\|g\|_{L^1}\|\mathfrak{F}_{p}f\|_{H^{s/2}}\|\mathfrak{F}_{l}f\|_{H^{s/2}}.
\eeno

Now we turn to   the estimate of $\mathcal{E}_{l,p}^{1,2}$. 
 Follow the argument applied to $\mathfrak{D}_k^{1,2}$ in Lemma \ref{lemub1}, then we have 
\beno |\mathcal{E}_{l,p}^{1,2}| \lesssim  2^{(l-p)s/2}2^{ks}\|g\|_{L^1}\|\mathfrak{F}_{p}f\|_{H^{s/2}}\|\mathfrak{F}_{l}f\|_{H^{s/2}},\eeno
which implies
\beno |\mathcal{E}_{l,p}^{1}|\lesssim  2^{(l-p)s/2}2^{ks}\|g\|_{L^1}\|\mathfrak{F}_{p}f\|_{H^{s/2}}\|\mathfrak{F}_{l}f\|_{H^{s/2}}. \eeno

{\it Step 2.2: Estimate of $\mathcal{E}_{l,p}$.} The similar argument can be applied to  $\mathcal{E}_{l,p}^2$ to get
\beno |\mathcal{E}_{l,p}^{2}|\lesssim  2^{(l-p)s/2}2^{ks}\|g\|_{L^1}\|\mathfrak{F}_{p}f\|_{H^{s/2}}\|\mathfrak{F}_{l}f\|_{H^{s/2}}, \eeno
which yields
\beno |\mathcal{E}_{l,p}|\lesssim   2^{(l-p)s/2}2^{ks}\|g\|_{L^1}\|\mathfrak{F}_{p}f\|_{H^{s/2}}\|\mathfrak{F}_{l}f\|_{H^{s/2}}.   \eeno
\smallskip

We arrive at
\beno |\mathcal{E}^0_{2,k}(g,f)|\le 2(\sum_{l\le p} |\mathcal{E}_{l,p}|+\sum_{l\ge p} |\mathcal{E}_{l,p}|)\lesssim 2^{ks}\|g\|_{L^1}\| f\|_{H^{s/2}}^2. \eeno

\bigskip
Suppose  $|v_*|\sim 2^j$ and $|u|\sim 2^k$. Then thanks to the fact $|u|\sim |u^+|$, we have
\begin{itemize}
\item Case 1: $j\le k-N_0$. Then $|v_*+u^+|, |v_*+|u|\f{u^+}{|u^+|}|\sim  2^k$;
\item Case 2: $j\ge k+N_0$. Then $|v_*+u^+|, |v_*+|u|\f{u^+}{|u^+|}|\sim  2^j$;
\item Case 3: $|j-k|< N_0$. Then $|v_*+u^+|, |v_*+|u|\f{u^+}{|u^+|}|\le 2^{k+N_0}, |v'|\le 2^{k+N_0}.$
\end{itemize}
We get
\beno \mathcal{E}^0_{2} &= & \sum_{k=-1}^\infty\mathcal{E}^0_{2,k}(g, f)\\
&=&  \sum_{k<j-N_0}\mathcal{E}^0_{2,k}( \mathcal{P}_j g,\tilde{\mathcal{P}}_j f)   +\sum_{j<k-N_0}\mathcal{E}^0_{2,k}( \mathcal{P}_j g,\tilde{\mathcal{P}}_kf) +\sum_{k=-1}^\infty \mathcal{E}^0_{2,k}(\tilde{\mathcal{P}}_k g, \mathcal{U}_{k+N_0}f).\eeno
Then \beno  |\mathcal{E}^0_{2}|&\lesssim& \sum_{k<j-N_0}  2^{ks}\|\mathcal{P}_j g\|_{L^1}\| \tilde{\mathcal{P}}_j f\|_{H^{s/2}}^2  +\sum_{j<k-N_0} 2^{ks}\|\mathcal{P}_j g\|_{L^1}\|\tilde{\mathcal{P}}_k f\|_{H^{s/2}}^2\\&&\qquad +\sum_{k=-1}^\infty2^{ks}\|\tilde{\mathcal{P}}_kg\|_{L^1}\|\mathcal{U}_{k+N_0} f\|_{H^{s/2}}^2 \\
&\lesssim& \|g\|_{L^1_s}\|f\|_{H^{s/2}_{s/2}}^2,
 \eeno
where we use Theorem \ref{baslem3}.
\medskip

Patch together the estimates of $\mathcal{E}^0_{1}$ and $\mathcal{E}^0_{2}$, then we finally get
\beno \mathcal{E}^0_g(f)&\gtrsim&  \|g\|_{L^1_{-2s}} \|(-\triangle_{\SS^2})^{s/2} f\|_{L^2}^2-\|g\|_{L^1_s}(\|f\|_{H^s}^2+ \|f\|_{H^{s/2}_{s/2}}^2)\\
&\gtrsim&\|g\|_{L^1_{-2s}} \|(-\triangle_{\SS^2})^{s/2} f\|_{L^2}^2-\|g\|_{L^1_s}(\eta^{-1}\|f\|_{H^s}^2+\eta \|f\|_{L^2_s}^2).\eeno
Thanks to Corollary 3 and Proposition 2 in \cite{advw}, we deduce that
\beno \mathcal{E}^0_g(f)+\|f\|_{L^2}^2\gtrsim \mathcal{C}_3(g) \|f\|_{H^s}^2. \eeno
From which together with the previous lower bound, we are led to the desired result. 
\end{proof}

\begin{lem} \label{comparelem}  Suppose the angular function $b$ verifies the conditions $ \int_ 0^{\pi/2} b(\cos\theta)\sin\theta \theta^2 d\theta<\infty$ and
\ben\label{asymsym}  
&& 1+\int_{\sigma\in\SS^2} b(\tau\cdot\sigma)\min\{|\xi|^2|\tau-\sigma|^2,1\}d\sigma\sim  1+\int_{\sigma\in\SS^2} b(2(\tau\cdot\sigma)^2-1)\min\{|\xi|^2|\tau-\sigma|^2,1\}d\sigma
\sim W^2(\xi),\een 
where $\tau\in \SS^2$
and $W$ is a radial function satisfying  $W(|\xi||\zeta|)\lesssim W(|\xi|)W(|\zeta|)$ and $W(\xi)\le \langle \xi\rangle$.
Then for any smooth function $g$, it holds
\beno  | \mathcal{E}^0_g(f)| \lesssim \|g\|_{L^1} \mathcal{E}^0_\mu(f)+\|W^2g\|_{L^1}\|W(D)f\|_{L^2}^2,\eeno
 
If $g$ is a non-negative function verifying the condition \eqref{lbc},  then there exist constants $C(\lambda,\delta)$ and $C(\lambda)$ such that
\beno  C(\lambda,\delta)\mathcal{E}^0_\mu(f)-C(\lambda)\|f\|_{L^2}^2\lesssim \mathcal{E}^0_g(f)\lesssim   C(\lambda)(\mathcal{E}^0_\mu(f)+\|f\|_{L^2}^2),\eeno
in other words, $\mathcal{E}^0_\mu(f)+\|f\|_{L^2}^2\sim \mathcal{E}^0_g(f)+\|f\|_{L^2}^2.$
\end{lem}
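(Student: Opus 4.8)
The plan is to compare $\mathcal{E}^0_g(f)$ to $\mathcal{E}^0_\mu(f)$ by passing through the Fourier side via the Bobylev formula, exactly in the spirit of \eqref{bobylev} with $\gamma=0$ and $k=-1$. First I would write, by the Bobylev identity applied to the quadratic functional,
\beno
\mathcal{E}^0_g(f)=\iint_{\sigma\in\SS^2,\eta_*,\xi\in\R^3} b\Big(\tfrac{\xi}{|\xi|}\cdot\sigma\Big)\big[\widehat g(\eta_*-\xi^-)-\widehat g(\eta_*)\big]\,\widehat f(\xi-\eta_*)\,\overline{\widehat f(\xi)}\,d\sigma d\eta_* d\xi
\eeno
plus its symmetrization; equivalently one keeps the ``squared difference'' form so that $\mathcal{E}^0_g(f)\ge0$ is manifest and the kernel is $b(\tfrac{\xi}{|\xi|}\cdot\sigma)$ acting on $|\widehat f(\xi)-\widehat f(\xi^+)|^2$-type quantities after splitting. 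The point of assumption \eqref{asymsym} is that the ``symbol'' produced by integrating $b$ against $\min\{|\xi|^2|\tau-\sigma|^2,1\}$ is comparable for the true Boltzmann kernel $b(\tau\cdot\sigma)$ and for the geometrically-transformed kernel $b(2(\tau\cdot\sigma)^2-1)$, and both are comparable to a fixed radial multiplier $W^2(\xi)$.

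Next I would separate the small-angle (Taylor) part from the bounded-angle part of $\mathcal{E}^0_g$, just as in the proof of Lemma~\ref{coermaxw1}: writing $f'-f$ via \eqref{taylor1}/\eqref{taylor3} and using the cancellation \eqref{vequ}, the difference $\widehat g(\eta_*-\xi^-)-\widehat g(\eta_*)$ is handled by the mean value theorem, producing at worst a factor $|\xi^-|\,\|\nabla\widehat g\|\lesssim \min\{|\xi||\tau-\sigma|,\cdots\}\,\|g\|_{L^1_1}$; collecting the $\sigma$-integral against $b$ gives precisely the multiplier on the left of \eqref{asymsym}. The term where one replaces $\widehat g(\eta_*-\xi^-)$ by $\widehat g(\eta_*)$ factors out $\|g\|_{L^1}=\widehat g(0)$ (or $\|\,g\,\|_{L^1}$ after taking absolute values) times the kernel $b(\tfrac{\xi}{|\xi|}\cdot\sigma)$ acting on $f$ alone, which is $\mathcal{E}^0_\mu(f)$ up to the Maxwellian weight that disappears on the Fourier side for $\gamma=0$ (one uses that $\widehat\mu$ is Gaussian and $\mathcal{E}^0_\mu(f)\sim \|W(D)f\|_{L^2}^2$, a fact already available from Lemma~\ref{coermaxw1} with $g=\mu$, since $\mu$ satisfies \eqref{lbc}). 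Assembling these pieces yields
\beno
|\mathcal{E}^0_g(f)|\lesssim \|g\|_{L^1}\,\mathcal{E}^0_\mu(f)+\|W^2 g\|_{L^1}\,\|W(D)f\|_{L^2}^2,
\eeno
which is the first assertion, since $\|W(D)f\|_{L^2}^2\lesssim \mathcal{E}^0_\mu(f)+\|f\|_{L^2}^2$ (again by Lemma~\ref{coermaxw1} applied to $\mu$, together with $W(\xi)\le\langle\xi\rangle$ and $W(|\xi||\zeta|)\lesssim W(|\xi|)W(|\zeta|)$).

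For the lower bound, I would exploit that $g$ is nonnegative and satisfies \eqref{lbc}: decompose $\mathcal{E}^0_g(f)$ using the geometric decomposition \eqref{geodecom} as in \eqref{defe20}, $\mathcal{E}^0_g(f)\ge \mathcal{E}^0_1-\mathcal{E}^0_2$; the principal part $\mathcal{E}^0_1$ is bounded below, after the change of variables $\sigma\mapsto\varsigma$ and Lemmas~\ref{antin2}--\ref{antin3}, by $c\,\|g\|_{L^1_{-2s}}\|(-\triangle_{\SS^2})^{s/2}f\|_{L^2}^2$ minus lower-order terms, while $\mathcal{E}^0_2$ is an error of size $\lesssim \|g\|_{L^1_s}(\eta^{-1}\|f\|_{H^s}^2+\eta\|f\|_{L^2_s}^2)$ — but this is circular, so instead I would invoke directly Corollary~3 and Proposition~2 of \cite{advw} (as in the end of Lemma~\ref{coermaxw1}) to get $\mathcal{E}^0_g(f)+\|f\|_{L^2}^2\gtrsim \mathcal{C}_3(g)\|f\|_{H^s}^2\gtrsim C(\lambda,\delta)\|f\|_{H^s}^2$, hence $\mathcal{E}^0_g(f)+\|f\|_{L^2}^2\gtrsim C(\lambda,\delta)\,\mathcal{E}^0_\mu(f)$ because $\mathcal{E}^0_\mu(f)\lesssim \|f\|_{H^s}^2$. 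Combining with the upper bound applied to both $g$ and to the choice exhibiting $\mu$ gives $\mathcal{E}^0_\mu(f)+\|f\|_{L^2}^2\sim \mathcal{E}^0_g(f)+\|f\|_{L^2}^2$. The main obstacle I anticipate is the bookkeeping in \eqref{asymsym}: one must show that integrating the \emph{transformed} kernel $b(2(\tau\cdot\sigma)^2-1)\mathbf{1}_{\tau\cdot\sigma\ge\sqrt2/2}$ against $\min\{|\xi|^2|\tau-\sigma|^2,1\}$ really does produce the same radial profile $W^2(\xi)$ as the original kernel — this is exactly where the geometric change of variables $d\sigma=4\cos\phi\,d\varsigma$ and the comparability $|\sigma-\tau|\sim|\varsigma-\tau|$ from the introduction must be used carefully, and where one verifies that no extra weight is generated for $\gamma=0$, keeping the estimate clean enough to later take the grazing limit.
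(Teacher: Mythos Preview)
Your strategy is broadly right---compare $\mathcal{E}^0_g$ and $\mathcal{E}^0_\mu$ via Bobylev---but the execution diverges from the paper's in a way that creates real gaps.

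\textbf{The correct Bobylev identity and the algebraic comparison.} The bilinear formula \eqref{bobylev} you cite is not the one to use. For the quadratic functional $\mathcal{E}^0_g(f)$ the paper (following \cite{amuxy1}) uses the identity
\[
\mathcal{E}^0_g(f)=\frac{1}{(2\pi)^3}\iint b\Bigl(\tfrac{\xi}{|\xi|}\cdot\sigma\Bigr)\Bigl(\hat g(0)\,|\hat f(\xi)-\hat f(\xi^+)|^2+2\,\mathrm{Re}\bigl((\hat g(0)-\hat g(\xi^-))\hat f(\xi^+)\overline{\hat f(\xi)}\bigr)\Bigr)\,d\xi\,d\sigma.
\]
The point is that the \emph{main} term $\hat g(0)\int b|\hat f(\xi)-\hat f(\xi^+)|^2$ is the same integral for every $g$, up to the scalar $\hat g(0)=\|g\|_{L^1}$. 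Subtracting the $g$-version and the $\mu$-version gives the algebraic identity
\[
\|\mu\|_{L^1}\mathcal{E}^0_g(f)=\|g\|_{L^1}\bigl(\mathcal{E}^0_\mu(f)-c\,I_1\bigr)+c\,I_2,
\]
where $I_1,I_2$ are cross terms involving $\hat\mu(0)-\hat\mu(\xi^-)$ and $\hat g(0)-\hat g(\xi^-)$. This single identity yields \emph{both} directions of the comparison at once; there is no need for a separate lower-bound mechanism via the geometric decomposition or via $\|f\|_{H^s}$.

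\textbf{Where \eqref{asymsym} actually enters.} You place the ``main obstacle'' at the physical-space change of variables $\sigma\mapsto\varsigma$. That change is irrelevant to this lemma. The transformed kernel $b(2(\tau\cdot\sigma)^2-1)$ in \eqref{asymsym} arises on the Fourier side: when you bound the cross term $I_1$ by Cauchy--Schwarz and pass from $\xi$ to $\xi^+$, you use $\tfrac{\xi}{|\xi|}\cdot\sigma=2(\tfrac{\xi^+}{|\xi^+|}\cdot\sigma)^2-1$ and $|\xi^-|^2\sim|\xi^+|^2|\tfrac{\xi^+}{|\xi^+|}-\sigma|^2$. The two halves of \eqref{asymsym} are designed exactly so that the $\sigma$-integral of $b$ against $\min\{|v|^2|\xi|^2|\tau-\sigma|^2,1\}$ is comparable before and after this change of variables; the submultiplicativity $W(|\xi||v|)\lesssim W(|\xi|)W(|v|)$ then separates the $v$- and $\xi$-integrals into $\|W^2g\|_{L^1}\|W(D)f\|_{L^2}^2$. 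Your mean-value estimate $|\hat g(\eta_*-\xi^-)-\hat g(\eta_*)|\lesssim|\xi^-|\|g\|_{L^1_1}$ does not produce this structure; one needs $\mathrm{Re}(\hat g(0)-\hat g(\xi^-))=\int(1-\cos(v\cdot\xi^-))g\,dv$ and the bound $1-\cos t\lesssim\min\{t^2,1\}$.

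\textbf{The lower bound under general $W$.} Your fallback argument invokes $\mathcal{E}^0_g+\|f\|_{L^2}^2\gtrsim C(\lambda,\delta)\|f\|_{H^s}^2$ and $\mathcal{E}^0_\mu\lesssim\|f\|_{H^s}^2$. Both statements presuppose $W(\xi)\sim\langle\xi\rangle^s$, i.e.\ assumption \eqref{a2}. The lemma is stated for the general $W$ appearing in \eqref{asymsym} (in particular it must cover \eqref{ab2}, where $W$ behaves like $\langle\xi\rangle$ on low frequencies), so this route fails as written. The paper instead uses the coercivity $\mathcal{E}^0_g(f)+\|f\|_{L^2}^2\gtrsim\mathcal{C}_3(g)\|W(D)f\|_{L^2}^2$ (the ADVW argument adapted to the symbol $W$), applied to both $g$ and $\mu$, and feeds it back into the two-sided algebraic comparison above to absorb the $\|W(D)f\|_{L^2}^2$ error terms.
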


\begin{rmk} We remark that \eqref{asymsym} holds under the assumption \eqref{a2} or \eqref{abc2} or
\eqref{ab2}.   Moreover we have
\beno W(\xi)=\left\{\begin{aligned}    &  \langle \xi\rangle^s, \quad \mbox{under the assumption \eqref{a2};} \\ &\psi(\epsilon \xi)\langle \xi\rangle^s+\epsilon^{-s}(1-\psi(\epsilon \xi)), 
 \quad \mbox{under the  assumption \eqref{abc2};} \\& \psi(\epsilon \xi)\langle \xi\rangle+\epsilon^{s-1}(1-\psi(\epsilon \xi))\langle \xi\rangle^{s},  \quad \mbox{under the assumption \eqref{ab2}.}  \end{aligned}\right. 
\eeno  We recall that the function $\psi$ is defined in \eqref{defpsivarphi}. It is easy to check that for all the cases, the symbol function $W$ satisfies the properties: $W(|\xi||\zeta|)\lesssim W(|\xi|)W(|\zeta|)$ and $W(\xi)\le \langle \xi\rangle$. \end{rmk}

\begin{proof} The proof is inspired by \cite{amuxy1}. Without loss of generality, we assume that the function $g$ is non-negative. By Bobylev's formula \eqref{bobylev},
 we have
 \ben\label{boby} &&  \mathcal{E}^0_g(f)\notag\\&&=\f1{(2\pi)^3}\iint_{\xi\in\R^3,\sigma\in\SS^2} b(\f{\xi}{|\xi|}\cdot \sigma)\bigg(\hat{g}(0)|\hat{f}(\xi)-\hat{f}(\xi^+)|^2+
 2\mathrm{Re}\big( (\hat{g}(0)-\hat{g}(\xi^-))\hat{f}(\xi^+)\bar{\hat{f}} (\xi)\big)\bigg)d\xi d\sigma.\een
 We recall that $\xi^-=\f{\xi-|\xi|\sigma}2$ and $\xi^+=\f{\xi+|\xi|\sigma}2$.
 
It implies
\beno  \|\mu\|_{L^1}\mathcal{E}^0_g(f)=\|g\|_{L^1}\big(\mathcal{E}^0_\mu(f)-\f{1}{(2\pi)^3}I_1)+\f{2\|\mu\|_{L^1}}{(2\pi)^3}I_2,\eeno
where
\beno  I_1=\iint_{\xi\in\R^3,\sigma\in\SS^2} b(\f{\xi}{|\xi|}\cdot \sigma)\mathrm{Re}\big( (\hat{\mu}(0)-\hat{\mu}(\xi^-))\hat{f}(\xi^+)\bar{\hat{f}} (\xi) \big)d\sigma d\xi,\eeno
and
\beno I_2&=&\iint_{\xi\in\R^3,\sigma\in\SS^2} b(\f{\xi}{|\xi|}\cdot \sigma)\mathrm{Re}\big( (\hat{g}(0)-\hat{g}(\xi^-))(\hat{f}(\xi^+)-\hat{f}(\xi))\bar{\hat{f}} (\xi)\big)d\xi d\sigma\\&&+\iint_{\xi\in\R^3,\sigma\in\SS^2} b(\f{\xi}{|\xi|}\cdot \sigma)\mathrm{Re}(\hat{g}(0)-\hat{g}(\xi^-))|\hat{f}(\xi)|^2d\xi d\sigma\\&\eqdefa&
I_{2,1}+I_{2,2}.\eeno

Thanks to the fact $\hat{\mu}(0)-\hat{\mu}(\xi^-)=\int_{ \R^3} (1-\cos (v\cdot \xi^-))\mu(v)dv$, we have
\beno  |I_1|&=&\bigg|\iint_{v,\xi\in\R^3,\sigma\in\SS^2} b(\f{\xi}{|\xi|}\cdot \sigma)(1-\cos (v\cdot \xi^-))\mu(v)\mathrm{Re}(\hat{f}(\xi^+)\bar{\hat{f}} (\xi))d\sigma d\xi dv \bigg|\\&\lesssim&
\bigg(\iint_{v,\xi\in\R^3,\sigma\in\SS^2} b(\f{\xi}{|\xi|}\cdot \sigma)(1-\cos (v\cdot \xi^-))\mu(v)|\hat{f}(\xi^+)|^2d\sigma d\xi dv\bigg)^{1/2}\\&&\times
\bigg(\iint_{v,\xi\in\R^3,\sigma\in\SS^2} b(\f{\xi}{|\xi|}\cdot \sigma)(1-\cos (v\cdot \xi^-))\mu(v)|\hat{f}(\xi)|^2d\sigma d\xi dv\bigg)^{1/2}.
\eeno
Observe that \beno (1-\cos (v\cdot \xi^-))\lesssim |v|^2|\xi^-|^2 \le |v|^2|\xi|^2|\f{\xi}{|\xi|}-\sigma|^2 \sim |v|^2|\xi^+|^2|\f{\xi^+}{|\xi^+|}-\sigma|^2 \eeno
and \beno \f{\xi}{|\xi|}\cdot \sigma=2 (\f{\xi^+}{|\xi^+|}\cdot\sigma)^2-1.\eeno
Then by change of the variable from $\xi$ to $\xi^+$, the assumption \eqref{asymsym} and the property  $W(|\xi||\zeta|)\lesssim W(|\xi|)W(|\zeta|)$, we have
\beno  |I_1|&\lesssim& \iint_{v,\xi\in\R^3} W^2(|v||\xi|)|\hat{f}(\xi)|^2\mu(v)dvd\xi
\\
&\lesssim&  \|W^2\mu\|_{L^1}\|W(D)f\|_{L^2}^2.\eeno
 
 Notice that $\mathrm{Re} (\hat{g}(0)-\hat{g}(\xi^-))=\int_{ \R^3}  (1-\cos (v\cdot \xi^-))g(v)dv$. The similar argument can be applied to get
\beno |I_{2,2}|\lesssim \|W^2g\|_{L^1}\|W(D)f\|_{L^2}^2 \eeno

Next, by Cauchy-Schwartz inequality, one has
\beno |I_{2,1}|&\lesssim& \big(\iint_{\xi\in\R^3,\sigma\in\SS^2} b(\cos\theta)|\hat{g}(0)-\hat{g}(\xi^-)|^2|\hat{f}(\xi)|^2d\sigma d\xi\big)^{\f12}\\&&\times \bigg(\iint_{\xi\in\R^3,\sigma\in\SS^2} b(\cos\theta)|\hat{f}(\xi)-\hat{f}(\xi^+)|^2d\sigma d\xi\bigg)^\f12\eqdefa (I_{2,1}^1)^\f12 (I_{2,1}^2)^\f12.\eeno
Observe that $\hat{g}(0)-\hat{g}(\xi^-)=\int_{\R^3} (1-e^{-iv\cdot \xi^-})g(v)dv$, then it holds
\beno I_{2,1}^1&\lesssim& \iint_{v,w,\xi\in\R^3,\sigma\in\SS^2} b(\cos\theta)g(v)g(w)(|1-e^{-iv\cdot \xi^-}|^2+|1-e^{-iw\cdot \xi^-}|^2)|\hat{f}(\xi)|^2d\sigma d\xi dv dw\\
&\lesssim&\|g\|_{L^1}\|W^2g\|_{L^1}\|W(D)f\|_{L^2}^2. \eeno
Thanks to \eqref{boby}, we have
\beno \f{1}{(2\pi)^3}\|\mu\|_{L^1}I_{2,1}^2=\mathcal{E}^0_\mu(f)-\f{1}{(2\pi)^3}I_1, \eeno
which implies
\beno  I_{2,1}^2\lesssim \mathcal{E}^0_\mu(f)+\|W^2\mu\|_{L^1}\|W(D)f\|_{L^2}^2.\eeno
We get
\beno |I_{2,1}|\lesssim \eta \|g\|_{L^1} \mathcal{E}^0_\mu(f)+\eta^{-1} (\|W^2g\|_{L^1}+\|g\|_{L^1})\|W(D)f\|_{L^2}^2.  \eeno

Combining the above estimates, we arrive at \ben\label{comequi}  &&\|\mu\|_{L^1}\mathcal{E}^0_g(f)-C(\eta) \|W^2g\|_{L^1}\|W(D)f\|_{L^2}^2\notag\\&&
\lesssim (1-\eta)\|g\|_{L^1}\mathcal{E}^0_\mu(f) \lesssim \|\mu\|_{L^1}\mathcal{E}^o_g(f)+C(\eta) \|W^2g\|_{L^1} \|W(D)f\|_{L^2}^2, \een
which is enough to derive the first inequality in the lemma. Moreover, if the function $g$ verifies the condition \eqref{lbc},  then by the computation in \cite{advw} and the assumption \eqref{asymsym}, we have
\ben\label{comequi1}  \mathcal{E}^0_g(f)+\|f\|_{L^2}^2\gtrsim \mathcal{C}_3(g)\|W(D)f\|_{L^2}^2.\een From which together with \eqref{comequi}, we get the equivalence in the lemma.
\end{proof}
 
In the next lemma, we will show that the lower bound of $\mathcal{E}^\gamma_g(f)$ can be reduced to the lower bound of $ \mathcal{E}^0_\mu(W_{\gamma/2}f)$.
\begin{lem}\label{reducmaxwellian} Suppose that the angular function $b$ verifies the same conditions in Lemma \ref{comparelem} and $g$ is a non-negative function verifying the condition \eqref{lbc}. Then there exists a constant $C(\lambda, \delta)$ such that 
\beno  \mathcal{E}^0_\mu(W_{\gamma/2}f)\le C(\lambda, \delta)\big(\|f\|^2_{L^2_{\gamma/2}}+\mathcal{E}^\gamma_g(f)\big).\eeno
\end{lem}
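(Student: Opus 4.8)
The plan is to reduce the weighted inequality for $\mathcal{E}^\gamma_g(f)$ to the unweighted Maxwellian estimate $\mathcal{E}^0_\mu(W_{\gamma/2}f)$ already established in Lemma \ref{comparelem}. The natural first move is to insert the weight: writing $F\eqdefa W_{\gamma/2}f=\langle v\rangle^{\gamma/2}f$, I want to compare $\mathcal{E}^\gamma_g(f)$ with a Maxwellian-type elliptic functional applied to $F$. Recall
\[
\mathcal{E}^\gamma_g(f)=\f12\iint_{v,v_*\in\R^3,\sigma\in\SS^2}|v-v_*|^\gamma b(\cos\theta)g_*(f'-f)^2\,d\sigma\,dv_*\,dv.
\]
The first key step is to remove the kinetic factor $|v-v_*|^\gamma$ by comparing it with a suitable product of weights. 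On the support of the collision (where $v'-v\neq 0$, i.e. $\theta$ bounded away from $0$ is not assumed, but $|v-v'|=|v-v_*|\sin(\theta/2)$), one has the standard bound $|v-v_*|^\gamma\gtrsim \langle v\rangle^\gamma\langle v_*\rangle^{-|\gamma|}$ for $\gamma<0$ (and $|v-v_*|^\gamma\gtrsim c$ on a suitable region for $\gamma\ge 0$, which is easier). Thus, up to absorbing powers of $\langle v_*\rangle$ into $g$ (legitimate because $g$ satisfies \eqref{lbc}, so $\langle v_*\rangle^{|\gamma|}g$ is still controlled in the relevant norms — or one simply restricts to $|v_*|\lesssim 1$ where $g$ retains positive mass, exactly as in the construction of $\mathcal{C}_1(g),\mathcal{C}_2(g)$ in Lemma \ref{coermaxw1}), I can bound from below by a functional of the form $\iint \langle v\rangle^\gamma b(\cos\theta)\tilde g_*(f'-f)^2$.

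The second, and main, step is the commutator estimate: I must pass the weight $\langle v\rangle^{\gamma/2}$ inside the difference $f'-f$. Write $f'=\langle v'\rangle^{-\gamma/2}F'$ and $f=\langle v\rangle^{-\gamma/2}F$, so that
\[
f'-f=\langle v\rangle^{-\gamma/2}(F'-F)+(\langle v'\rangle^{-\gamma/2}-\langle v\rangle^{-\gamma/2})F'.
\]
Squaring and using $2ab\ge -a^2-b^2$ (or rather the reverse, since we want a lower bound, $ (a+b)^2 \ge \tfrac12 a^2 - b^2$), the leading term gives $\gtrsim \langle v\rangle^{-\gamma}(F'-F)^2$, which combines with the leftover $\langle v\rangle^{\gamma}$ to produce exactly $(F'-F)^2$, i.e. the Maxwellian integrand for $F$; the cross/error term involves $|\langle v'\rangle^{-\gamma/2}-\langle v\rangle^{-\gamma/2}|^2|F'|^2$. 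The crucial point is that $\big|\langle v'\rangle^{-\gamma/2}-\langle v\rangle^{-\gamma/2}\big|\lesssim |v-v'|\,\langle v\rangle^{-\gamma/2-1}\lesssim \theta\,|v-v_*|\,\langle v\rangle^{-\gamma/2-1}$ (using Proposition \ref{baslem5} or an elementary mean value estimate since $-\gamma/2\le 3/2$), and the extra factor $\theta^2$ kills the angular singularity: $\int_{\SS^2}b(\cos\theta)\theta^2\,d\sigma<\infty$. After the change of variables $v\mapsto v'$ (Jacobian bounded above and below), this error term is controlled by $\iint |v-v_*|^{\gamma+2}\langle v'\rangle^{-\gamma-2}|F'|^2 g_*\lesssim \|f\|_{L^2_{\gamma/2}}^2$ times a constant depending on $\|g\|_{L^1_2}$, which by \eqref{lbc} is $\le C(\lambda,\delta)$ (here one needs $\gamma+2\ge 0$ and uses $\langle v'\rangle\sim\langle v\rangle$ on the relevant region plus $|v-v_*|^{\gamma+2}\lesssim\langle v_*\rangle^{\gamma+2}\langle v\rangle^{\gamma+2}$ to shift weights onto $g$).

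Assembling these, I obtain $\mathcal{E}^\gamma_g(f)\gtrsim c\,\mathcal{E}^0_{\tilde g}(W_{\gamma/2}f)-C(\lambda,\delta)\|f\|_{L^2_{\gamma/2}}^2$ for the truncated density $\tilde g$; since $\tilde g$ also satisfies a lower-mass/upper-bound condition of the type \eqref{lbc} (with constants depending only on $\lambda,\delta,\gamma$), Lemma \ref{comparelem} applies to give $\mathcal{E}^0_{\tilde g}(W_{\gamma/2}f)+\|W_{\gamma/2}f\|_{L^2}^2\sim \mathcal{E}^0_\mu(W_{\gamma/2}f)+\|W_{\gamma/2}f\|_{L^2}^2$, and rearranging yields $\mathcal{E}^0_\mu(W_{\gamma/2}f)\le C(\lambda,\delta)(\|f\|_{L^2_{\gamma/2}}^2+\mathcal{E}^\gamma_g(f))$, which is the claim. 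I expect the main obstacle to be the bookkeeping in the commutator step — in particular making the weight-shifting onto $g$ clean enough that only the $L^1_2$ (and $L^1$, $L\log L$) norms of $g$ enter, so that the condition \eqref{lbc} suffices and no regularity of $g$ is needed; the angular integrability $\int b(\cos\theta)\theta^2\,d\theta<\infty$ is what makes this possible, and one must be careful that it is invoked correctly after the $v\mapsto v'$ change of variables (with the angle $\tilde\theta$ between $v'-v_*$ and $\sigma$, exactly as in the estimates \eqref{angular1}--\eqref{angular2} of Section 2).
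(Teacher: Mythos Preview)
Your commutator strategy and the reduction to Lemma~\ref{comparelem} are exactly what the paper does, and for $\gamma\le 0$ your sketch is essentially correct: localizing $v_*$ to a ball where $g$ retains mass, using $|v-v_*|^\gamma\gtrsim\langle v\rangle^\gamma$ (since $|v-v_*|\lesssim\langle v\rangle\langle v_*\rangle$), and controlling the weight--commutator error by $\int b(\cos\theta)\theta^2\,d\theta<\infty$ matches the paper's Case~1 and its soft-potential Case~2.

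The genuine gap is your treatment of $\gamma>0$, which you dismiss as ``easier'' via $|v-v_*|^\gamma\gtrsim c$ on a suitable region. This fails precisely where it matters: once you restrict $|v_*|\le R$ to keep mass in $g$, nothing prevents $v$ from being close to $v_*$ inside the bounded region $|v|\lesssim R$, and there $|v-v_*|^\gamma\to 0$. You cannot get the required lower bound by a single localization. The paper's remedy is a finite covering argument in the spirit of \cite{advw}: cover $B_{2R}$ by balls $\{|v-v_j|\le r_0\}$, and for each ball take $g$ supported in $B_j=B_{3R}\setminus\{|v-v_j|\le 3r_0\}$, so that $|v-v_*|\ge 2r_0$ and hence $|v-v_*|^\gamma\ge (2r_0)^\gamma$. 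One must then choose $r_0$ small enough (using $\|g\|_{L\log L}\le\lambda$) that each $g\chi_{B_j}$ still satisfies a lower-mass condition, apply Lemma~\ref{comparelem} on each piece, and sum over the $N\sim(R/r_0)^3$ balls. This is the more delicate half of the proof, not the easier one.

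A secondary point: your commutator error bound invokes $\langle v\rangle\sim\langle v'\rangle$ and $|v-v_*|^{\gamma+2}\lesssim\langle v\rangle^{\gamma+2}\langle v_*\rangle^{\gamma+2}$ somewhat loosely. The paper makes this clean by first localizing so that $|v|\sim|\kappa(v)|\sim|v-v_*|$ along the interpolation path (this is automatic once $|v_*|\le R/4$ and $|v|\ge R$, or in the bounded region once the covering is in place), after which the mean-value estimate gives exactly $\langle\kappa(v)\rangle^{\gamma-2}|v-v_*|^2\theta^2\lesssim\langle v'\rangle^\gamma\theta^2$ without any sign restriction on $\gamma+2$.
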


\begin{proof}
 Let $\chi$ be a radial and smooth function such that $0\le \chi\le 1$, $\chi=1$ on $B_1$ and $\mbox{Supp} \chi \subset B_2$. We set $\chi_R(v)=\chi(v/R)$. We recall the notation: $W_l(v)=\langle v\rangle^l$. 
 
{\it Case 1: $|v|$ is sufficiently large.} 
It is easy to check
\beno \mathcal{E}^\gamma_{g}(f)&\gtrsim&  \iint_{v,v_*\in\R^3,\sigma\in\SS^2} |v-v_*|^\gamma (g\chi_{\f{R}8})_* b(\cos\theta)(f'-f)^2(1-\chi_R)^2 d\sigma dv_*dv\\&\gtrsim&  \iint_{v,v_*\in\R^3,\sigma\in\SS^2} W_{\gamma/2}^2 (g\chi_{\f{R}8})_* b(\cos\theta)(f'-f)^2(1-\chi_R)^2 d\sigma dv_*dv.\eeno
Thanks to the inequality $(a-b)^2\ge \f12a^2-b^2$, we obtain that
\beno \mathcal{E}^\gamma_{g}(f)&\ge&\f12 \iint_{v,v_*\in\R^3,\sigma\in\SS^2} (g\chi_{\f{R}8})_* b(\cos\theta)\big( (W_{\gamma/2}(1-\chi_R)f)'-W_{\gamma/2}( 1-\chi_R)f\big)^2 d\sigma dv_*dv\\
&&\quad-2 \iint_{v,v_*\in\R^3,\sigma\in\SS^2} (g\chi_{\f{R}8})_* b(\cos\theta)f'^2\big( (W_{\gamma/2}(1-\chi_R))'  -W_{\gamma/2}(1-\chi_R)\big)^2 d\sigma dv_*dv. \eeno

Suppose that $
\kappa(v)=v+\kappa(v'-v)$ with $\kappa\in[0,1]$. It is easy to check that $\f{\sqrt{2}}2|v-v_*|\le |v'-v_*|\le|\kappa(v)-v_*|\le |v-v_*|$. Since now   $|v_*|\le R/4$, then if $|v|\ge R$, we have $|v|\sim |\kappa(v)|\sim |v-v_*|$. Similarly if $|v'|\ge R$, we have  $|v'|\sim |\kappa(v)|\sim |v-v_*|$. Then in both cases, we have $|v'|\sim |v-v_*|\sim |\kappa(v)|$.
By the Mean Value Theorem, we get
\beno &&\bigg|\iint_{v,v_*\in\R^3,\sigma\in\SS^2} (g\chi_{\f{R}8})_* b(\cos\theta)f'^2\big((W_{\gamma/2}(1-\chi_R))'  -W_{\gamma/2}(1-\chi_R)\big)^2 d\sigma dv_*dv\bigg|\\
&&\quad\lesssim \iint_{v,v_*\in\R^3,\sigma\in\SS^2} (g\chi_{\f{R}8})_* b(\cos\theta)f'^2 \langle \kappa(v) \rangle^{\gamma-2}|v-v_*|^2\theta^2 \mathrm{1}_{|v'|\sim |v-v_*|\sim |\kappa(v)|} d\sigma dv_*dv
\\
&&\quad\lesssim \iint_{v,v_*\in\R^3,\sigma\in\SS^2} (g\chi_{\f{R}8})_* b(\cos\theta)\theta^2f'^2W_{\gamma}' d\sigma dv_*dv\\
&&\quad\lesssim  \|g\|_{L^1}\|f\|_{L^2_{\gamma/2}}^2.\eeno
Thus we arrive at
\beno \iint_{v,v_*\in\R^3,\sigma\in\SS^2} (g\chi_{\f{R}8})_* b(\cos\theta)\big((W_{\gamma/2}(1-\chi_R)f)'-W_{\gamma/2}( 1-\chi_R)f\big)^2 d\sigma dv_*dv\lesssim \|g\|_{L^1}\|f\|_{L^2_{\gamma/2}}^2+ \mathcal{E}^\gamma_{g}(f), \eeno  
that is, \ben\label{l2}  \mathcal{E}^0_{g\chi_{\f{R}8}}( ( 1-\chi_R)W_{\gamma/2}f)\lesssim \|g\|_{L^1}\|f\|_{L^2_{\gamma/2}}^2+ \mathcal{E}^\gamma_{g}(f). \een

{\it Case 2: $|v|$ is bounded.} Let $A,B$ be the subsets in $B_{3R}$. We denote $\chi_A$ and $\chi_B$  by the mollified characteristic functions corresponding to the sets $A$ and $B$. Then it yields
\ben\label{l1}&&\iint_{v,v_*\in\R^3,\sigma\in\SS^2} b(\cos\theta)(g\chi_B)_*f'^2(\chi_A'-\chi_A)^2d\sigma dv_*dv\notag\\&&\lesssim  \iint_{v,v_*\in\R^3,\sigma\in\SS^2} b(\cos\theta)(g\chi_B)_*f'^2(\chi_A'-\chi_A)^2\mathrm{1}_{|v-v_*|\le 8R}\mathrm{1}_{|v'|\le 8R}d\sigma dv_*dv\notag\\
&&\lesssim   \|\na (\chi_A)\|_{L^\infty}^2\iint_{v,v_*\in\R^3,\sigma\in\SS^2a} b(\cos\theta)(g\chi_B)_*f'^2|v-v_*|^2\theta^2 \mathrm{1}_{|v-v_*|\le 8R}\mathrm{1}_{|v'|\le 8R}d\sigma dv_*dv\notag\\
&&\lesssim    R^2\|\na (\chi_A)\|_{L^\infty}^2\iint_{v,v_*\in\R^3,\sigma\in\SS^2} b(\cos\theta)(g\chi_B)_*f'^2\mathrm{1}_{|v'|\le 8R}\theta^2  d\sigma dv_*dv
 \notag\\&&\lesssim \|\na (\chi_A)\|_{L^\infty}^2 R^{2}\max \{R^{-\gamma},1\} \|g\|_{L^1}\|f\|_{L^2_{\gamma/2}}^2.
 \een

With the help of Lemma 2.1 in \cite{advw} and replacing (35) in \cite{advw} by \eqref{l1}, we conclude that if $\gamma<0$,
\beno   R^\gamma \iint_{v,v_*\in\R^3,\sigma\in\SS^2} (g\chi_{\f{R}8})_* b(\cos\theta)\big( (\chi_Rf)'-\chi_Rf\big)^2 d\sigma dv_*dv\lesssim  \|g\|_{L^1}\|f\|_{L^2_{\gamma/2}}^2+ \mathcal{E}^\gamma_{g}(f),
\eeno
and if $\gamma>0$,
\beno r_0^\gamma\iint_{v,v_*\in\R^3,\sigma\in\SS^2} (g\chi_{B_j})_* b(\cos\theta)\big( (\chi_{A_j}f)'-\chi_{A_j}f\big)^2 d\sigma dv_*dv\lesssim r_0^{-2 }R^2\|g\|_{L^1}\|f\|_{L^2_{\gamma/2}}^2+ \mathcal{E}^\gamma_{g}(f),\eeno
where $\chi_{A_j}= \chi(\frac{v-v_j}{r_0}), \chi_{B_j}=\chi_{3R}- \chi(\frac{v-v_j}{3r_0})$  with $v_j\in B_{2R}$ and $r_0$ will be chosen later.
Notice that \beno  (\chi_Af)'-\chi_Af=\big((\chi_AW_{\gamma/2}f)'-(\chi_AW_{\gamma/2}f)\big)W_{-\gamma/2}+(\chi_AW_{\gamma/2}f)'\big((W_{-\gamma/2})'-W_{-\gamma/2}\big).\eeno
By a slight modification, we may derive that  if $\gamma<0$,
\ben\label{l3}  && \iint_{v,v_*\in\R^3,\sigma\in\SS^2} (g\chi_{\f{R}8})_* b(\cos\theta)\big( (\chi_RW_{\gamma/2}f )'-(\chi_RW_{\gamma/2}f)\big)^2 d\sigma dv_*dv\notag\\&&\lesssim R^{2-2\gamma }\|g\|_{L^1}\|f\|_{L^2_{\gamma/2}}^2+ R^{-\gamma}\mathcal{E}^\gamma_{g}(f),
\een
and if $\gamma>0$,
\ben\label{l4} &&\iint_{v,v_*\in\R^3,\sigma\in\SS^2} (g\chi_{B_j})_* b(\cos\theta)\big( (\chi_{A_j}W_{\gamma/2}f)'-(\chi_{A_j}W_{\gamma/2}f )\big)^2 d\sigma dv_*dv\notag\\&&\lesssim r_0^{-2}R^{4+\gamma}\|g\|_{L^1}\|f\|_{L^2_{\gamma/2}}^2+ R^\gamma r_0^{-\gamma}\mathcal{E}^\gamma_{g}(f).\een 

By finite covering theorem, there exists an integer $N$ such that
\ben\label{l5}  B_{2R}\subset \bigcup_{j=1}^{N} \{|v-v_j|\le r_0\}\quad \mbox{and}\quad N\sim \bigg(\f{R}{r_0}\bigg)^3,\een
where $v_j\in B_{2R}$.
 Observe that \beno
\| g\chi_{R/8}\|_{L^1}\ge \|g\|_{L^1}-R^{-1}\|g\|_{L^1_1} \eeno
and \beno \| g\chi_{B_j}\|_{L^1}\ge \|g\|_{L^1}-(3R)^{-1}\|g\|_{L^1_1}-M(6r_0)^3-(\log M)^{-1}\|g\|_{L\log L}.  \eeno
Then by choosing $R=\f{4\lambda}{3\delta}+1$, $M=e^{4\lambda/\delta}$ and $r_0=\f16e^{-4\lambda/(3\delta)}$, we get
\beno N\sim 6^3(\f{4\lambda}{3\delta}+1)^3e^{4\lambda/\delta}\eqdefa C_1(\delta, \lambda)\eeno  and \beno \| g\chi_{R/8}\|_{L^1}\ge \delta/4,\, \| g\chi_{B_j}\|_{L^1}\ge \delta/4. \eeno
Then there exists a constant $C(\lambda, \delta)$ such that   \beno \mathcal{C}_3(g\chi_{R/8})\ge C(\lambda, \delta), \quad \mathcal{C}_3(g\chi_{B_j})\ge C(\lambda, \delta). \eeno
 Thanks to \eqref{comequi} and \eqref{comequi1} in the proof of Lemma \ref{comparelem}, we may rewrite (\ref{l2}-\ref{l4}) as:
 \ben\label{l6} \mathcal{E}^0_{\mu}( ( 1-\chi_R)W_{\gamma/2}f)&\lesssim& C_2(\lambda, \delta) \big( \|f\|_{L^2_{\gamma/2}}^2+ \mathcal{E}^\gamma_{g}(f)\big),\\
 \label{l7} \mathcal{E}_\mu^0(\chi_{A_j}W_{\gamma/2}f)&\lesssim& C_3(\lambda,\delta)\big( \|f\|_{L^2_{\gamma/2}}^2+ \mathcal{E}^\gamma_{g}(f)\big),\,\quad\mbox{if}\,\, \gamma>0,
 \\ \label{l8}\mathcal{E}_\mu^0(\chi_{R}W_{\gamma/2}f)&\lesssim& C_4(\lambda,\delta)\big( \|f\|_{L^2_{\gamma/2}}^2+ \mathcal{E}^\gamma_{g}(f)\big),\,\quad\mbox{if}\,\, \gamma<0.
\een

We conclude that \eqref{l6} and \eqref{l8} yield the desired result for  soft potentials.  For $\gamma>0$,
thanks to the facts \beno 
\mathcal{E}_\mu^0(\chi_{A_j}W_{\gamma/2}f)&\ge&\f12\iint_{\sigma\in\SS^2, v,v_*\in\R^3} \mu_*b(\cos\theta)\chi_{A_j}^2\big((W_{\gamma/2}f)'-(W_{\gamma/2}f)\big)^2d\sigma dv_*dv\\&&-\iint_{\sigma\in\SS^2, v,v_*\in\R^3} \mu_*b(\cos\theta)(\chi_{A_j}'-\chi_{A_j})^2\big( (W_{\gamma/2}f)'\big)^2 d\sigma dv_*dv, \eeno
and 
\beno  \mu_*(\chi_{A_j}'-\chi_{A_j})^2(\mathrm{1}_{|v_*|\le 8R}+\mathrm{1}_{|v_*|\ge 8R})\lesssim  \mu_*(\chi_{A_j}'-\chi_{A_j})^2(\mathrm{1}_{|v-v_*|\le 15R}+\mathrm{1}_{|v_*-v|\sim |v_*|}),\eeno
 we have  
\beno&&\iint_{\sigma\in\SS^2, v,v_*\in\R^3} \mu_*b(\cos\theta)\chi_{A_j}^2\big((W_{\gamma/2}f)'-(W_{\gamma/2}f)\big)^2d\sigma dv_*dv\\&&
\lesssim C_5(\lambda,\delta)\big( \|f\|_{L^2_{\gamma/2}}^2+ \mathcal{E}^\gamma_{g}(f)\big). \eeno
From which together with \eqref{l5} and \eqref{l6}, we are led to the desired result for   hard potentials.
We complete the proof of the lemma. \end{proof}
\medskip

We are now in a position to complete the proof of Theorem \ref{thmlb}. 
\begin{proof}The desired results  are easily derived from \eqref{sharpdec}, Lemma \ref{grlimlem3}, Lemma \ref{coermaxw1} and Lemma \ref{reducmaxwellian}. \end{proof}

\subsection{Proof of  Theorem \ref{entropyproduction} }  Finally we give the proof to Theorem \ref{entropyproduction}.

\begin{proof} Following the computation in \cite{advw}, we first have if $\gamma\ge0$,
\beno  D_B(f)&=&\iint_{v,v_*\in\R^3,\sigma\in\SS^2} |v-v_*|^\gamma b(\cos\theta) f_*(f\ln\f{f}{f'}-f+f' )dvdv_*d\sigma\\
&&-\iint_{v,v_*\in\R^3,\sigma\in\SS^2} |v-v_*|^\gamma b(\cos\theta) (f-f') dvdv_*d\sigma\\
&\ge&\iint_{v,v_*\in\R^3,\sigma\in\SS^2} |v-v_*|^\gamma b(\cos\theta)f_*(\sqrt{f'}-\sqrt{f})^2dvdv_*d\sigma\\
&&-\iint_{v,v_*\in\R^3,\sigma\in\SS^2} |v-v_*|^\gamma b(\cos\theta) f_*(f-f') dvdv_*d\sigma\\
&\ge& \mathcal{E}^{\gamma}_f(\sqrt{f})-\|f\|_{L^1}\|f\|_{L^1_2}\gtrsim \mathcal{E}^{0}_\mu(W_{\gamma/2}\sqrt{f})-\|f\|_{L^1_2}^2, \eeno
where we use the inequality $x\ln \f{x}{y}-x+y\ge (\sqrt{x}-\sqrt{y})^2$ and Lemma \ref{reducmaxwellian}.   

For   soft potentials($\gamma<0$), we observe that 
\beno  D_B(f)&=&\f14\iint_{v,v_*\in\R^3,\sigma\in\SS^2} |v-v_*|^\gamma b(\cos\theta) (f'f'_*-ff_*)\ln\f{f_*'f'}{f_*f}dvdv_*d\sigma \\
&\ge&\f14\iint_{v,v_*\in\R^3,\sigma\in\SS^2}  \langle v-v_*\rangle^\gamma b(\cos\theta) (f'f'_*-ff_*)\ln\f{f_*'f'}{f_*f}dvdv_*d\sigma\\
&\ge&\iint_{v,v_*\in\R^3,\sigma\in\SS^2}  \langle v-v_*\rangle^\gamma b(\cos\theta) f_*(\sqrt{f'}-\sqrt{f})^2dvdv_*d\sigma\\
&&-\iint_{v,v_*\in\R^3,\sigma\in\SS^2}  \langle v-v_*\rangle^\gamma b(\cos\theta) f_*(f-f') dvdv_*d\sigma\\
&\gtrsim& \mathcal{E}^{0}_\mu(W_{\gamma/2}\sqrt{f})-\|f\|_{L^1}^2.  \eeno
The last inequality is deduced from the proof of Lemma \ref{reducmaxwellian}. We complete the proof of the theorem with the help of Lemma \ref{coermaxw1}. \end{proof}

\subsection{Proof of Theorem \ref{thumbanti}}  Now we are ready to give the    proof to Theorem \ref{thumbanti}. 
\begin{proof} To get sharp bounds for the Boltzmann collision operator in anisotropic spaces, we only need to give the new estimates to $\mathfrak{W}_{k,l}^2$ and  $\mathfrak{W}_{k,p}^3$ due to the geometric decomposition \eqref{geodecom} for $k\ge0$. Recalling that  $u=r\tau$ and $\varsigma=\f{\sigma+\tau}{|\sigma+\tau|}\in \SS^2$, we
have
\beno \mathfrak{W}_{k,p}^3
 &=& \mathfrak{W}_{k,p}^{3,1}+\mathfrak{W}_{k,p}^{3,2},\eeno
  where \beno
 \mathfrak{W}_{k,p}^{3,1}&=&\iint_{\sigma\in \SS^2,v_*,u\in \R^3} \Phi_k^\gamma(|u|)b(\sigma\cdot\tau)  (\mathfrak{F}_{p}g)_*(T_{v_*}\tilde{\mathfrak{F}}_{p}h)(r\tau)\\&&\quad\times\big(
 (T_{v_*}\tilde{\mathfrak{F}}_{p}f)(r\varsigma)-(T_{v_*}\tilde{\mathfrak{F}}_{p}f)(r\tau)\big) d\sigma dudv_*,\\
 \mathfrak{W}_{k,p}^{3,2}&=& \iint_{\sigma\in \SS^2,v_*,u\in \R^3} \Phi_k^\gamma(|v-v_*|)b(\cos\theta)  (\mathfrak{F}_{p}g)_*(\tilde{\mathfrak{F}}_{p}h) \\&&\quad\times\big(\big(\tilde{\mathfrak{F}}_{p}f)(v_*+u^+)-(\tilde{\mathfrak{F}}_{p}f)(v_*+|u|\f{u^+}{|u^+|})\big)d\sigma dv_* du.
 \eeno
For the term $\mathfrak{W}_{k,p}^{3,1}$, by change of the variable from $\sigma$ to $\varsigma$, we have
\beno \mathfrak{W}_{k,p}^{3,1}&=&\iint_{\varsigma,\tau\in \SS^2,v_*\in \R^3,r>0} \Phi_k^\gamma(r)b(2(\varsigma\cdot\tau)^2-1)  (\mathfrak{F}_{p}g)_*(T_{v_*}\tilde{\mathfrak{F}}_{p}h)(r\tau)\\&&\quad\times\big(
 (T_{v_*}\tilde{\mathfrak{F}}_{p}f)(r\varsigma)-(T_{v_*}\tilde{\mathfrak{F}}_{p}f)(r\tau)\big)r^24(\varsigma\cdot \tau)d\varsigma dv_* d\tau dr\\
 &=&\iint_{v_*\in \R^3,r>0} dr dv_*\Phi_k^\gamma(r) (\mathfrak{F}_{p}g)_* r^2\iint_{\varsigma, \tau \in \SS^2}b\big( 2(\varsigma\cdot\tau)^2-1\big)  4(\varsigma\cdot \tau) \\&&\quad\times (T_{v_*}\tilde{\mathfrak{F}}_{p}h)(r\tau)\big(
 (T_{v_*}\tilde{\mathfrak{F}}_{p}f) (r\varsigma)-(T_{v_*}\tilde{\mathfrak{F}}_{p}f)(r\tau)\big)d\varsigma d\tau .  \eeno
Thanks to Corrolary \ref{antin6} and Lemma \ref{antin3},  for $a,b\in[0,2s]$ with $a+b=2s$, we get
\beno |\mathfrak{W}_{k,p}^{3,1}|&\lesssim&2^{\gamma k} \int_{\R^3} |(\mathfrak{F}_{p}g)_*| \|(1-\triangle_{\SS^2})^{a/2}(T_{v_*}\tilde{\mathfrak{F}}_{p}h)\|_{L^2}
\|(1-\triangle_{\SS^2})^{b/2}(T_{v_*}\tilde{\mathfrak{F}}_{p}f)\|_{L^2}dv_*\\
&\lesssim& 2^{\gamma k}\|\mathfrak{F}_{p}g\|_{L^1_{2s}}  (\|(-\triangle_{\SS^2})^{a/2}\tilde{\mathfrak{F}}_{p}h\|_{L^2}+\|\tilde{\mathfrak{F}}_{p}h\|_{H^a})
(\|(-\triangle_{\SS^2})^{b/2}\tilde{\mathfrak{F}}_{p}f\|_{L^2}+\|\tilde{\mathfrak{F}}_{p}f\|_{H^b}).\eeno
From which together with Lemma \ref{antin7}, we deduce that
\beno \sum_{p=-1}^\infty |\mathfrak{W}_{k,p}^{3,1}|&\lesssim& 2^{\gamma k}\|g\|_{L^1_{2s}}(\|(-\triangle_{\SS^2})^{a/2} h\|_{L^2}+\| h\|_{H^a})
(\|(-\triangle_{\SS^2})^{b/2} f\|_{L^2}+\| f\|_{H^b}).  \eeno

For the term $\mathfrak{W}_{k,p}^{3,2}$, we may follow the argument used to bound $\mathcal{E}_{l,p}$(see \eqref{elp}) to get  for $k\ge0$,
\beno |\mathfrak{W}_{k,p}^{3,2}|\lesssim 2^{(\gamma+s) k}2^{sp} \|g\|_{L^1}\|\tilde{\mathfrak{F}}_{p}h\|_{L^2}\|\tilde{\mathfrak{F}}_{p}f\|_{L^2},  \eeno
which implies
\beno  \sum_{p=-1}^\infty|\mathfrak{W}_{k,p}^{3,2}|\lesssim 2^{(\gamma+s) k} \|g\|_{L^1}\|h\|_{H^{a_1}}\|f\|_{H^{b_1}},\eeno where  $a_1,b_1\in\R$ with $a_1+b_1=s$.

We finally arrive at for $k\ge0$,
\beno \sum_{p=-1}^\infty |\mathfrak{W}_{k,p}^{3}|+\sum_{l=-1}^\infty|\mathfrak{W}_{k,l}^{2}| &\lesssim& 2^{\gamma k}\|g\|_{L^1_{2s}}(\|(-\triangle_{\SS^2})^{a/2} h\|_{L^2}+\| h\|_{H^a})
(\|(-\triangle_{\SS^2})^{b/2} f\|_{L^2}+\| f\|_{H^b}) \\&&\quad+2^{(\gamma+s) k} \|g\|_{L^1}\|h\|_{H^{a_1}}\|f\|_{H^{b_1}}. \eeno
Thanks to Lemma \ref{lemub1} and Lemma \ref{lemub2}, we also have for $k\ge0$,
 \beno \sum_{l\le p-N_0} |\mathfrak{W}_{k,p,l}^1|+\sum_{m<p-N_0} |\mathfrak{W}_{k,p,m}^4|&\lesssim&   2^{\gamma k}\|g\|_{L^1}\|h\|_{H^{a }}\|f\|_{H^{b }}.\eeno

Now we are in a position to prove the sharp bounds.
We conclude that  for $k\ge0$, \ben\label{estqk3}  |\langle Q_k(g, h), f \rangle_v|  &\lesssim&   2^{(\gamma+s) k} \|g\|_{L^1}\|h\|_{H^{a_1}}\|f\|_{H^{b_1}}\notag\\&&+2^{\gamma k}\|g\|_{L^1_{2s}}(\|(-\triangle_{\SS^2})^{a/2} h\|_{L^2}+\| h\|_{H^a})
(\|(-\triangle_{\SS^2})^{b/2} f\|_{L^2}+\| f\|_{H^b}), \een
and
\ben\label{estqk4}  |\langle Q_{-1}(g, h), f \rangle_v|  &\lesssim&  (\|g\|_{L^1}+\|g\|_{L^2})\|h\|_{H^{a}}\|f\|_{H^{b}}. \een

Recalling \eqref{ubdecom}, we rewrite it by
\beno \langle Q(g, h), f \rangle_v
&=&\sum_{k\ge N_0-1}\langle Q_k(\mathcal{U}_{k-N_0} g, \tilde{\mathcal{P}}_kh), \tilde{\mathcal{P}}_kf \rangle_v +
\sum_{j\ge k+N_0}\langle Q_k(\mathcal{P}_{j} g, \tilde{\mathcal{P}}_jh), \tilde{\mathcal{P}}_jf \rangle_v\notag\\&&\quad+\sum_{|j-k|\le N_0}\langle Q_k( \mathcal{P}_{j} g, \mathcal{U}_{k+N_0}h), \mathcal{U}_{k+N_0}f \rangle_v\\&=&
 \mathfrak{U}_4+\mathfrak{U}_5+\mathfrak{U}_6. \eeno
Thanks to \eqref{estqk3} and \eqref{estqk4}, we can give the estimates term by term .

Suppose $w_1,w_2\in\R$ with $w_1+w_2=\gamma+s$. It is not difficult to check
\beno |\mathfrak{U}_4|&\lesssim& \sum_{k\ge N_0-1} \bigg(
   2^{(\gamma+s) k} \|\mathcal{U}_{k-N_0} g\|_{L^1}\|\tilde{\mathcal{P}}_kh \|_{H^{a_1}}\|\tilde{\mathcal{P}}_kf\|_{H^{b_1}}+2^{\gamma k}\|\mathcal{U}_{k-N_0} g\|_{L^1_{2s}}\notag\\&&\quad\times(\|(-\triangle_{\SS^2})^{a/2} \tilde{\mathcal{P}}_kh \|_{L^2}+\| \tilde{\mathcal{P}}_kh \|_{H^a})
(\|(-\triangle_{\SS^2})^{b/2} \tilde{\mathcal{P}}_kf)\|_{L^2}+\| \tilde{\mathcal{P}}_kf\|_{H^b})\bigg).\eeno
From which together with Theorem \ref{baslem3},  we get 
\beno  |\mathfrak{U}_4|&\lesssim&  \|g\|_{L^1_{2s}} \bigg((\|(-\triangle_{\SS^2})^{a/2}  h\|_{L^2_{\gamma/2}}+\|  h\|_{H^a_{\gamma/2}})\\&&\quad\times
(\|(-\triangle_{\SS^2})^{b/2} f)\|_{L^2_{\gamma/2}}+\| f\|_{H^b_{\gamma/2}})+ \|h\|_{H^{a_1}_{w_1}}\|f\|_{H^{b_1}_{w_2}}
\bigg). \eeno

For the term $\mathfrak{U}_5$, it holds
\beno |\mathfrak{U}_5|&\lesssim& \sum_{j\ge k+N_0,k\ge0} \bigg(
  2^{(\gamma+s) k} \|\mathcal{P}_{j} g\|_{L^1}\|\tilde{\mathcal{P}}_jh \|_{H^{a_1}}\|\tilde{\mathcal{P}}_jf\|_{H^{b_1}}+2^{\gamma k}\|\mathcal{P}_{j} g\|_{L^1_{2s}}\notag\\&&\quad\times(\|(-\triangle_{\SS^2})^{a/2} \tilde{\mathcal{P}}_jh \|_{L^2}+\| \tilde{\mathcal{P}}_jh \|_{H^a})
(\|(-\triangle_{\SS^2})^{b/2} \tilde{\mathcal{P}}_jf)\|_{L^2}+\| \tilde{\mathcal{P}}_jf\|_{H^b})\bigg)\\
&&+\sum_{j\ge N_0-1}(\|\mathcal{P}_{j}g\|_{L^1}+\|\mathcal{P}_{j} g\|_{L^2})\|\tilde{\mathcal{P}}_j h\|_{H^{a}}\| \tilde{\mathcal{P}}_j f\|_{H^{b}}.\eeno
Thanks to Theorem \ref{baslem3}, we obtain that
\begin{enumerate}
\item
if $\gamma>0$
\beno  |\mathfrak{U}_5|&\lesssim& (\|g\|_{L^1_{2s}}+\| g\|_{L^2})\bigg((\|(-\triangle_{\SS^2})^{a/2}  h\|_{L^2_{\gamma/2}}+\|  h\|_{H^a_{\gamma/2}})\\&&\quad\times
(\|(-\triangle_{\SS^2})^{b/2} f)\|_{L^2_{\gamma/2}}+\| f\|_{H^b_{\gamma/2}})+ \|h\|_{H^{a_1}_{w_1}}\|f\|_{H^{b_1}_{w_2}}
\bigg). \eeno
\item if $\gamma=0$, for any $\delta>0$,
\beno  |\mathfrak{U}_5|&\lesssim& (\|g\|_{L^1_{2s+\delta}} +\|g\|_{L^2})\bigg((\|(-\triangle_{\SS^2})^{a/2}  h\|_{L^2}+\|  h\|_{H^a})\\&&\quad\times
(\|(-\triangle_{\SS^2})^{b/2} f)\|_{L^2}+\| f\|_{H^b})+ \|h\|_{H^{a_1}_{w_1}}\|f\|_{H^{a_2}_{ w_2}}
\bigg). \eeno
\item if $\gamma<0$,
\beno  |\mathfrak{U}_5|&\lesssim& (\|g\|_{L^1_{-\gamma+2s}}+\| g\|_{L^2_{-\gamma}})\bigg((\|(-\triangle_{\SS^2})^{a/2}  h\|_{L^2_{\gamma/2}}+\|  h\|_{H^a_{\gamma/2}})\\&&\quad\times
(\|(-\triangle_{\SS^2})^{b/2} f)\|_{L^2_{\gamma/2}}+\| f\|_{H^b_{\gamma/2}})+ \|h\|_{H^{a_1}_{w_1}}\|f\|_{H^{b_1}_{w_2}}
\bigg). \eeno
\end{enumerate}

Finally we turn to   the estimate of $\mathfrak{U}_6$. One has
\beno |\mathfrak{U}_6|&\lesssim& \sum_{|j-k|\le N_0,k\ge0}\bigg(
  2^{(\gamma+s) k} \| \mathcal{P}_{j}  g\|_{L^1}\| \mathcal{U}_{k+N_0}h \|_{H^{a_1}}\| \mathcal{U}_{k+N_0}f\|_{H^{b_1}}+2^{\gamma k}\|\mathcal{P}_{j}  g\|_{L^1_{2s}}\notag\\&&\quad\times(\|(-\triangle_{\SS^2})^{a/2} \mathcal{U}_{k+N_0}h \|_{L^2}+\| \mathcal{U}_{k+N_0}h \|_{H^a})
(\|(-\triangle_{\SS^2})^{b/2}  \mathcal{U}_{k+N_0}f)\|_{L^2}+\| \mathcal{U}_{k+N_0}f\|_{H^b})\bigg)\\
&&+(\|\tilde{\mathcal{P}}_{-1} g\|_{L^1}+\|\tilde{\mathcal{P}}_{-1} g\|_{L^2})\|\mathcal{U}_{N_0} h\|_{H^{a}}\| \mathcal{U}_{N_0} f\|_{H^{b}}.\eeno
Then by Lemma \ref{antin7} and \eqref{u4}, we have
\begin{enumerate}
\item
if $\gamma>0$
\beno  |\mathfrak{U}_6|&\lesssim& (\|g\|_{L^1_{\gamma+2s}}+\|g\|_{L^1_{\gamma+s+(-w_1)^++(-w_2)^+}}+\| g\|_{L^2})\bigg((\|(-\triangle_{\SS^2})^{a/2}  h\|_{L^2_{\gamma/2}}+\|  h\|_{H^a_{\gamma/2}})\\&&\quad\times
(\|(-\triangle_{\SS^2})^{b/2} f)\|_{L^2_{\gamma/2}}+\| f\|_{H^b_{\gamma/2}})+ \|h\|_{H^{a_1}_{w_1}}\|f\|_{H^{b_1}_{w_2}}
\bigg). \eeno
\item if $\gamma=0$,
\beno  |\mathfrak{U}_6|&\lesssim& (\|g\|_{L^1_{2s}}+\|g\|_{L^1_{s+(-w_1)^++(-w_2)^+}}+\| g\|_{L^2})\bigg((\|(-\triangle_{\SS^2})^{a/2}  h\|_{L^2}+\|  h\|_{H^a})\\&&\quad\times
(\|(-\triangle_{\SS^2})^{b/2} f)\|_{L^2}+\| f\|_{H^b})+ \|h\|_{H^{a_1}_{w_1}}\|f\|_{H^{b_1}_{w_2}}
\bigg). \eeno
\item if $\gamma<0$,
\beno  |\mathfrak{U}_6|&\lesssim& (\|g\|_{L^1_{-\gamma+2s}}+\|g\|_{L^1_{\gamma+s+(-w_1)^++(-w_2)^+}}+\| g\|_{L^2_{}})\bigg((\|(-\triangle_{\SS^2})^{a/2}  h\|_{L^2_{\gamma/2}}+\|  h\|_{H^a_{\gamma/2}})\\&&\quad\times
(\|(-\triangle_{\SS^2})^{b/2} f)\|_{L^2_{\gamma/2}}+\| f\|_{H^b_{\gamma/2}})+ \|h\|_{H^{a_1}_{w_1}}\|f\|_{H^{b_1}_{w_2}}
\bigg). \eeno
\end{enumerate}

 The  theorem is obtained by patching together all the  estimates to $\mathfrak{U}_4, \mathfrak{U}_5$ and $\mathfrak{U}_6$. We complete the proof of the theorem.
 \end{proof}

\setcounter{equation}{0}

\section{Sharp bounds for the Landau collision operator via grazing collision limit}
In this section, we will show that the strategy used to handle the Boltzmann collision operator  is robust. It can be applied to capture the intrinsic structure of the collision operator in the process of the grazing collision limit. Before giving the estimates, we first introduce the special function $W^\epsilon$ defined by
\ben\label{symboloflimit} W^\epsilon(x)=\psi(\epsilon x)\langle x\rangle+\epsilon^{s-1}(1-\psi(\epsilon x))\langle x\rangle^{s},  \een
which characterizes the symbol of the collision operator in the process of the limit.  We emphasize that the function $\psi$ is defined in \eqref{defpsivarphi}.

We begin with a technical lemma which describes the behavior of the fractional Laplace-Beltrami operator in  the limit. We postpone the proof to the end of Section 5.4.

\begin{lem}\label{lbplim} Suppose  $0<s<1$. For any smooth function $f$ defined in $\SS^2$,  the following equivalences hold:
\beno &&\|f\|_{L^2(\SS^2)}^2+\epsilon^{2s-2}\int_{\sigma,\tau\in \SS^2} \f{|f(\sigma)-f(\tau)|^2}{|\sigma-\tau|^{2+2s}} \mathrm{1}_{|\sigma-\tau|\le \epsilon}d\sigma d\tau\\ &&\sim \|f\|^2_{L^2(\SS^2)}+\|(-\triangle_{\SS^2})^{1/2}\mathbb{P}_{\le\f1\epsilon}f\|^2_{L^2(\SS^2)}
+\epsilon^{2s-2}\|(-\triangle_{\SS^2})^{s/2}\mathbb{P}_{>\f1\epsilon}f\|^2_{L^2(\SS^2)}\\
&&\sim \|f\|^2_{L^2(\SS^2)}+\|\big((-\triangle_{\SS^2})^{1/2}\mathbb{P}_{\le\f1\epsilon}
+\epsilon^{s-1} (-\triangle_{\SS^2})^{s/2}\mathbb{P}_{>\f1\epsilon}\big)f\|^2_{L^2(\SS^2)},
\eeno
where the projection operators $\mathbb{P}_{\le\f1\epsilon}$ and $\mathbb{P}_{>\f1\epsilon}$ are defined as follows: if $f(\sigma)=\sum\limits_{l=0}^\infty \sum\limits_{m=-l}^lf_l^m Y_l^m(\sigma)$, then
\ben\label{prejector}  &&\big(\mathbb{P}_{\le\f1\epsilon} f\big) (\sigma)\eqdefa\sum_{[l(l+1)]^\f12\le \f1\epsilon} \sum_{m=-l}^l f_l^mY_l^m(\sigma),\nonumber\\&&
\big(\mathbb{P}_{>\f1\epsilon} f\big) (\sigma)\eqdefa\sum_{[l(l+1)]^\f12> \f1\epsilon} \sum_{m=-l}^l f_l^mY_l^m(\sigma).
\een
\end{lem}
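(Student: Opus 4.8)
\textbf{Plan of proof for Lemma \ref{lbplim}.} The statement reduces to two equivalences on the sphere $\SS^2$, both of which are to be read after adding $\|f\|_{L^2(\SS^2)}^2$ to every term. The second equivalence (replacing a sum of two squared norms by the squared norm of a sum of two operators) is automatic from the fact that $\mathbb{P}_{\le 1/\epsilon}$ and $\mathbb{P}_{>1/\epsilon}$ are orthogonal projectors onto complementary spans of spherical harmonics: since $(-\triangle_{\SS^2})^{1/2}\mathbb{P}_{\le 1/\epsilon}f$ and $(-\triangle_{\SS^2})^{s/2}\mathbb{P}_{>1/\epsilon}f$ live in orthogonal subspaces, the Pythagorean identity gives equality, not just $\sim$. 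So the entire content is the first equivalence, namely
\beno
\|f\|_{L^2(\SS^2)}^2+\epsilon^{2s-2}\int_{\sigma,\tau\in\SS^2}\frac{|f(\sigma)-f(\tau)|^2}{|\sigma-\tau|^{2+2s}}\mathrm{1}_{|\sigma-\tau|\le\epsilon}\,d\sigma d\tau
\sim \|f\|_{L^2(\SS^2)}^2+\sum_{l\ge 0}\Lambda_\epsilon(l)\sum_{m=-l}^l|f_l^m|^2,
\eeno
where $\Lambda_\epsilon(l)\eqdefa l(l+1)$ for $[l(l+1)]^{1/2}\le 1/\epsilon$ and $\Lambda_\epsilon(l)\eqdefa \epsilon^{2s-2}[l(l+1)]^s$ for $[l(l+1)]^{1/2}>1/\epsilon$. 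The plan is to compute the quadratic form on the left on the basis of real spherical harmonics and compare its eigenvalues to $\Lambda_\epsilon(l)$.

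First I would expand $f(\sigma)=\sum_{l,m}f_l^m Y_l^m(\sigma)$ and use rotational invariance: the bilinear functional $(f,g)\mapsto \int\int \frac{(f(\sigma)-f(\tau))(g(\sigma)-g(\tau))}{|\sigma-\tau|^{2+2s}}\mathrm{1}_{|\sigma-\tau|\le\epsilon}\,d\sigma d\tau$ is invariant under simultaneous rotation of $f$ and $g$, hence by Schur's lemma it is diagonalized by the decomposition into spaces of fixed degree $l$, acting as a scalar $\lambda_\epsilon(l)$ on each. This is exactly the kind of computation underlying the basic properties of real spherical harmonics and the Funk--Hecke / Addition Theorem machinery invoked in Section 5; I would cite that rather than redo it. So the left-hand quadratic form equals $\sum_{l,m}\epsilon^{2s-2}\lambda_\epsilon(l)|f_l^m|^2$ with
\beno
\lambda_\epsilon(l)=\int_{\SS^2}\frac{1-P_l(\sigma\cdot e)}{|\sigma-e|^{2+2s}}\mathrm{1}_{|\sigma-e|\le\epsilon}\,d\sigma,
\eeno
$P_l$ the Legendre polynomial and $e$ any fixed pole, where I have used $\sum_m Y_l^m(\sigma)\overline{Y_l^m(\tau)}=\frac{2l+1}{4\pi}P_l(\sigma\cdot\tau)$ and $\frac{1}{2l+1}\sum_m|Y_l^m|^2=$ const, so that the cross terms telescope into $1-P_l$.

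Next I would estimate $\epsilon^{2s-2}\lambda_\epsilon(l)$ and show $\epsilon^{2s-2}\lambda_\epsilon(l)\sim \Lambda_\epsilon(l)$ uniformly in $l$ and in $\epsilon\in(0,1]$ (with the understanding that for $l=0$ both sides vanish, and for small $l$ we compare $1+\Lambda_\epsilon(l)$). Writing $|\sigma-e|=2\sin(\vartheta/2)$ with $\vartheta$ the polar angle and $d\sigma=2\pi\sin\vartheta\,d\vartheta$, and using $1-P_l(\cos\vartheta)\sim \min\{l^2\vartheta^2,1\}$ (the standard two-sided bound for Legendre polynomials, valid for $\vartheta\in[0,\pi]$), the integral becomes, up to constants, $\int_0^{\epsilon}\vartheta^{-1-2s}\min\{l^2\vartheta^2,1\}\,d\vartheta$. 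When $l\epsilon\lesssim 1$ the minimum is $l^2\vartheta^2$ throughout, giving $\sim l^2\epsilon^{2-2s}$, hence $\epsilon^{2s-2}\lambda_\epsilon(l)\sim l^2\sim l(l+1)$, matching the ``low mode'' regime $[l(l+1)]^{1/2}\lesssim 1/\epsilon$. When $l\epsilon\gtrsim 1$ one splits at $\vartheta=1/l$: the part $\vartheta<1/l$ contributes $\sim l^2\cdot (1/l)^{2-2s}=l^{2s}$ and the part $1/l<\vartheta<\epsilon$ contributes $\sim l^{2s}$ as well, so $\lambda_\epsilon(l)\sim l^{2s}$ and $\epsilon^{2s-2}\lambda_\epsilon(l)\sim \epsilon^{2s-2}l^{2s}\sim \epsilon^{2s-2}[l(l+1)]^s$, matching the ``high mode'' regime. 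Then summing over $l,m$, adding $\|f\|_{L^2(\SS^2)}^2=\sum_{l,m}|f_l^m|^2$, and using $\|(-\triangle_{\SS^2})^{t/2}\mathbb{P}_\cdot f\|_{L^2}^2=\sum_{l,m}[l(l+1)]^t|f_l^m|^2$ restricted to the appropriate modes, yields the claimed two-sided bound term by term, hence the first equivalence; the remark on the transition at $l(l+1)=\epsilon^{-2}$ is precisely the dichotomy above.

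\textbf{Main obstacle.} The only genuinely delicate point is the uniform two-sided bound $1-P_l(\cos\vartheta)\sim\min\{l^2\vartheta^2,1\}$ over the \emph{full} range $\vartheta\in[0,\pi]$ and all $l\ge 1$ — in particular the lower bound, which can fail pointwise near the zeros of $1-P_l$ but holds after the angular integration because the oscillatory region has small measure; one must be slightly careful that after truncating to $\vartheta\le\epsilon$ the relevant range is $\vartheta$ small, where $P_l(\cos\vartheta)$ is well controlled by its behavior near $1$ (Bessel-type asymptotics $P_l(\cos\vartheta)\approx J_0(l\vartheta)$ for $l\vartheta=O(1)$, and $1-P_l(\cos\vartheta)\gtrsim 1$ once $l\vartheta$ exceeds a fixed constant). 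I would either quote this from the real-spherical-harmonics toolbox in Section 5 or give a short self-contained argument using $1-P_l(\cos\vartheta)=\sum_{k\ge1}c_{l,k}(1-\cos k\vartheta)$-type expansions; everything else is a routine splitting-of-integrals computation.
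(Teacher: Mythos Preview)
Your approach is correct and genuinely different from the paper's. Both begin identically, diagonalizing the quadratic form on spherical harmonics (the paper invokes Lemma \ref{antin5}) to reduce everything to two-sided bounds on the eigenvalues $A_l=\epsilon^{2s-2}\int\!\!\int|Y_l^m(\sigma)-Y_l^m(\tau)|^2|\sigma-\tau|^{-2-2s}\mathrm{1}_{|\sigma-\tau|\le\epsilon}\,d\sigma d\tau$. From there the routes diverge. You compute $A_l$ directly via the Funk--Hecke formula and the pointwise bound $1-P_l(\cos\vartheta)\sim\min\{l^2\vartheta^2,1\}$, then split the resulting one-dimensional integral at $\vartheta=1/l$. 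The paper instead runs a three-case argument that never touches Legendre asymptotics: for small $l$ it revisits the local-chart machinery of Lemma \ref{antin1}, Taylor-expanding to second order to sandwich $A_l+1$ between $(1\mp c\,l(l+1)\epsilon^2)l(l+1)+1$, which is useful only while $l(l+1)\epsilon^2\lesssim 1$; for large $l$ it writes the truncated integral as the full one (controlled by Lemma \ref{antin1}, worth $[l(l+1)]^s$) minus the tail $|\sigma-\tau|>\epsilon$ (worth $\lesssim\epsilon^{-2s}$); and for the transitional band $l(l+1)\sim\epsilon^{-2}$ it uses a rescaling trick, comparing the $\epsilon$-cutoff to an $\epsilon/N$-cutoff with $N$ a fixed large constant. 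Your route is shorter and more transparent; the paper's route recycles only tools already assembled in Section 5 (in particular Lemma \ref{antin1} and the $H^2$ equivalence \eqref{lb13}) and so stays self-contained. One remark: your stated obstacle is slightly overstated, since on the truncated range $\vartheta\in(0,\epsilon]$ the function $1-P_l(\cos\vartheta)$ has no zeros ($|P_l|<1$ on $(-1,1)$), and the two-sided bound follows from $P_l'(1)=l(l+1)/2$ together with the Hilb estimate $|P_l(\cos\vartheta)|\lesssim(l\vartheta)^{-1/2}$ for $l\vartheta\gtrsim 1$; this is classical but not in the paper's toolbox, so you would need to state or cite it.
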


\begin{rmk} We remark that the projection operators $\mathbb{P}_{\le\f1\epsilon}$ and $\mathbb{P}_{>\f1\epsilon}$ commutate with the fractional Laplace-Beltrami operator. Moreover, since the  Laplace-Beltrami operator is a self-adjoint operator with orthogonal basis of the eigenfunctions, the spectrum theorem yields that \beno  &&\|f\|^2_{L^2(\SS^2)}+\|\big((-\triangle_{\SS^2})^{1/2}\mathbb{P}_{\le\f1\epsilon}
+\epsilon^{s-1} (-\triangle_{\SS^2})^{s/2}\mathbb{P}_{>\f1\epsilon}\big)f\|^2_{L^2(\SS^2)}\\&&\sim
\|W^\epsilon((-\triangle_{\SS^2})^{1/2}) f\|^2_{L^2(\SS^2)}+\|f\|^2_{L^2(\SS^2)}.
\eeno
\end{rmk}

\subsection{Proof of Theorem \ref{thmblandau}} We are in a position to prove Theorem \ref{thmblandau}.

\begin{proof} In  \cite{he1} , it is proved that for any smooth functions $g, h$ and $f$, there holds
\beno  \lim_{\epsilon\rightarrow 0} \langle Q^\epsilon (g,h), f\rangle =\langle Q_L (g, h), f\rangle, \eeno
where $Q^\epsilon$ is a collision operator with the kernel $B^\epsilon$ under the assumption {\bf{(B1)}}.
  Then the bounds of the Landau operator can be reduced to the uniform bounds of the   operator $Q^\epsilon$ with respect to the parameter $\epsilon$.
Since $Q^\epsilon$ is still the Boltzmann collision operator, we may copy the argument  used in the proof of Theorem \ref{thmub} and Theorem \ref{thumbanti} to get the desired results.

Let us follow the same notations used in Theorem \ref{thmub} and Theorem \ref{thumbanti}.  In the next we only  point out the difference. In order to cancel the singularity caused by the kernel and  get the uniform estimates with respect to the parameter $\epsilon$, we have to make use of the fact:
\beno  \int_{0}^{\pi/2} b^\epsilon (\cos\theta) \sin\theta \theta^2d\theta \sim 1.\eeno  Therefore
there is no need to introduce the   function $\psi$ to make the decomposition for the term $\mathfrak{D}_k^1$ in the proof of Lemma \ref{lemub1},  the terms $\mathfrak {E}^1_k$ and $\mathfrak {E}^2_k$ in the proof of Lemma \ref{lemub2},  the term $\mathfrak{M}^3_{k,p}$ in the proof of Lemma \ref{lemub3}  and the term $\mathcal{E}_{l,p}$ in the proof of Lemma \ref{coermaxw1}. 
Keep it in mind  and follow almost the same calculation, then we get that  the  results stated in Lemma \ref{lemub1},  Lemma  \ref{lemub2} and  Lemma \ref{lemub3}  are valid for $Q^\epsilon$ with $s=1$. In particular, 
we have  \ben\label{ubl1}  |\mathcal{E}_{l,p}|\lesssim 2^{(l-p)/2}2^k\|g\|_{L^1}\|\mathfrak{F}_pf\|_{H^{1/2}}\|\mathfrak{F}_lf\|_{H^{1/2}},
\een where $\mathcal{E}_{l,p}$ is defined in \eqref{elp}.
 With these in hand, following the same argument used in the proof of Theorem \ref{thmub} will yield the desired results (\ref{u1L}-\ref{u3L}).

Next we turn to the  upper bounds of the Landau operator in   anisotropic spaces.  Let $a,b\in[0,2]$ with $a+b=2$ and $a_1,b_1\in\R$ with $a_1+b_1=1$. We first give the bounds to $\mathfrak{W}_{k,p,l}^1,  \mathfrak{W}_{k,p,m}^4,\mathfrak{W}_{k,p}^{3}$ and $ \mathfrak{W}_{k,l}^{2}$.  
Thanks to  the facts that Lemma \ref{lemub1}, Lemma \ref{lemub2}  and  Lemma \ref{lemub3}  are valid  for $Q^\epsilon$ with $s=1$, we deduce that for $k\ge0$,
 \beno \sum_{l\le p-N_0} |\mathfrak{W}_{k,p,l}^1|+\sum_{m<p-N_0} |\mathfrak{W}_{k,p,m}^4|&\lesssim&    \|g\|_{L^1}\|h\|_{L^2}\|f\|_{L^2}.\eeno
and   
\ben\label{thmgrf}|\langle Q^\epsilon_{-1}(g, h), f \rangle_v|  &\lesssim&  (\|g\|_{L^1}+\|g\|_{L^2})\|h\|_{H^{a}}\|f\|_{H^{b}}.\een

 Thanks to  Lemma \ref{lbplim}, we have
 \beno && \epsilon^{2s-2}\int_{\sigma,\tau\in \SS^2} \f{|f(\sigma)-f(\tau)|^2}{|\sigma-\tau|^{2+2s}} \mathrm{1}_{|\sigma-\tau|\le \epsilon}d\sigma d\tau\\ &&\lesssim \|f\|^2_{L^2(\SS^2)}+\|(-\triangle_{\SS^2})^{1/2} f\|^2_{L^2(\SS^2)}, \eeno
which implies that for smooth functions $g$ and $h$,  
\beno   &&\epsilon^{2s-2}\int_{\SS^2\times\SS^2} \big(g(\sigma)-g(\tau)\big)h(\tau) |\sigma-\tau|^{-(2+2s)}\mathrm{1}_{|\sigma-\tau|\le \epsilon} d\sigma d\tau\\&&\lesssim \sum_{l=0}^\infty\sum_{m=-l}^l  g_l^mh_l^m
(\|(-\triangle_{\SS^2})^{1/2}Y^m_l\|_{L^2(\SS^2)}+1)(\|(-\triangle_{\SS^2})^{1/2}Y^m_l\|_{L^2(\SS^2)}+1)
\\&&\lesssim \sum_{l=0}^\infty\sum_{m=-l}^l g_l^mh_l^m (l(l+1)+1)^{2}\\&&\lesssim \|(1-\triangle_{\SS^2})^{a/2}g\|_{L^2(\SS^2)}\|(1-\triangle_{\SS^2})^{b/2}h\|_{L^2(\SS^2)}.\eeno
 From which together with the strategy explained in Section 1.3 and the fact $b^\epsilon(\sigma\cdot\tau)\sim |\sigma-\tau|^{-(2+2s)}\mathrm{1}_{|\sigma-\tau|\le\epsilon}$, we have
\beno |\mathfrak{W}_{k,p}^{3,1}|&\lesssim&2^{\gamma k} \int_{\R^3} |(\mathfrak{F}_{p}g)_*| \|(1-\triangle_{\SS^2})^{a/2}(T_{v_*}\tilde{\mathfrak{F}}_{p}h)\|_{L^2}
\|(1-\triangle_{\SS^2})^{b/2}(T_{v_*}\tilde{\mathfrak{F}}_{p}f)\|_{L^2}dv_*\\
&\lesssim& 2^{\gamma k}\|\mathfrak{F}_{p}g\|_{L^1_{2}}  (\|(-\triangle_{\SS^2})^{a/2}\tilde{\mathfrak{F}}_{p}h\|_{L^2}+\|\tilde{\mathfrak{F}}_{p}h\|_{H^a})
(\|(-\triangle_{\SS^2})^{b/2}\tilde{\mathfrak{F}}_{p}f\|_{L^2}+\|\tilde{\mathfrak{F}}_{p}f\|_{H^b}).\eeno
From which together with Lemma \ref{antin7}, one has
\beno \sum_{p=-1}^\infty |\mathfrak{W}_{k,p}^{3,1}|&\lesssim& 2^{\gamma k}\|g\|_{L^1_{2}}(\|(-\triangle_{\SS^2})^{a/2} h\|_{L^2}+\| h\|_{H^a})
(\|(-\triangle_{\SS^2})^{b/2} f\|_{L^2}+\| f\|_{H^b}).  \eeno
Observing that the term $\mathfrak{W}_{k,p}^{3,2}$ enjoys the similar structure as that of the term $\mathcal{E}_{l,p}$,   \eqref{ubl1} indicates that it is not difficult to derive
\beno |\mathfrak{W}_{k,p}^{3,2}|\lesssim 2^{(\gamma+1) k}2^{p} \|g\|_{L^1}\|\tilde{\mathfrak{F}}_{p}h\|_{L^2}\|\tilde{\mathfrak{F}}_{p}f\|_{L^2},  \eeno
which implies
\beno  \sum_{p=-1}^\infty|\mathfrak{W}_{k,p}^{3,2}|\lesssim 2^{(\gamma+1) k} \|g\|_{L^1}\|h\|_{H^{a_1}}\|f\|_{H^{b_1}}. \eeno
Therefore we have 
\beno    \sum_{p=-1}^\infty |\mathfrak{W}_{k,p}^{3}| &\lesssim& 2^{\gamma k}\|g\|_{L^1_{2}}(\|(-\triangle_{\SS^2})^{a/2} h\|_{L^2}+\| h\|_{H^a})
(\|(-\triangle_{\SS^2})^{b/2} f\|_{L^2}+\| f\|_{H^b}) \\&&\quad+2^{(\gamma+1) k} \|g\|_{L^1}\|h\|_{H^{a_1}}\|f\|_{H^{b_1}}. \eeno 
Because $\mathfrak{W}_{k,l}^{2}$ enjoys the similar structure as that of $\mathfrak{W}_{k,p}^{3}$, we finally arrive at for $k\ge0$,
\beno \sum_{p=-1}^\infty |\mathfrak{W}_{k,p}^{3}|+\sum_{l=-1}^\infty|\mathfrak{W}_{k,l}^{2}| &\lesssim& 2^{\gamma k}\|g\|_{L^1_{2}}(\|(-\triangle_{\SS^2})^{a/2} h\|_{L^2}+\| h\|_{H^a})
(\|(-\triangle_{\SS^2})^{b/2} f\|_{L^2}+\| f\|_{H^b}) \\&&\quad+2^{(\gamma+1) k} \|g\|_{L^1}\|h\|_{H^{a_1}}\|f\|_{H^{b_1}}. \eeno

Due to \eqref{fsdecom}, we conclude that for for $k\ge0$, \beno |\langle Q^\epsilon_k(g, h), f \rangle_v|  &\lesssim&   2^{(\gamma+1) k} \|g\|_{L^1}\|h\|_{H^{a_1}}\|f\|_{H^{b_1}}\notag\\&&+2^{\gamma k}\|g\|_{L^1_{2}}(\|(-\triangle_{\SS^2})^{a/2} h\|_{L^2}+\| h\|_{H^a})
(\|(-\triangle_{\SS^2})^{b/2} f\|_{L^2}+\| f\|_{H^b}). \eeno
From which together with \eqref{thmgrf}, it is enough to derive the upper bounds in   anisotropic spaces by the argument used in the proof of Theorem \ref{thumbanti}. This completes the proof of the theorem. \end{proof}

\subsection{Proof of Theorem \ref{thmlblandau} } We   give the proof to Theorem \ref{thmlblandau}.
\begin{proof} 
 We first focus on the lower bound  of the functional $\langle -Q^\epsilon(g, f), f\rangle$ where $g$ satisfies the condition \eqref{lbc}.  Observe that
\beno  \langle -Q^\epsilon(g, f), f\rangle=\f12\mathcal{E}^{\gamma,\epsilon}_g(f)-\f12\mathcal{L}^{\epsilon}_g(f),\eeno
where
\beno \mathcal{E}^{\gamma,\epsilon}_g(f)&\eqdefa& \iint_{v,v_*\in\R^3,\sigma\in\SS^2} |v-v_*|^\gamma g_* b^\epsilon(\cos\theta)(f'-f)^2  d\sigma dv_*dv,\\
\mathcal{L}^{\epsilon}_g(f)&\eqdefa&\int_{\R^6}dv_*dv\int_{\SS^2}
B^\epsilon (|v-v_*|,\sigma)g_*(f'^2-f^2)d\sigma.
\eeno
By the cancellation lemma, it holds
\beno  |\mathcal{L}^{\epsilon}_g(f)|\lesssim \mathcal{R}.\eeno
We note that this term is controlled by Lemma \ref{grlimlem3}.

Next we concentrate on the term $\mathcal{E}^{\gamma,\epsilon}_g(f)$. Due to Lemma \ref{reducmaxwellian}, it suffices to consider the lower bound of  $\mathcal{E}^{0,\epsilon}_g(f)$.
Thanks to the geometric decomposition  \eqref{geodecom}, we  have \beno \mathcal{E}^{0,\epsilon}_g(f)\ge \mathcal{E}^{0,\epsilon}_{1,g}(f)-\mathcal{E}^{0,\epsilon}_{2,g}(f),\eeno where
\beno   \mathcal{E}^{0,\epsilon}_{1,g}(f)\eqdefa
\f12\iint_{u,v_*\in\R^3,\sigma\in\SS^2}  g_* b^\epsilon(\cos\theta)\big((T_{v_*}f)(r\varsigma)-(T_{v_*}f)(r\tau))^2 d\sigma dv_*du,\\
\mathcal{E}^{0,\epsilon}_{2,g}(f)\eqdefa 2
 \iint_{u,v_*\in\R^3,\sigma\in\SS^2}   g_* b^\epsilon(\cos\theta)\big(f(v_*+u^+)-f(v_*+|u|\f{u^+}{|u^+|})\big)^2 d\sigma dv_*du. \eeno
 \medskip

{\it Step 1: Estimate of $\mathcal{E}^{0,\epsilon}_{1,g}(f)$.}
By the strategy explained in Section 1.3, the fact $b^\epsilon(\sigma\cdot\tau)\sim |\sigma-\tau|^{-(2+2s)}\mathrm{1}_{|\sigma-\tau|\le\epsilon}$ and     Lemma \ref{lbplim}, we first get the lower bound of $\mathcal{E}^{0,\epsilon}_{1,g}(f)$, that is, 
if $g\ge0$, then \beno\mathcal{E}^{0,\epsilon}_{1,g}(f)+\|g\|_{L^1}\|f\|_{L^2}^2\sim \int_{\R^3} g_*\|W^\epsilon((-\triangle_{\SS^2})^{1/2})T_{v_*}f\|_{L^2}^2dv_*+\|g\|_{L^1}\|f\|_{L^2}^2.\eeno

\smallskip

{\it Step 2: Estimate of $\mathcal{E}^{0,\epsilon}_{2,g}(f)$:}
 By a slight modification of the estimate to $\mathcal{E}^{0}_2$(defined in \eqref{defe20}) in the proof of Lemma \ref{coermaxw1}, we can get   
\beno  |\mathcal{E}^{0,\epsilon}_{2,g}(f)|\lesssim \|g\|_{L^1_1}\|f\|_{H^{1/2}_{1/2}}^2.\eeno  We point out that it is  a consequence of   \eqref{ubl1}.

We finally arrive at for $\eta>0$,
\beno  \mathcal{E}^{0,\epsilon}_{g}(f)+\|g\|_{L^1}\|f\|_{L^2}^2&\gtrsim&  
\int_{\R^3} g_*\|W^\epsilon((-\triangle_{\SS^2})^{1/2})T_{v_*}f\|_{L^2}^2dv_*-\|g\|_{L^1_1}(\eta^{-1} \|f\|_{H^{1}}^2+\eta\|f\|_{L^2_1}^2)\\
&\gtrsim & \int_{\R^3} g_*\|W^\epsilon((-\triangle_{\SS^2})^{1/2})T_{v_*}f\|_{L^2}^2dv_*-\|g\|_{L^1_1}(\eta^{-1} \|\psi(\epsilon D)f\|_{H^1}^2\\&&+\eta^{-1} \|(1-\psi(\epsilon D))f\|_{H^1}^2+
\eta\|f\|_{L^2_1}^2).
\eeno
Thanks to the condition \eqref{lbc}, we also have
 \beno  \mathcal{E}^{0,\epsilon}_{g}(f)+\|g\|_{L^1}\|f\|_{L^2}^2\ge C(\lambda, \delta)\|W^\epsilon(D)f\|_{L^2}^2,\eeno
 which was proven in \cite{he1}.
 Combining these two inequalities, we obtain that
\beno &&\mathcal{E}^{0,\epsilon}_{g}(f)+\|g\|_{L^1_1}(\eta^{-1} \|(1-\psi(\epsilon D))f\|_{H^{1}}^2+\eta^{-1}\|f\|_{L^2}^2+\eta\|f\|_{L^2_1}^2)\\&&\ge  C(\lambda, \delta, \eta)\big(\|W^\epsilon(D)f\|_{L^2}^2+
\int_{\R^3} g_*\|W^\epsilon((-\triangle_{\SS^2})^{1/2})T_{v_*}f\|_{L^2}^2dv_*\big).  \eeno

Thanks to the condition \eqref{lbc} and Lemma \ref{reducmaxwellian}, we have
\ben\label{coerLandau} &&C(\lambda, \delta, \eta) \big( \int_{\R^3} \mu_*\|W^\epsilon((-\triangle_{\SS^2})^{1/2})T_{v_*}W_{\gamma/2}f\|_{L^2}^2dv_*+\|W^\epsilon(D)W_{\gamma/2}f\|_{L^2}^2\big)\notag\\
&&\lesssim C(\delta,\lambda)\big(\mathcal{E}^{\gamma,\epsilon}_{g}(f)+\eta^{-1}(\|f\|_{L^{2}_{\gamma/2}}^2+\|(1-\psi(\epsilon D))W_{\gamma/2}f\|_{H^{1}}^2)+\eta\|f\|_{L^2_{\gamma/2+1}}^2\big) .\een 
Noticing that for any  smooth function $f$, we have
\beno    \langle -Q_L(g,f),f\rangle_v=\lim_{\epsilon\rightarrow0} \langle -Q^\epsilon(g, f), f\rangle\ge 
\lim_{\epsilon\rightarrow0}\f12\mathcal{E}^{\gamma,\epsilon}_g(f)- C\mathcal{R}.\eeno
Then the results are easily obtained by Fatou Lemma, \eqref{coerLandau}, Lemma \ref{grlimlem3} and Lemma \ref{antin3}. \end{proof}

\subsection{Proof of Theorem \ref{thmeplandau}} Finally we give the proof to Theorem \ref{thmeplandau}.
\begin{proof}  It is proved in \cite{villani1} (see (55) in \cite{villani3}) that for any smooth function $f$,
$ \lim\limits_{\epsilon\rightarrow0}D_B^\epsilon(f)=D_L(f)$. By the proof of Theorem  \ref{entropyproduction} and Lemma  \ref{reducmaxwellian}, we have  \beno  D_B^\epsilon(f) 
&\gtrsim& \mathcal{E}_\mu^{0,\epsilon}(W_{\gamma/2}\sqrt{f})- \|f\|_{L^1_2}. \eeno
From the proof of Theorem \ref{thmlblandau}, we have that for any $\eta>0$,
\beno &&\mathcal{E}^{0,\epsilon}_{\mu}(W_{\gamma/2}\sqrt{f})+ (\eta^{-1} \|(1-\psi(\epsilon D))W_{\gamma/2}\sqrt{f}\|_{H^{1}}^2+\eta^{-1}\|W_{\gamma/2}\sqrt{f}\|_{L^2}^2+\eta\|W_{\gamma/2}\sqrt{f}\|_{L^2_1}^2)\\&&\gtrsim  C(\lambda, \delta, \eta)\big(\|W^\epsilon(D)W_{\gamma/2}\sqrt{f}\|_{L^2}^2+
\int_{\R^3} \mu_*\|W^\epsilon((-\triangle_{\SS^2})^{1/2})T_{v_*}W_{\gamma/2}\sqrt{f}\|_{L^2}^2dv_*\big).
\eeno
In other words,
\beno 
&&D_B^\epsilon(f)+\|f \|_{L^1_{\gamma+2}}+\|f\|_{L^1_2}+ \|(1-\psi(\epsilon D))W_{\gamma/2}\sqrt{f}\|_{H^{1}}^2\\
&&\gtrsim    C(\lambda, \delta)\big(\|W^\epsilon(D)W_{\gamma/2}\sqrt{f}\|_{L^2}^2+
\int_{\R^3} \mu_*\|W^\epsilon((-\triangle_{\SS^2})^{1/2})T_{v_*}W_{\gamma/2}\sqrt{f}\|_{L^2}^2dv_*\big).
\eeno
Thanks to Fatou Lemma and Lemma \ref{antin3},  the theorem is easily obtained by passing the limit $\epsilon\rightarrow0$.  
\end{proof}

\setcounter{equation}{0}
\section{Toolbox: weighted Sobolev spaces, interpolation theory and $L^2$ profile of the Laplace-Beltrami operator}

In this section,  we will first   give new profiles  of the weighted Sobolev Spaces.    Then we will state a new version of  interpolation theory which slightly relaxes the assumption that operators are needed to be commutated with each other. Then in the next we will list some basic properties of the real spherical harmonics and introduce the definition of the Laplace-Beltrami operator.  After giving the $L^2$ profile of the fractional Laplace-Beltrami operator,   we  will give a detailed proof to  \eqref{labelint} which are crucial to capture the anisotropic structure of the collision operator.  We address that  results in this section have independent interest.

\subsection{Weighted Sobolev spaces}

\medskip

Before stating the results, we list some basic facts which will be used in the proof of the new profiles of the weighted Sobolev spaces.
\begin{lem}[Bernstein inequalities]\label{Bernstein-Ineq}
There exists a  constant $C$ independent of $j$ and $f$ such that

\noindent 1) For any $s\in \mathbb{R}$ and $j\geq 0$,
\ben\label{bern1}
 C^{-1} 2^{js} \|\mathfrak{F}_j f\|_{L^2(\mathbb{R}^3)} \leq \|\mathfrak{F}_j f\|_{H^s(\mathbb{R}^3)} \leq C 2^{js}\|\mathfrak{F}_j f\|_{L^2(\mathbb{R}^3)}.
\een
2) For integers $j,k \geq 0$ and $
p, q \in [1, \infty]$, the Bernstein  inequalities are shown as
\begin{eqnarray}
\begin{array}{ccc}
\displaystyle{\sup_{|\alpha|=k} \|\partial^{\alpha} \mathfrak{F}_j f\|_{L^{q}(\mathbb{R}^3)} \lesssim 2^{jk} 2^{3j(\frac{1}{p}-\frac{1}{q})}\|\mathfrak{F}_j f \|_{L^{p}(\mathbb{R}^3)}},\\
  \displaystyle{ 2^{jk} \| \mathfrak{F}_j f\|_{L^p(\mathbb{R}^3)} \lesssim
  \sup_{|\alpha|=k} \|\partial^{\alpha} \mathfrak{F}_j f\|_{L^p(\mathbb{R}^3)} \lesssim
  2^{jk} \|\mathfrak{F}_j f \|_{L^{p}(\mathbb{R}^3)}}.
\end{array}\end{eqnarray}
3) For any $f \in H^s$, it holds that
\begin{eqnarray}\label{bern2}
 \|f\|_{H^s(\mathbb{R}^3)}^2 \sim \sum_{k=-1}^{\infty} 2^{2ks} \|\mathfrak{F}_k f\|^2_{L^2(\mathbb{R}^3)}.
\end{eqnarray} 
\end{lem}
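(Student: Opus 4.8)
The plan is to reduce the whole statement to the elementary fact that each Littlewood--Paley block $\mathfrak{F}_j f$ is a Fourier multiplier whose symbol is supported in a fixed dyadic shell. Since $\mathfrak{F}_j f=\phi_j\ast f$ with $\widehat{\phi_j}(\xi)=\varphi(2^{-j}\xi)$, the spectrum of $\mathfrak{F}_j f$ is contained in the annulus $\mathcal{A}_j\eqdefa\{\tfrac34 2^j\le|\xi|\le\tfrac83 2^j\}$ for $j\ge 0$ (and in the ball $\{|\xi|\le\tfrac43\}$ for $j=-1$). On $\mathcal{A}_j$ one has $1+|\xi|^2\sim 2^{2j}$, hence $(1+|\xi|^2)^{s/2}\sim 2^{js}$ with implicit constants depending only on $s$. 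Applying Plancherel's identity to $\|\mathfrak{F}_j f\|_{H^s}^2=\int_{\R^3}(1+|\xi|^2)^s|\varphi(2^{-j}\xi)|^2|\widehat f(\xi)|^2\,d\xi$ then produces item 1) immediately.

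For item 2) I would first fix an auxiliary function $\widetilde\varphi\in C_0^\infty(\R^3)$ which equals $1$ on a neighbourhood of $\mathrm{Supp}\,\varphi$ and vanishes near the origin, set $\widetilde\phi\eqdefa\mathcal{F}^{-1}\widetilde\varphi$ and $\widetilde\phi_j(x)\eqdefa 2^{3j}\widetilde\phi(2^j x)$, so that $\mathfrak{F}_j f=\widetilde\phi_j\ast\mathfrak{F}_j f$. Differentiating under the convolution gives $\partial^\alpha\mathfrak{F}_j f=(\partial^\alpha\widetilde\phi_j)\ast\mathfrak{F}_j f$, and Young's inequality with exponents $1+\tfrac1q=\tfrac1r+\tfrac1p$, together with the scaling identity $\|\partial^\alpha\widetilde\phi_j\|_{L^r}=2^{j|\alpha|}\,2^{3j(1-1/r)}\|\partial^\alpha\widetilde\phi\|_{L^r}$ and $1-\tfrac1r=\tfrac1p-\tfrac1q$, yields the first two displayed inequalities (the choice $r=1$, $p=q$ gives the derivative estimate, and a general $r$ gives the embedding gain $2^{3j(1/p-1/q)}$). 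For the reverse bound $2^{jk}\|\mathfrak{F}_j f\|_{L^p}\lesssim\sup_{|\alpha|=k}\|\partial^\alpha\mathfrak{F}_j f\|_{L^p}$ I would use the elementary identity $1=\sum_{|\alpha|=k}\binom{k}{\alpha}\xi^{2\alpha}|\xi|^{-2k}$, valid for $\xi\ne 0$: multiplying $\widehat{\mathfrak{F}_j f}$ by this and inserting the cutoff $\widetilde\varphi(2^{-j}\xi)$ (which is $1$ on the spectrum of $\mathfrak{F}_j f$ and makes $\xi^\alpha|\xi|^{-2k}\widetilde\varphi(2^{-j}\xi)$ a legitimate smooth symbol) expresses $\mathfrak{F}_j f$ as a finite sum of convolutions $m_{j,\alpha}\ast\partial^\alpha\mathfrak{F}_j f$, where $m_{j,\alpha}=\mathcal{F}^{-1}\big(\binom{k}{\alpha}\xi^\alpha|\xi|^{-2k}\widetilde\varphi(2^{-j}\xi)\big)$ has $\|m_{j,\alpha}\|_{L^1}\lesssim 2^{-jk}$ by scaling; Young's inequality then closes the estimate.

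Item 3) follows by combining item 1) with the almost orthogonality of the decomposition $f=\sum_{k\ge-1}\mathfrak{F}_k f$: since the shells $\mathcal{A}_k$ overlap only when $|k-k'|\le 1$, Plancherel's theorem gives $\|f\|_{H^s}^2\sim\sum_{k\ge-1}\|\mathfrak{F}_k f\|_{H^s}^2$, and item 1) converts each summand with $k\ge 0$ into $2^{2ks}\|\mathfrak{F}_k f\|_{L^2}^2$, the low-frequency term $k=-1$ being comparable to $\|\mathfrak{F}_{-1}f\|_{L^2}^2$ (and hence to $2^{-2s}\|\mathfrak{F}_{-1}f\|_{L^2}^2$) because $(1+|\xi|^2)^{s/2}$ is bounded above and below on the unit ball. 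I do not expect any genuine obstacle here; the only points that require a little care are the systematic bookkeeping of the scaling factors $2^{3j(1/p-1/q)}$ in item 2) and keeping the low-frequency block $j=-1$ apart in item 1) and in the reverse inequality of item 2), where the spectrum is a ball rather than an annulus and the multiplier argument built on $|\xi|^{-2k}$ does not apply verbatim.
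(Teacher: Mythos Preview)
Your argument is correct and is exactly the standard proof of the Bernstein inequalities via Plancherel, Young's convolution inequality, and scaling. The paper, however, does not prove this lemma at all: it is stated without proof in Section~5.1 as a well-known background fact from Littlewood--Paley theory, so there is nothing to compare.
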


\begin{lem}\label{baslem1}(see \cite{hmuy}) Let $s, r\in \R$ and $a(v), b(v)\in C^\infty$ satisfy for any $\alpha\in \Z^3_{+}$,
\beno  |\pa^\alpha_v a(v)|\le C_{1,\alpha} \langle v\rangle^{r-|\alpha|}, |\pa^\alpha_\xi b(\xi)|\le C_{2,\alpha} \langle \xi\rangle^{s-|\alpha|} \eeno for constants $C_{1,\alpha}, C_{2,\alpha}$. Then there exists a constant $C$ depending only on $s, r$ and finite numbers of  $C_{1,\alpha}, C_{2,\alpha}$ such that for any Schwartz function f,
\beno  \|a(\cdot)b(D)f\|_{L^2}\le C\|\langle D\rangle^sW_rf\|_{L^2},\\
\|b(D)a(\cdot)f\|_{L^2}\le C\|W_r\langle D\rangle^sf\|_{L^2}.
\eeno
\end{lem}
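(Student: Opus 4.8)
The statement is a standard pseudo-differential composition estimate: if $a$ is a symbol of order $r$ (in the position variable, with weights $\langle v\rangle$) and $b(D)$ is a Fourier multiplier of order $s$, then $a(\cdot)b(D)$ maps $H^s_r$ into $L^2$, and symmetrically $b(D)a(\cdot)$ maps $H^s_r$ into $L^2$ after commuting the weight and the derivative. The plan is to reduce everything to the boundedness on $L^2$ of a properly normalised pseudo-differential operator of order $0$, for which one can invoke the Calder\'on--Vaillancourt theorem (or the explicit symbolic calculus in \cite{hmuy}, which is cited precisely for this purpose).

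\medskip
\textbf{Step 1: reduce to an order-zero operator.} Write $a(\cdot)b(D)f = a(\cdot)b(D)\langle D\rangle^{-s}W_{-r}\,\big(\langle D\rangle^{s}W_r f\big)$ formally, so it suffices to show that the operator $T\eqdefa a(\cdot)\,b(D)\,\langle D\rangle^{-s}\,W_{-r}$ is bounded on $L^2$. By the hypotheses, $a(v)W_{-r}(v)$ is a symbol of order $0$ in $v$ (each derivative gains a factor $\langle v\rangle^{-1}$, since $|\pa^\alpha_v(a W_{-r})|\lesssim \langle v\rangle^{-|\alpha|}$ by Leibniz), and $b(\xi)\langle\xi\rangle^{-s}$ is a symbol of order $0$ in $\xi$. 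Hence $T$ is (a composition that is) a pseudo-differential operator with symbol in the H\"ormander class $S^0_{0,0}$, or more simply a product of a bounded multiplication operator composed with a bounded Fourier multiplier; in the latter reading $a(v)W_{-r}(v)\in L^\infty$ and $b(\xi)\langle\xi\rangle^{-s}\in L^\infty$ immediately give $L^2\to L^2$ boundedness without any calculus. The constant depends only on $s,r$ and finitely many of the $C_{1,\alpha},C_{2,\alpha}$, as claimed.

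\medskip
\textbf{Step 2: the second inequality by duality or by commutator.} For $\|b(D)a(\cdot)f\|_{L^2}\le C\|W_r\langle D\rangle^s f\|_{L^2}$, one option is duality: $\langle b(D)a(\cdot)f, \varphi\rangle = \langle f, a(\cdot)\overline{b}(D)\varphi\rangle$, and $\overline{b}(\xi)$ satisfies the same symbol bounds as $b(\xi)$, so by Step 1 (applied with $W_r\langle D\rangle^s f$ in the role of the test function) one controls this by $\|W_r\langle D\rangle^s f\|_{L^2}\|\varphi\|_{L^2}$; here one uses that $W_r a(\cdot)$ relative to the weight structure is handled identically. Alternatively, write $b(D)a(\cdot) = a(\cdot)b(D) + [b(D),a(\cdot)]$; the commutator has symbol of order $r+s-1$ by the standard asymptotic expansion (the principal terms cancel), which is strictly lower order, so it is absorbed by the first inequality applied with $(r,s)$ replaced by $(r,s-1)$, after an interpolation/comparison $\|\cdot\|_{H^{s-1}_r}\lesssim \|\cdot\|_{H^s_r}$.

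\medskip
\textbf{Main obstacle.} There is no genuine obstacle here; the lemma is quoted from \cite{hmuy} and the only care needed is bookkeeping: verifying that the weighted symbol $a(v)W_{-r}(v)$ genuinely lies in the order-zero class (a one-line Leibniz estimate), and keeping track that all constants depend only on finitely many seminorms of $a$ and $b$. If one wanted a self-contained proof rather than citing \cite{hmuy}, the mild technical point would be justifying the composition formula and the commutator expansion for symbols in $S^0_{0,0}$ (where one cannot iterate the expansion indefinitely), but since the paper only needs the two displayed inequalities, the crude factorisation ``bounded multiplier $\circ$ bounded multiplication'' from Step 1 suffices and avoids even that.
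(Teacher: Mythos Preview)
The paper does not prove this lemma; it is simply cited from \cite{hmuy}. So there is nothing to compare your approach to. That said, your proposal has a genuine gap.

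In Step 1 you write $a(\cdot)b(D)f = a(\cdot)b(D)\langle D\rangle^{-s}W_{-r}\,\big(\langle D\rangle^{s}W_r f\big)$, but this identity is false: it would require $W_{-r}\langle D\rangle^{s}W_r=\langle D\rangle^{s}$, i.e.\ that the weight and the multiplier commute. Even with the correct substitution $f=W_{-r}\langle D\rangle^{-s}g$ (so that the operator to bound is $a(\cdot)b(D)W_{-r}\langle D\rangle^{-s}$), your ``crude factorisation'' still fails. The operator has the structure
\[
\text{(multiply by }a)\circ(\text{Fourier multiplier }b\langle\cdot\rangle^{-s})\circ(\text{multiply by }W_{-r}),
\]
and the two multiplication operators are \emph{not} adjacent: there is a nontrivial Fourier multiplier between them, so $aW_{-r}\in L^\infty$ is irrelevant. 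For $r>0$ the outer multiplication by $a$ is unbounded on $L^2$; for $r<0$ the inner multiplication by $W_{-r}$ is unbounded. Either way the product-of-bounded-operators argument collapses. The same obstruction reappears in your duality argument for Step 2.

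What actually survives is the part you flag as unnecessary: one genuinely needs the pseudo-differential calculus. The composition $a(\cdot)b(D)W_{-r}\langle D\rangle^{-s}$ has principal symbol $a(v)\langle v\rangle^{-r}\,b(\xi)\langle\xi\rangle^{-s}\in S^{0,0}$ in the SG/scattering class, and the asymptotic expansion for compositions (exactly the one the paper writes out in the proof of Lemma~\ref{baslem2}) shows the full symbol stays in $S^{0,0}$; Calder\'on--Vaillancourt then gives the $L^2$ bound. This is the content of the reference \cite{hmuy}, and it cannot be shortcut by the bounded-times-bounded trick.
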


\begin{rmk}\label{rmwsp} As a direct consequence, we get \beno \|\langle D\rangle^m W_l f\|_{L^2}\sim \| W_l \langle D\rangle^mf\|_{L^2}\sim \|f\|_{H^m_l}. \eeno
\end{rmk}

\begin{defi} A smooth function $a(v,\xi)$ is said to be a symbol of type $S^{m}_{1,0}$ if   $a(v,\xi)$  verifies that for any multi-indices $\alpha$ and $\beta$,
\beno |(\pa^\alpha_\xi\pa^\beta_v a)(v,\xi)|\le C_{\alpha,\beta} \langle \xi\rangle^{m-|\alpha|}, \eeno
where $C_{\alpha,\beta}$ is a constant depending only on   $\alpha$ and $\beta$.
\end{defi}

\begin{lem}\label{baslem2} Let $l, s, r\in \R$, $M(\xi)\in S^{r}_{1,0}$ and $\Phi(v)\in S^{l}_{1,0}$. Then there exists a constant $C$ such that
\beno \|[M(D_v), \Phi]f \|_{H^s}\le C\|f\|_{H^{r+s-1}_{l-1}}. \eeno
\end{lem}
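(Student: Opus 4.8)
\textbf{Proof proposal for Lemma 5.4 (the commutator estimate $\|[M(D_v),\Phi]f\|_{H^s}\le C\|f\|_{H^{r+s-1}_{l-1}}$).}

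The plan is to use the standard symbolic calculus for pseudodifferential operators of type $S^m_{1,0}$, reading the commutator $[M(D_v),\Phi]$ as a pseudodifferential operator and tracking its order carefully. First I would observe that $M(D_v)$ has symbol $M(\xi)\in S^r_{1,0}$ (independent of $v$), while the multiplication operator by $\Phi(v)$ is a pseudodifferential operator with symbol $\Phi(v)\in S^l_{1,0}$ viewed as a symbol in $(v,\xi)$ that is constant in $\xi$. By the composition formula for pseudodifferential operators, both $M(D_v)\circ\Phi$ and $\Phi\circ M(D_v)$ are pseudodifferential operators, and their symbols admit asymptotic expansions in decreasing order. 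The key point is that the leading terms of these two expansions agree: the principal symbol of each composition is $M(\xi)\Phi(v)$, which belongs to $S^{r+l}_{1,0}$. Hence in the difference $[M(D_v),\Phi]$ the top-order term cancels, and the resulting operator has symbol lying in $S^{r+l-1}_{1,0}$, with the gain of one order coming precisely from the single $\partial_\xi M\cdot\partial_v\Phi$ correction term.

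Concretely, I would write the composition $\Phi\circ M(D_v)$ has symbol exactly $\Phi(v)M(\xi)$ (since $\Phi$ is constant in $\xi$, there is no expansion on that side), while $M(D_v)\circ\Phi$ has symbol with asymptotic expansion $\sum_{\alpha}\frac{1}{\alpha!}\partial_\xi^\alpha M(\xi)\,D_v^\alpha\Phi(v)$; the $\alpha=0$ term is $M(\xi)\Phi(v)$ and cancels against the other composition, leaving a symbol
$$
\sigma(v,\xi)=\sum_{|\alpha|\ge 1}\frac{1}{\alpha!}\partial_\xi^\alpha M(\xi)\,D_v^\alpha\Phi(v).
$$
Each term with $|\alpha|=j\ge 1$ satisfies, by the $S^r_{1,0}$ bound on $M$ and the $S^l_{1,0}$ bound on $\Phi$, the estimate $|\partial_\xi^\beta\partial_v^\gamma(\partial_\xi^\alpha M\, D_v^\alpha\Phi)|\lesssim \langle\xi\rangle^{r-j-|\beta|}\langle v\rangle^{l-j-|\gamma|}\lesssim \langle\xi\rangle^{r-1-|\beta|}\langle v\rangle^{l-1}$, so $\sigma\in S^{r-1}_{1,0}$ uniformly in $v$ with a weight $\langle v\rangle^{l-1}$ factored out. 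Then I invoke Lemma 5.2 (respectively Remark 5.1 on weighted Sobolev norms): writing the commutator operator as $\Psi(v)\,b(v,D_v)$ where $\Psi(v)=\langle v\rangle^{l-1}$ and $b$ is a symbol of type $S^{r-1}_{1,0}$ (with bounded seminorms), one has
$$
\|[M(D_v),\Phi]f\|_{H^s}\lesssim \|\langle v\rangle^{l-1}\langle D_v\rangle^{s}\,b_0(v,D_v)f\|_{L^2}\lesssim \|\langle D_v\rangle^{s+r-1}W_{l-1}f\|_{L^2}\sim \|f\|_{H^{r+s-1}_{l-1}},
$$
using the $L^2$-boundedness of order-zero pseudodifferential operators together with the calculus that lets $\langle v\rangle^{l-1}$ and $\langle D_v\rangle^{s}$ be moved past the order-$(r-1)$ factor up to lower-order errors (which are themselves handled inductively or absorbed, since they are of strictly lower order and hence controlled by the same weighted norm).

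The main obstacle — really the only nontrivial point — is making the asymptotic expansion argument rigorous at the level of a \emph{finite} expansion plus a genuinely bounded remainder, rather than a merely formal series: one must choose $N$ large enough that the remainder term in the composition formula maps $H^{r+s-1}_{l-1}$ into $H^s$ with norm controlled by finitely many seminorms of $M$ and $\Phi$, and one must verify that the remainder, despite involving oscillatory-integral representations, still factors through the weight $\langle v\rangle^{l-1}$ in the way needed to apply Lemma 5.2. This is where one has to be careful that the derivatives $\partial_v^\gamma\Phi$ do not degrade the weight worse than $\langle v\rangle^{l-1-|\gamma|}$, which is exactly guaranteed by $\Phi\in S^l_{1,0}$; once that bookkeeping is in place, everything reduces to the already-cited boundedness results for pseudodifferential operators on weighted $L^2$ spaces, and the proof closes.
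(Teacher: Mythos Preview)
Your proposal is correct and follows essentially the same approach as the paper's proof: both use the asymptotic expansion of the composition $M(D_v)\Phi$ as $\Phi M(D_v)+\sum_{1\le|\alpha|<N}\frac{1}{\alpha!}\Phi_\alpha M^\alpha(D_v)+r_N(v,D_v)$, observe that the leading term cancels in the commutator, estimate each surviving term as an operator with symbol bounded by $\langle\xi\rangle^{r-|\alpha|}\langle v\rangle^{l-|\alpha|}$, and then apply the weighted $L^2$ boundedness lemma (what you call Lemma~5.2) to conclude. The paper is slightly more explicit than your sketch at the one point you flagged as the obstacle: it writes down the oscillatory-integral representation of the remainder $r_{N,\tau,\alpha}(v,\xi)$ and verifies directly that $|\partial_v^\beta\partial_\xi^{\beta'}r_{N,\tau,\alpha}(v,\xi)|\le C_{\beta,\beta'}\langle\xi\rangle^{r-N-|\beta'|}\langle v\rangle^{l-N-|\beta|}$, but this is exactly the bookkeeping you anticipated.
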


\begin{proof} We   prove it in the   spirit of \cite{hmuy}.
Thanks to  the expansion of the pseduo-differential operator, it holds that for any $N\in \N$,
\ben\label{psuedoexp} M(D_v)\Phi=\Phi M(D_v)+\sum_{1\le|\alpha|<N}\f1{\alpha !}\Phi_{\alpha}M^{\alpha}(D_v)+r_N(v,D_v),\een
where $\Phi_{\alpha}(v)=\pa_v^\alpha \Phi$, $M^\alpha(\xi)=\pa^\alpha_\xi M(\xi)$ and $$ r_{N}(v, \xi)=N\sum_{|\alpha|=N}\int_0^1\f{(1-\tau)^{N-1}}{\alpha !}r_{N,\tau, \alpha}(v,\xi)d\tau. $$
Here $$r_{N,\tau, \alpha}(v,\xi)=\int \big[(1-\triangle_y)^n\Phi_{\alpha}(v+y)\big]I(\xi;y)\langle y\rangle^{-2m}dy$$ with $2m>N-l+3$, $2n>N-r+3$ and \beno  I(\xi,y)=\f1{(2\pi)^3}\int e^{-iy\eta}(1-\triangle_\eta)^m\bigg[ \langle \eta\rangle^{-2n}M^{(\alpha)}(\xi+\tau\eta)\bigg]d\eta.\eeno
It is not difficult to check that  it holds uniformly with respect to $\tau\in [0,1]$,
\beno  \big| \pa_v^\beta\pa^{\beta'}_\xi r_{N,\tau, \alpha}(v,\xi)\big|\le C_{\beta,\beta'}\langle \xi\rangle^{r-N-|\beta'|}\langle v\rangle^{l-N-|\beta|}.\eeno
 Then \eqref{psuedoexp} and Lemma \ref{baslem1} imply the lemma with $s=0$. The case $s\neq0 $ can be treated similarly and we skip the proof here.
 \end{proof}

Now we are in a position to  give the new profiles of the weighted Sobolev spaces.

\begin{thm}\label{baslem3} Let $m,l\in \R$. Then for   $f\in H^m_l$, we have
\beno  \sum_{k=-1}^\infty 2^{2kl}\|\mathcal{P}_{k}f\|_{H^m}^2\sim \|f\|_{H^m_l}^2.\eeno
\end{thm}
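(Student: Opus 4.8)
The plan is to establish the equivalence by proving the two directions of the inequality $\sum_{k\ge-1} 2^{2kl}\|\mathcal{P}_k f\|_{H^m}^2 \lesssim \|f\|_{H^m_l}^2$ and its reverse, using the facts that the phase-space dyadic cutoffs $\mathcal{P}_k$ (multiplication by $\varphi(2^{-k}\cdot)$ or $\psi(\cdot)$) are, up to harmless rescalings, uniformly bounded operators, and that on the support of $\mathcal{P}_k f$ the weight $\langle v\rangle^l$ is comparable to $2^{kl}$ (for $k\ge 0$) or to $1$ (for $k=-1$). The key tool is Lemma \ref{baslem2}: since $\langle D\rangle^m \in S^m_{1,0}$ and the cutoff symbols lie in $S^0_{1,0}$ with constants uniform in $k$ after the natural normalization, the commutator $[\langle D\rangle^m, \varphi(2^{-k}\cdot)]$ maps $H^{m-1}_{-1}$ to $L^2$ with a gain; tracking the $2^{-k}$ scaling in the derivatives of $\varphi(2^{-k}\cdot)$ shows this commutator is in fact lower order by a factor $2^{-k}$, which will be summable against the overlap structure of the $\mathcal{P}_k$'s.

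First I would prove $\lesssim$. Write $\|f\|_{H^m_l}^2 \sim \|\langle v\rangle^l \langle D\rangle^m f\|_{L^2}^2$ by Remark \ref{rmwsp}. Insert the partition $f = \sum_j \mathcal{P}_j f$ and use almost-orthogonality: $\mathcal{P}_j$ and $\mathcal{P}_k$ have disjoint $v$-supports unless $|j-k|\le 1$, so $\|\langle v\rangle^l\langle D\rangle^m f\|_{L^2}^2$ can be compared term-by-term. For each $j$, $\langle v\rangle^l \mathcal{P}_j f$ is supported where $\langle v\rangle \sim 2^j$, hence $\|\langle v\rangle^l \langle D\rangle^m \mathcal{P}_j f\|_{L^2} \sim 2^{jl}\|\langle D\rangle^m \mathcal{P}_j f\|_{L^2} = 2^{jl}\|\mathcal{P}_j f\|_{H^m}$ — but one must be careful because $\langle D\rangle^m$ does not commute with $\mathcal{P}_j$, so I would instead estimate $\|\langle v\rangle^l\langle D\rangle^m \mathcal{P}_j f\|_{L^2}$ by writing $\langle v\rangle^l\langle D\rangle^m \mathcal{P}_j = \langle v\rangle^l \mathcal{P}_j \langle D\rangle^m + \langle v\rangle^l[\langle D\rangle^m,\mathcal{P}_j]$; the first piece is $\sim 2^{jl}\|\langle D\rangle^m\mathcal{P}_j f\|$ by the support property (applied to $\tilde{\mathcal{P}}_j$ to absorb the fact that $\langle D\rangle^m$ spreads support only in frequency, not in $v$), and the commutator piece is controlled by Lemma \ref{baslem2} with the $2^{-j}$ gain, which after multiplication by $\langle v\rangle^l \sim 2^{jl}$ still leaves a net gain and sums. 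Conversely, for $\gtrsim$, I would bound $\|\mathcal{P}_k f\|_{H^m} = \|\langle D\rangle^m \mathcal{P}_k f\|_{L^2}$ by again splitting off the commutator $[\langle D\rangle^m, \mathcal{P}_k]$ and using that $\mathcal{P}_k\langle D\rangle^m f$ is supported where $\langle v\rangle\sim 2^k$, so $\|\mathcal{P}_k \langle D\rangle^m f\|_{L^2} \lesssim 2^{-kl}\|\langle v\rangle^l \mathcal{P}_k\langle D\rangle^m f\|_{L^2} \lesssim 2^{-kl}\|\langle v\rangle^l \tilde{\mathcal{P}}_k \langle D\rangle^m f\|_{L^2}$, and then sum $\sum_k 2^{2kl}\cdot 2^{-2kl}\|\langle v\rangle^l\tilde{\mathcal{P}}_k\langle D\rangle^m f\|^2 \lesssim \|\langle v\rangle^l\langle D\rangle^m f\|^2 = \|f\|_{H^m_l}^2$ using finite overlap of the $\tilde{\mathcal{P}}_k$.

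I expect the main obstacle to be the non-commutativity of $\langle D\rangle^m$ with the $v$-localizations $\mathcal{P}_k$, which is exactly what prevents a one-line argument: one cannot simply pull the weight $\langle v\rangle^l$ through $\langle D\rangle^m$ on each dyadic piece. The resolution is the scaling bookkeeping for the commutator — one needs that $[\langle D\rangle^m, \varphi(2^{-k}\cdot)]$ is lower order \emph{and} gains a factor $2^{-k}$ (because each $v$-derivative of $\varphi(2^{-k}\cdot)$ produces $2^{-k}$), so that the error terms are $O(2^{-k})$ relative to the main term and remain summable after weighting by $2^{2kl}$; for negative $l$ this gain is what saves the argument, and for positive $l$ one additionally uses that $\langle v\rangle^l \lesssim 2^{kl}$ on $\mathrm{supp}\,\mathcal{P}_k$ only after passing to a slightly fattened cutoff $\tilde{\mathcal{P}}_k$. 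A secondary technical point is handling the $k=-1$ term separately, where the weight is $\sim 1$ and the estimate is immediate. Once the commutator scaling is set up carefully via Lemma \ref{baslem2}, both directions follow from Cauchy--Schwarz and the finite-overlap property of the dyadic family, together with Bernstein's inequality \eqref{bern2} if one wishes to further decompose in frequency, though I do not think that is necessary here.
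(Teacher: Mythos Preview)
Your outline has the right ingredients (the commutator Lemma~\ref{baslem2} and the support property $\langle v\rangle\sim 2^k$ on $\mathrm{supp}\,\mathcal{P}_k$), but there are two genuine gaps.

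\medskip
\textbf{The commutator normalization is wrong for general $l$.} You assert that $[\langle D\rangle^m,\varphi(2^{-k}\cdot)]$ gains a single factor $2^{-k}$, and that this ``remains summable after weighting by $2^{2kl}$''. But a gain of $2^{-k}$ alone gives, at best, $\|[\langle D\rangle^m,\mathcal{P}_k]f\|_{L^2}\lesssim 2^{-k}\|f\|_{H^{m-1}}$ (with no weight on the right), so $\sum_k 2^{2kl}\|[\langle D\rangle^m,\mathcal{P}_k]f\|_{L^2}^2\lesssim \bigl(\sum_k 2^{2k(l-1)}\bigr)\|f\|_{H^{m-1}}^2$, which diverges for $l\ge 1$. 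The point you are missing is that $2^{kn}\varphi(2^{-k}\cdot)\in S^{n}_{1,0}$ uniformly in $k$ for \emph{every} $n\in\R$, so one can and must choose $n$ depending on $l$. The paper takes $n=l+1$: then $W_{l+1}\mathcal{P}_k\in S^{l+1}_{1,0}$ uniformly, Lemma~\ref{baslem2} yields $\|[\langle D\rangle^m,W_{l+1}\mathcal{P}_k]f\|_{L^2}\lesssim\|f\|_{H^{m-1}_{l}}$ with the correct weight, and the residual $2^{-2k}$ prefactor is what makes the sum converge. Your ``fattened cutoff'' remark does not rescue this, because the commutator applied to $\tilde{\mathcal{P}}_k f$ still only gives $2^{-k}\|\tilde{\mathcal{P}}_k f\|_{H^{m-1}}$, and summing $2^{2k(l-1)}\|\tilde{\mathcal{P}}_k f\|_{H^{m-1}}^2$ is exactly the statement you are trying to prove at shifted indices, so you would be forced into an iteration you have not described.

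\medskip
\textbf{The harder direction $\|f\|_{H^m_l}^2\lesssim\sum_k 2^{2kl}\|\mathcal{P}_k f\|_{H^m}^2$ is missing.} Your two paragraphs, once the labeling is untangled, both argue the same inequality $\sum\lesssim\|f\|_{H^m_l}^2$. The ``almost-orthogonality'' step in your first paragraph fails: the functions $\langle D\rangle^m\mathcal{P}_j f$ are \emph{not} almost orthogonal in $L^2(\langle v\rangle^{2l}dv)$, because $\langle D\rangle^m$ destroys the $v$-localization. The paper handles this direction quite differently. For $m\ge 0$ it writes $\|f\|_{H^m_l}^2\sim\sum_k 2^{2kl}\|\mathcal{P}_k\langle D\rangle^m f\|_{L^2}^2$, commutes $\mathcal{P}_k$ past $\langle D\rangle^m$ using the $S^{l+1/2}$ normalization, and obtains
\[
\|f\|_{H^m_l}^2\lesssim \sum_k 2^{2kl}\|\mathcal{P}_k f\|_{H^m}^2+\|f\|_{H^{m-1}_{l-1/2}}^2,
\]
then \emph{iterates} this to push the error to $\|f\|_{H^{m-N}_{l-N/2}}^2$, which for large $N$ is dominated by $\|f\|_{L^2_l}^2\lesssim\sum_k 2^{2kl}\|\mathcal{P}_k f\|_{H^m}^2$ (the last inequality being trivial since $m\ge 0$). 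For $m<0$ the paper uses duality against $H^{-m}_{-l}$ and invokes the already-proven case $-m\ge 0$. Neither the iteration nor the duality appears in your proposal.
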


\begin{proof} We first observe that $2^{k(l+1)}\varphi(\f{v}{2^k})$ verifies the condition in Lemma \ref{baslem1}. Then we have
\beno  2^{2kl}\|\mathcal{P}_{k}f\|_{H^m}^2&=&2^{-2k}\|2^{k(l+1)}\mathcal{P}_{k}f\|_{H^m}^2\\
&\lesssim& 2^{-2k} \|\langle D\rangle^m W_{l+1} \mathcal{P}_{k}f\|_{L^2}^2\\
&\lesssim& 2^{-2k} \big[\|W_{l+1} \mathcal{P}_{k}\langle D\rangle^m f\|_{L^2}^2+\|f\|_{H^{m-1}_{l}}^2\big], \eeno
where we use Lemma \ref{baslem2} in the last step. This implies
\beno \sum_{k=-1}^\infty 2^{2kl}\|\mathcal{P}_{k}f\|_{H^m}^2&\lesssim& \sum_{k=-1}^\infty  \|W_{l} \mathcal{P}_{k}\langle D\rangle^m f\|_{L^2}^2 +\|f\|_{H^m_l}^2\\&\lesssim&  \|f\|_{H^m_l}^2.\eeno

To prove the inverse inequality, we first treat with the case $m\ge0$. Thanks to Remark \ref{rmwsp}, we have \beno \|f\|_{H^m_l}^2&\sim& \sum_{k=-1}^\infty \|\mathcal{P}_{k}W_l\langle D\rangle^m f\|_{L^2}^2\sim\sum_{k=-1}^\infty 2^{2kl}\|\mathcal{P}_{k}\langle D\rangle^m f\|_{L^2}^2\\&\lesssim & \sum_{k=-1}^\infty \bigg(2^{2kl}\|\mathcal{P}_{k}f\|_{H^m}^2+2^{-k}\|[2^{k(l+\f12)}\mathcal{P}_{k}, \langle D
\rangle^m]f\|_{L^2}^2 \bigg)\\
&\lesssim & \sum_{k=-1}^\infty  2^{2kl}\|\mathcal{P}_{k}f\|_{H^m}^2+ \| f\|_{H^{m-1}_{l-1/2}}^2,  \eeno
where we use Lemma \ref{baslem2} in the last two steps.
Then by iterated argument, we obtain that for any $N\in\N$,
\beno \|f\|_{H^m_l}^2&\lesssim & \sum_{k=-1}^\infty  2^{2kl}\|\mathcal{P}_{k}f\|_{H^m}^2+ \| f\|_{H^{m-N}_{l-N/2}}^2. \eeno
 Thanks to the fact that for $m\ge0$,  it holds \beno  \sum_{k=-1}^\infty 2^{2kl}\|\mathcal{P}_{k}f\|_{H^m}^2\gtrsim \|f\|_{L^2_{l}}^2.\eeno
Choose $N$ sufficiently large, then we  
  get \beno \|f\|_{H^m_l}^2&\lesssim & \sum_{k=-1}^\infty  2^{2kl}\|\mathcal{P}_{k}f\|_{H^m}^2,\eeno
which gives the proof to  desired result with $m\ge0$.

Next we will use the duality method to deal with the case $m<0$.
 Notice that \beno \int_{\R^3} fgdv&=&\sum_{k=-1}^\infty \int_{\R^3}  \mathcal{P}_{k}f \tilde{ \mathcal{P}}_{k}gdv
\lesssim\sum_{k=-1}^\infty  \| \mathcal{P}_{k}f\|_{H^m}\| \tilde{ \mathcal{P}}_{k}g\|_{H^{-m}} \\ &\lesssim&\big(\sum_{k=-1}^\infty  2^{2kl}\| \mathcal{P}_{k}f\|_{H^m}^2\big)^{\f12}\|g\|_{H^{-m}_{-l}}.\eeno
Then for any Schwartz function $g$, \beno \big|\int_{\R^3} \langle v\rangle^l(\langle D\rangle^m f)gdv\big|&\lesssim& \big(\sum_{k=-1}^\infty  2^{2kl}\| \mathcal{P}_{k}f\|_{H^m}^2\big)^{\f12}\|\langle D\rangle^mW_lg\|_{H^{-m}_{-l}}\\
&\lesssim& \big(\sum_{k=-1}^\infty  2^{2kl}\| \mathcal{P}_{k}f\|_{H^m}^2\big)^{\f12}\| g\|_{L^2},\eeno
which implies \beno \|f\|_{H^m_l}^2&\lesssim & \sum_{k=-1}^\infty  2^{2kl}\| \mathcal{P}_{k}f\|_{H^m}^2.\eeno
We complete the proof of the lemma. \end{proof}

\subsection{Interpolation theory}  
The couple of Banach spaces $(X,Y)$ is said to be an interpolation couple if both $X$ and $Y$ are continuously embedding in a Hausdorff topological vector space. Let $(X,Y)$ be a real interpolation couple, then the real interpolation space $(X,Y)_{\theta, p}$ with $\theta\in (0,1)$ and $p\in [1,\infty]$ is defined as follows:
\beno  (X, Y)_{\theta, p}\eqdefa \bigg\{x\in X+Y\bigg| \|x\|_{\theta, p}\eqdefa \bigg\| t^{-\theta}K(t,x) \bigg\|_{L^p_*(0,\infty)}<\infty\bigg\},\eeno
where $K(t,x)=\inf\limits_{x=a+b, a\in X,b\in Y} (\|a\|_X+t\|b\|_Y)$ and $L^p_*(0,\infty)$ is a Lebesgue  space $L^p$ with respect to the measure $dt/t$. 

Let $X$ be a real  Banach space with norm $\|\cdot\|$.  Let $T$ be a closed operator:
$\mathcal{D}(T)\subset X\rightarrow X$  satisfying there exists a constant $M$ such that for any $\lambda>0$,
\ben (0,\infty)\subset\rho(T),  \|\lambda R(\lambda, T)\|_{L(X)}\le M,\label{clooper}
 \een 
 where $\rho(T)$ denotes the resolvent set of the operator $T$ and 
 \beno R(\lambda, T)\eqdefa (\lambda I-T)^{-1}, \|T\|_{L(X)}\eqdefa \sup_{\|x\|=1} \|Tx\|. \eeno
 Then $\mathcal{D}(T)$ is a Banach space with the graph norm $\|x\|_{\mathcal{D}(T)}=\|x\|+\|Tx\|$ for $x\in \mathcal{D}(T)$.

\begin{prop}[see \cite{al}]\label{interspace}
Let $A$ satisfy \eqref{clooper}. If we set $\mathcal{D}_A(\theta, p)\eqdefa(X, \mathcal{D}(A))_{\theta,p}$, then 
\beno \mathcal{D}_A(\theta, p)=\bigg\{x\in X\bigg| \bigg\|\lambda^{\theta}\|AR(\lambda, A)x\|\bigg\|_{L^p_*(0,\infty)}<\infty\bigg\}. \eeno
\end{prop}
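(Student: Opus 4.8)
\textbf{Proof proposal for Proposition \ref{interspace}.}

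The plan is to establish the characterization of the real interpolation space $\mathcal{D}_A(\theta,p) = (X, \mathcal{D}(A))_{\theta,p}$ by comparing the $K$-functional of the couple $(X, \mathcal{D}(A))$ with the quantity $t \mapsto t^\theta \|A R(t^{-1}, A)x\|$ — or, after the natural change of variable $\lambda = t^{-1}$, with $\lambda^{-\theta}$ times the analogous expression. The key idea is that $A R(\lambda, A) = -I + \lambda R(\lambda, A)$ provides a canonical near-optimal decomposition of an element $x \in X$ into a part in $X$ and a part in $\mathcal{D}(A)$, and this decomposition realizes the $K$-functional up to constants depending only on $M$ in \eqref{clooper}. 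I would first recall that for $x \in X$ and $\lambda > 0$ one has $x = (x - \lambda R(\lambda,A)x) + \lambda R(\lambda,A)x$ where, by \eqref{clooper}, $x - \lambda R(\lambda,A)x = -AR(\lambda,A)x \in \mathcal{D}(A)$ (since $R(\lambda,A)$ maps $X$ into $\mathcal{D}(A)$) and also $\lambda R(\lambda,A)x \in \mathcal{D}(A) \subset X$ with $\|\lambda R(\lambda,A)x\|_X \le M\|x\|$.

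First I would prove the upper bound $K(t,x) \lesssim \|AR(t^{-1},A)x\| + t^{-1}\cdot(\text{something controlled})$. Concretely, for the decomposition $x = a_\lambda + b_\lambda$ with $b_\lambda = -A R(\lambda,A)x \in \mathcal{D}(A)$ and $a_\lambda = \lambda R(\lambda,A)x \in X$, one estimates $\|b_\lambda\|_{\mathcal{D}(A)} = \|b_\lambda\|_X + \|A b_\lambda\|_X$; here $\|Ab_\lambda\|_X = \|A^2 R(\lambda,A)x\|_X$ needs to be rewritten using $A R(\lambda,A) = \lambda R(\lambda,A) - I$, so that $A^2 R(\lambda,A)x = \lambda A R(\lambda,A)x - Ax$, which is not obviously bounded — this is where care is needed. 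The cleaner route, which is the standard one in \cite{al}, is to instead use the decomposition where the "good" piece is $\lambda R(\lambda,A)x$ and observe $A\lambda R(\lambda,A)x = \lambda(\lambda R(\lambda,A)x - x) = \lambda A R(\lambda,A)x$, giving $\|A \cdot \lambda R(\lambda,A)x\|_X = \lambda\|AR(\lambda,A)x\|_X$. Setting $\lambda = t^{-1}$ and taking $a = x - \lambda R(\lambda,A)x = -AR(\lambda,A)x$, $b = \lambda R(\lambda,A)x$, one gets
\[
K(t,x) \le \|a\|_X + t\|b\|_{\mathcal{D}(A)} \le \|AR(\lambda,A)x\|_X + tM\|x\|_X + t\lambda\|AR(\lambda,A)x\|_X,
\]
and since $t\lambda = 1$ the dominant term for small $t$ is $\|AR(\lambda,A)x\|_X = \|AR(t^{-1},A)x\|_X$; multiplying by $t^{-\theta}$ and taking the $L^p_*$ norm over $(0,\infty)$ (equivalently, the $L^p_*$ norm in $\lambda$ of $\lambda^\theta\|AR(\lambda,A)x\|$) shows that finiteness of the right-hand quantity implies $x \in \mathcal{D}_A(\theta,p)$, after absorbing the term $t^{1-\theta}M\|x\|$ near $t=0$ and handling $t$ large by the trivial bound $K(t,x)\le \|x\|_X$.

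For the reverse inclusion I would start from an arbitrary decomposition $x = a + b$ with $a \in X$, $b \in \mathcal{D}(A)$, and estimate $\|AR(\lambda,A)x\|_X \le \|AR(\lambda,A)a\|_X + \|AR(\lambda,A)b\|_X$; the first term is bounded by $(1+M)\|a\|_X$ using $AR(\lambda,A) = \lambda R(\lambda,A) - I$, while the second satisfies $\|AR(\lambda,A)b\|_X = \|R(\lambda,A)Ab\|_X \le \lambda^{-1}M\|Ab\|_X \le \lambda^{-1}M\|b\|_{\mathcal{D}(A)}$ since $A$ commutes with its resolvent on $\mathcal{D}(A)$. Hence $\lambda^\theta\|AR(\lambda,A)x\|_X \lesssim \lambda^\theta\|a\|_X + \lambda^{\theta-1}\|b\|_{\mathcal{D}(A)}$; optimizing over decompositions, $\lambda^\theta\|AR(\lambda,A)x\|_X \lesssim \lambda^\theta K(\lambda^{-1},x)$, and taking the $L^p_*$ norm (with the substitution $t=\lambda^{-1}$, noting $dt/t = d\lambda/\lambda$) yields the bound by $\|x\|_{\theta,p}$. \textbf{The main obstacle} I anticipate is the bookkeeping at the two endpoints $\lambda \to 0$ and $\lambda \to \infty$ (equivalently $t$ large and small): one must check that the extra terms ($t^{1-\theta}M\|x\|$ in one direction, and the contribution of $\|x\|_X$ for $t$ large in the other) are genuinely in $L^p_*$ near the relevant endpoint, which uses $0 < \theta < 1$ crucially, and that the two quasi-norms are comparable with constants depending only on $M$, $\theta$, $p$. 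The algebraic identities with $R(\lambda,A)$ are routine; the care is entirely in the integrability near the endpoints and in choosing which of the two natural decompositions to use so that $\|Ab\|_X$ stays controlled.
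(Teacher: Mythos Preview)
The paper does not give its own proof of this proposition; it is simply quoted from Lunardi's book \cite{al}. Your argument is essentially the standard proof found there --- comparing $K(t,x)$ with $\|AR(t^{-1},A)x\|$ via the canonical decomposition $x = -AR(\lambda,A)x + \lambda R(\lambda,A)x$ and the identity $AR(\lambda,A)=\lambda R(\lambda,A)-I$ --- and it is correct, including the endpoint bookkeeping using $0<\theta<1$.
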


Let $A, B$ be two closed operators satisfying \eqref{clooper}. We recall that $[A,B]=AB-BA.$
In general,  if $[A,B]\neq 0$, it is not easy to derive
 \ben\label{intpfunc}(X, \mathcal{D}(A)\cap \mathcal{D}(B) )_{\theta,2}=\mathcal{D}_A(\theta,2)\cap \mathcal{D}_B(\theta,2).\een 
 The aim of this subsection is to show that under some special conditions on the operators $A$ and $B$, the real interpolation space $(X, \mathcal{D}(A)\cap \mathcal{D}(B))_{\theta,2}$ still verifies \eqref{intpfunc}. We will use this fact to prove \eqref{labelint}.

Let us give the typical examples of the operators which verify the condition \eqref{clooper}.
 Let $\Omega_{ij}=x_i\pa_j-x_j\pa_i$ with  $1\le i<j\le3$ and the domain of the operator is defined as $$\mathcal{D}(\Omega_{ij})=\bigg\{f\in L^2(\R^3_x)| \exists \, g\in L^2(
\R^3_x),  \forall h\in   C^\infty_c(\R^3_x), \int_{\R^3} \big(\Omega_{ij} h\big) fdx=-\int_{\R^3}  hgdx \bigg \}.$$ From which, we give the definition: $g\eqdefa\Omega_{ij}f.$   Then $\Omega_{ij}$ is a closed operator and verifies the condition \eqref{clooper}. Another   example is the partial derivative operator $\pa_k$ with $1\le k\le 3$. We mention that in this case the domain of the operator $\pa_k$ is defined by
$$\mathcal{D}(\pa_k)=\bigg\{f\in L^2(\R^3_x)| \exists \, g\in L^2(
\R^3_x),  \forall h\in   C^\infty_c(\R^3_x), \int_{\R^3} \big(\pa_k h\big) fdx=-\int_{\R^3}  hgdx \bigg \}.$$

 \medskip

The new interpolation theory can be  stated as follows:

\begin{thm}\label{intp}
Let $A, B_1, B_2$ and $B_3$ be closed operators satisfying the  condition \eqref{clooper} and
\ben\label{comuope}[B_i, B_j]=0, [A, B_1]=-B_2, [A,B_2]=-B_1, [A, B_3]=0.\een
If we set $\mathcal{D}(B)=\bigcap\limits_{i=1}^{3} \mathcal{D}(B_i)$ and $\|x\|_{\mathcal{D}(B)}=\|x\|+\sum_{i=1}^3\|B_ix\|.$ Then $$(X, D)_{\theta,2}=\mathcal{D}_A(\theta,2)\cap \mathcal{D}_B(\theta,2),$$ where
$D=\mathcal{D}(A)\cap \mathcal{D}(B)$.
\end{thm}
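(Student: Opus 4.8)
The plan is to establish the nontrivial inclusion $(X,D)_{\theta,2}\subset \mathcal D_A(\theta,2)\cap\mathcal D_B(\theta,2)$, since the reverse inclusion $(X,D)_{\theta,2}\subset (X,\mathcal D(A))_{\theta,2}\cap (X,\mathcal D(B))_{\theta,2}$ follows immediately from the continuous embeddings $D\hookrightarrow\mathcal D(A)$ and $D\hookrightarrow\mathcal D(B)$ together with the monotonicity of the $K$-functional. So the whole content is the estimate: if $x\in X$ admits decompositions $x=a_t+b_t$ with $\|a_t\|+t\|b_t\|_{\mathcal D(A)}$ and $\|a_t\|+t\|b_t\|_{\mathcal D(B)}$ both well controlled in the $L^2_*$-sense against $t^\theta$, then $x$ admits a single decomposition $x=a_t+b_t$ controlling $\|a_t\|+t\|b_t\|_D$. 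First I would record, via Proposition~\ref{interspace}, that membership in $\mathcal D_A(\theta,2)$ is equivalent to $\bigl\|\lambda^\theta\|AR(\lambda,A)x\|\bigr\|_{L^2_*(0,\infty)}<\infty$, and likewise for each $B_i$; this reformulation in terms of resolvents is what makes the commutator hypotheses \eqref{comuope} usable.

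The heart of the argument is to propagate the resolvent bounds for $B_1,B_2,B_3$ through the resolvent of $A$. The key computation is the commutation identity for resolvents: from $[A,B_1]=-B_2$ and $[A,B_2]=-B_1$ one gets, on suitable domains,
\beno
[B_1,R(\lambda,A)] = -R(\lambda,A)[B_1,A]R(\lambda,A) = R(\lambda,A)B_2R(\lambda,A),
\eeno
and symmetrically $[B_2,R(\lambda,A)]=R(\lambda,A)B_1R(\lambda,A)$, while $[B_3,R(\lambda,A)]=0$. Iterating and using $\|\lambda R(\lambda,A)\|_{L(X)}\le M$, one controls $B_1 R(\lambda,A)x$ and $B_2R(\lambda,A)x$ in terms of $R(\lambda,A)(B_1+B_2)R(\lambda,A)x$ plus lower order terms; the pair $(B_1,B_2)$ closes precisely because the commutators exchange them. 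The upshot I would aim for is a bound of the schematic form $\lambda\|B_iR(\lambda,A)x\|\lesssim \sum_j\|B_jx\|$ (uniformly in $\lambda$), or more precisely the analogous bound with $AR(\lambda,A)$ inserted, which lets me transfer the weighted square-integrability in $\lambda$ from the $B_j$-resolvents to $AR(\lambda,A)B_jx$. Combined with $[B_i,B_j]=0$ (so the $B_j$ themselves behave well together, and $\mathcal D(B)$ is a genuine Banach space under the stated graph norm), this yields that for $x\in\mathcal D_A(\theta,2)\cap\mathcal D_B(\theta,2)$ the natural near-optimal decomposition $a_\lambda = x - \lambda R(\lambda,A)\cdots$ used in Proposition~\ref{interspace} is simultaneously a good decomposition for $D$, giving $x\in(X,D)_{\theta,2}$.

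I expect the main obstacle to be the \emph{domain bookkeeping} in the commutator manipulations: the identities $[B_1,R(\lambda,A)]=R(\lambda,A)B_2R(\lambda,A)$ etc.\ are formally immediate but require justifying that $R(\lambda,A)$ maps $\mathcal D(B_i)$ into itself (or at least into a dense subspace on which everything is legitimate), and that the iterated resolvent expressions converge in the operator-norm sense uniformly in $\lambda$. A clean way around this is to first prove everything on a common core — e.g.\ $\bigcap_{n}\mathcal D(A^n)\cap\bigcap_{n,i}\mathcal D(B_i^n)$, which the model examples $\Omega_{ij},\partial_k$ (and their abstract analogues) possess densely — derive the inequalities there, and then pass to the limit using closedness of the operators and density. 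A secondary technical point is matching the $L^2_*$-norms rather than just pointwise resolvent bounds, but once the uniform-in-$\lambda$ operator inequalities are in hand this is a routine application of Minkowski's integral inequality in $L^2_*(0,\infty)$. With these pieces assembled, invoking Proposition~\ref{interspace} for $A$ and for each $B_i$ and reassembling yields $(X,D)_{\theta,2}=\mathcal D_A(\theta,2)\cap\mathcal D_B(\theta,2)$, which is exactly the interpolation identity \eqref{intpfunc} needed for \eqref{labelint}.
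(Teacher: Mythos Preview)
Your opening sentence has the inclusions reversed: you call $(X,D)_{\theta,2}\subset \mathcal D_A(\theta,2)\cap\mathcal D_B(\theta,2)$ the ``nontrivial'' inclusion and then immediately call the \emph{same} inclusion the ``reverse'' one that ``follows immediately''. The rest of your sketch makes clear you mean to prove $\mathcal D_A(\theta,2)\cap\mathcal D_B(\theta,2)\subset (X,D)_{\theta,2}$, which is indeed the substantive direction and is what the paper proves.

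On the substance: your strategy (build a resolvent-based near-optimal element and control it via the commutator relations) is the paper's strategy, and your basic commutator identity $[B_1,R(\lambda,A)]=R(\lambda,A)B_2R(\lambda,A)$ is the right starting point. But there is a genuine gap in the construction. You propose to use ``the natural near-optimal decomposition $a_\lambda=x-\lambda R(\lambda,A)\cdots$'' and to aim at the schematic bound $\lambda\|B_iR(\lambda,A)x\|\lesssim\sum_j\|B_jx\|$. That bound, even on a core, presupposes $x\in\mathcal D(B_j)$; for $x$ merely in $\mathcal D_B(\theta,2)$ the right-hand side is meaningless, so the single-resolvent choice $V(\lambda)=\lambda R(\lambda,A)x$ cannot be shown to lie in $D$ with the required control. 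Working on a core and passing to the limit does not fix this, because the limiting inequality you would obtain still has $\|B_jx\|$ on the right.

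The paper's remedy is to pre- and post-compose $R(\lambda,A)$ with enough $B_i$-resolvents: it takes
\[
V(\lambda)=\lambda^8\,R(\lambda,B_3)\,[R(\lambda,B_1)R(\lambda,B_2)]^2\,R(\lambda,A)\,R(\lambda,B_1)R(\lambda,B_2)\,f.
\]
The trailing $R(\lambda,B_1)R(\lambda,B_2)$ applied to $f$ is what produces an element of $\mathcal D(B_1)\cap\mathcal D(B_2)$ without assuming $f$ is there; the leading block $[R(\lambda,B_1)R(\lambda,B_2)]^2$ is there to absorb the commutator debris when one then applies $A$ or $B_i$ to $V(\lambda)$. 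The actual estimates use not your elementary identity but commutator formulas for \emph{products} of resolvents, namely $[R(\lambda,B_1)R(\lambda,B_2),R(\lambda,A)]$, $[R(\lambda,B_1)R(\lambda,B_2)^2,R(\lambda,A)]$, and $[(R(\lambda,B_1)R(\lambda,B_2))^2,R(\lambda,A)]$, each of which is a polynomial in $B_1,B_2$ sandwiched between enough resolvents that every term is $O(1)$ in $L(X)$. These yield
\[
\|f-V(\lambda)\|\lesssim \|AR(\lambda,A)f\|+\sum_i\|B_iR(\lambda,B_i)f\|,\qquad
\|V(\lambda)\|_D\lesssim \|f\|+\lambda\|AR(\lambda,A)f\|+\lambda\sum_i\|B_iR(\lambda,B_i)f\|,
\]
which is exactly what Proposition~\ref{interspace} needs. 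So the missing idea in your sketch is precisely this sandwich construction of $V(\lambda)$; once you write it down, the domain bookkeeping you worry about becomes automatic (every $B_i$ and $A$ hits a matching resolvent), and no density/core argument is required.
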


\begin{proof} By the definition of the real interpolation space, it is easy to check \beno (X, D)_{\theta,2}\subset\mathcal{D}_A(\theta,2)\cap \mathcal{D}_B(\theta,2).
 \eeno Therefore we only need to prove the inverse conclusion.  In other words, we only need to prove
 \beno \mathcal{D}_A(\theta,2)\cap \mathcal{D}_B(\theta,2)\subset (X, D)_{\theta,2}. \eeno
 By the definition of real interpolation space $(X, D)_{\theta,2}$, it is reduced to prove that for $f\in  \mathcal{D}_A(\theta,2)\cap \mathcal{D}_B(\theta,2)$,
\ben\label{goalinter}  \|t^{-\theta}K(t,f)\|_{L^2_*(0,\infty)} <\infty,\een 
 where $K(t,f)=\inf\limits_{f=a+b, a\in X,b\in D} (\|a\|+t\|b\|_D)$ and $\|b\|_D\eqdefa \|b\|+ \|Ab\|+ \sum_{i=1}^3\|B_ib\|$.
 We remark that for $t\ge1$ and  $f\in \mathcal{D}_A(\theta,2)\cap \mathcal{D}_B(\theta,2)$, it holds
\beno  K(t,f)\le  \|f\|,\eeno
which yields
\beno \|t^{-\theta}K(t,f)\|_{L^2_*(1,\infty)}\lesssim \|f\|.\eeno Now it suffices to give the bound for $\|t^{-\theta}K(t,f)\|_{L^2_*(0,1)}$. 
 In order to do that, we perform the following decomposition:
\beno  f=f-V(\lambda)+V(\lambda),\eeno
where \beno  V(\lambda)=\lambda^8R(\lambda,B_3)[R(\lambda,B_1)R(\lambda,B_2)]^2 R(\lambda, A)R(\lambda,B_1)R(\lambda,B_2)f.\eeno
 
{\it Step 1: Estimate of  $\|f-V(\lambda)\|$.}
 From the fact \ben\label{fact5} \lambda R(\lambda,T)=I+TR(\lambda, T), \een it holds
\beno && V(\lambda)-f\\&&=-f+\lambda^7R(\lambda,B_3)R(\lambda,B_1)R(\lambda,B_2)R(\lambda,B_1)
R(\lambda,B_2)R(\lambda, A)R(\lambda,B_1)f\\
&&\quad+\lambda^7R(\lambda,B_3)R(\lambda,B_1)R(\lambda,B_2)R(\lambda,B_1)
R(\lambda,B_2)R(\lambda, A)R(\lambda,B_1)B_2R(\lambda, B_2)f.\eeno
Using  the condition \eqref{clooper}, we get
\beno &&\|\lambda^7R(\lambda,B_3)R(\lambda,B_1)R(\lambda,B_2)R(\lambda,B_1)
R(\lambda,B_2)R(\lambda, A)R(\lambda,B_1)B_2R(\lambda, B_2)f\|\\&&\lesssim \|B_2R(\lambda, B_2)f\|.\eeno
 It gives
 \beno  &&\|V(\lambda)-f\|\\&&\lesssim\|-f+\lambda^7R(\lambda,B_3)R(\lambda,B_1)R(\lambda,B_2)R(\lambda,B_1)
R(\lambda,B_2)R(\lambda, A)R(\lambda,B_1)f\|\\
 &&\quad+ \|B_2R(\lambda, B_2)f\|.\eeno
 
Using \eqref{fact5} again and following the similar argument, we derive  that
\beno  &&\|-f+\lambda^7R(\lambda,B_3)R(\lambda,B_1)R(\lambda,B_2)R(\lambda,B_1)
R(\lambda,B_2)R(\lambda, A)R(\lambda,B_1)f\|
\\&&\lesssim \|-f+\lambda^7R(\lambda,B_3)R(\lambda,B_1)R(\lambda,B_2)R(\lambda,B_1)
R(\lambda,B_2)R(\lambda, A)f\|+\|B_1R(\lambda,B_1)f\|.\eeno 
Then by the inductive method, we obtain that
\ben\label{comop2}  \|V(\lambda)-f\|\lesssim \|AR(\lambda, A)f\|+\sum_{i=1}^3 \|B_iR(\lambda, B_i)f\|.\een

{\it Step 2: Estimate of $\|V(\lambda)\|_D$.} Thanks to the condition \eqref{clooper}, we have $$\|V(\lambda)\|\lesssim \|f\|.$$
Observe that if  $[T_i, T_j]=0$, one has
\beno  R(\lambda, T_i)R(\lambda,T_j)=R(\lambda,T_i)R(\lambda, T_j), T_iR(\lambda,T_j)=R(\lambda,T_j)T_i .
\eeno
From which together with the condition \eqref{clooper} and $[B_i, B_3]=[A, B_3]=0$, we deduce that
\beno  \|B_3V(\lambda)\|\lesssim \lambda\|B_3R(\lambda, B_3)f\|.\eeno

Due to the condition \eqref{comuope},  the standard computation for the resolvent will give the following three facts:
 \ben&& [R(\lambda, B_1)R(\lambda, B_2), R(\lambda, A)]\label{Fact I}\\&&
= R(\lambda, A)R(\lambda, B_1)R(\lambda, B_2)[\lambda(B_1+B_2)-B_1^2-B_2^2]\notag\\&&\quad\times R(\lambda, B_1)R(\lambda, B_2)R(\lambda, A),\notag\\
&& [R(\lambda, B_1)R(\lambda, B_2)^2, R(\lambda, A)]\label{Fact II}\\&&
= R(\lambda, A)R(\lambda, B_1)R(\lambda, B_2)^2[-\lambda^2(B_2+2B_1)
+2\lambda(B_1B_2+B_1^2+B_2^2)\notag
\\&&\quad+B_2^3-2B_1^2B_2]R(\lambda, B_1)R(\lambda, B_2)^2R(\lambda, A),\notag\een
and  \ben&& [(R(\lambda, B_1)R(\lambda, B_2))^2, R(\lambda, A)]\label{Fact III}\\&&
= R(\lambda, A)[R(\lambda, B_1)R(\lambda, B_2)]^2[2\lambda^3(B_1+B_2)
-4\lambda^2(B_1B_2+B_1^2+B_2^2)\notag
\\&&\quad+2\lambda(B_1^3+B_2^3+2B_1^2B_3+2B_2^2B_1)-2B_1B_2^3-2B_2B_1^3][R(\lambda, B_1)R(\lambda, B_2)]^2 R(\lambda, A).\notag\een

Now we start to estimate  $\|AV(\lambda)\|$ and $\|B_1V(\lambda)\|$.
It is easy to check
\beno AV(\lambda)&=& \lambda^8A R(\lambda,B_3)[R(\lambda,B_1)R(\lambda,B_2)]^2 R(\lambda, A)R(\lambda,B_1)R(\lambda,B_2)f\\
&=& \lambda^8 AR(\lambda, A)[R(\lambda, B_1)R(\lambda, B_2)]^3R(\lambda, B_3)f\\&&
+\lambda^8A[(R(\lambda, B_1)R(\lambda, B_2))^2, R(\lambda, A)]R(\lambda,B_1)R(\lambda,B_2)R(\lambda, B_3)f\\
&\eqdefa & R_1+R_2.\eeno
By \eqref{fact5} and the condition \eqref{clooper}, we get \ben\label{comop1} \|TR(\lambda, T)\|_{L(X)}\lesssim 1. \een
From which together with \eqref{Fact III} and the condition \eqref{clooper}, we obtain that \beno \|R_2\|\lesssim \|f\|. \eeno
Notice that
\beno  R_1&=&\lambda^8(-I+\lambda R(\lambda, A))[R(\lambda, B_1)R(\lambda, B_2)]^3R(\lambda, B_3)f\\
&=&\lambda^8[R(\lambda, B_1)R(\lambda, B_2)]^3R(\lambda, B_3)(-I)f\\&&+\lambda^9[R(\lambda, A), (R(\lambda, B_1)R(\lambda, B_2))^2]
R(\lambda, B_1)R(\lambda, B_2)R(\lambda, B_3)f\\&&+\lambda^9[R(\lambda, B_1)R(\lambda, B_2)]^2[R(\lambda, A), R(\lambda, B_1)R(\lambda, B_2)]
R(\lambda, B_3)f\\&&+\lambda^9[R(\lambda, B_1)R(\lambda, B_2)]^3R(\lambda, B_3)R(\lambda, A)f\\
&=&\lambda^8[R(\lambda, B_1)R(\lambda, B_2)]^3R(\lambda, B_3)AR(\lambda, A)f\\&&+\lambda^9[R(\lambda, A), (R(\lambda, B_1)R(\lambda, B_2))^2]
R(\lambda, B_1)R(\lambda, B_2)R(\lambda, B_3)f\\&&+\lambda^9[R(\lambda, B_1)R(\lambda, B_2)]^2[R(\lambda, A), R(\lambda, B_1)R(\lambda, B_2)]
R(\lambda, B_3)f\\
&\eqdefa& R_3+R_4+R_5.\eeno
Thanks to \eqref{clooper}, \eqref{comop1} , \eqref{Fact I} and \eqref{Fact III}, we get 
\beno  \|R_3\|\lesssim \lambda \|AR(\lambda, A)f\|, \|R_4\|+\|R_5\|\lesssim \|f\|,\eeno
which implies that $$\|AV(\lambda)\|\lesssim \|f\|+\lambda\|AR(\lambda, A)f\|.$$

Similarly we have \beno
B_1V(\lambda)&=&\lambda^8(-I+\lambda R(\lambda, B_1))R(\lambda, B_1)R(\lambda, B_2)^2R(\lambda, A)
R(\lambda, B_1)R(\lambda, B_2)R(\lambda, B_3)\\
&=& \lambda^8R(\lambda, A)R(\lambda,B_2)^3R(\lambda, B_1)^2R(\lambda, B_3)B_1R(\lambda, B_1)f\\&&-\lambda^8[R(\lambda, B_1)R(\lambda, B_2)^2, R(\lambda, A)]R(\lambda, B_1)R(\lambda, B_2)R(\lambda, B_3)f\\&&+\lambda^9[R(\lambda, B_1)^2R(\lambda, B_2)^2, R(\lambda, A)]R(\lambda, B_1)R(\lambda, B_2)R(\lambda, B_3)f.\eeno
Thanks to \eqref{clooper}, \eqref{comop1} , \eqref{Fact II} and \eqref{Fact III}, we have $$\|B_1V(\lambda)\|\lesssim \|f\|+\lambda\|B_1R(\lambda, B_1)f\|.$$

By the same argument, we can get \beno \|B_2V(\lambda)\|\lesssim \|f\|+\lambda\|B_2R(\lambda, B_2)f\|.\eeno
Patching together all the estimates, we finally get
\ben\label{comop3}  \|V(\lambda)\|_D\lesssim  \|f\|+\lambda\|AR(\lambda, A)f\|+\sum_{i=1}^3\lambda\|B_iR(\lambda, B_i)f\|.\een
\smallskip

Then for $\lambda\ge1$, one has
\beno  &&\lambda^\theta (\|V(\lambda)-f\|+\lambda^{-1}\|V(\lambda)\|_D)\\&&
\lesssim \lambda^\theta(\|AR(\lambda, A)f\|+\sum_{i=1}^3\|B_iR(\lambda, B_i)f\|) +\lambda^{\theta-1}\|f\|.\eeno
 Thanks to the condition $[B_i,B_j]=0,$ by Proposition 3.1 in \cite{al}, one has \beno \mathcal{D}_B(\theta,2)=\bigcap_{i=1}^3 \mathcal{D}_{B_i}(\theta,2).\eeno
Then if $f\in \mathcal{D}_A(\theta,2)\cap \mathcal{D}_B(\theta,2)$, by Proposition \ref{interspace}, we have for $\lambda>0$,
\ben\label{Fact4} \lambda^\theta\|AR(\lambda, A)f\|, \lambda^\theta\|B_iR(\lambda, B_i)f\| \in L^2_*(0,\infty). \een It means
\beno \bigg\|\lambda^\theta (\|V(\lambda)-f\|+\lambda^{-1}\|V(\lambda)\|_D)\bigg\|_{L^2_*(1,\infty)}\lesssim \|f\|+\|f\|_{\mathcal{D}_A(\theta,2)}+\|f\|_{\mathcal{D}_B(\theta,2)}.\eeno
In other words, we get
\beno \|t^{-\theta}K(t,f)\|_{L^2_*(0,1)}\le\bigg\|t^{-\theta} (\|V(t^{-1})-f\|+t\|V(t^{-1})\|_D)\bigg\|_{L^2_*(0,1)}\lesssim \|f\|+\|f\|_{\mathcal{D}_A(\theta,2)}+\|f\|_{\mathcal{D}_B(\theta,2)}.\eeno
 We complete the proof to  \eqref{goalinter} and then ends the proof of the theorem.
 \end{proof}

\subsection{Spherical harmonics}
In this subsection, we introduce the definition and   basic properties of the real spherical harmonics.

 Let
$\sigma=(\cos\phi\sin \theta, \sin\phi\sin \theta, \cos\theta)\in \SS^2$ with $\theta\in[0,\pi]$ and $\phi\in [0,2\pi)$. The real spherical harmonics $Y_l^m(\sigma)$ with $l\in \N$, $-l\le m\le l$, are defined as $Y_0^0(\sigma)=(4\pi)^{-1/2}$ and for any $l\ge1$,
\beno  Y^m_l(\sigma)=\left\{\begin{aligned} &\bigg(\f{2l+1}{4\pi}\bigg)^{1/2}P_l(\cos\theta), \mbox{if}\, m=0,\\&\bigg(\f{2l+1}{2\pi}\f{(l-m)!}{(l+m)!}\bigg)^{1/2}P^m_l(\cos\theta)\cos(m\phi), \,\mbox{if}\,\,m=1,\ldots,\,l ,\\&\bigg(\f{2l+1}{2\pi}\f{(l+m)!}{(l-m)!}\bigg)^{1/2}P^{-m}_l(\cos\theta)\sin(-m\phi), \,\mbox{if}\,\,m=-l,\,\ldots,\,-1, \end{aligned}\right.\eeno
where $P_l$ denotes the $l$-th Legendre polynomial and $P^m_l$ denotes the associated Legendre functions of  order $l$ and degree $m$. It is well-known that
\beno (-\triangle_{\SS^2})Y_l^m=l(l+1)Y_l^m.  \eeno
We remark that the family $(Y_l^m)_{l,m}$ is an orthonormal basis of the space $L^2(\SS^2, d\sigma)$ with $d\sigma$ being the surface measure on $\SS^2$. Thus if $f\in L^2(\SS^2)$, then we have
\beno f(\sigma)=\sum_{l=0}^\infty\sum_{m=-l}^l f^m_lY^m_l(\sigma), \eeno
where $f^m_l=\int_{\SS^2} f(\sigma)Y^m_l(\sigma)d\sigma.$ Then for $s\in\R$, the fractional Laplace-Beltrami operator $(-\triangle_{\SS^2})^{s/2}$ is defined by 
\ben\label{defilb1}
\big((-\triangle_{\SS^2})^{s/2}f\big)(\sigma)\eqdefa \sum_{l=0}^\infty\sum_{m=-l}^l (l(l+1))^{s/2}f^m_lY^m_l(\sigma).
\een  Similarly we have 
\ben\label{defilb11}
\big((1-\triangle_{\SS^2})^{s/2}f\big)(\sigma)\eqdefa \sum_{l=0}^\infty\sum_{m=-l}^l (1+l(l+1))^{s/2}f^m_lY^m_l(\sigma).
 \een
\smallskip

Next we denote $\mathcal{A}_l$ by the space of solid spherical harmonics of degree $l$, that is, the set of  all  homogeneous polynomials of degree $l$ on $\R^3$ that are harmonic. Let $\mathcal{H}_l$  be the space of spherical harmonics of degree $l$. Then we define $\mathcal{D}_l$ to be a space of all linear combinations of functions of the form $f(r)P(x)$, where $f$ ranges  over the radial functions and $P$ over the solid spherical harmonics of degree $l$, in such a way that $f(r)P(x)$ belongs to $L^2(\R^3)$.
We have
\begin{thm} \label{l2decom} \beno L^2(\R^3)=\sum_{l=0}^\infty \bigoplus\mathcal{D}_l. \eeno
Moreover, for $Y\in \mathcal{H}_l$, there exists a function $\Psi$ defined on the $[0,\infty)$ such that for $w> 0$,
\ben\label{sphar} \int_{\SS^2} e^{-2\pi i w \sigma\cdot \tau} Y(\sigma)d\sigma= \Psi(w)Y(\tau),\een
which means that the Fourier transform maps $\mathcal{D}_l$ into itself.
\end{thm}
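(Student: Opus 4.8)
\emph{Proof proposal.} This is the $\R^3$ instance of the Hecke--Bochner theory of spherical harmonics, and the plan is to establish it in two steps, one for each assertion, the second step resting on a representation-theoretic fact about $SO(3)$.

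\textbf{Step 1: the orthogonal decomposition.} I would pass to polar coordinates $x=r\sigma$ ($r>0$, $\sigma\in\SS^2$) and use Fubini to identify $L^2(\R^3,dx)$ unitarily with $L^2\big((0,\infty),r^2dr;L^2(\SS^2)\big)$. Since $(Y_l^m)_{l\ge0,\,-l\le m\le l}$ is an orthonormal basis of $L^2(\SS^2)$, one has the orthogonal Hilbert sum $L^2(\SS^2)=\bigoplus_{l\ge0}\mathcal{H}_l$, where $\mathcal{H}_l$ is the $(2l+1)$-dimensional eigenspace of $-\triangle_{\SS^2}$ with eigenvalue $l(l+1)$; tensoring with $L^2((0,\infty),r^2dr)$ yields $L^2(\R^3)=\bigoplus_{l\ge0}\big(L^2((0,\infty),r^2dr)\otimes\mathcal{H}_l\big)$. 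It then remains to recognize the $l$-th summand as (the closure of) $\mathcal{D}_l$: a generator $f(r)P(x)$ of $\mathcal{D}_l$ with $P$ a solid harmonic of degree $l$ is, writing $P(x)=r^lY(\sigma)$ for a unique $Y\in\mathcal{H}_l$, exactly the product $\big(f(r)r^l\big)Y(\sigma)$, and conversely every radial-function-times-$\mathcal{H}_l$ element is an $L^2$-limit of such generators. Orthogonality of distinct summands is the angular orthogonality of $\mathcal{H}_l$ and $\mathcal{H}_{l'}$, and completeness is completeness of $(Y_l^m)$ together with Fubini.

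\textbf{Step 2: the spherical Fourier transform of $\mathcal{H}_l$.} For $w\ge0$ I would introduce $\mathcal{T}_w:L^2(\SS^2)\to L^2(\SS^2)$, $(\mathcal{T}_wf)(\tau)=\int_{\SS^2}e^{-2\pi iw\tau\cdot\sigma}f(\sigma)\,d\sigma$, which is Hilbert--Schmidt since its kernel is bounded. A change of variable $\sigma\mapsto R\sigma$ for $R\in SO(3)$ shows $\mathcal{T}_w(f\circ R)=(\mathcal{T}_wf)\circ R$, so $\mathcal{T}_w$ commutes with rotations; since $-\triangle_{\SS^2}$ also commutes with rotations and has the $\mathcal{H}_l$ as eigenspaces, $\mathcal{T}_w$ preserves each $\mathcal{H}_l$. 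Using that $\mathcal{H}_l$ is, after complexification, an irreducible $SO(3)$-representation, Schur's lemma forces $\mathcal{T}_w|_{\mathcal{H}_l}=\Psi_l(w)\,\mathrm{Id}$ for a scalar $\Psi_l(w)$, i.e. $\int_{\SS^2}e^{-2\pi iw\tau\cdot\sigma}Y(\sigma)\,d\sigma=\Psi_l(w)Y(\tau)$ for $Y\in\mathcal{H}_l$; I would take $\Psi=\Psi_l$, which is continuous on $[0,\infty)$ by dominated convergence. Finally, for $g(r)Y(\sigma)\in\mathcal{D}_l$ with $Y\in\mathcal{H}_l$, integrating in polar coordinates and writing $\xi=|\xi|\tau$,
\[
\widehat{gY}(\xi)=\int_0^\infty g(r)r^2\Big(\int_{\SS^2}e^{-2\pi ir|\xi|\tau\cdot\sigma}Y(\sigma)\,d\sigma\Big)dr=\Big(\int_0^\infty g(r)r^2\Psi_l(r|\xi|)\,dr\Big)Y(\tau),
\]
again a radial function times $Y$, which lies in $L^2(\R^3)$ by Plancherel and hence in $\mathcal{D}_l$; thus $\mathcal{F}(\mathcal{D}_l)\subset\mathcal{D}_l$.

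\textbf{Expected main obstacle.} The only non-routine ingredient is the irreducibility of $\mathcal{H}_l$ under $SO(3)$ (equivalently, that each $\mathcal{H}_l$ occurs with multiplicity one in $L^2(\SS^2)$): this is precisely what upgrades "$\mathcal{T}_w$ commutes with $-\triangle_{\SS^2}$", which by itself only gives invariance of $\mathcal{H}_l$, to "$\mathcal{T}_w$ acts by a scalar on $\mathcal{H}_l$". One may cite this as a classical fact, or prove it concretely via the zonal harmonic $Z^{(l)}_\tau(\sigma)=\sum_m Y_l^m(\tau)Y_l^m(\sigma)$: $\mathcal{T}_wZ^{(l)}_\tau$ is invariant under the one-parameter group of rotations fixing $\tau$, hence proportional to $Z^{(l)}_\tau$, and then the reproducing property $Y(\tau)=\langle Y,Z^{(l)}_\tau\rangle$ together with equivariance of the adjoint $\mathcal{T}_w^*$ gives the scalar multiplication on all of $\mathcal{H}_l$. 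Everything else is bookkeeping with polar coordinates and dominated convergence.
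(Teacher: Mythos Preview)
The paper does not prove this theorem: it is stated without proof in Section~5.3 as a classical fact (it is the Hecke--Bochner theory; \eqref{sphar} is a special case of the Funk--Hecke formula), and is then immediately used to define the fractional Laplace--Beltrami operator on $\R^3$. So there is no ``paper's own proof'' to compare against.

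Your argument is correct. Step~1 is the standard tensor-product identification, and Step~2 is the clean representation-theoretic route: rotation-equivariance of $\mathcal{T}_w$ plus irreducibility of $\mathcal{H}_l$ under $SO(3)$ forces $\mathcal{T}_w|_{\mathcal{H}_l}$ to be a scalar by Schur. Your fallback via zonal harmonics is also fine and in fact closer in spirit to how the paper works elsewhere: the Addition Theorem (Theorem~5.3 in the paper) gives $Z^{(l)}_\tau(\sigma)=\frac{2l+1}{4\pi}P_l(\sigma\cdot\tau)$, and since the kernel $e^{-2\pi i w\sigma\cdot\tau}$ depends only on $\sigma\cdot\tau$, expanding it in Legendre polynomials and integrating against $Y\in\mathcal{H}_l$ immediately yields \eqref{sphar} with an explicit $\Psi_l(w)=2\pi\int_{-1}^1 e^{-2\pi i w t}P_l(t)\,dt$ (a Bessel function up to normalization). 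That Funk--Hecke computation avoids invoking irreducibility altogether and matches the Legendre-expansion technique the paper uses in Lemma~5.2; but your Schur argument is equally valid and arguably more conceptual.
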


Suppose $f$ is a Schwartz function. Thanks to Theorem \ref{l2decom}, we have
\beno f(x)=\sum_{l=0}^\infty\sum_{m=-l}^l  Y^m_l(\sigma) f^m_l(r), \eeno where $x=r\sigma$ and   $\sigma\in \SS^2$.
Then for $s\in\R$,
\ben\label{defilb2} \big((-\triangle_{\SS^2})^{s/2}f\big)(x)\eqdefa\sum_{l=0}^\infty\sum_{m=-l}^l (l(l+1))^{s/2} Y^m_l(\sigma) f^m_l(r). \een
Similarly we have 
\ben\label{defilb21} \big((1-\triangle_{\SS^2})^{s/2}f\big)(x)\eqdefa\sum_{l=0}^\infty\sum_{m=-l}^l (1+l(l+1))^{s/2} Y^m_l(\sigma) f^m_l(r). \een

We recall the statement of the addition theorem:
\begin{thm}[Addition Theorem] Suppose that $\sigma$ and $\tau$ are two unit vectors. Then
\beno  P_l(\sigma\cdot \tau)= \f{4\pi}{2l+1}\sum_{m=-l}^l Y_l^m(\sigma) Y_l^m(\tau). \eeno
\end{thm}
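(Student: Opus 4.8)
The plan is to prove the equivalent identity
\beno K_l(\sigma,\tau)\eqdef \sum_{m=-l}^l Y_l^m(\sigma)Y_l^m(\tau)=\f{2l+1}{4\pi}P_l(\sigma\cdot\tau),\eeno
from which the stated formula follows by multiplying through by $\f{4\pi}{2l+1}$. The strategy is to recognize $K_l$ as the reproducing kernel of the space $\mathcal{H}_l$ of degree-$l$ spherical harmonics inside $L^2(\SS^2)$, to exploit rotation invariance to reduce it to a zonal harmonic, and finally to pin down the constant by a single integration. First I would record that, since $(Y_l^m)_{-l\le m\le l}$ is an orthonormal basis of $\mathcal{H}_l$, one has $\int_{\SS^2}K_l(\sigma,\tau)g(\tau)\,d\tau=g(\sigma)$ for every $g\in\mathcal{H}_l$; in particular the kernel $K_l$ does not depend on which orthonormal basis of $\mathcal{H}_l$ is used to build it.

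Next I would use rotation invariance. For $R\in SO(3)$ the space $\mathcal{H}_l$ is $R$-invariant (harmonicity and homogeneity are preserved by rotations), and $(Y_l^m\circ R)_{-l\le m\le l}$ is again an orthonormal basis of $\mathcal{H}_l$ because the $L^2(\SS^2)$ inner product is rotation invariant. By the basis-independence of the reproducing kernel this gives $K_l(R\sigma,R\tau)=K_l(\sigma,\tau)$ for all $R$. Since $SO(3)$ acts transitively on pairs of unit vectors with a prescribed inner product, $K_l(\sigma,\tau)$ depends only on $\sigma\cdot\tau$, say $K_l(\sigma,\tau)=\psi_l(\sigma\cdot\tau)$. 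For fixed $\tau$, the function $\sigma\mapsto\psi_l(\sigma\cdot\tau)$ lies in $\mathcal{H}_l$ and is invariant under all rotations fixing $\tau$, i.e.\ it is a degree-$l$ zonal harmonic with pole $\tau$. Writing such a harmonic as a function $h(\cos\theta)$ of the polar angle about $\tau$, the eigenvalue relation $(-\triangle_{\SS^2})h=l(l+1)h$ becomes Legendre's equation, whose only solution regular on $[-1,1]$ up to a scalar is $P_l$; equivalently, expanding $|x-\rho\tau|^{-1}=\sum_{l\ge0}\rho^l|x|^{-l-1}P_l\big(x\cdot\tau/|x|\big)$ for $|x|>\rho$ exhibits each $|x|^{-l-1}P_l(x\cdot\tau/|x|)$ as harmonic, and the Kelvin transform then shows $|x|^lP_l(x\cdot\tau/|x|)\in\mathcal{H}_l$. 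Hence $\psi_l=c_lP_l$ for some constant $c_l$ depending on $l$ alone.

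Finally I would compute $c_l$ by setting $\sigma=\tau$ and integrating over $\SS^2$: orthonormality gives $\int_{\SS^2}K_l(\sigma,\sigma)\,d\sigma=\sum_{m=-l}^l\int_{\SS^2}Y_l^m(\sigma)^2\,d\sigma=2l+1$, while $c_l\int_{\SS^2}P_l(\sigma\cdot\sigma)\,d\sigma=c_lP_l(1)\,|\SS^2|=4\pi c_l$ using $P_l(1)=1$. Therefore $c_l=(2l+1)/(4\pi)$, which is exactly the asserted identity. The only non-formal ingredient is the classical fact that the space of degree-$l$ zonal harmonics is one-dimensional and spanned by $\sigma\mapsto P_l(\sigma\cdot\tau)$ with $P_l(1)=1$; this is the step that genuinely invokes the theory of Legendre polynomials (the generating function and Legendre's ODE) rather than anything established earlier in the paper, so it is where the small amount of care is needed. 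Everything else — the reproducing-kernel property, rotation invariance, and the constant computation — is soft.
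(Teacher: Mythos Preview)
Your proof is correct and is the standard reproducing-kernel/rotation-invariance argument for the Addition Theorem. Note, however, that the paper does not actually prove this theorem: it is introduced with the phrase ``We recall the statement of the addition theorem'' and is taken as a classical fact from the theory of spherical harmonics, to be used as a tool in the subsequent Lemma on bilinear integrals over $\SS^2\times\SS^2$. So there is nothing to compare against; you have supplied a self-contained proof where the paper simply cites the result.
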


Now we want to prove
\begin{lem}\label{antin5} Suppose $H(x)\in L^2([-1,1])$. Then we have
\beno &&\int_{\SS^2\times\SS^2} \big( g(\sigma)-g(\tau)\big)h(\sigma) H(\sigma\cdot \tau)d\sigma d\tau\\&&
=\sum_{l=0}^\infty\sum_{m=-l}^l g^m_l h^m_l \int_{\SS^2\times\SS^2} \big( Y^m_l(\sigma)-Y^m_l(\tau)\big)Y^m_l(\sigma) H(\sigma\cdot \tau)d\sigma d\tau.
 \eeno Here we use the notation: $f^m_l\eqdefa \int_{\SS^2} f(\sigma)Y^m_l(\sigma)d\sigma.$ 
\end{lem}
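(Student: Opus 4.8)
The plan is to expand each function in the orthonormal basis of real spherical harmonics, substitute into the bilinear form, and then exploit the rotational invariance of the kernel $H(\sigma\cdot\tau)$ to kill all off-diagonal contributions. Write $g(\sigma)=\sum_{l,m} g^m_l Y^m_l(\sigma)$ and $h(\sigma)=\sum_{l',m'} h^{m'}_{l'} Y^{m'}_{l'}(\sigma)$ (convergence in $L^2(\SS^2)$, which is enough since $H\in L^2([-1,1])$ makes the double integral a bounded bilinear form on $L^2(\SS^2)\times L^2(\SS^2)$ by Young's inequality on the sphere, so I may interchange sum and integral). The form splits into two pieces: $\int\int g(\sigma)h(\sigma)H(\sigma\cdot\tau)\,d\sigma d\tau$ and $\int\int g(\tau)h(\sigma)H(\sigma\cdot\tau)\,d\sigma d\tau$. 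In the first piece the $\tau$-integral $\int_{\SS^2}H(\sigma\cdot\tau)\,d\tau$ is a constant $c_H$ independent of $\sigma$, so that piece is $c_H\sum_{l,m}g^m_l h^m_l$ by orthonormality; this already has the claimed diagonal form.

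The heart of the matter is the second piece, $\sum_{l,m}\sum_{l',m'} g^m_l h^{m'}_{l'}\int_{\SS^2\times\SS^2}Y^m_l(\tau)Y^{m'}_{l'}(\sigma)H(\sigma\cdot\tau)\,d\sigma d\tau$. Here I would use the fact that the integral operator $T_H f(\tau)\eqdefa\int_{\SS^2}H(\sigma\cdot\tau)f(\sigma)\,d\sigma$ is a convolution operator on the sphere, hence (Funk--Hecke) diagonalized by spherical harmonics: $T_H Y^{m'}_{l'}=\mu_{l'}Y^{m'}_{l'}$ for a scalar $\mu_{l'}$ depending only on $l'$. Consequently $\int_{\SS^2\times\SS^2}Y^m_l(\tau)Y^{m'}_{l'}(\sigma)H(\sigma\cdot\tau)\,d\sigma d\tau=\mu_{l'}\int_{\SS^2}Y^m_l(\tau)Y^{m'}_{l'}(\tau)\,d\tau=\mu_{l'}\delta_{ll'}\delta_{mm'}$, again by orthonormality. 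So the second piece collapses to $\sum_{l,m}\mu_l g^m_l h^m_l$, matching the diagonal structure. If one prefers to avoid invoking Funk--Hecke by name, the same diagonalization follows from the Addition Theorem $P_l(\sigma\cdot\tau)=\frac{4\pi}{2l+1}\sum_{m}Y^m_l(\sigma)Y^m_l(\tau)$ together with the Legendre expansion $H(x)=\sum_l h_l P_l(x)$ in $L^2([-1,1])$: substituting and using orthonormality of $\{Y^m_l\}$ on each factor sends every cross term to zero.

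Combining the two pieces, the whole form equals $\sum_{l,m}(c_H-\mu_l)g^m_l h^m_l$. The final step is to recognize that this same computation applied with $g=h=Y^m_l$ identifies $c_H-\mu_l$ as exactly $\int_{\SS^2\times\SS^2}\big(Y^m_l(\sigma)-Y^m_l(\tau)\big)Y^m_l(\sigma)H(\sigma\cdot\tau)\,d\sigma d\tau$ (note this is independent of $m$ by rotational invariance, but we do not even need that observation — we just plug $g=h=Y^m_l$ into the identity already established). Therefore $\int\int(g-g(\tau))h\,H=\sum_{l,m}g^m_l h^m_l\int\int(Y^m_l(\sigma)-Y^m_l(\tau))Y^m_l(\sigma)H(\sigma\cdot\tau)$, which is the assertion.

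\textbf{Main obstacle.} The only genuinely delicate point is justifying the interchange of the infinite summation with the double integral, i.e.\ the convergence in the appropriate topology. The clean way is to first prove the identity for $g,h$ finite linear combinations of spherical harmonics (where everything is a finite sum and the manipulations above are elementary), and then pass to the limit using that both sides are continuous bilinear forms on $L^2(\SS^2)\times L^2(\SS^2)$: the left side because $\|T_H\|_{L^2(\SS^2)\to L^2(\SS^2)}\lesssim\|H\|_{L^2([-1,1])}$ by Schur/Young, and the right side because $|c_H-\mu_l|$ is bounded uniformly in $l$ (again from the operator-norm bound), so the series $\sum_{l,m}(c_H-\mu_l)g^m_l h^m_l$ converges absolutely and defines a bounded form. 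Everything else is bookkeeping with orthonormality.
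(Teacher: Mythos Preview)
Your proof is correct and follows essentially the same route as the paper. Both arguments split the bilinear form into the two pieces $\int\int g(\sigma)h(\sigma)H$ and $\int\int g(\tau)h(\sigma)H$, handle the first via the constancy of $\int_{\SS^2}H(\sigma\cdot\tau)\,d\tau$ together with orthonormality of $\{Y^m_l\}$, and diagonalize the second via the Addition Theorem combined with the Legendre expansion of $H$ (which is exactly your Funk--Hecke argument); your additional care about the sum--integral interchange is a welcome refinement the paper leaves implicit.
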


\begin{proof} Thanks to the fact the family $\{P_n\}_{n\ge0}$ is an orthogonal basis of the space $L^2[-1,1]$, we have
\beno  H(x)=\sum_{n\ge0} a_n P_n(x),\eeno
where $a_n(x)=(n+\f12)\int_{-1}^1 H(x)P_n(x)dx.$ In particular, it gives
\beno &&\int_{\SS^2\times\SS^2} g(\tau) h(\sigma) H(\sigma\cdot \tau)d\sigma d\tau\\
&&= \sum_{n=0}^\infty   a_n \int_{\SS^2\times\SS^2} g(\tau) h(\sigma) P_n(\sigma\cdot\tau) d\sigma d\tau\\
&&= \sum_{n=0}^\infty\sum_{q=-n}^n  a_n  \f{4\pi}{2n+1}\int_{\SS^2\times\SS^2} g(\tau) h(\sigma)Y_n^q(\sigma) Y_n^q(\tau) d\sigma d\tau\\
&&=\sum_{n=0}^\infty\sum_{q=-n}^n g_n^q h^{q}_{n}a_n  \f{4\pi}{2n+1}  \\
&&=\sum_{l=0}^\infty\sum_{m=-l}^l  g_l^m h^m_l\int_{\SS^2\times\SS^2} Y^m_l(\tau) Y^{m}_{l}(\sigma) H(\sigma\cdot \tau)d\sigma d\tau,\eeno
where we use Theorem \ref{antin5} in the second  and the last equalities.
On the other hand, \beno  \int_{\SS^2\times\SS^2} g(\sigma) h(\sigma) H(\sigma\cdot \tau)d\sigma d\tau
&=&  \int_{\SS^2} g(\sigma) h(\sigma)d\sigma \int_{\SS^2}H(\sigma\cdot \tau)d\tau\\
&=& (\sum_{l=0}^\infty\sum_{m=-l}^l  g^m_l h^m_l) \iint_{\theta\in[0,\pi], \phi\in[0,2\pi]}H(\cos\theta)\sin\theta d\theta d\phi\\
&=&\sum_{l=0}^\infty\sum_{m=-l}^l  g^m_l h^m_l \int_{\SS^2\times\SS^2} Y^m_l(\sigma) Y^m_l(\sigma) H(\sigma\cdot \tau)d\sigma d\tau.\eeno
Combine these two equalities and then we get the desired result. \end{proof}

\subsection{$L^2$ profile of the fractional Laplace-Beltrami operator}
   In this subsection, we first show    the   $L^2$ profile of the fractional Laplace-Beltrami operator. Then we  show that in the whole space the  fractional Laplace-Beltrami operator has strong connection to the rotation vector fields.

\begin{lem}\label{antin1} Suppose that $f$ is a smooth function defined in $\SS^2$. Then if $0<s<1$, it holds
\beno \|f\|_{L^2(\SS^2)}^2+\int_{\sigma,\tau\in \SS^2} \f{|f(\sigma)-f(\tau)|^2}{|\sigma-\tau|^{2+2s}}d\sigma d\tau\sim \|f\|^2_{L^2(\SS^2)}+\|(-\triangle_{\SS^2})^{s/2}f\|^2_{L^2(\SS^2)}.
 \eeno
\end{lem}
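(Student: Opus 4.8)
The plan is to prove the claimed equivalence
\[
\|f\|_{L^2(\SS^2)}^2+\int_{\SS^2\times\SS^2} \frac{|f(\sigma)-f(\tau)|^2}{|\sigma-\tau|^{2+2s}}\,d\sigma d\tau
\;\sim\;
\|f\|_{L^2(\SS^2)}^2+\|(-\triangle_{\SS^2})^{s/2}f\|_{L^2(\SS^2)}^2
\]
by diagonalising both sides in the basis of real spherical harmonics $(Y_l^m)_{l\ge0,\,-l\le m\le l}$. First I would expand $f=\sum_{l,m}f_l^m Y_l^m$; the right-hand side is then exactly $\sum_{l,m}\big(1+(l(l+1))^s\big)|f_l^m|^2$ by the very definition \eqref{defilb1} of $(-\triangle_{\SS^2})^{s/2}$ and Parseval's identity on $L^2(\SS^2)$. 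For the left-hand side, the key observation is that the kernel $H(\sigma\cdot\tau):=|\sigma-\tau|^{-2-2s}=\big(2-2\sigma\cdot\tau\big)^{-1-s}$ depends on $\sigma,\tau$ only through $\sigma\cdot\tau$, so Lemma \ref{antin5} (the diagonalisation lemma proved just above, via the Addition Theorem) applies. Strictly speaking $H$ is not in $L^2([-1,1])$ because of the singularity at $\sigma\cdot\tau=1$, so I would first truncate, working with $H_\delta(\sigma\cdot\tau)=\min\{H(\sigma\cdot\tau),\delta^{-1}\}$ (or equivalently restricting the integral to $|\sigma-\tau|\ge\delta$), apply Lemma \ref{antin5}, and then let $\delta\to0$ using monotone convergence — note the integrand $|f(\sigma)-f(\tau)|^2 H(\sigma\cdot\tau)$ is nonnegative, and for smooth $f$ the full integral converges since $|f(\sigma)-f(\tau)|^2\lesssim|\sigma-\tau|^2$ and $2-(2+2s)>-2$.

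Having applied Lemma \ref{antin5}, the quadratic form becomes
\[
\int_{\SS^2\times\SS^2}\frac{|f(\sigma)-f(\tau)|^2}{|\sigma-\tau|^{2+2s}}\,d\sigma d\tau
=\sum_{l=0}^\infty\sum_{m=-l}^l |f_l^m|^2\, \lambda_l,
\qquad
\lambda_l:=\int_{\SS^2\times\SS^2}\big(Y_l^m(\sigma)-Y_l^m(\tau)\big)Y_l^m(\sigma)\,H(\sigma\cdot\tau)\,d\sigma d\tau,
\]
and one checks (by symmetrising in $\sigma\leftrightarrow\tau$ inside the second factor, exactly as in the proof of Lemma \ref{antin5}) that $\lambda_l=\tfrac12\int_{\SS^2\times\SS^2}|Y_l^m(\sigma)-Y_l^m(\tau)|^2H(\sigma\cdot\tau)\,d\sigma d\tau$, so in particular $\lambda_l\ge0$ and $\lambda_l$ is independent of $m$ (by rotation invariance of the kernel, or directly from the Addition Theorem computation). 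Thus the whole problem reduces to the two-sided scalar estimate
\[
\lambda_l \;\sim\; (l(l+1))^s \quad\text{for } l\ge 1,\qquad \lambda_0=0,
\]
with implied constants depending only on $s$. The main obstacle is precisely establishing this asymptotic for the eigenvalues $\lambda_l$: one must show $c(s)(l(l+1))^s\le\lambda_l\le C(s)(l(l+1))^s$.

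I would obtain these bounds as follows. Write $P_l$ for the Legendre polynomial; using the Addition Theorem $P_l(\sigma\cdot\tau)=\frac{4\pi}{2l+1}\sum_m Y_l^m(\sigma)Y_l^m(\tau)$ together with $\sum_m Y_l^m(\sigma)^2=\frac{2l+1}{4\pi}$, the symmetrised form gives
\[
\lambda_l=\frac{2l+1}{4\pi}\,\big|\SS^2\big|\int_{-1}^{1}\big(1-P_l(t)\big)\,(2-2t)^{-1-s}\,dt
= \text{const}_s \int_0^{\pi}\big(1-P_l(\cos\theta)\big)\,\big(\sin(\theta/2)\big)^{-2-2s}\sin\theta\,d\theta .
\]
Now split the $\theta$-integral at the natural scale $\theta\sim 1/l$. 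On $\theta\lesssim 1/l$ I use $1-P_l(\cos\theta)\sim \tfrac14 l(l+1)\theta^2$ (from the series expansion of $P_l$, valid for $l\theta$ bounded) to see this part contributes $\sim l^2\int_0^{1/l}\theta^{2}\cdot\theta^{-2-2s}\cdot\theta\,d\theta\sim l^{2}\cdot l^{-(2-2s)}= l^{2s}$. On $\theta\gtrsim 1/l$ I use the crude bounds $0\le 1-P_l(\cos\theta)\le 2$ together with the uniform decay $|P_l(\cos\theta)|\lesssim (l\sin\theta)^{-1/2}$; since $0<s<1$ the integral $\int_{1/l}^{\pi}\theta^{-2-2s}\sin\theta\,d\theta\sim l^{2s}$ converges and gives the same order, while the genuine lower bound on this range comes from the region $\theta\sim 1$ where $1-P_l(\cos\theta)$ is bounded below on average (equivalently, from $\int|P_l|^2\lesssim 1/l\to0$, so $\frac{1}{\pi}\int_0^\pi(1-P_l(\cos\theta))\sin\theta\,d\theta\ge \tfrac12$ for $l$ large). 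Combining, $\lambda_l\sim l^{2s}\sim (l(l+1))^s$ for all $l\ge1$, with the finitely many small $l$ handled by positivity ($\lambda_l>0$ for $l\ge1$ since $Y_l^m$ is non-constant). Plugging this back in yields $\sum_{l,m}|f_l^m|^2\lambda_l\sim\sum_{l,m}(l(l+1))^s|f_l^m|^2$, which is exactly $\|(-\triangle_{\SS^2})^{s/2}f\|_{L^2(\SS^2)}^2$, and adding $\|f\|_{L^2}^2=\sum|f_l^m|^2$ to both sides finishes the proof. A short remark at the end would note that the same argument, restricting the $\theta$-integral to $\theta\le\epsilon$ and tracking the scale $1/l$ versus $\epsilon$, is what underlies Lemma \ref{lbplim}.
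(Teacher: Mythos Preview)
Your approach is correct and genuinely different from the paper's. You diagonalise both quadratic forms in the spherical-harmonic basis via Lemma~\ref{antin5} and the Addition Theorem, reducing the lemma to the scalar asymptotic $\lambda_l\sim (l(l+1))^s$, which you obtain by splitting the Legendre integral $\int_0^\pi(1-P_l(\cos\theta))\sin^{-2-2s}(\theta/2)\sin\theta\,d\theta$ at the scale $\theta\sim 1/l$. The paper instead argues by localisation: it builds a smooth partition of unity $\{\vartheta_{km\pm}\}$ on $\SS^2$, pushes each piece through a coordinate chart onto a flat disc $B_{2/\sqrt5}\subset\R^2$, and then proves $\|(-\triangle_{\SS^2})^{s/2}(\vartheta f)\|_{L^2(\SS^2)}+\|\vartheta f\|_{L^2(\SS^2)}\sim\|F\|_{H^s(B_{2/\sqrt5})}$ by real interpolation between the $L^2$ and $H^1$ endpoints, together with a direct computation of $-\triangle_{\SS^2}$ in local coordinates.

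Your route is shorter and more transparent for this lemma in isolation, and---as you rightly observe at the end---it is essentially the mechanism the paper itself uses to prove Lemma~\ref{lbplim}. The paper's localisation route, by contrast, pays for itself downstream: the partition $\{\vartheta_{km\pm}\}$ and the chart functions $\widetilde{F^+_{1m}}$ are the backbone of Lemmas~\ref{antin2} and~\ref{antin8}, where one must connect $(-\triangle_{\SS^2})^{s/2}$ acting on functions on $\R^3$ with the rotation fields $\Omega_{ij}$, the interpolation spaces $\mathcal{D}_{\Omega_{ij}}(s,2)$, and flat Sobolev norms $H^s(\R^2)$---places where a purely spectral argument on $\SS^2$ does not reach. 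One small slip to fix: your displayed formula for $\lambda_l$ carries a spurious factor $\tfrac{2l+1}{4\pi}\,|\SS^2|=2l+1$; the correct constant is $2\pi$ with no $l$-dependence (average over $m$ before applying the Addition Theorem), and indeed your subsequent $\theta$-integral estimates are consistent with the correct formula rather than the displayed one.
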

\begin{proof} Let $\omega_1, \omega_2, \omega_3\in C^\infty_c(\R)$ be non-negative functions. Assume that $\omega_1(x)=1$ in the Ball $B_{\f23}$ with compact support in the Ball $B_{\f34}$, $\omega_2(x)=1$ in the Ball $B_{\f34}$ with compact support in the Ball $B_{\f45}$ and $\omega_3(x)=1$ in the Ball $B_{\f45}$ with compact support in the Ball $B_{\f56}$. Let $\chi$ be a smooth function verifying
 \beno  \chi(x)=\left\{\begin{aligned} & 1, \quad\mbox{if}\quad x\ge0;\\
&  0,\quad\mbox{if}\quad x<-\f1{10}.\end{aligned}\right.\eeno
Suppose  $u=(u_1, u_2, u_3) \in \R^3$. Then it is easy to check for $k\in \NN$ with $1\le k\le 3$ and $u\neq 0$,
\beno  \sum_{i=1}^3 \omega_k(\sum_{j\neq i} \f{u_j^2}{|u|^2})\ge 1.\eeno
Then we suppose  that for   $1\le m\le 3$,
\beno \vartheta_{km+}(u)\eqdefa\f{\omega_k(\sum\limits_{j\neq m} \f{u_j^2}{|u|^2})}{\sum\limits_{i=1}^3 \omega_k(\sum\limits_{j\neq i} \f{u_j^2}{|u|^2})}\chi(\f{u_m}{|u|})\quad
\mbox{and}\quad \vartheta_{km-}(u)\eqdefa\f{\omega_k(\sum\limits_{j\neq m} \f{u_j^2}{|u|^2})}{\sum\limits_{i=1}^3 \omega_k(\sum\limits_{j\neq i} \f{u_j^2}{|u|^2})}\chi(-\f{u_m}{|u|}). \eeno
We conclude that for $u\in\SS^2$, \ben\label{lb6} \sum_{m=1}^3 \big[\vartheta_{km+}(u)+\vartheta_{km-}(u)\big]=1.\een

Observe that
\ben\label{lb1} &&\int_{\sigma,\tau\in \SS^2} \f{|f(\sigma)-f(\tau)|^2}{|\sigma-\tau|^{2+2s}}d\sigma d\tau\nonumber\\
&&\sim\sum_{m=1}^3\bigg[\int_{\sigma,\tau\in \SS^2} \f{|f(\sigma)-f(\tau)|^2}{|\sigma-\tau|^{2+2s}}[\vartheta_{1m+}^2(\sigma)+\vartheta_{1m-}^2(\sigma)] d\sigma d\tau. \een
Then due to the symmetric structure, we only need to focus on the estimate of
\beno \int_{\sigma,\tau\in \SS^2} \f{|f(\sigma)-f(\tau)|^2}{|\sigma-\tau|^{2+2s}}\vartheta_{13+}^2(\sigma) d\sigma d\tau. \eeno

Notice that
\beno  &&\int_{\sigma,\tau\in \SS^2} \f{|f(\sigma)-f(\tau)|^2}{|\sigma-\tau|^{2+2s}}\vartheta_{13+}^2(\sigma) d\sigma d\tau\\&&=
\int_{\sigma,\tau\in \SS^2} \f{|f(\sigma)-f(\tau)|^2}{|\sigma-\tau|^{2+2s}}\vartheta_{13+}^2(\sigma) \vartheta_{33+}(\sigma)\vartheta_{33+}(\tau)d\sigma d\tau\\
&&+
\int_{\sigma,\tau\in \SS^2} \f{|f(\sigma)-f(\tau)|^2}{|\sigma-\tau|^{2+2s}}\vartheta_{13+}^2(\sigma) \vartheta_{33+}(\sigma)[1-\vartheta_{33+}(\tau)]d\sigma d\tau. \eeno
From which together with the fact that $|\sigma-\tau|\ge \f12-\f1{\sqrt{5}}$ if $\sigma\in \mathrm{Supp}\,\vartheta_{13+}$ and $\tau\in \mathrm{Supp}\, (1-\vartheta_{33+})$, we deduce that
\beno &&\int_{\sigma,\tau\in \SS^2} \f{|f(\sigma)-f(\tau)|^2}{|\sigma-\tau|^{2+2s}}\vartheta_{13+}^2(\sigma) d\sigma d\tau+\|f\|_{L^2(\SS^2)}^2\\
&&\sim \int_{\sigma,\tau\in \SS^2} \f{|f(\sigma)-f(\tau)|^2}{|\sigma-\tau|^{2+2s}}\vartheta_{13+}^2(\sigma) \vartheta_{33+}(\sigma)\vartheta_{33+}(\tau)d\sigma d\tau+\|f\|_{L^2(\SS^2)}^2\\
\\
&&\sim \int_{\sigma,\tau\in \SS^2} \f{|(\vartheta_{13+}f)(\sigma)-(\vartheta_{13+}f)(\tau)|^2}{|\sigma-\tau|^{2+2s}} \vartheta_{33+}(\sigma)\vartheta_{33+}(\tau)d\sigma d\tau+\|f\|_{L^2(\SS^2)}^2.\eeno

Suppose  $\sigma=(x_1, x_2, x_3) \in \SS^2$ and  $x=(x_1,x_2)$. Let $F_{13}^+(x)\eqdefa(\vartheta_{13+}f)(x_1,x_2, \sqrt{1-x_1^2-x_2^2})$ and $\Theta_{i3}^+(x)=\vartheta_{i3+}(x_1,x_2, \sqrt{1-x_1^2-x_2^2})(i=1,2,3)$. Then by change of variables, we have
\beno  && \int_{\sigma,\tau\in \SS^2} \f{|(\vartheta_{13+}f)(\sigma)-(\vartheta_{13+}f)(\tau)|^2}{|\sigma-\tau|^{2+2s}} \vartheta_{33+}(\sigma)\vartheta_{33+}(\tau)d\sigma d\tau\\
&&=\int_{|x|,|y|\le \sqrt{\f56}} \f{|F_{13}^+(x)-F_{13}^+(y)|^2}{\big(|x-y|^2+|\sqrt{1-x_1^2-x_2^2}-\sqrt{1-y_1^2-y_2^2}|^2\big)^{1+s}} \\&&\quad\times\Theta_{33}^+(x)\Theta_{33}^+(y)\f1{\sqrt{1-x_1^2-x_2^2}}
\f1{\sqrt{1-y_1^2-y_2^2}}dxdy\\
&&\gtrsim \int_{|x|,|y|\le \sqrt{\f45}} \f{|F_{13}^+(x)-F_{13}^+(y)|^2}{ |x-y|^{2+2s}} dxdy,
 \eeno
which yields
\ben\label{lb2} &&\int_{\sigma,\tau\in \SS^2} \f{|f(\sigma)-f(\tau)|^2}{|\sigma-\tau|^{2+2s}}\vartheta_{13+}^2(\sigma) d\sigma d\tau+\|f\|_{L^2(\SS^2)}^2\gtrsim \|F_{13}^+\|_{H^s(B_{\f2{\sqrt{5}}})}^2.  \een
On the other hand,  one has \beno  && \int_{\sigma,\tau\in \SS^2} \f{|(\vartheta_{13+}f)(\sigma)-(\vartheta_{13+}f)(\tau)|^2}{|\sigma-\tau|^{2+2s}} \vartheta_{33+}(\sigma)\vartheta_{33+}(\tau)d\sigma d\tau\\
&&\lesssim \int_{|x-y|\le \eta} \f{|F_{13}^+(x)-F_{13}^+(y)|^2}{ |x-y|^{2+2s}} \Theta_{33}^+(x)\Theta_{33}^+(y) dxdy+C_\eta\|F_{13}^+\|_{L^2(B_{\f2{\sqrt{5}}})}^2.\eeno
 Choose $\eta$   sufficiently small such that  \beno \mathrm{1}_{|x-y|\le \eta}F_{13}^+(x)^2\Theta_{33}^+(x)\Theta_{33}^+(y)=\mathrm{1}_{|x-y|\le \eta}F_{13}^+(x)^2\Theta_{23}^+(x)\Theta_{23}^+(y),\\
\mbox{and}\quad\mathrm{1}_{|x-y|\le \eta} F_{13}^+(x)F_{13}^+(y) \Theta_{33}^+(x)\Theta_{33}^+(y)=\mathrm{1}_{|x-y|\le \eta}F_{13}^+(x)F_{13}^+(y)\Theta_{23}^+(x)\Theta_{23}^+(y).\eeno
Then we  get 
\beno&& \int_{\sigma,\tau\in \SS^2} \f{|(\vartheta_{13+}f)(\sigma)-(\vartheta_{13+}f)(\tau)|^2}{|\sigma-\tau|^{2+2s}} \vartheta_{33+}(\sigma)\vartheta_{33+}(\tau)d\sigma d\tau\\
&&\lesssim \int_{|x|,|y|\le \sqrt{\f45}}  \f{|F_{13}^+(x)-F_{13}^+(y)|^2}{ |x-y|^{2+2s}} \Theta_{23}^+(x)\Theta_{23}^+(y) dxdy+C_\eta\|F_{23}^+\|_{L^2(B_{\f2{\sqrt{5}}})}^2\\&&
\lesssim \|F_{13}^+\|_{H^s(B_{\f2{\sqrt{5}}})}^2, \eeno
which implies
\beno &&\int_{\sigma,\tau\in \SS^2} \f{|f(\sigma)-f(\tau)|^2}{|\sigma-\tau|^{2+2s}}\vartheta_{13+}^2(\sigma) d\sigma d\tau\lesssim  \|f\|_{L^2(\SS^2)}^2+ \|F_{13}^+\|_{H^s(B_{\f2{\sqrt{5}}})}^2.\eeno
From which together with \eqref{lb2}, we have 
\ben\label{lb3} &&\int_{\sigma,\tau\in \SS^2} \f{|f(\sigma)-f(\tau)|^2}{|\sigma-\tau|^{2+2s}}\vartheta_{13+}^2(\sigma) d\sigma d\tau+\|f\|_{L^2(\SS^2)}^2\sim \|F_{13}^+\|_{H^s(B_{\f2{\sqrt{5}}})}^2+\|f\|_{L^2(\SS^2)}^2.  \een

 Observe  that \beno \|(-\triangle_{\SS^2})^{\f12}  (\vartheta_{13+}f)\|_{L^2(\SS^2)}^2=\int_{ \SS^2} \big((-\triangle_{\SS^2})(\vartheta_{13+}f)\big)(\sigma) (\vartheta_{13+}f)(\sigma)d\sigma.\eeno
Thanks to the fact $(-\triangle_{\SS^2}f)(\sigma)=-\sum\limits_{1\le i<j\le 3} (\Omega_{ij}^2 f)(x_1, x_2, x_3)$ with $\sigma=(x_1, x_2, x_3)$ and $\Omega_{ij}=x_i\pa_j-x_j\pa_i$, by change of variables, we obtain that
\beno &&\|(-\triangle_{\SS^2})^{\f12}  (\vartheta_{13+}f)\|_{L^2(\SS^2)}^2\\&&=\int_{|x|\le \sqrt{\f45}} -\sum\limits_{1\le i<j\le 3} \bigg(\Omega_{ij}^2(\vartheta_{13+}f)\bigg)(x,\sqrt{1-|x|^2})(\vartheta_{13+}f)(x,\sqrt{1-|x|^2})\f1{\sqrt{1-|x|^2}}dx.\eeno
It is easy to   see that for $i=1,2$, \beno \pa_iF_{13}^+(x_1,x_2)=\f1{\sqrt{1-|x|^2}}\bigg(-\Omega_{i3}(\vartheta_{13+}f)\bigg)(x,\sqrt{1-|x|^2}), \eeno
which implies that \beno \bigg(\Omega_{i3}^2(\vartheta_{13+}f)\bigg)(x,\sqrt{1-|x|^2})=\bigg((\sqrt{1-|x|^2}\pa_i)^2F_{13}^+\bigg)(x).\eeno
Then by direct calculation, it yields
\beno  &&\|(-\triangle_{\SS^2})^{\f12}  (\vartheta_{13+}f)\|_{L^2(\SS^2)}^2\\
&&=-\int_{|x|\le \sqrt{\f45}}  \big(\pa_1(\sqrt{1-|x|^2}\pa_1)F_{13}^+\big)F_{13}^+dx -\int_{|x|\le \sqrt{\f45}}  \big(\pa_2(\sqrt{1-|x|^2}\pa_2) F_{13}^+\big)F_{13}^+ dx\\
&&\quad-\int_{|x|\le \sqrt{\f45}}  (\Omega_{12})^2F_{13}^+F_{13}^+\f1{\sqrt{1-|x|^2}}dx.\eeno
Thus we have
\ben\label{lb12} &&\|(-\triangle_{\SS^2})^{\f12}  (\vartheta_{13+}f)\|_{L^2(\SS^2)}^2+\|\vartheta_{13+}f\|_{L^2(\SS^2)}^2\nonumber\\
&& \sim \|F_{13}^+\|_{H^1(B_{\f2{\sqrt{5}}})}^2+\|\Omega_{12}F_{13}^+\|_{L^2(B_{\f2{\sqrt{5}}})}^2\sim \|F_{13}^+\|_{H^1(B_{\f2{\sqrt{5}}})}^2,  \een
where we use the fact $\|\vartheta_{13+}f\|_{L^2(\SS^2)}\sim \|F_{13}^+\|_{L^2(B_{\f2{\sqrt{5}}})}.$

By  the real interpolation method, we obtain that for $0\le s\le 1$,
\ben\label{lb4}\|(-\triangle_{\SS^2})^{s/2}(\vartheta_{13+}f)\|_{L^2(\SS^2)}+\|\vartheta_{13+}f\|_{L^2(\SS^2)}\sim \|F_{13}^+\|_{H^s(B_{\f2{\sqrt{5}}})}. \een
Next we claim that for $0\le s\le 2$,
 \ben\label{lb5} &&\|(-\triangle_{\SS^2})^{s/2}(\vartheta_{1m+}f)\|_{L^2(\SS^2)}+ \|\vartheta_{1m+}f\|_{L^2(\SS^2)}\nonumber\\&&\lesssim \|(-\triangle_{\SS^2})^{s/2} f\|_{L^2(\SS^2)}+\|f\|_{L^2(\SS^2)}. \een
This is easily  followed by the real interpolation method since
\beno \|(-\triangle_{\SS^2})(\vartheta_{1m+}f)\|_{L^2(\SS^2)}\lesssim  \|(-\triangle_{\SS^2}) f\|_{L^2(\SS^2)}+\|  f\|_{L^2(\SS^2)},\\
\mbox{and} \qquad \| \vartheta_{1m+}f\|_{L^2(\SS^2)}\lesssim  \|  f\|_{L^2(\SS^2)}. \eeno

 Thanks to \eqref{lb6} and \eqref{lb5}, we  deduce that  for $0\le s\le 2$,
 \ben \label{lb7} &&\sum_{m=1}^3\bigg[\|(-\triangle_{\SS^2})^{s/2}(\vartheta_{1m+}f)\|_{L^2(\SS^2)}+\|(-\triangle_{\SS^2})^{s/2}(\vartheta_{1m-}f)\|_{L^2(\SS^2)}\nonumber\\&&+\|\vartheta_{1m+}f\|_{L^2(\SS^2)}+\|\vartheta_{1m-}f\|_{L^2(\SS^2)}\bigg]
\sim \|(-\triangle_{\SS^2})^{s/2} f\|_{L^2(\SS^2)}+\|f\|_{L^2(\SS^2)}. \een
Then (\ref{lb3}-\ref{lb4})  and  \eqref{lb7} imply the lemma. 
\end{proof}

As a consequence of Lemma \ref{antin5} and Lemma \ref{antin1}, we get the following estimate:
\begin{col}\label{antin6} Suppose that  $g$ and $h$ are smooth functions defined in $\SS^2$. Then for $a,b\in \R$ with $a+b=2s$,
\beno&& \bigg|\int_{\SS^2\times\SS^2} \big(g(\sigma)-g(\tau)\big)h(\tau) H(\sigma\cdot\tau) d\sigma d\tau\bigg|\\
&&\lesssim
\|(1-\triangle_{\SS^2})^{a/2}g\|_{L^2(\SS^2)}\|(1-\triangle_{\SS^2})^{b/2}h\|_{L^2(\SS^2)},\eeno
where $H(\sigma\cdot \tau)=|\sigma-\tau|^{-(2+2s)}$.
\end{col}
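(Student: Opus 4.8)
The plan is to combine the two lemmas that have just been established, namely Lemma \ref{antin5} (the spectral/diagonalization identity for bilinear forms against a kernel depending only on $\sigma\cdot\tau$) and Lemma \ref{antin1} (the $L^2$ profile of the fractional Laplace--Beltrami operator), together with a Cauchy--Schwarz step and a straightforward duality/interpolation argument on the eigenvalue weights. First I would apply Lemma \ref{antin5} with $H(\sigma\cdot\tau)=|\sigma-\tau|^{-(2+2s)}$ to write
\beno \int_{\SS^2\times\SS^2} \big(g(\sigma)-g(\tau)\big)h(\tau) H(\sigma\cdot\tau) d\sigma d\tau
=\sum_{l=0}^\infty\sum_{m=-l}^l g^m_l h^m_l\, \lambda_l,\eeno
where $\lambda_l\eqdefa \int_{\SS^2\times\SS^2}\big(Y^m_l(\sigma)-Y^m_l(\tau)\big)Y^m_l(\sigma)H(\sigma\cdot\tau)d\sigma d\tau$ is independent of $m$ by rotational symmetry (it depends only on the degree $l$). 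Taking $g=h$ and using Lemma \ref{antin1} applied to $f=Y^m_l$, one identifies $\lambda_l\sim 1+\big(l(l+1)\big)^{s}$, i.e. $\lambda_l\sim \big(1+l(l+1)\big)^{s}$.

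The second step is the bilinear estimate itself. Given the diagonal representation, for any $a,b\in\R$ with $a+b=2s$ I would split the weight as $\lambda_l\sim \big(1+l(l+1)\big)^{a/2}\big(1+l(l+1)\big)^{b/2}$ and apply Cauchy--Schwarz in the summation over $(l,m)$:
\beno \bigg|\sum_{l,m} g^m_l h^m_l\lambda_l\bigg|
\lesssim \Big(\sum_{l,m}|g^m_l|^2\big(1+l(l+1)\big)^{a}\Big)^{1/2}
\Big(\sum_{l,m}|h^m_l|^2\big(1+l(l+1)\big)^{b}\Big)^{1/2}.\eeno
By the definition \eqref{defilb11} of $(1-\triangle_{\SS^2})^{a/2}$ and Parseval on $L^2(\SS^2)$, the two factors are exactly $\|(1-\triangle_{\SS^2})^{a/2}g\|_{L^2(\SS^2)}$ and $\|(1-\triangle_{\SS^2})^{b/2}h\|_{L^2(\SS^2)}$, which yields the claim. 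I should remark that the sign bookkeeping in $\lambda_l$ is harmless: although each individual term $\big(Y_l^m(\sigma)-Y_l^m(\tau)\big)Y_l^m(\sigma)$ is not symmetric, after symmetrizing in $\sigma,\tau$ one gets $\tfrac12\int|Y_l^m(\sigma)-Y_l^m(\tau)|^2H\,d\sigma d\tau\ge0$, and more importantly the \emph{value} of $\lambda_l$ is pinned down by Lemma \ref{antin1}, so only its size matters.

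The only genuine subtlety — and what I expect to be the main obstacle — is justifying the interchange of summation and integration that underlies Lemma \ref{antin5} and the resulting series manipulations for the specific kernel $|\sigma-\tau|^{-(2+2s)}$, which is only in $L^1$ near the diagonal (not in $L^2[-1,1]$ in the variable $\sigma\cdot\tau$) when $2s\ge 1$. Since we assume $g,h$ smooth, this is handled by first regularizing the kernel (e.g. replacing $H$ by $H_{\epsilon}=H\mathbf 1_{|\sigma-\tau|\ge\epsilon}$, which is bounded and thus lies in $L^2[-1,1]$, so Lemma \ref{antin5} applies verbatim), obtaining the estimate with $\lambda_l^\epsilon$ in place of $\lambda_l$ with a constant uniform in $\epsilon$ (here one uses that $\lambda_l^\epsilon\le \lambda_l\sim (1+l(l+1))^s$, which follows from Lemma \ref{antin1} applied to the truncated Gagliardo seminorm, itself dominated by the full one), and then letting $\epsilon\to0$ by monotone/dominated convergence since for fixed smooth $g,h$ the double integral converges absolutely. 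This truncation argument is routine but must be written, as it is the point where the non-integrability of the singular kernel is reconciled with the spectral identity.
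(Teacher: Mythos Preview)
Your proposal is correct and follows essentially the same route as the paper: truncate the singular kernel so that Lemma~\ref{antin5} applies, symmetrize in $(\sigma,\tau)$ to recognize each diagonal coefficient $\lambda_l$ as a Gagliardo seminorm of $Y_l^m$, invoke Lemma~\ref{antin1} to get $\lambda_l\sim(1+l(l+1))^s$, then split the weight and apply Cauchy--Schwarz in $(l,m)$. Your discussion of the truncation/limit step is in fact more careful than the paper's, which performs the same regularization (writing $H\,\mathrm{1}_{|\sigma\cdot\tau|\ge\lambda}$ and letting $\lambda\to 0$) but passes over the justification more quickly.
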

\begin{proof}
Let  $\lambda>0$. Then by Lemma \ref{antin5} and the notation: $f^m_l\eqdefa \int_{\SS^2} f(\sigma)Y^m_l(\sigma)d\sigma$,  we have
\beno &&\int_{\SS^2\times\SS^2} \big(g(\sigma)-g(\tau)\big)h(\tau) H(\sigma\cdot\tau) d\sigma d\tau\\&&=
\lim_{\lambda\rightarrow0}\int_{\SS^2\times\SS^2}  \big(g(\sigma)-g(\tau)\big)h(\tau) H(\sigma\cdot\tau)\mathrm{1}_{|\sigma\cdot \tau|\ge\lambda} d\sigma d\tau\\&&=
\lim_{\lambda\rightarrow0} \sum_{l=0}^\infty\sum_{m=-l}^l g_l^mh_l^m \int_{\SS^2\times\SS^2} (Y^m_l(\sigma)-Y^m_l(\tau))Y^m_l(\tau) H(\sigma\cdot\tau)\mathrm{1}_{|\sigma\cdot \tau|\ge\lambda} d\sigma d\tau\\
&&=\f12
\lim_{\lambda\rightarrow0}\sum_{l=0}^\infty\sum_{m=-l}^lg_l^mh_l^m \int_{\SS^2\times\SS^2} (Y^m_l(\sigma)-Y^m_l(\tau))(Y^m_l(\sigma)-Y^m_l(\tau)) H(\sigma\cdot\tau)\mathrm{1}_{|\sigma\cdot \tau|\ge\lambda} d\sigma d\tau,
   \eeno where we use the symmetric property of the integral in the last step. Applying the Cauchy-Schwartz inequality and Lemma \ref{antin1}, we obtain that
\beno   &&\int_{\SS^2\times\SS^2} \big(g(\sigma)-g(\tau)\big)h(\tau) H(\sigma\cdot\tau) d\sigma d\tau\\&&\lesssim \sum_{l=0}^\infty\sum_{m=-l}^l g_l^mh_l^m
(\|(-\triangle_{\SS^2})^{s/2}Y^m_l\|_{L^2(\SS^2)}+1)(\|(-\triangle_{\SS^2})^{s/2}Y^m_l\|_{L^2(\SS^2)}+1)
\\&&\lesssim \sum_{l=0}^\infty\sum_{m=-l}^l g_l^mh_l^m (l(l+1)+1)^{s} \lesssim \|(1-\triangle_{\SS^2})^{a/2}g\|_{L^2(\SS^2)}\|(1-\triangle_{\SS^2})^{b/2}h\|_{L^2(\SS^2)}, \eeno which completes the proof of the lemma.
\end{proof}

Next we show the strong connection between the Laplace-Beltrami operator and the rotation vector fields.

\begin{lem}\label{antin2}  Let $f$ be a smooth function defined in $\R^3$. Suppose $f(u)=f(u_1,u_2,u_3)$ with $u=(u_1,u_2,u_3)\in \R^3$ and
$\Omega_{ij} f\eqdefa (u_i\pa_{u_j}-u_j\pa_{u_i})f$. Then if $0<s<1$, it holds
\beno &&\int_{\sigma,\tau\in \SS^2, r>0} \f{|f(r\sigma)-f(r\tau)|^2}{|\sigma-\tau|^{2+2s}}r^2d\sigma d\tau dr+\|f\|^2_{L^2(\R^3)}\\
&&\sim \|(-\triangle_{\SS^2})^{s/2}f\|_{L^2}^2+\|f\|_{L^2}^2\sim \sum_{1\le i<j\le 3} \|f\|^2_{\mathcal{D}_{\Omega_{ij}}(s,2)}. \eeno
Moreover  for $s\in [0,2]$, we have
\ben\label{lapbel}  \|(-\triangle_{\SS^2})^{s/2}f\|_{L^2}^2+\|f\|_{L^2}^2\sim \sum_{1\le i<j\le 3} \|f\|^2_{\mathcal{D}_{\Omega_{ij}^2}(s/2,2)}.\een
Here  we use notations $\mathcal{D}_{\Omega_{ij}}(s,2)\eqdefa (L^2, \mathcal{D}(\Omega_{ij}))_{s, 2}$ and $\mathcal{D}_{\Omega_{ij}^2}(s/2,2)
\eqdefa (L^2, \mathcal{D}(\Omega_{ij}^2))_{s/2, 2}$.

\end{lem}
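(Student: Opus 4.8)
\textbf{Proof strategy for Lemma \ref{antin2}.} The plan is to transport everything to the sphere via polar coordinates and then invoke Lemma \ref{antin1} fiberwise in the radial variable, after which the characterization in terms of rotation vector fields follows from the interpolation machinery of Subsection 5.2. Write $f(u) = f(r\sigma)$ with $r = |u|$ and $\sigma \in \SS^2$, and for fixed $r>0$ set $f_r(\sigma) \eqdef f(r\sigma)$. By Fubini, the double-difference quantity becomes $\int_0^\infty r^2 \big(\int_{\sigma,\tau\in\SS^2} |f_r(\sigma)-f_r(\tau)|^2 |\sigma-\tau|^{-2-2s} d\sigma d\tau\big) dr$, and $\|f\|_{L^2(\R^3)}^2 = \int_0^\infty r^2 \|f_r\|_{L^2(\SS^2)}^2 dr$. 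First I would apply Lemma \ref{antin1} to each $f_r$, obtaining uniformly in $r$ the equivalence with $\|(-\triangle_{\SS^2})^{s/2} f_r\|_{L^2(\SS^2)}^2 + \|f_r\|_{L^2(\SS^2)}^2$; integrating against $r^2\,dr$ and using the definition \eqref{defilb2} of $(-\triangle_{\SS^2})^{s/2}$ in the whole space (which acts only on the angular variables and commutes with the radial decomposition), this yields the first equivalence with $\|(-\triangle_{\SS^2})^{s/2}f\|_{L^2}^2 + \|f\|_{L^2}^2$.

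For the identification with $\sum_{i<j}\|f\|_{\mathcal{D}_{\Omega_{ij}}(s,2)}^2$, I would proceed as follows. Each $\Omega_{ij}$ is a closed operator on $L^2(\R^3)$ satisfying the resolvent bound \eqref{clooper} (as noted right before Theorem \ref{intp}), and the operators $\Omega_{12},\Omega_{13},\Omega_{23}$ together with $A = $ (the scaling field, or more precisely any operator implementing the commutation relations \eqref{comuope}) satisfy the hypotheses of Theorem \ref{intp}: indeed $[\Omega_{12},\Omega_{13}] = -\Omega_{23}$ and cyclic permutations thereof — so after relabeling one is in exactly the algebraic situation $[A,B_1]=-B_2$, $[A,B_2]=-B_1$, $[A,B_3]=0$ that Theorem \ref{intp} is built to handle. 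Hence $(L^2, \bigcap_{i<j}\mathcal{D}(\Omega_{ij}))_{s,2} = \bigcap_{i<j}\mathcal{D}_{\Omega_{ij}}(s,2)$, and since $-\triangle_{\SS^2} = \sum_{i<j}\Omega_{ij}^2$ with the $\Omega_{ij}^2$ generating, via the spectral theorem on each $\mathcal{D}_l$, the same scale of spaces as $(-\triangle_{\SS^2})^{s/2}$, the endpoint cases $s=0$ (trivial) and $s=1$ (where $\|(-\triangle_{\SS^2})^{1/2}f\|_{L^2}^2 = \sum_{i<j}\|\Omega_{ij}f\|_{L^2}^2$ by integration by parts) bootstrap by real interpolation to all $s\in[0,1]$; this gives the middle equivalence.

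For \eqref{lapbel} with $s\in[0,2]$, the point is that $(-\triangle_{\SS^2})$ itself, being a nonnegative self-adjoint operator with the $Y_l^m$ as eigenbasis, satisfies $\mathcal{D}_{(-\triangle_{\SS^2})}(s/2,2) = $ the domain-interpolation space whose norm is $\|(1-\triangle_{\SS^2})^{s/2}f\|_{L^2}$; then writing $-\triangle_{\SS^2} = \sum_{i<j}\Omega_{ij}^2$ and applying Theorem \ref{intp} once more — now with the closed operators $\Omega_{ij}^2$, which also verify \eqref{clooper} and, after the appropriate choice of the generator $A$, the bracket relations \eqref{comuope} — identifies $(L^2,\bigcap_{i<j}\mathcal{D}(\Omega_{ij}^2))_{s/2,2}$ with $\bigcap_{i<j}\mathcal{D}_{\Omega_{ij}^2}(s/2,2)$, and the endpoints $s/2=0,1$ match by the same spectral argument. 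I expect the main obstacle to be the bookkeeping in verifying that the triple $(\Omega_{12},\Omega_{13},\Omega_{23})$ (respectively their squares) genuinely fits the commutator hypotheses \eqref{comuope} of Theorem \ref{intp} with an explicit fourth operator $A$ playing the role of the generator — the Lie algebra $\mathfrak{so}(3)$ has the right structure constants, but one must check that the chosen $A$ is closed, satisfies the resolvent estimate, and that $[A,B_3]=0$ holds for the correct labeling; the rest is routine Fubini, integration by parts on the sphere, and real interpolation.
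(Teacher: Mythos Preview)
Your first equivalence is fine: applying Lemma \ref{antin1} fiberwise in $r$ and integrating against $r^2\,dr$ is exactly right, and this part of the paper's argument is essentially the same (the paper phrases it through charts, but the content is identical).

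The genuine gap is in your use of Theorem \ref{intp}. That theorem requires the operators $B_1,B_2,B_3$ to \emph{commute}: the hypothesis is $[B_i,B_j]=0$. The rotation vector fields do not satisfy this --- indeed $[\Omega_{13},\Omega_{23}]=\Omega_{12}\neq 0$ --- so you cannot take the $\Omega_{ij}$ as the $B$'s. Your proposed relabeling (e.g.\ $A=\Omega_{12}$, $B_1=\Omega_{13}$, $B_2=\Omega_{23}$) fails precisely at $[B_1,B_2]=0$, and taking $A$ to be the scaling field $r\partial_r$ gives $[A,\Omega_{ij}]=0$ for all pairs, which is not the required structure either. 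The same obstruction applies to the squares $\Omega_{ij}^2$ in your argument for \eqref{lapbel}. Consequently the identity $(L^2,\bigcap_{i<j}\mathcal{D}(\Omega_{ij}))_{s,2}=\bigcap_{i<j}\mathcal{D}_{\Omega_{ij}}(s,2)$, which is the crux, is not obtainable from Theorem \ref{intp} and is in fact the hard step.

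The paper circumvents the non-commutativity by working in the local charts $\vartheta_{1m\pm}$ from Lemma \ref{antin1}. In the chart associated to $\vartheta_{13+}$ one writes $\bar F_{13}^+(r,x)=r(\vartheta_{13+}f)(rx_1,rx_2,r\sqrt{1-|x|^2})$; then $\Omega_{13}$ and $\Omega_{23}$ become $-\sqrt{1-|x|^2}\,\partial_{x_1}$ and $-\sqrt{1-|x|^2}\,\partial_{x_2}$ acting on $\bar F_{13}^+$, so after extending to $\widetilde{F_{13}^+}$ on $\R^2$ one is interpolating between $L^2_r L^2_x$ and $L^2_r H^1_x$ with \emph{commuting} derivatives $\partial_{x_1},\partial_{x_2}$. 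This yields $\|\widetilde{F_{13}^+}\|_{L^2_r H^s_x}\sim\|\vartheta_{13+}f\|_{\mathcal{D}_{\Omega_{13}}(s,2)}+\|\vartheta_{13+}f\|_{\mathcal{D}_{\Omega_{23}}(s,2)}$, and $\Omega_{12}$ is then controlled by the other two in this chart. Patching the charts via \eqref{lb7} gives the result. For \eqref{lapbel} with $1<s\le 2$ one additionally needs an elliptic estimate for the operator $\mathbf L$ realizing $-\triangle_{\SS^2}$ in chart coordinates (see \eqref{lb13}, \eqref{elipitic lb}); your argument does not supply this ingredient either. Theorem \ref{intp} is used elsewhere in the paper (Lemma \ref{antin3}), but there the $B$'s are the commuting partial derivatives $\partial_k$ and $A$ is a single $\Omega_{ij}$.
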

\begin{proof} For  $r>0$ and $x=(x_1,x_2)$, we set $$\bar{F}_{13}^+(r,x)\eqdefa r (\vartheta_{13+}f) (rx_1,rx_2, r\sqrt{1-x_1^2-x_2^2})$$ and
\beno \widetilde{F_{13}^+}(r,x)\eqdefa \left\{\begin{aligned} & r( \vartheta_{13+}f)(rx_1,rx_2, r\sqrt{1-x_1^2-x_2^2}), \quad\mbox{if}\quad |x| \le\sqrt{\f45};\\
&  0,\quad\mbox{if}\quad |x| \ge \sqrt{\f45},\end{aligned}\right. . \eeno where we use the fact $\vartheta_{13+} (rx_1,rx_2, r\sqrt{1-x_1^2-x_2^2})=\vartheta_{13+} (x_1,x_2, \sqrt{1-x_1^2-x_2^2})$.
Thanks to \eqref{lb4}, one has
\beno\|(-\triangle_{\SS^2})^{s/2}(\vartheta_{13+}f)\|_{L^2(\R^3)}+\|\vartheta_{13+}f\|_{L^2(\R^3)}\sim \|\bar{F}_{13}^+\|_{L^2((0,\infty);H^s(B_{\f2{\sqrt{5}}}))}. \eeno

Let $T_1: L^2((0,\infty)\times B_{\f2{\sqrt{5}}})\mapsto L^2((0,\infty)\times \R^2)$ be a linear operator defined by
\ben\label{OpT1} (T_1f)(r,x)\eqdefa \left\{\begin{aligned} &  \vartheta_{23+}(x_1,x_2, \sqrt{1-x_1^2-x_2^2})f(r,x), \quad\mbox{if}\quad |x| \le\sqrt{\f45};\\
&  0,\quad\mbox{if}\quad |x| \ge \sqrt{\f45}.\end{aligned}\right. . \een
Then we have \beno \|T_1f\|_{L^2((0,\infty);H^1(\R^2))}\lesssim \|f\|_{L^2((0,\infty);H^1(B_{\f2{\sqrt{5}}}))}, \\ \|T_1f\|_{L^2((0,\infty);L^2(\R^2))}\lesssim \|f\|_{L^2((0,\infty);L^2(B_{\f2{\sqrt{5}}}))}.
 \eeno
Then by real interpolation, we obtain that \beno \|T_1f\|_{L^2((0,\infty);H^s(\R^2))}\lesssim \|f\|_{L^2((0,\infty);H^s(B_{\f2{\sqrt{5}}}))}. \eeno
By the definition of $\bar{F}_{13}^+$, we have $\mathrm{Supp}\bar{F}_{13}^+(r,x) \subset (0,\infty)\times B_{\f{\sqrt{3}}2} $.
Thus if we take $f=\bar{F}_{13}^+ $, then we get
\beno \| \widetilde{F_{13}^+}\|_{L^2((0,\infty);H^s(\R^2))}\lesssim \|\bar{F}_{13}^+(r,x)\|_{L^2((0,\infty);H^s(B_{\f2{\sqrt{5}}}))}.
 \eeno

  Let $T_2:L^2((0,\infty)\times \R^2)\mapsto  L^2((0,\infty)\times B_{\f2{\sqrt{5}}}) $ be a linear operator defined by
\ben\label{OpT2} (T_2f)(r,x)\eqdefa    \vartheta_{23+}(x_1,x_2, \sqrt{1-x_1^2-x_2^2})f(r,x).  \een
Then by the similar argument, we may obtain that
\beno \|T_2f\|_{L^2((0,\infty); H^s(B_{\f2{\sqrt{5}}}))}\lesssim \|f\|_{L^2((0,\infty);H^s(\R^2))}.\eeno
Thus if we  take $f=\widetilde{F_{13}^+}$, then we get
\beno \|\bar{F}_{13}^+ \|_{L^2((0,\infty);H^s(B_{\f2{\sqrt{5}}}))}\lesssim \| \widetilde{F_{13}^+}\|_{L^2((0,\infty);H^s(\R^2))}. \eeno

Therefore, we are led to \beno \|\bar{F}_{13}^+ \|_{L^2((0,\infty);H^s(B_{\f2{\sqrt{5}}}))}\sim \| \widetilde{F_{13}^+}\|_{L^2((0,\infty);H^s(\R^2))}, \eeno
which implies that
\ben\label{lb8}\|(-\triangle_{\SS^2})^{s/2}(\vartheta_{13+}f)\|_{L^2(\R^3)}+\|\vartheta_{13+}f\|_{L^2(\R^3)}\sim \|\widetilde{F_{13}^+}\|_{L^2((0,\infty);H^s(\R^2))}. \een
In what follows, we use the notation $ \|\widetilde{F_{13}^+}\|_{L^2_rH^s_{x}}\eqdefa \|\widetilde{F_{13}^+}\|_{L^2((0,\infty);H^s(\R^2))}.$ It is easy to see
\beno \|\widetilde{F_{13}^+}\|_{L^2_rH^s_{x}}\sim\|\widetilde{F_{13}^+}\|_{L^2_rL^2_{x_1}H^s_{x_2}}+\|\widetilde{F_{13}^+}\|_{L^2_rL^2_{x_2}H^s_{x_1}}. \eeno
Notice that
\beno \|\widetilde{F_{13}^+}\|_{L^2_rL^2_{x_2}H^1_{x_1}}&\sim& \|\Omega_{13} (\vartheta_{13+}f)\|_{L^2(\R^3)}+\| \vartheta_{13+}f\|_{L^2(\R^3)},\\
\|\widetilde{F_{13}^+}\|_{L^2_rL^2_{x_2}L^2_{x_1}}&\sim & \|\vartheta_{13+}f\|_{L^2(\R^3)}.
\eeno
By  real interpolation, one has
\beno  \|\widetilde{F_{13}^+}\|_{L^2_rL^2_{x_1}H^s_{x_2}}\sim \| \vartheta_{13+}f\|_{\mathcal{D}_{\Omega_{13}}(s,2)}. \eeno
Similarly  \beno  \|\widetilde{F_{13}^+}\|_{L^2_rL^2_{x_2}H^s_{x_1}}\sim \| \vartheta_{13+}f\|_{\mathcal{D}_{\Omega_{23}}(s,2)}. \eeno
From which together with \eqref{lb8}, we have
\ben\label{lb9}&&\|(-\triangle_{\SS^2})^{s/2}(\vartheta_{13+}f)\|_{L^2(\R^3)}+\|\vartheta_{13+}f\|_{L^2(\R^3)}\nonumber\\&&\sim\|\widetilde{F_{13}^+}\|_{L^2((0,\infty);H^s(\R^2))}\sim \
 \| \vartheta_{13+}f\|_{\mathcal{D}_{\Omega_{13}}(s,2)}+\| \vartheta_{13+}f\|_{\mathcal{D}_{\Omega_{23}}(s,2)}. \een

Observe that \beno \big(\Omega_{12}(\vartheta_{13+}f)\big)(u)=(x_1\pa_{x_2}-x_2\pa_{x_1})\big(r^{-1}\bar{F}_{13}^+(r,x)\big),\eeno where
$u=(rx_1,rx_2,r\sqrt{1-|x|^2})$.  It yields
\beno   \|\Omega_{12}(\vartheta_{13+}f)\|_{L^2(\R^3)}+\| \vartheta_{13+}f\|_{L^2(\R^3)} \lesssim  \|\widetilde{F_{13}^+}\|_{L^2_rH^1_{x}}. \eeno
Therefore by real interpolation, we deduce that
\beno \| \vartheta_{13+}f\|_{\mathcal{D}_{\Omega_{12}}(s,2)}\lesssim \|\widetilde{F_{13}^+}\|_{L^2_rH^s_{x}}\lesssim  \| \vartheta_{13+}f\|_{\mathcal{D}_{\Omega_{13}}(s,2)}+\| \vartheta_{13+}f\|_{\mathcal{D}_{\Omega_{23}}(s,2)} \eeno
which yields
\ben\label{lb9}\|(-\triangle_{\SS^2})^{s/2}(\vartheta_{13+}f)\|_{L^2(\R^3)}+\|\vartheta_{13+}f\|_{L^2(\R^3)}\sim \sum_{1\le i<j\le 3} \| \vartheta_{13+}f\|_{\mathcal{D}_{\Omega_{ij}}(s,2)}. \een
With the help of \eqref{lb7}, we are led to
 \ben \label{lb10} &&\sum_{m=1}^3 \sum_{1\le i<j\le 3} [\| \vartheta_{1m+}f\|_{\mathcal{D}_{\Omega_{ij}}(s,2)}+\| \vartheta_{1m-}f\|_{\mathcal{D}_{\Omega_{ij}}(s,2)} ]\nonumber\\&&
\sim \|(-\triangle_{\SS^2})^{s/2} f\|_{L^2(\R^3)}+\|f\|_{L^2(\R^3)}. \een

Due to the fact \beno \|\Omega_{ij}(\vartheta_{1m+}f)\|_{L^2(\R^3)}\lesssim \|\Omega_{ij}f\|_{L^2(\R^3)}+\|f\|_{L^2(\R^3)},\eeno
we have \beno \| \vartheta_{1m+}f\|_{\mathcal{D}_{\Omega_{ij}}(s,2)}\lesssim \|  f\|_{\mathcal{D}_{\Omega_{ij}}(s,2)}. \eeno
From which  together with \eqref{lb10}, we derive that \beno   \sum_{1\le i<j\le 3}  \|  f\|_{\mathcal{D}_{\Omega_{ij}}(s,2)}
\sim \|(-\triangle_{\SS^2})^{s/2} f\|_{L^2(\R^3)}+\|f\|_{L^2(\R^3)}.\eeno
We complete the proof to the first equivalence.

 The  interpolation theory indicates \beno    \sum_{1\le i<j\le 3} \|f\|^2_{\mathcal{D}_{\Omega_{ij}}(s,2)}\sim  \sum_{1\le i<j\le 3} \|f\|^2_{\mathcal{D}_{\Omega^2_{ij}}(s/2,2)},\eeno which implies the second equivalence in the case of $0\le s\le1$.
 Next we want to prove that the result still holds for $1<s\le 2$.
 
  We first show \ben\label{lb13}  && \|(-\triangle_{\SS^2})  (\vartheta_{13+}f)\|_{L^2(\SS^2)}^2+\|\vartheta_{13+}f\|_{L^2(\SS^2)}^2\nonumber\\
&&\sim \|F_{13}^+\|_{H^2(B_{\f2{\sqrt{5}}})}^2.\een
It derives from the fact that 
\ben\label{elipitic lb}  (-\triangle_{\SS^2})  (\vartheta_{13+}f)=\mathbf{L} F_{13}^+, \een
where $\mathbf{L}=-(1-x_1^2)\pa^2_1-(1-x_2^2)\pa_2^2+2x_1\pa_1+2x_2\pa_2$. Since  $\mathbf{L}$ is a uniformly elliptic in $B_{\f2{\sqrt{5}}}$ and $F_{13}^+$ vanishes in the boundary of 
$B_{\f2{\sqrt{5}}}$, the standard elliptic estimate implies that 
\beno \|F_{13}^+\|_{H^2(B_{\f2{\sqrt{5}}})}&\lesssim &\|F_{13}^+\|_{L^2(B_{\f2{\sqrt{5}}})}+\|\mathbf{L} F_{13}^+\|_{L^2(B_{\f2{\sqrt{5}}})}\\
&\lesssim & \|(-\triangle_{\SS^2})  (\vartheta_{13+}f)\|_{L^2(\SS^2)}+\|\vartheta_{13+}f\|_{L^2(\SS^2)}, \eeno 
which gives the proof to \eqref{lb13} since the inverse inequality is obviously valid recalling the definition of $-\triangle_{\SS^2}$.
 By real interpolation, \eqref{lb13} yields  that \eqref{lb8} holds for $0\le s\le 2$.

Due to the fact \beno (\sqrt{1-|x|^2}\pa_{x_1})^2\bigg( r^{-1} \bar{F}^+_{13} (r,x)\bigg)= \big(\Omega_{13}^2 (\vartheta_{13+}f)\big)(u), \eeno
where $u=(rx_1,rx_2,r\sqrt{1-|x|^2})$, we derive
\beno \|\widetilde{F_{13}^+}\|_{L^2_rL^2_{x_2}H^2_{x_1}}&\sim& \|\Omega^2_{13} (\vartheta_{13+}f)\|_{L^2(\R^3)}+\| \vartheta_{13+}f\|_{L^2(\R^3)}. \eeno By real interpolation, one has that for  $0\le s\le 2$
\beno  \|\widetilde{F_{13}^+}\|_{L^2_rL^2_{x_1}H^s_{x_2}}\sim \| \vartheta_{13+}f\|_{\mathcal{D}_{\Omega^2_{13}}(s/2,2)}. \eeno
Similarly  for $0\le s\le 2$,  it holds \beno  \|\widetilde{F_{13}^+}\|_{L^2_rL^2_{x_1}H^s_{x_2}}\sim \| \vartheta_{13+}f\|_{\mathcal{D}_{\Omega^2_{23}}(s/2,2)}. \eeno
From which together with \eqref{lb8},  for  $0\le s\le 2$, we have
\ben\label{lb11}&&\|(-\triangle_{\SS^2})^{s/2}(\vartheta_{13+}f)\|_{L^2(\R^3)}+\|\vartheta_{13+}f\|_{L^2(\R^3)}
 \\&&\sim\|\widetilde{F_{13}^+}\|_{L^2((0,\infty);H^s(\R^2))}\sim \
 \| \vartheta_{13+}f\|_{\mathcal{D}_{\Omega^2_{13}}(s/2,2)}+\| \vartheta_{13+}f\|_{\mathcal{D}_{\Omega^2_{23}}(s/2,2)}\notag. \een

Observe that \beno \big(\Omega_{12}(\vartheta_{13+}f)\big)(u)=(x_1\pa_{x_2}-x_2\pa_{x_1})\big(r^{-1}\bar{F}_{13}^+(r,x)\big),\eeno where
$u=(rx_1,rx_2,r\sqrt{1-|x|^2})$.  It yields
\beno   \|\Omega_{12}^2(\vartheta_{13+}f)\|_{L^2(\R^3)}+\| \vartheta_{13+}f\|_{L^2(\R^3)} \lesssim  \|\widetilde{F_{13}^+}\|_{L^2_rH^2_{x}}. \eeno
Therefore by real interpolation, we deduce that
\beno \| \vartheta_{13+}f\|_{\mathcal{D}_{\Omega^2_{12}}(s/2,2)}\lesssim \|\widetilde{F_{13}^+}\|_{L^2_rH^s_{x}}\lesssim  \| \vartheta_{13+}f\|_{\mathcal{D}_{\Omega^2_{13}}(s/2,2)}+\| \vartheta_{13+}f\|_{\mathcal{D}_{\Omega^2_{23}}(s/2,2)} \eeno
which yields
\beno\|(-\triangle_{\SS^2})^{s/2}(\vartheta_{13+}f)\|_{L^2(\R^3)}+\|\vartheta_{13+}f\|_{L^2(\R^3)}\sim \sum_{1\le i<j\le 3} \| \vartheta_{13+}f\|_{\mathcal{D}_{\Omega^2_{ij}}(s/2,2)}. \eeno
From which together with \eqref{lb7}, we get the equivalence \eqref{lapbel}.
\end{proof}

As a consequence,  we show that the $L^2$ norm of  the fractional Laplace-Beltrami operator can be bounded by the weighted Sobolev norm. It explains why the additional weights are needed in  Theorem \ref{thmub}.

\begin{lem}\label{antin4} Suppose $f\in H^s_s(\R^3)$ with $s\ge0$. Then it holds  \beno \|(-\triangle_{\SS^2})^{s/2}f\|_{L^2}\lesssim \|f\|_{H^s_s}.\eeno
\end{lem}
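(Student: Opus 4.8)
The plan is to bound $\|(-\triangle_{\SS^2})^{s/2}f\|_{L^2}$ for $f\in H^s_s(\R^3)$ by using the characterization of the fractional Laplace--Beltrami operator in terms of the rotation vector fields $\Omega_{ij}=v_i\pa_{v_j}-v_j\pa_{v_i}$ provided by Lemma \ref{antin2}, namely $\|(-\triangle_{\SS^2})^{s/2}f\|_{L^2}^2+\|f\|_{L^2}^2\sim\sum_{1\le i<j\le3}\|f\|^2_{\mathcal{D}_{\Omega_{ij}}(s,2)}$. Thus it suffices to show $\|f\|_{\mathcal{D}_{\Omega_{ij}}(s,2)}\lesssim\|f\|_{H^s_s}$ for each pair $i<j$. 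Since $\mathcal{D}_{\Omega_{ij}}(s,2)=(L^2,\mathcal{D}(\Omega_{ij}))_{s,2}$, the natural route is a real-interpolation argument: establish the two endpoint estimates $\|f\|_{L^2}\le\|f\|_{H^0_0}=\|f\|_{L^2}$ (trivial) and $\|f\|_{\mathcal{D}(\Omega_{ij})}=\|f\|_{L^2}+\|\Omega_{ij}f\|_{L^2}\lesssim\|f\|_{H^1_1}$, and then interpolate.

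The endpoint at level $1$ is the only thing with content, and it is elementary: $\Omega_{ij}f=v_i\pa_{v_j}f-v_j\pa_{v_i}f$, so $\|\Omega_{ij}f\|_{L^2}\lesssim\||v|\,\nabla f\|_{L^2}\lesssim\|\langle v\rangle\nabla f\|_{L^2}\le\|f\|_{H^1_1}$, using $|v_i|,|v_j|\le|v|\le\langle v\rangle$ and the equivalence $\|\langle D\rangle\langle v\rangle f\|_{L^2}\sim\|f\|_{H^1_1}$ from Remark \ref{rmwsp} (together with $\|\nabla f\|_{L^2}\le\|f\|_{H^1}$). Hence $\mathcal{D}(H^1_1)\hookrightarrow\mathcal{D}(\Omega_{ij})$ continuously, and trivially $\mathcal{D}(H^0_0)=L^2\hookrightarrow L^2$. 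To conclude by interpolation I would invoke that the weighted Sobolev scale satisfies $(H^0_0,H^1_1)_{s,2}=H^s_s$; this is the standard real-interpolation identity for weighted Bessel-potential spaces (it follows, e.g., from writing $H^m_l=\langle D\rangle^{-m}\langle v\rangle^{-l}L^2$ and using that $\langle D\rangle$ and the operator of multiplication by $\langle v\rangle$ generate commuting—up to lower order—one-parameter structures; alternatively it is a direct consequence of Stein--Weiss complex interpolation combined with reiteration). Then the bounded maps $H^1_1\to\mathcal{D}(\Omega_{ij})$ and $L^2\to L^2$ interpolate to a bounded map $(L^2,H^1_1)_{s,2}=H^s_s\to(L^2,\mathcal{D}(\Omega_{ij}))_{s,2}=\mathcal{D}_{\Omega_{ij}}(s,2)$, giving $\|f\|_{\mathcal{D}_{\Omega_{ij}}(s,2)}\lesssim\|f\|_{H^s_s}$ for $0\le s\le1$.

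For $s>1$ I would not use the first-order interpolation characterization but rather the second one in Lemma \ref{antin2}, $\|(-\triangle_{\SS^2})^{s/2}f\|_{L^2}^2+\|f\|_{L^2}^2\sim\sum_{i<j}\|f\|^2_{\mathcal{D}_{\Omega_{ij}^2}(s/2,2)}$, valid for $s\in[0,2]$; the endpoint estimate is then $\|f\|_{\mathcal{D}(\Omega_{ij}^2)}=\|f\|_{L^2}+\|\Omega_{ij}^2f\|_{L^2}\lesssim\|f\|_{H^2_2}$, which again follows by direct differentiation since $\Omega_{ij}^2$ is a second-order operator with polynomial coefficients of degree at most $2$ in $v$, bounded by $\langle v\rangle^2|\nabla^2_v|+\langle v\rangle|\nabla_v|$ in an obvious sense, hence controlled by $\|\langle v\rangle^2\langle D\rangle^2 f\|_{L^2}\sim\|f\|_{H^2_2}$. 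Interpolating with the trivial $L^2$ endpoint and using $(L^2,H^2_2)_{s/2,2}=H^s_s$ yields the claim for $1<s\le2$; iterating/reiterating (or using that higher weighted Sobolev norms dominate, together with the analogous higher-order rotation operators $\Omega_{ij}^k$) extends it to all $s\ge0$. The main obstacle, such as it is, is purely bookkeeping: pinning down cleanly the real-interpolation identity $(L^2,H^m_l)_{\theta,2}=H^{\theta m}_{\theta l}$ for the weighted Bessel-potential scale so that one may transport the endpoint embeddings along the interpolation functor; everything else reduces to the explicit coefficient bounds for $\Omega_{ij}$ and $\Omega_{ij}^2$ and to quoting Lemma \ref{antin2} and Remark \ref{rmwsp}.
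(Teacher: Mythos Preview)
Your argument is correct for $s\in[0,2]$ and takes a genuinely different route from the paper's. The paper does not invoke Lemma~\ref{antin2} at all; instead it uses the phase-space dyadic operators $\mathcal{P}_k$: since $\mathcal{P}_k$ is multiplication by a radial function, it commutes with $(-\triangle_{\SS^2})^{s/2}$, so $\|(-\triangle_{\SS^2})^{s/2}f\|_{L^2}^2\sim\sum_k\|(-\triangle_{\SS^2})^{s/2}\mathcal{P}_kf\|_{L^2}^2$. On each piece one has the integer endpoint $\|(-\triangle_{\SS^2})^{m}\tilde{\mathcal{P}}_kf\|_{L^2}\lesssim 2^{2mk}\|f\|_{H^{2m}}$ (because $\Omega_{ij}$ on the $k$-th annulus has coefficients of size $2^k$), and interpolating against the trivial $L^2$ bound in \emph{unweighted} Sobolev spaces gives $\|(-\triangle_{\SS^2})^{s/2}\mathcal{P}_kf\|_{L^2}\lesssim 2^{ks}\|\mathcal{P}_kf\|_{H^s}$. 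Summing in $k$ via Theorem~\ref{baslem3} then produces $\|f\|_{H^s_s}$. Your approach interpolates once, globally, in the weighted scale, which is cleaner but forces you to import the identity $(L^2,H^m_m)_{\theta,2}=H^{\theta m}_{\theta m}$; the paper's approach keeps the interpolation in unweighted $H^s$ (where it is immediate) and buries the weight in the dyadic summation, making the proof self-contained with the paper's own Theorem~\ref{baslem3}. A minor point: Lemma~\ref{antin2} gives the $\mathcal{D}_{\Omega_{ij}}(s,2)$ equivalence only for $0<s<1$, so your first step literally covers that open interval (the endpoints being trivial or direct); and for $s>2$ your sketch would require extending Lemma~\ref{antin2} beyond $s=2$, whereas the paper's localized argument handles all $s\ge0$ uniformly by choosing $m$ large.
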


\begin{proof}
Suppose $0\le s\le 2m$ with $m\in \N$. Then we have
 \beno \|(-\triangle_{\SS^2})^{m}f\|_{L^2}^2&=&\sum_{k=-1}^\infty \|(-\triangle_{\SS^2})^{m}\mathcal{P}_kf\|_{L^2}^2\\
 &=& \sum_{k=-1}^\infty \|(\sum_{1\le i<j\le 3} \Omega_{ij}^2)^{m}\mathcal{P}_kf\|_{L^2}^2\\
 &\lesssim& \sum_{k=-1}^\infty 2^{4mk}\|\mathcal{P}_k f\|_{H^{2m}}^2.\eeno
 Since it holds \beno \|(-\triangle_{\SS^2})^{m} \tilde{\mathcal{P}}_kf\|_{L^2}\lesssim  2^{2mk}\| f\|_{H^{2m}}, \|\tilde{\mathcal{P}}_k f\|_{L^2}\lesssim   \|  f\|_{L^2}, \eeno
by real interpolation, we get
\beno \|(-\triangle_{\SS^2})^{s/2}\tilde{\mathcal{P}}_kf\|_{L^2}\lesssim 2^{ks}\| f\|_{H^s}. \eeno
In particular, it yields
\beno \|(-\triangle_{\SS^2})^{s/2} \mathcal{P}_kf\|_{L^2}\lesssim 2^{ks}\|\mathcal{P}_k f\|_{H^s}. \eeno
We finally get
\beno \|(-\triangle_{\SS^2})^{s/2}f\|_{L^2}^2&=&\sum_{k=-1}^\infty \|(-\triangle_{\SS^2})^{s/2}\mathcal{P}_kf\|_{L^2}^2\\&\lesssim&  \sum_{k=-1}^\infty2^{2ks}\|\mathcal{P}_kf\|_{H^s}^2\lesssim \|f\|_{H^s_s}^2. \eeno  This completes the proof of the Lemma.
\end{proof}

Now we are in a position to give the proof to Lemma \ref{lbplim}.

\begin{proof} The result can be reduced to prove 
\beno &&\epsilon^{2s-2}\int_{\sigma,\tau\in \SS^2} \f{|f(\sigma)-f(\tau)|^2}{|\sigma-\tau|^{2+2s}} \mathrm{1}_{|\sigma-\tau|\le \epsilon} d\sigma d\tau+\|f\|_{L^2(\SS^2)}^2\\
&&\sim \sum_{[l(l+1)]^{1/2}\le \epsilon^{-1}} \sum_{m=-l}^l l(l+1)|f_l^m|^2 + \epsilon^{2s-2}\sum_{[l(l+1)]^{1/2}> \epsilon^{-1}} \sum_{m=-l}^l [l(l+1)]^s|f_l^m|^2 +\|f\|_{L^2(\SS^2)}^2,\eeno
where $f_l^m=\int_{\SS^2} f(\sigma)Y_l^m(\sigma)d\sigma.$
Thanks to Lemma \ref{antin5}, we have
\beno  &&\epsilon^{2s-2}\int_{\sigma,\tau\in \SS^2} \f{|f(\sigma)-f(\tau)|^2}{|\sigma-\tau|^{2+2s}} \mathrm{1}_{|\sigma-\tau|\le \epsilon} d\sigma d\tau\\
&&= \epsilon^{2s-2}\sum_{l=0}^\infty\sum_{m=-l}^l |f_l^m|^2 \int_{\sigma,\tau\in \SS^2} \f{|Y_l^m(\sigma)-Y_l^m(\tau)|^2}{|\sigma-\tau|^{2+2s}} \mathrm{1}_{|\sigma-\tau|\le \epsilon} d\sigma d\tau.\eeno
To prove the result, it suffices to estimate the quantity $A_l$ defined by 
\beno A_l\eqdefa \epsilon^{2s-2}\int_{\sigma,\tau\in \SS^2} \f{|Y_l^m(\sigma)-Y_l^m(\tau)|^2}{|\sigma-\tau|^{2+2s}}  \mathrm{1}_{|\sigma-\tau|\le \epsilon} d\sigma. d\tau\eeno
We divide the estimate of $A_l$ into three cases. We will follow the notations used in Lemma \ref{antin1}.
\smallskip

{\it Case 1: $l$ is small.}
We first claim that
\beno  &&\|(-\triangle_{\SS^2})^{1/2} f \|_{L^2(\SS^2)}^2+\| f\|_{L^2(\SS^2)}^2
-\epsilon^2\|(-\triangle_{\SS^2})  f  \|_{L^2(\SS^2)}^2\\ &&\lesssim \epsilon^{2s-2}\int_{\sigma,\tau\in \SS^2} \f{|f(\sigma)-f(\tau)|^2}{|\sigma-\tau|^{2+2s}} \mathrm{1}_{|\sigma-\tau|\le \epsilon} d\sigma d\tau+\|f\|_{L^2(\SS^2)}^2\\&&
\lesssim  \|(-\triangle_{\SS^2})^{1/2} f \|_{L^2(\SS^2)}^2+\| f\|_{L^2(\SS^2)}^2
+\epsilon^2\|(-\triangle_{\SS^2})  f \|_{L^2(\SS^2)}^2.\eeno
In particular, if we choose $f=Y_l^m$, then there exists a universal constant $c_1$ and $c_2$ such that
\ben\label{grlb5} &&(1-c_1[l(l+1)]\epsilon^2)[l(l+1)]+1 
\lesssim A_l+1\lesssim (1+c_2[l(l+1)]\epsilon^2)[l(l+1)]+1.\een
 Then we arrive at that if $[l(l+1)]^{1/2}\le (2c_1)^{-1/2}\epsilon^{-1}$,
\ben\label{grlb3}  A_l\sim l(l+1)+1.
 \een

To prove the claim,  we set \beno I\eqdefa \epsilon^{2s-2}\int_{\sigma,\tau\in \SS^2} \f{|f(\sigma)-f(\tau)|^2}{|\sigma-\tau|^{2+2s}}\vartheta_{13+}^2(\sigma)\mathrm{1}_{|\sigma-\tau|\le \epsilon} d\sigma d\tau+\|f\|_{L^2(\SS^2)}^2.\eeno
Then it is easy to check
\beno  I
&\sim& \epsilon^{2s-2} \int_{\sigma,\tau\in \SS^2} \f{|(\vartheta_{13+}f)(\sigma)-(\vartheta_{13+}f)(\tau)|^2}{|\sigma-\tau|^{2+2s}} \vartheta_{33+}(\sigma)\vartheta_{33+}(\tau)\mathrm{1}_{|\sigma-\tau|\le \epsilon}d\sigma d\tau+\|f\|_{L^2(\SS^2)}^2.\eeno

Suppose  $\sigma=(x_1, x_2, x_3) \in \SS^2$ and  $x=(x_1,x_2)$. Let $F_{13}^+(x)\eqdefa(\vartheta_{13+}f)(x_1,x_2, \sqrt{1-x_1^2-x_2^2})$ and $\Theta_{33}^+(x)=\vartheta_{33+}(x_1,x_2, \sqrt{1-x_1^2-x_2^2})$. Then by change of variables, we have
\beno  && \epsilon^{2s-2}\int_{\sigma,\tau\in \SS^2} \f{|(\vartheta_{13+}f)(\sigma)-(\vartheta_{13+}f)(\tau)|^2}{|\sigma-\tau|^{2+2s}} \vartheta_{33+}(\sigma)\vartheta_{33+}(\tau)\mathrm{1}_{|\sigma-\tau|\le \epsilon}d\sigma d\tau\\
&&\sim\epsilon^{2s-2}\int_{|x|,|y|\le \sqrt{\f56}} \f{|F_{13}^+(x)-F_{13}^+(y)|^2}{\big(|x-y|^2+|\sqrt{1-x_1^2-x_2^2}-\sqrt{1-y_1^2-y_2^2}|^2\big)^{1+s}} \\&&\quad\times\Theta_{33}^+(x)\Theta_{33}^+(y)\f1{\sqrt{1-x_1^2-x_2^2}}
\f1{\sqrt{1-y_1^2-y_2^2}}\mathrm{1}_{|x-y|\lesssim \epsilon}dxdy\\
&&\gtrsim \epsilon^{2s-2}\int_{|x|,|y|\le \sqrt{\f45}} \f{|F_{13}^+(x)-F_{13}^+(y)|^2}{ |x-y|^{2+2s}} \mathrm{1}_{|x-y|\le \epsilon}dxdy\\
&&\gtrsim \epsilon^{2s-2}\int_{|x|\le \sqrt{\f34},|x-y|\le \epsilon} \f{|F_{13}^+(x)-F_{13}^+(y)|^2}{ |x-y|^{2+2s}}dxdy.
 \eeno
Thanks to the Taylor expansion, it yields that
\beno I&\gtrsim& \epsilon^{2s-2}\int_{|x|\le \sqrt{\f34},|x-y|\le \epsilon} \f{|\na F_{13}^+(x)\cdot (y-x)|^2}{ |x-y|^{2+2s}}dxdy\\
&&-\epsilon^{2s-2}\int_0^1\int_{|x|\le \sqrt{\f34},|x-y|\le \epsilon} \f{| \na^2 F_{13}^+(x+\kappa(y-x))|^2||x-y|^4}{ |x-y|^{2+2s}}dxdyd\kappa. \eeno
Note that   \beno &&\epsilon^{2s-2}\int_{|x|\le \sqrt{\f34},|x-y|\le \epsilon} \f{|\na F_{13}^+(x)\cdot (y-x)|^2}{ |x-y|^{2+2s}}dxdy\\
&&=\int_{|x|\le \sqrt{\f34}}|\na F_{13}^+(x)|^2\int_{|h|\le \epsilon}\epsilon^{2s-2} \f{\bigg|\f{\na F_{13}^+(x)}{|\na F_{13}^+(x)|} \cdot \f{h}{|h|}\bigg|^2}{ |h|^{2s}}dhdx \eeno
 and \beno &&\epsilon^{2s-2}\int_0^1\int_{|x|\le \sqrt{\f34},|x-y|\le \epsilon} \f{| \na^2 F_{13}^+(x+\kappa(y-x))|^2||x-y|^4}{ |x-y|^{2+2s}}dxdyd\kappa\\
 &&\lesssim \epsilon^2\| F_{13}^+\|_{H^2(B_{\f2{\sqrt{5}}})}^2.\eeno
 Then by the definition of $\vartheta_{13+}$, we derive that
 \ben\label{grlb1} I&\gtrsim& \|   F_{13}^+\|_{H^1(B_{\f2{\sqrt{5}}})}^2-\epsilon^2 \| F_{13}^+\|_{H^2(B_{\f2{\sqrt{5}}})}^2. \een

On the other hand, following the similar argument, we may get \ben\label{grlb2} I&\lesssim& \|   F_{13}^+\|_{H^1(B_{\f2{\sqrt{5}}})}^2+\epsilon^2 \| F_{13}^+\|_{H^2(B_{\f2{\sqrt{5}}})}^2. \een  Thanks to the facts \eqref{lb12} and \eqref{lb13}, \eqref{grlb1} and \eqref{grlb2} imply that
\beno  &&\|(-\triangle_{\SS^2})^{1/2}(\vartheta_{13+}f)\|_{L^2(\SS^2)}^2+\|\vartheta_{13+}f\|_{L^2(\SS^2)}^2
-\epsilon^2\|(-\triangle_{\SS^2}) (\vartheta_{13+}f)\|_{L^2(\SS^2)}^2\\ &&\lesssim I
\lesssim  \|(-\triangle_{\SS^2})^{1/2}(\vartheta_{13+}f)\|_{L^2(\SS^2)}^2+\|\vartheta_{13+}f\|_{L^2(\SS^2)}^2
+\epsilon^2\|(-\triangle_{\SS^2}) (\vartheta_{13+}f)\|_{L^2(\SS^2)}^2.\eeno
Due to the decomposition \eqref{lb6}, we finally conclude the claim.

{\it Case 2: $l$ is sufficiently large.}  Observe that 
\beno &&A_l=  \epsilon^{2s-2} \int_{\sigma,\tau\in \SS^2} \f{|Y_l^m(\sigma)-Y_l^m(\tau)|^2}{|\sigma-\tau|^{2+2s}}   d\sigma d\tau
-\epsilon^{2s-2} \int_{\sigma,\tau\in \SS^2} \f{|Y_l^m(\sigma)-Y_l^m(\tau)|^2}{|\sigma-\tau|^{2+2s}} \mathrm{1}_{|\sigma-\tau|\ge \epsilon} d\sigma d\tau.
\eeno
Thanks to Lemma \ref{antin1}, there exits a universal constants $c_3$ and $c_4$ such that
\beno   \epsilon^{2s-2}\big([l(l+1)]^s-c_3\epsilon^{-2s}\big)\lesssim A_l\lesssim \epsilon^{2s-2}\big([l(l+1)]^s+c_4\epsilon^{-2s}\big).\eeno
It implies that if $[l(l+1)]^{1/2}\ge 2c_3^{1/2s}\epsilon^{-1}$,
\ben\label{grlb4} A_l
 \sim \epsilon^{2s-2}[l(l+1)]^s+1.  \een

{\it Case 3: $[l(l+1)]^{1/2}\sim \epsilon^{-1}$.}
We claim that in this case, $ A_l\sim l(l+1)+1$. Observe that for any $N\in\N$,
\beno A_l&\ge& N^{2s-2}(\epsilon/N)^{2s-2} \int_{\sigma,\tau\in \SS^2} \f{|Y_l^m(\sigma)-Y_l^m(\tau)|^2}{|\sigma-\tau|^{2+2s}} \mathrm{1}_{|\sigma-\tau|\le \epsilon/N} d\sigma d\tau \\
&\ge& N^{2s-2}\bigg( (\epsilon/N)^{2s-2} \int_{\sigma,\tau\in \SS^2} \f{|Y_l^m(\sigma)-Y_l^m(\tau)|^2}{|\sigma-\tau|^{2+2s}} \mathrm{1}_{|\sigma-\tau|\le \epsilon/N} d\sigma d\tau\bigg).\eeno
From which together with \eqref{grlb5} and \eqref{grlb3}, we derive that if $[l(l+1)]^{1/2}\le (2c_1)^{-1/2}N\epsilon^{-1}$,
\ben\label{grlb6} N^{2s-2}\big( l(l+1)+1\big)\lesssim A_l\lesssim  \big( l(l+1)+1\big). \een
Choose $N\ge 2\sqrt{2}c_3^{1/(2s)}c_1^{1/2}$, then \eqref{grlb3}, \eqref{grlb4} and \eqref{grlb6} yield the claim.

We are in a position to prove the lemma. It is easily obtained from the behavior of $A_l$.
We complete the proof of the lemma. \end{proof}

\subsection{Proof of \eqref{labelint}}
To prove \eqref{labelint},  we  first give several estimates to the  commutator between the Laplace-Beltrami operator and the standard derivatives. 

\begin{lem}\label{antin7}    Suppose $a, b\in\R$ and $\phi$ to be a radial function. Then we have \beno  \mathcal{F}(-\triangle_{\SS^2})^{a/2}&=&(-\triangle_{\SS^2})^{a/2}\mathcal{F}\eeno and \beno
\phi(|D|)(-\triangle_{\SS^2})^{a/2}=(-\triangle_{\SS^2})^{a/2}\phi(|D|).
\eeno  In particular, it holds
\beno \|(1-\triangle_{\SS^2})^{a/2} f\|_{H^b}^2\sim\sum_{p\ge-1}2^{2pb}\|(1-\triangle_{\SS^2})^{a/2} \mathfrak{F}_p f\|_{L^2}^2. \eeno
\end{lem}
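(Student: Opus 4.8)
\textbf{Proof proposal for Lemma \ref{antin7}.}

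The plan is to prove the two commutation identities first, since the norm equivalence will follow almost immediately by summing one of them against a Littlewood--Paley partition. Both commutation facts rest on the same structural observation recorded in Theorem \ref{l2decom} and formula \eqref{sphar}: the Fourier transform preserves each summand $\mathcal{D}_l$ in the decomposition $L^2(\R^3)=\bigoplus_l \mathcal{D}_l$, and for $f\in\mathcal{D}_l$ with angular part $Y_l^m$ the transform acts only on the radial factor, replacing $f_l^m(r)Y_l^m(\sigma)$ by $\big(\mathcal{H}_l f_l^m\big)(\rho)\,Y_l^m(\omega)$ for a suitable radial (Hankel-type) transform $\mathcal{H}_l$ depending only on $l$. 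Since, by the definition \eqref{defilb2}, $(-\triangle_{\SS^2})^{a/2}$ acts on $\mathcal{D}_l$ as the scalar multiplication by $(l(l+1))^{a/2}$ — that is, it commutes with every operator that maps each $\mathcal{D}_l$ into itself — I would expand an arbitrary Schwartz $f$ as $f=\sum_{l,m}f_l^m(r)Y_l^m(\sigma)$ and compute both $\mathcal{F}(-\triangle_{\SS^2})^{a/2}f$ and $(-\triangle_{\SS^2})^{a/2}\mathcal{F}f$ term by term; both equal $\sum_{l,m}(l(l+1))^{a/2}\big(\mathcal{H}_l f_l^m\big)(\rho)Y_l^m(\omega)$. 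A density argument extends the identity from Schwartz functions to the relevant $L^2$-based spaces. One should be a little careful about which space the identity is asserted in — I would state it on Schwartz functions (or on $H^a_{|a|}$, using Lemma \ref{antin4} to ensure the left-hand side makes sense) and remark that it then passes to closures.

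For the second identity, the point is that $\phi(|D|)$ is a Fourier multiplier by a \emph{radial} symbol: $\phi(|D|)=\mathcal{F}^{-1}\phi(|\cdot|)\mathcal{F}$. Multiplication by the radial function $\phi(|\xi|)$ preserves each $\mathcal{D}_l$ (it only touches the radial variable), and $\mathcal{F}$ and $\mathcal{F}^{-1}$ preserve each $\mathcal{D}_l$ by the first part; hence $\phi(|D|)$ preserves each $\mathcal{D}_l$ and therefore commutes with $(-\triangle_{\SS^2})^{a/2}$, which is diagonal in that decomposition. Concretely I would write, for $f=\sum_{l,m}f_l^m(r)Y_l^m$,
\beno
\phi(|D|)(-\triangle_{\SS^2})^{a/2}f=\sum_{l,m}(l(l+1))^{a/2}\,\phi(|D|)\big(f_l^m(r)Y_l^m(\sigma)\big)=(-\triangle_{\SS^2})^{a/2}\phi(|D|)f,
\eeno
where in the middle step $\phi(|D|)$ acts within $\mathcal{D}_l$ and the scalar $(l(l+1))^{a/2}$ can be pulled out freely. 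The same reasoning applies with $\phi(2^{-p}|\cdot|)$, so $\mathfrak{F}_p$ commutes with $(-\triangle_{\SS^2})^{a/2}$ as well; note $\mathfrak{F}_{-1}$ is convolution with $\widetilde m=\mathcal{F}^{-1}\psi$ and $\psi$ is radial, so this case is included.

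Finally, for the norm equivalence I would combine the Littlewood--Paley characterization \eqref{bern2} of $H^b$ with the commutation just established: for any $g$,
$\|g\|_{H^b}^2\sim\sum_{p\ge-1}2^{2pb}\|\mathfrak{F}_p g\|_{L^2}^2$, and applying this to $g=(1-\triangle_{\SS^2})^{a/2}f$ together with $\mathfrak{F}_p(1-\triangle_{\SS^2})^{a/2}f=(1-\triangle_{\SS^2})^{a/2}\mathfrak{F}_p f$ (which holds by the argument above applied to the symbol $(1+l(l+1))^{a/2}$ via \eqref{defilb21}) gives
\beno
\|(1-\triangle_{\SS^2})^{a/2}f\|_{H^b}^2\sim\sum_{p\ge-1}2^{2pb}\|(1-\triangle_{\SS^2})^{a/2}\mathfrak{F}_p f\|_{L^2}^2,
\eeno
as claimed. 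I expect the only genuinely delicate point to be the functional-analytic bookkeeping: making the term-by-term manipulations of the infinite series $\sum_{l,m}$ rigorous (convergence of $(-\triangle_{\SS^2})^{a/2}f$ when $a>0$ requires $f$ in a suitable domain, e.g.\ via Lemma \ref{antin4}, and one must justify interchanging $\mathcal{F}$ or $\phi(|D|)$ with the sum), whereas the algebra of "operators diagonal on the $\mathcal{D}_l$ decomposition commute" is routine once that is in place. I would handle this by first proving everything for finite linear combinations $\sum_{l\le L}$ of spherical harmonics with Schwartz radial profiles, where all sums are finite and no convergence issue arises, and then extending by density and the boundedness already built into the statements.
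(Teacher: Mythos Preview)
Your proposal is correct and follows essentially the same route as the paper: both use the decomposition $L^2=\bigoplus_l\mathcal{D}_l$ from Theorem \ref{l2decom} and formula \eqref{sphar} to see that $\mathcal{F}$ preserves each $\mathcal{D}_l$ while $(-\triangle_{\SS^2})^{a/2}$ acts as a scalar there, then deduce the norm equivalence from the commutation $\mathfrak{F}_p(1-\triangle_{\SS^2})^{a/2}=(1-\triangle_{\SS^2})^{a/2}\mathfrak{F}_p$ combined with the Littlewood--Paley characterization \eqref{bern2}. The only cosmetic difference is that for the second identity the paper conjugates by $\mathcal{F}$ and manipulates the multiplier algebraically (writing $\mathcal{F}\phi(|D|)(-\triangle_{\SS^2})^{a/2}=\phi(-\triangle_{\SS^2})^{a/2}\mathcal{F}=(-\triangle_{\SS^2})^{a/2}\phi\mathcal{F}$ using that $\phi$ is radial), whereas you phrase the same fact as ``$\phi(|D|)$ preserves each $\mathcal{D}_l$''; these are equivalent.
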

\begin{proof} Suppose $f$ is a smooth function. Thanks to Theorem \ref{l2decom}, we have
\beno \big((-\triangle_{\SS^2})^{a/2}f\big)(x)=\sum_{l=0}^\infty\sum_{m=-l}^l (l(l+1))^{a/2} Y^m_l(\sigma) f^m_l(r), \eeno where $x=r\sigma$ and   $\sigma\in \SS^2$.
Then if $\xi=\rho\tau$ with $\tau\in \SS^2$, then \beno  \mathcal{F}\big((-\triangle_{\SS^2})^{a/2}f\big)(\xi)&=&\sum_{l=0}^\infty\sum_{m=-l}^l (l(l+1))^{a/2} \mathcal{F}(Y^m_l  f^m_l)(\xi)\\
&=&\sum_{l=0}^\infty\sum_{m=-l}^l  (l(l+1))^{a/2} Y_l^m(\tau)W_l^m(\rho),
\eeno where we use \eqref{sphar} to assume that $\mathcal{F}(Y^m_l f^m_l)(\xi)=Y_l^m(\tau)W_l^m(\rho)$.

On the other hand, using the same notation, we have
\beno (\mathcal{F}f)(\xi)=\sum_{l=0}^\infty\sum_{m=-l}^l Y_l^m(\tau)W_l^m(\rho),  \eeno which implies
\beno  (-\triangle_{\SS^2})^{a/2}(\mathcal{F}f)(\xi)=\sum_{l=0}^\infty\sum_{m=-l}^l (l(l+1))^{a/2} Y_l^m(\tau)W_l^m(\rho)=\mathcal{F}\big((-\triangle_{\SS^2})^{a/2}f\big)(\xi). \eeno
This gives the first equality.

Observe that \beno  \mathcal{F}\phi(|D|)(-\triangle_{\SS^2})^{a/2}&=&\phi\mathcal{F}(-\triangle_{\SS^2})^{a/2}
\\&=&\phi(-\triangle_{\SS^2})^{a/2} \mathcal{F}= (-\triangle_{\SS^2})^{a/2}\phi \mathcal{F},
\eeno where we use the fact that $\phi$ is a radial function in the last equality.

On the other hand, we have
\beno \mathcal{F}(-\triangle_{\SS^2})^{a/2}\phi(|D|)=(-\triangle_{\SS^2})^{a/2}\mathcal{F}\phi(|D|)=(-\triangle_{\SS^2})^{a/2}\phi\mathcal{F}, \eeno
which is enough to yield the second equality in the lemma. 

Finally we give the proof to the last equivalence.  It is derived  from the facts
\beno \|(1-\triangle_{\SS^2})^{a/2} f\|_{H^b}^2\sim\sum_{p\ge-1}2^{2pb}\|\mathfrak{F}_p(1-\triangle_{\SS^2})^{a/2}  f\|_{L^2}^2\eeno 
and \beno  &&\mathcal{F}(\mathfrak{F}_p(1-\triangle_{\SS^2})^{a/2} )=\varphi(2^{-p}\cdot)\mathcal{F}(1-\triangle_{\SS^2})^{a/2}=\varphi(2^{-p}\cdot)(1-\triangle_{\SS^2})^{a/2}\mathcal{F}\\&&=
(1-\triangle_{\SS^2})^{a/2}\varphi(2^{-p}\cdot)\mathcal{F}=(1-\triangle_{\SS^2})^{a/2}\mathcal{F}\mathfrak{F}_p=\mathcal{F}(1-\triangle_{\SS^2})^{a/2}\mathfrak{F}_p,\eeno
where $\varphi(2^{-p}\cdot)(\xi)\eqdefa \varphi(2^{-p}\xi)=\varphi(2^{-p}|\xi|)$, the multiplier of the operator $\mathfrak{F}_p$.
We complete the proof of the lemma. \end{proof}

\begin{lem}\label{antin8} Suppose  $a, b \ge 0$, $m\in\N$  and $f$ is a smooth function. Then
we have \ben\label{equ1}  \sum_{1\le i<j\le 3}\|\Omega_{ij} f\|_{H^a}&\sim& \|(-\triangle_{\SS^2})^{1/2} f\|_{H^a},\\
\sum_{1\le i<j\le 3}\|(-\triangle_{\SS^2})^{a/2}\Omega_{ij} f\|_{H^m}+\|f\|_{H^m}&\sim& \|(1-\triangle_{\SS^2})^{(a+1)/2} f\|_{H^m},\label{equ2}\een
and
\ben\label{equ3}\|(1-\triangle_{\SS^2})^{a/2} f\|_{H^m}\sim \sum_{|\alpha|\le m} \|(1-\triangle_{\SS^2})^{a/2}\pa^\alpha f\|_{L^2}.\een
Moreover, it holds\beno \|(-\triangle_{\SS^2})^{a/2} f\|_{H^b}\lesssim  \|(-\triangle_{\SS^2})^{(a+b)/2} f\|_{L^2}+\| f\|_{H^{a+b}}. \eeno
\end{lem}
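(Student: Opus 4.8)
\textbf{Proof proposal for Lemma \ref{antin8}.}
The plan is to prove the four assertions in sequence, using only the $L^2$-profile machinery from Lemma \ref{antin2} together with the commutation identities of Lemma \ref{antin7}, and then to derive the final inequality by combining \eqref{equ2}, \eqref{equ3} and Lemma \ref{antin4}. For \eqref{equ1}, I would Littlewood--Paley-decompose $f=\sum_p \mathfrak{F}_p f$ and observe, via Lemma \ref{antin7}, that $\Omega_{ij}$ and $(-\triangle_{\SS^2})^{1/2}$ essentially commute with the frequency localizers up to harmless error (since the multiplier $\varphi(2^{-p}\cdot)$ is radial). On each dyadic block $\mathfrak{F}_p f$ the equivalence $\sum_{i<j}\|\Omega_{ij}\mathfrak{F}_p f\|_{L^2}\sim \|(-\triangle_{\SS^2})^{1/2}\mathfrak{F}_p f\|_{L^2}+\|\mathfrak{F}_p f\|_{L^2}$ follows from the $s=1$ case of Lemma \ref{antin2} (indeed $-\triangle_{\SS^2}=\sum_{i<j}\Omega_{ij}^2$ so this is just Cauchy--Schwarz on the spherical decomposition); summing with weights $2^{2pa}$ and using Lemma \ref{antin7}'s last equivalence gives \eqref{equ1} (the lower-order $\|f\|$ being absorbed since $a\ge0$ and, e.g., the $L^2$ norm is controlled by $\|(-\triangle_{\SS^2})^{1/2}f\|$ after a trivial modification, or kept explicitly if $f$ has a nontrivial $l=0$ mode).

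For \eqref{equ2}, I would iterate the idea of \eqref{equ1}: apply $(-\triangle_{\SS^2})^{a/2}$ (which commutes with all $\Omega_{ij}$, with $\mathcal{F}$, and with radial Fourier multipliers by Lemma \ref{antin7}) and reduce to showing $\sum_{i<j}\|(-\triangle_{\SS^2})^{a/2}\Omega_{ij} g\|_{L^2}+\|g\|_{L^2}\sim\|(1-\triangle_{\SS^2})^{(a+1)/2}g\|_{L^2}$ on each dyadic block, where $g=\mathfrak{F}_p f$; this is again immediate from the spherical-harmonic expansion since $\Omega_{ij}$ raises the eigenvalue weight to $[l(l+1)]^{1/2}$ and $(-\triangle_{\SS^2})^{a/2}$ multiplies by $[l(l+1)]^{a/2}$, so the product weight is comparable to $(1+l(l+1))^{(a+1)/2}$. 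Summing in $p$ with weight $2^{2pm}$ and invoking Lemma \ref{antin7} finishes it. For \eqref{equ3}, the point is that $\pa^\alpha$ with $|\alpha|\le m$ does \emph{not} commute with $-\triangle_{\SS^2}$, so a direct spherical argument fails; instead I would use the new interpolation Theorem \ref{intp} with $A=(-\triangle_{\SS^2})^{1/2}$ playing the role generated by the rotation fields and the $B_i$'s being the partial derivatives $\pa_k$ — but actually the cleaner route is: $\|(1-\triangle_{\SS^2})^{a/2}f\|_{H^m}$ is by Lemma \ref{antin7} equivalent to a weighted sum over dyadic blocks of $\|(1-\triangle_{\SS^2})^{a/2}\mathfrak{F}_p f\|_{L^2}$, and on frequency annuli one has $2^{pm}\sim \sum_{|\alpha|=m}\|\pa^\alpha \cdot\|$; commuting $\pa^\alpha$ past $(1-\triangle_{\SS^2})^{a/2}$ produces, by the structure $[\pa_k,\Omega_{ij}]=\pm\pa_l$, only lower-order terms in the same scale, which are reabsorbed by induction on $m$ exactly as in the proof of Theorem \ref{baslem3}.

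Finally, the stated inequality $\|(-\triangle_{\SS^2})^{a/2}f\|_{H^b}\lesssim\|(-\triangle_{\SS^2})^{(a+b)/2}f\|_{L^2}+\|f\|_{H^{a+b}}$ I would obtain by applying \eqref{equ3} (with $m=b$ when $b\in\N$, and by real interpolation in $b$ otherwise, or equivalently by the dyadic characterization of $H^b$) to write $\|(-\triangle_{\SS^2})^{a/2}f\|_{H^b}\lesssim\sum_{p}2^{2pb}\|(-\triangle_{\SS^2})^{a/2}\mathfrak{F}_p f\|_{L^2}^2$ to the power $1/2$, then splitting each block according to whether the spherical eigenvalue $l(l+1)$ dominates $2^{2p}$ or not: in the first regime $[l(l+1)]^{a/2}2^{pb}\lesssim [l(l+1)]^{(a+b)/2}$, giving the $(-\triangle_{\SS^2})^{(a+b)/2}$ term; in the second, $[l(l+1)]^{a/2}2^{pb}\lesssim 2^{p(a+b)}$, giving the $H^{a+b}$ term. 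I expect the main obstacle to be \eqref{equ3}: unlike the other identities it genuinely mixes the non-commuting operators $\pa^\alpha$ and $-\triangle_{\SS^2}$, so the bookkeeping of commutator error terms (and the choice to either invoke Theorem \ref{intp} or run the Theorem \ref{baslem3}-style induction) is where the real care is needed; everything else is a routine transcription of the spherical-harmonic eigenvalue calculus plus Lemma \ref{antin7}.
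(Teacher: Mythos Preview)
Your arguments for \eqref{equ1} and the final inequality are essentially the paper's: both reduce to the dyadic--spherical-harmonic orthogonal decomposition $f=\sum_{k,l,m}\mathcal{B}_l^m(\mathfrak{F}_k f)$ and read off the eigenvalue weights. Your proof of \eqref{equ2} is correct and in fact \emph{simpler} than the paper's. The paper proves the case $m=0$ by a long detour through the coordinate charts $\vartheta_{1m\pm}$, local $H^s$ norms on balls, and elliptic regularity for the operator $\mathbf{L}$; then it runs separate inductions in $a$ and $m$. Your observation bypasses all of this: since each $\Omega_{ij}$ preserves $\mathcal{D}_l$ and $\sum_{i<j}\|\Omega_{ij}g_l\|_{L^2}^2=\langle(-\triangle_{\SS^2})g_l,g_l\rangle=l(l+1)\|g_l\|_{L^2}^2$, one gets directly $\sum_{i<j}\|(-\triangle_{\SS^2})^{a/2}\Omega_{ij}f\|_{L^2}^2=\sum_l[l(l+1)]^{a+1}\|f_l\|_{L^2}^2$, which together with $\|f\|_{L^2}^2$ is equivalent to $\sum_l(1+l(l+1))^{a+1}\|f_l\|_{L^2}^2$. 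The $H^m$ case then follows exactly as you say, via Lemma~\ref{antin7} and the commutation of $\mathfrak{F}_p$ with $\Omega_{ij}$. (Your phrasing ``$\Omega_{ij}$ raises the eigenvalue weight to $[l(l+1)]^{1/2}$'' is loose---$\Omega_{ij}$ is not a scalar on $\mathcal{H}_l$---but the intended identity $\sum_{i<j}\|\Omega_{ij}\cdot\|^2=\|(-\triangle_{\SS^2})^{1/2}\cdot\|^2$ is exactly what is needed.)

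For \eqref{equ3} your plan is less complete, and this is indeed the delicate part. The difficulty you flag is real: for fractional $a$, the commutator $[\partial_k,(1-\triangle_{\SS^2})^{a/2}]$ cannot be read off from $[\partial_k,\Omega_{ij}]=\delta_{ki}\partial_j-\delta_{kj}\partial_i$ alone, so the ``commute and reabsorb'' step needs a genuine input. The paper handles the base case $m=1$, $0<a<1$ by passing to the Fourier side (where $\partial_i$ becomes multiplication by $\xi_i=r\sigma_i$) and invoking the integral profile of Lemma~\ref{antin2}: writing $\sum_i\|(-\triangle_{\SS^2})^{a/2}(\xi_i\hat f)\|_{L^2}^2$ via the double integral $\int |\sigma-\tau|^{-2-2a}|r\sigma_i\hat f(r\sigma)-r\tau_i\hat f(r\tau)|^2$ and using the pointwise bounds $\sum_i|\sigma_i A-\tau_i B|^2\sim|A-B|^2+|\sigma-\tau|^2|A|^2$ to peel off the $\sigma_i$ factors, which yields the equivalence with $\|(-\triangle_{\SS^2})^{a/2}(|D|f)\|_{L^2}+\|f\|_{H^1}$. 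The case $a=1$ is done directly by the commutator identity, and then two separate inductions (in $m$ for $a\in[0,1]$, then in $a$ using \eqref{equ2}) complete the proof. Your alternative route---a product estimate $\|(1-\triangle_{\SS^2})^{a/2}(\sigma^\alpha h)\|_{L^2(\SS^2)}\lesssim\|(1-\triangle_{\SS^2})^{a/2}h\|_{L^2(\SS^2)}$ on each radial shell---would also work (the paper in fact states and uses exactly such an estimate elsewhere), but you should make this step explicit rather than appeal to a Theorem~\ref{baslem3}-style induction, which concerns a different commutator structure.
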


\begin{proof}
  (i). We first give the proof to the last inequality.
Thanks to Theorem \ref{l2decom}, we have for $f\in L^2$,
\beno f(x)=\sum_{l=0}^\infty\sum_{m=-l}^l (\mathcal{B}^m_l f)(x),\eeno where   $x=r\sigma$ with $r\ge0$ and $\sigma\in\SS^2$ and  $\mathcal{B}^m_l(x)\eqdefa f^m_l(r) Y^m_l(\sigma)$. 
Then one has
\beno  \|(-\triangle_{\SS^2})^{a/2} f\|_{H^b}^2&\sim& \sum_{k\ge-1}2^{2kb}\|\mathfrak{F}_k(-\triangle_{\SS^2})^{a/2} f\|_{L^2}^2\\
&\sim& \sum_{k\ge -1}2^{2kb}\|(-\triangle_{\SS^2})^{a/2}\mathfrak{F}_k f\|_{L^2}^2\\
&\sim& \sum_{k\ge -1}\sum_{l=0}^\infty\sum_{m=-l}^l 2^{2kb}(l(l+1))^a\| \mathcal{B}^m_l(\mathfrak{F}_k f)\|_{L^2}^2\\
&\lesssim& \sum_{k\ge -1}\sum_{l=0}^\infty\sum_{m=-l}^l  (2^{2k(a+b)}+(l(l+1))^{a+b})\| \mathcal{B}^m_l(\mathfrak{F}_k f)\|_{L^2}^2\\
&\lesssim& \|(-\triangle_{\SS^2})^{(a+b)/2} f\|_{L^2}^2+\| f\|_{H^{a+b}}^2.
 \eeno
\medskip

(ii). Now we turn to proof of \eqref{equ1}.
Thanks to the fact $\mathcal{F} \Omega_{ij}=-\Omega_{ij}\mathcal{F}$ if $i\neq j$, we deduce that if $i\neq j$,
\beno \mathfrak{F}_k \Omega_{ij}=-\Omega_{ij} \mathfrak{F}_k. \eeno Then we have
\beno  \sum_{1\le i<j\le 3}\|\Omega_{ij}  f\|_{H^a(\R^3)}^2\sim \sum_{k\ge-1}  \sum_{1\le i<j\le 3} 2^{2ka}\|\Omega_{ij} \mathfrak{F}_k f\|_{L^2}^2,\eeno
which yields
\beno  \sum_{1\le i<j\le 3}\|\Omega_{ij}  f\|_{H^a(\R^3)}^2&\sim& \sum_{k\ge-1}  2^{2ka}\|(-\triangle_{\SS^2})^{1/2}\mathfrak{F}_k f\|_{L^2}^2\\
&\sim& \sum_{k\ge-1}  2^{2ka}\|\mathfrak{F}_k (-\triangle_{\SS^2})^{1/2} f\|_{L^2}^2\sim \|(-\triangle_{\SS^2})^{1/2}f\|_{H^a}^2. \eeno
This gives \eqref{equ1}.
\medskip

(iii).  We divide the proof of \eqref{equ2} into two steps.

{\it Step 1: $m=0$.} We want to prove 
\ben\label{equ2m0} \sum_{1\le i<j\le 3} \|(-\triangle_{\SS^2})^{a/2}\Omega_{ij} f\|_{L^2}+\|f\|_{L^2}&\sim& \|(1-\triangle_{\SS^2})^{(a+1)/2} f\|_{L^2}. \een
We begin with the case $0\le a\le 1$. 
Observing that \ben\label{e1}&&\langle \Omega_{mn}\Omega_{ij}f, \Omega_{mn}\Omega_{ij}f\rangle- \langle \Omega_{mn}\Omega_{mn}f, \Omega_{ij}\Omega_{ij}f\rangle\notag\\
&&=\langle [\Omega_{mn}, \Omega_{ij}f], \Omega_{mn}\Omega_{ij}f\rangle+\langle \big[[\Omega_{ij}, \Omega_{mn}], \Omega_{mn}\big]f, \Omega_{ij}f\rangle-\langle [\Omega_{ij}, \Omega_{mn}]f, \Omega_{mn}\Omega_{ij}f\rangle\een
and \beno [\Omega_{mn}, \Omega_{ij}]f=\delta_{ni}\Omega_{mj}+\delta_{jn}\Omega_{im}-\delta_{jm}\Omega_{in}-\delta_{mi}\Omega_{nj},\eeno
we deduce that
\ben\label{e4}  \sum_{1\le m<n \le 3}\sum_{1\le i<j \le 3} \|\Omega_{mn}\Omega_{ij}f\|_{L^2} \lesssim \|(-\triangle_{\SS^2})^{1/2} f\|_{L^2}+\|(-\triangle_{\SS^2}) f\|_{L^2}.\een
Due to the fact  \beno \|(-\triangle_{\SS^2})^{1/2}\Omega_{ij}f\|_{L^2}
\sim \sum_{1\le m<n \le 3}\|\Omega_{mn}\Omega_{ij}f\|_{L^2},\eeno
 we get
\ben\label{e2} \sum_{1\le i<j\le 3} \|(-\triangle_{\SS^2})^{1/2}\Omega_{ij}f\|_{L^2}\lesssim \| \triangle_{\SS^2} f\|_{L^2}+\|f\|_{L^2}. \een
On the other hand, by \eqref{e1}, we  obtain \beno \|(-\triangle_{\SS^2}) f\|_{L^2}&\lesssim& \sum_{1\le m<n \le 3}\sum_{1\le i<j \le 3}
 \|\Omega_{mn}\Omega_{ij}f\|_{L^2} + \|(-\triangle_{\SS^2})^{1/2} f\|_{L^2}
\\&\lesssim & \sum_{1\le i<j \le 3}\|(-\triangle_{\SS^2})^{1/2}\Omega_{ij}f\|_{L^2}+ \eta\|(-\triangle_{\SS^2}) f\|_{L^2}+C_\eta\|f\|_{L^2}, \eeno
where $\eta$ is a small constant. From which together with \eqref{e2}, we obtain  \eqref{equ2m0} with $a=1$.

We turn to the case $0<a<1$.
Due to Lemma \ref{antin2},  for smooth functions $g$ and $f$, we have \beno  \|(-\triangle_{\SS^2})^{a/2} (fg)\|_{L^2(\SS^2)}
\lesssim (\|\na_{\SS^2}g\|_{L^\infty(\SS^2)}+\|g\|_{L^\infty(\SS^2)})\|(1-\triangle_{\SS^2})^{a/2}f\|_{L^2(\SS^2)}.\eeno
It  implies \ben \label{e6} \|(-\triangle_{\SS^2})^{a/2}\vartheta_{13+}\Omega_{ij}f\|_{L^2}&=&\|(-\triangle_{\SS^2})^{a/2}[\Omega_{ij}(\vartheta_{13+}f)-
(\Omega_{ij}\vartheta_{13+})f]\|_{L^2}\nonumber\\
&\gtrsim&
\|(-\triangle_{\SS^2})^{a/2}\Omega_{ij}(\vartheta_{13+}f)\|_{L^2}-\|(1-\triangle_{\SS^2})^{a/2}f\|_{L^2}.
\een
Using the notations introduced in Lemma \ref{antin2}, we have
\beno \Omega_{ij}(\vartheta_{13+}f)(u)=r^{-1}\mathcal{A}_k  \bar{F}^+_{13}(r,x_1,x_2), \eeno
with $u=(rx_1,rx_2, r\sqrt{1-|x|^2})$ and $1\le k\le 3$. Here the operator $\mathcal{A}_k$ is defined by  \beno \mathcal{A}_1\eqdefa \sqrt{1-|x|^2}\pa_{x_1}, \mathcal{A}_2\eqdefa \sqrt{1-|x|^2}\pa_{x_2}, \mathcal{A}_3\eqdefa x_1\pa_{x_2}-x_2\pa_{x_1}. \eeno
Therefore, by \eqref{lb8}, we obtain that
\beno &&\sum_{1\le i<j\le 3}\|(-\triangle_{\SS^2})^{a/2}\Omega_{ij}(\vartheta_{13+}f)\|_{L^2}+\|(1-\triangle_{\SS^2})^{1/2}(\vartheta_{13+}f)\|_{L^2}
\\&&\sim  \sum_{1\le k\le 3}\|\mathcal{A}_k  \widetilde{F^+_{13}}\|_{L^2_rH^s_x}+\|\widetilde{F^+_{13}}\|_{L^2_rH^{1}_x}
\\&&\sim   \| \widetilde{F^+_{13}}\|_{L^2_rH^{1+s}_x}\sim \|(1-\triangle_{\SS^2})^{(a+1)/2}(\vartheta_{13+}f)\|_{L^2},\eeno
where \eqref{lb11} is used in the last equivalence.  From which together with \eqref{e6} and
\beno  \|\Omega_{ij}f\|_{L^2}+ \|(-\triangle_{\SS^2})^{a/2}\Omega_{ij}f\|_{L^2}\sim
\sum_{m=1}^3 \big( \|(-\triangle_{\SS^2})^{a/2}\vartheta_{1m+}\Omega_{ij}f\|_{L^2}+ \|(-\triangle_{\SS^2})^{a/2}\vartheta_{1m-}\Omega_{ij}f\|_{L^2}\big)+\|\Omega_{ij}f\|_{L^2},\eeno
we have
\beno \sum_{1\le i<j\le 3 } \|(-\triangle_{\SS^2})^{a/2}\Omega_{ij}f\|_{L^2} +\|(1-\triangle_{\SS^2})^{1/2}f\|_{L^2}\sim
\|(1-\triangle_{\SS^2})^{(a+1)/2}f\|_{L^2},\eeno
which implies \eqref{equ2m0} with $0<a<1$. It completes the proof to \eqref{equ2m0} for $a\in [0,1]$.

Next we prove that \eqref{equ2m0} holds for $1<a\le 2$. Suppose $a=1+s$ with $0<s\le 1$. Thanks to the fact that \eqref{equ2m0} holds for $0\le  a\le 1$, we have
\beno  \sum_{1\le i<j\le 3 }\|(-\triangle_{\SS^2})^{a/2} \Omega_{ij}f\|_{L^2}+\|f\|_{L^2}&\sim&\sum_{1\le i<j\le 3 } \|(-\triangle_{\SS^2})^{s+1/2}  \Omega_{ij}f\|_{L^2}+\|f\|_{L^2}\\
&\sim&  \sum_{1\le i<j\le 3 }\big(\sum_{1\le m<n\le 3 }\|(-\triangle_{\SS^2})^{s/2}\Omega_{mn} \Omega_{ij}f\|_{L^2}+\|\Omega_{ij}f\|_{L^2}\big)+\|f\|_{L^2}\\
&\sim&  \sum_{1\le i<j\le 3 } \sum_{1\le m<n\le 3 }\sum_{p=1}^3\big(\|(-\triangle_{\SS^2})^{s/2}\big(\vartheta_{1p+}\Omega_{mn} \Omega_{ij} f)\|_{L^2}\\&&
+\|(-\triangle_{\SS^2})^{s/2}\big(\vartheta_{1p-}\Omega_{mn} \Omega_{ij} f)\|_{L^2}\big)+\|(1-\triangle_{\SS^2})^{1/2}f\|_{L^2} .
\eeno
 Notice that
\beno  (-\triangle_{\SS^2})^{s/2}(\vartheta_{13+}\Omega_{mn} \Omega_{ij}f)&=&(-\triangle_{\SS^2})^{s/2}\Omega_{mn} \Omega_{ij}(\vartheta_{13+}f)-
 (-\triangle_{\SS^2})^{s/2}\big((\Omega_{mn}\vartheta_{13+} )(\Omega_{ij}f)\big)\\&& -
 (-\triangle_{\SS^2})^{s/2}\big((\Omega_{ij}\vartheta_{13+} )(\Omega_{mn}f)\big)-  (-\triangle_{\SS^2})^{s/2}\big((\Omega_{mn}\Omega_{ij}\vartheta_{13+}) f\big),\eeno
  \beno  \| (-\triangle_{\SS^2})^{s/2}\big((\Omega_{mn}\vartheta_{13+} )(\Omega_{ij}f)\big)\|_{L^2}&\lesssim& \|(1-\triangle_{\SS^2})^{s/2} \Omega_{ij}f\|_{L^2}\\
&\lesssim&\|(1-\triangle_{\SS^2})^{a/2}f\|_{L^2}
\eeno
and \beno \|(-\triangle_{\SS^2})^{s/2}\big((\Omega_{mn}\Omega_{ij}\vartheta_{13+}) f\big)\|_{L^2}\lesssim \|(1-\triangle_{\SS^2})^{s/2} f\|_{L^2},\eeno
then we get
\beno \|(-\triangle_{\SS^2})^{s/2}(\vartheta_{13+}\Omega_{mn} \Omega_{ij}f)\|_{L^2}+\|(1-\triangle_{\SS^2})^{a/2}f\|_{L^2}\sim \|(-\triangle_{\SS^2})^{s/2}\Omega_{mn} \Omega_{ij}(\vartheta_{13+}f)\|_{L^2} +\|(1-\triangle_{\SS^2})^{a/2}f\|_{L^2}.\eeno
Thus we have 
\beno  &&\sum_{1\le i<j\le 3 }\|(-\triangle_{\SS^2})^{a/2} \Omega_{ij}f\|_{L^2}+\|f\|_{L^2}+\|(1-\triangle_{\SS^2})^{a/2}f\|_{L^2}
\\ &&\sim \sum_{1\le i<j\le 3 } \sum_{1\le m<n\le 3 }\sum_{p=1}^3\big(\|(-\triangle_{\SS^2})^{s/2}\big(\Omega_{mn} \Omega_{ij}\vartheta_{1p+}f\big)\|_{L^2} 
+\|(-\triangle_{\SS^2})^{s/2}\big(\Omega_{mn} \Omega_{ij}\vartheta_{1p-} f)\|_{L^2}\big)+\|(1-\triangle_{\SS^2})^{a/2}f\|_{L^2}. \eeno
If we show 
\ben\label{e7} \sum_{1\le i<j\le 3 } \sum_{1\le m<n\le 3 }  \|(-\triangle_{\SS^2})^{s/2}\Omega_{mn} \Omega_{ij}(\vartheta_{13+}f)\|_{L^2}+\|\vartheta_{13+}f\|_{L^2} \sim \|(1-\triangle_{\SS^2})^{(s+2)/2}(\vartheta_{13+}f)\|_{L^2},\een
then by Young inequality, \beno  \|(1-\triangle_{\SS^2})^{a/2}f\|_{L^2}\le \eta \|(1-\triangle_{\SS^2})^{(a+1)/2}f\|_{L^2}+C_\eta \|f\|_{L^2},\eeno we conclude the equivalence \eqref{equ2m0} with $1\le a\le 2$.
It remains to prove \eqref{e7}. On one hand, we   observe that 
\beno &&\sum_{1\le i<j\le 3 } \sum_{1\le m<n\le 3 }\big( \|(-\triangle_{\SS^2})^{s/2}\Omega_{mn} \Omega_{ij}(\vartheta_{13+}f)\|_{L^2}+\|\Omega_{mn} \Omega_{ij}(\vartheta_{13+}f)\|_{L^2}\big)+\|\vartheta_{13+}f\|_{L^2}\\
&&\sim \sum_{k=1}^3 \sum_{p=1}^3 \|\mathcal{A}_k  \mathcal{A}_p\widetilde{F}_{13+}\|_{L^2_rH^s_x}+\|\widetilde{F}_{13+}\|_{L^2_rH^{2}_x}
\\&&\sim   \| \widetilde{F}_{13+}\|_{L^2_rH^{2+s}_x}. 
\eeno
On the other hand, it is easy to check that
\beno && \|(-\triangle_{\SS^2})^{(s+2)/2}(\vartheta_{13+}f)\|_{L^2}+\|\vartheta_{13+}f\|_{L^2}\\&&
\sim  \|(-\triangle_{\SS^2})^{s/2}(-\triangle_{\SS^2})(\vartheta_{13+}f)\|_{L^2}+\|\vartheta_{13+}f\|_{L^2}\\
&&\sim \|\mathbf{L} \widetilde{F}_{13+}\|_{L^2_rH^{s}_x}+\|\widetilde{F}_{13+}\|_{L^2_rL^2_x}\\
&& \sim\|\widetilde{F}_{13+}\|_{L^2_rH^{2+s}_x},
\eeno
where we use  \eqref{lb13} and \eqref{elipitic lb}. We end the proof to \eqref{e7} by these two equivalences. Then we get \eqref{equ2m0} with $a\in[0,2]$.

 To complete the proof,  we first use inductive method to  show that for $a\ge0$,
\ben\label{e3} \|(-\triangle_{\SS^2})^{a/2}\Omega_{ij}f\|_{L^2}\lesssim \|(1-\triangle_{\SS^2})^{a/2}(-\triangle_{\SS^2})^{1/2} f\|_{L^2}. \een
Since \eqref{e3} holds for $0\le a\le2$,
 we assume \eqref{e3} holds for $a\le m$ with $m\ge2$. Suppose $a\in [m, m+1]$, then we have
 \beno \|(-\triangle_{\SS^2})^{a/2}\Omega_{ij}f\|_{L^2}&=&\|(-\triangle_{\SS^2})^{a/2-1}\sum_{1\le m<n \le 3}\Omega_{mn}^2\Omega_{ij}f\|_{L^2}\\
 &\lesssim& \|(-\triangle_{\SS^2})^{a/2-1} \Omega_{ij}(-\triangle_{\SS^2})f\|_{L^2}+ \|(-\triangle_{\SS^2})^{a/2-1}[\sum_{1\le m<n \le 3}\Omega_{mn}^2,\Omega_{ij}]f\|_{L^2}\\
 &\lesssim&\|(-\triangle_{\SS^2})^{a/2-1} \Omega_{ij}(-\triangle_{\SS^2})f\|_{L^2}+\sum_{1\le m<n \le 3}\sum_{1\le i<j \le 3}
 \| (-\triangle_{\SS^2})^{a/2-1} \Omega_{mn}\Omega_{ij}f\|_{L^2}
 \\ &\lesssim & \|(1-\triangle_{\SS^2})^{a/2}(-\triangle_{\SS^2})^{1/2} f\|_{L^2} ,\eeno
where we use the inductive assumption in the last inequality and the fact
\beno  [\sum_{1\le m<n \le 3} \Omega^2_{mn},\Omega_{ij}]=\sum_{1\le m<n \le 3} \big(\Omega_{mn}[\Omega_{mn},\Omega_{ij}]+ [\Omega_{mn},\Omega_{ij}]\Omega_{mn}\big).\eeno We complete the inductive argument to derive \eqref{e3}.

Now we are ready to prove \eqref{equ2m0}. We may assume that \eqref{equ2m0} holds for $a\le m$ with $m\ge1$.  Suppose $a\in [m,m+1]$. We have
\beno && \sum_{1\le i<j\le 3} \|(1-\triangle_{\SS^2})^{a/2}\Omega_{ij}f\|_{L^2} +\|(1-\triangle_{\SS^2})^{1/2}f\|_{L^2}
\\&&\sim \sum_{1\le i<j\le 3} \|(1-\triangle_{\SS^2})^{(a-1)/2+1/2}\Omega_{ij}f\|_{L^2} +\|(1-\triangle_{\SS^2})^{1/2}f\|_{L^2}
\\&&\sim \sum_{1\le i<j\le 3} \sum_{1\le m<n\le 3} \|(-\triangle_{\SS^2})^{(a-1)/2}\Omega_{mn}\Omega_{ij}f\|_{L^2} +\|(1-\triangle_{\SS^2})^{1/2}\Omega_{ij}f\|_{L^2} +\|(1-\triangle_{\SS^2})^{1/2}f\|_{L^2}.\eeno
Due the fact \beno
&&\|(-\triangle_{\SS^2})^{(a+1)/2}  f\|_{L^2}=\|(-\triangle_{\SS^2})^{(a-1)/2}(-\triangle_{\SS^2}) f\|_{L^2}\\&&\le\sum_{1\le i<j\le 3} \sum_{1\le m<n\le 3} \|(-\triangle_{\SS^2})^{(a-1)/2}\Omega_{mn}\Omega_{ij}f\|_{L^2}\\
&&\lesssim \|(1-\triangle_{\SS^2})^{a/2}(-\triangle_{\SS^2})^{1/2} f\|_{L^2}\le\|(1-\triangle_{\SS^2})^{(a+1)/2}  f\|_{L^2},
 \eeno where we use \eqref{e3} in the second inequality, we finally derive the desired result  and ends the inductive argument to the equivalence \eqref{equ2m0}.

{\it Step 2: $m\in\N$.} By Lemma \ref{antin7} and  \eqref{equ2m0}, we have that for $a\ge1$,
 \beno \|(1-\triangle_{\SS^2})^{a/2} f\|_{H^m}^2 &\sim&  \sum_{k\ge-1} 2^{2mk} \|(1-\triangle_{\SS^2})^{a/2}\mathfrak{F}_kf\|_{L^2}^2\\
 &\sim&   \sum_{k\ge-1} 2^{2mk} \big(\sum_{1\le i<j\le3}\|( -\triangle_{\SS^2})^{(a-1)/2}\Omega_{ij}\mathfrak{F}_kf\|_{L^2}^2+ \|\mathfrak{F}_kf\|_{L^2}^2)
 \\
 &\sim&   \sum_{k\ge-1} 2^{2mk} \big(\sum_{1\le i<j\le3}\|( -\triangle_{\SS^2})^{(a-1)/2}\mathfrak{F}_k\Omega_{ij}f\|_{L^2}^2+ \| \mathfrak{F}_kf\|_{L^2}^2)\\
 &\sim&\sum_{1\le i<j\le3}\|( -\triangle_{\SS^2})^{(a-1)/2}\Omega_{ij}f\|_{H^m}^2+ \| f\|_{H^m}^2,
 \eeno where we use  the fact
 $\Omega_{ij}\mathcal{F}=-\mathcal{F}\Omega_{ij}$ if $i\neq j$. We complete the proof to \eqref{equ2}.

\medskip
(iv). The proof of \eqref{equ3} falls in three steps. 

{\it Step 1: Proof of  \eqref{equ3} with $a=0$ and $m=1$.} We want to prove that for $0\le a\le 1$,   \ben\label{mx1} \|(1-\triangle_{\SS^2})^{a/2} f\|_{H^1} \sim \sum_{|\alpha|\le 1} \|(1-\triangle_{\SS^2})^{a/2}\pa^\alpha f\|_{L^2}. \een
Obviously \eqref{mx1} holds for $a=0$.  Then we separate the proof of \eqref{mx1} into two cases.

{\it Case1:  $0<a<1$.} Indeed, by Plancherel theorem and Lemma \ref{antin7}, we have
\beno && \sum_{1\le i\le 3}\big(\|(-\triangle_{\SS^2})^{a/2} \pa_i f\|_{L^2}+\|\pa_i f\|_{L^2}\big)
= \sum_{1\le i\le 3}\big(\|(-\triangle_{\SS^2})^{a/2} \xi_i \mathcal{F}f\|_{L^2}+\|\xi_i \mathcal{F}f\|_{L^2}\big). \eeno
Due to Lemma \ref{antin2} and by  setting $\xi=r\sigma=(r\sigma_1, r\sigma_2, r\sigma_3)$, we have
\beno && \sum_{1\le i\le 3}\|(-\triangle_{\SS^2})^{a/2} \pa_i f\|_{L^2}^2+\| f\|_{H^1}^2
\\&&\sim
\sum_{1\le i\le 3}\int_{\sigma,\tau\in \SS^2, r>0} \f{|r\sigma_i (\mathcal{F}f)(r\sigma)-r\tau_i(\mathcal{F}f)(r\tau)|^2}{|\sigma-\tau|^{2+2a}}r^2d\sigma d\tau dr +\|f\|_{H^1}^2. \eeno
Thanks to the observation \beno &&\f12|\tau_i|^2|r (\mathcal{F}f)(r\sigma)-r(\mathcal{F}f)(r\tau)|^2-2|\sigma_i-\tau_i|^2|r(\mathcal{F}f)(r\sigma)|^2\\&&\le|r\sigma_i (\mathcal{F}f)(r\sigma)-r\tau_i(\mathcal{F}f)(r\tau)|^2\\
&&\lesssim |\tau_i|^2|r (\mathcal{F}f)(r\sigma)-r(\mathcal{F}f)(r\tau)|^2+|\sigma_i-\tau_i|^2|r(\mathcal{F}f)(r\sigma)|^2,\eeno
we deduce that
\beno && \sum_{1\le i\le 3}\|(-\triangle_{\SS^2})^{a/2} \pa_i f\|_{L^2}^2+\|f\|_{H^1}^2\\&&\sim
\int_{\sigma,\tau\in \SS^2, r>0} \f{|r (\mathcal{F}f)(r\sigma)-r(\mathcal{F}f)(r\tau)|^2}{|\sigma-\tau|^{2+2a}}r^2d\sigma d\tau dr +\|f\|_{H^1}^2,
\eeno
which implies
\ben\label{mx2}  \sum_{1\le i\le 3} \|(-\triangle_{\SS^2})^{a/2} \pa_i f\|_{L^2}+\|  f\|_{H^1} \sim \|(-\triangle_{\SS^2})^{a/2}(|D|f)\|_{L^2} +\|f\|_{H^1} .\een
This is enough to get \eqref{mx1} for $0<a<1$. 

{\it Case 2: $a=1$.} It is not difficult to check
 \beno &&\sum_{|\alpha|\le1} \|(-\triangle_{\SS^2})^{1/2}\pa^\alpha f\|_{L^2}+\|f\|_{H^1}\\
 &&\sim  \sum_{|\alpha|\le1}\sum_{1\le m<n\le 3}\|\Omega_{mn}\pa^\alpha f\|_{L^2}+\|f\|_{H^1}\\
 &&\sim  \sum_{|\alpha|\le1}\sum_{1\le m<n\le 3}\|\pa^\alpha \Omega_{mn}f\|_{L^2}+\|f\|_{H^1}
 \\
 &&\sim  \sum_{1\le m<n\le 3}\| \Omega_{mn}f\|_{H^1}+\|f\|_{H^1} \\
 &&\sim   \|(1-\triangle_{\SS^2})^{1/2}f\|_{H^1},\eeno
 where we use \eqref{equ1} in the last equivalence.
 We complete the proof to \eqref{mx1}.

 {\it Step 2: Proof of \eqref{equ3} with $a\in[0,1]$ and $m\in\N$.}   Thanks to \eqref{mx1},
we assume \eqref{equ1} holds for $m\le N-1$ with $N\ge2$.
Recall that \beno  &&\sum_{|\alpha|\le N} \|(1-\triangle_{\SS^2})^{a/2}\pa^\alpha f\|_{L^2}\\
&&\sim\sum_{|\alpha|\le N-1} \|(1-\triangle_{\SS^2})^{a/2}\pa^\alpha f\|_{L^2}+\sum_{|\alpha|=N-1}\sum_{i=1}^3 \|(1-\triangle_{\SS^2})^{a/2}\pa_i\pa^\alpha f\|_{L^2}. \eeno
Due to the result in {\it Step 1}, we have \beno  &&\sum_{|\alpha|\le N-1} \|(1-\triangle_{\SS^2})^{a/2}\pa^\alpha f\|_{L^2}+\sum_{|\alpha|=N-1}\sum_{i=1}^3 \|(1-\triangle_{\SS^2})^{a/2}\pa_i\pa^\alpha f\|_{L^2}\\
&&\sim \sum_{|\alpha|\le N-1} \|(1-\triangle_{\SS^2})^{a/2}\pa^\alpha f\|_{L^2}+\sum_{|\alpha|=N-1} \|(1-\triangle_{\SS^2})^{a/2}\langle D\rangle\pa^\alpha f\|_{L^2}.
\eeno  From which together with  the assumption that \eqref{equ1} holds for $N-1$,
we have  \beno \sum_{|\alpha|\le N} \|(1-\triangle_{\SS^2})^{a/2}\pa^\alpha f\|_{L^2}&\sim&   \|(1-\triangle_{\SS^2})^{a/2}\langle D\rangle^{N-2}\langle D\rangle f\|_{L^2}
+\sum_{|\alpha|=N-1} \|(1-\triangle_{\SS^2})^{a/2}\pa^\alpha\langle D\rangle f\|_{L^2}.\eeno
 We deduce that
\beno &&\sum_{|\alpha|\le N} \|(1-\triangle_{\SS^2})^{a/2}\pa^\alpha f\|_{L^2}\\
&&\sim \sum_{|\alpha|\le N-2}  \|(1-\triangle_{\SS^2})^{a/2}\pa^\alpha \langle D\rangle f\|_{L^2}
+\sum_{|\alpha|=N-1} \|(1-\triangle_{\SS^2})^{a/2}\pa^\alpha\langle D\rangle f\|_{L^2}
\\&&\sim \|(1-\triangle_{\SS^2})^{a/2} f\|_{H^N},
\eeno which completes the inductive argument to derive \eqref{equ3} with $a\in[0,1]$.

{\it Step 3: Proof of  \eqref{equ3} with $a\ge0$ and $m\in\N$.} We still use the inductive method. Suppose \eqref{equ3} holds for $a\le N$ with $N\ge1$. Suppose now $a\in [N, N+1]$. Due to \eqref{equ2} and the inductive assumption,  we have \beno  
\|( 1 -\triangle_{\SS^2})^{a/2}f\|_{H^m} &\sim&    \sum_{1\le i<j\le3}\|( -\triangle_{\SS^2})^{(a-1)/2} \Omega_{ij}f\|_{H^m}+ \| f\|_{H^m} \\
 &\sim&    \sum_{1\le i<j\le3}\sum_{|\alpha|\le m}\|( 1-\triangle_{\SS^2})^{(a-1)/2} \pa^\alpha\Omega_{ij}f\|_{L^2}+ \|f\|_{H^m}.\eeno
Thanks to \eqref{equ2}, we also have \beno && \sum_{|\alpha|\le m} \sum_{1\le i<j\le3}\|( 1-\triangle_{\SS^2})^{(a-1)/2} \Omega_{ij}\pa^\alpha f\|_{L^2} +\|   f\|_{H^m} \\ 
&&\sim  \sum_{|\alpha|\le m}\|( 1-\triangle_{\SS^2})^{ a/2}\pa^\alpha f\|_{L^2}.  \eeno
Notice the fact
\beno  &&\sum_{|\alpha|\le m} \sum_{1\le i<j\le3}\|( 1-\triangle_{\SS^2})^{(a-1)/2} \Omega_{ij}  \pa^\alpha f\|_{L^2}-\sum_{|\beta|\le m} \|( 1-\triangle_{\SS^2})^{(a-1)/2}  \pa^\beta f\|_{L^2}
 \\&&\lesssim \sum_{|\alpha|\le m} \sum_{1\le i<j\le3}\|( 1-\triangle_{\SS^2})^{(a-1)/2}  \pa^\alpha \Omega_{ij}   f\|_{L^2}\\&&\lesssim \sum_{|\alpha|\le m} \sum_{1\le i<j\le3}\|( 1-\triangle_{\SS^2})^{(a-1)/2} \Omega_{ij} \pa^\alpha f\|_{L^2}+\sum_{|\beta|\le m} \|( 1-\triangle_{\SS^2})^{(a-1)/2}  \pa^\beta f\|_{L^2},\eeno
we derive that
\ben\label{e5}  &&\sum_{|\alpha|\le m}\|( 1-\triangle_{\SS^2})^{ a/2}\pa^\alpha f\|_{L^2}-C_1\|(1-\triangle_{\SS^2})^{(a-1)/2} f\|_{H^{m}}^2\notag\\ &&\lesssim\|(1-\triangle_{\SS^2})^{a/2} f\|_{H^m}^2\\&&
\lesssim  \sum_{|\alpha|\le m}\|( 1-\triangle_{\SS^2})^{ a/2}\pa^\alpha f\|_{L^2}+C_2\|(1-\triangle_{\SS^2})^{(a-1)/2} f\|_{H^{m}}^2. \notag\een
Observe that \beno &&\|(1-\triangle_{\SS^2})^{(a-1)/2} f\|_{H^{m}}\sim \|(1-\triangle_{\SS^2})^{(a-1)/2}\langle D\rangle^m f\|_{L^2} \\&&
\le \eta\|(1-\triangle_{\SS^2})^{a/2} \langle D\rangle^mf\|_{L^2}+C_\eta\| \langle D\rangle^m f\|_{L^2}\\&&
\le \eta\|(1-\triangle_{\SS^2})^{a/2}  f\|_{H^m}+C_\eta\|  f\|_{H^m}. \eeno
Then \eqref{e5} yields the desired result and we complete the inductive argument to \eqref{equ3}. We end the proof of the lemma.
\end{proof}
\smallskip

We are in a position to prove \eqref{labelint}.

  \begin{lem}\label{antin3}  If $T_h f(v)\eqdefa f(v+h)$, then for $s\ge0$, it holds
\beno  \|(-\triangle_{\SS^2})^{s/2} T_h f\|_{L^2(\R^3)}\lesssim \langle h\rangle^s\big(\|(-\triangle_{\SS^2})^{s/2} f\|_{L^2(\R^3)}+\|f\|_{H^s(\R^3)}\big).\eeno
\end{lem}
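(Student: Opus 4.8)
\textbf{Proof plan for Lemma \ref{antin3}.} The statement asserts that translation in $\R^3$ distorts the $L^2$ norm of the fractional Laplace--Beltrami operator by at most a factor $\langle h\rangle^s$, modulo the $H^s$ norm. The plan is to prove it first for the two integer endpoints $s=0$ and $s=1$ (together with $s=2$), and then recover all intermediate and higher $s$ by the new interpolation theory of Section 5.2 combined with the $L^2$ profiles of Lemma \ref{antin2}. The endpoint $s=0$ is trivial: $\|T_hf\|_{L^2}=\|f\|_{L^2}$. For $s=1$, I would use the identity $-\triangle_{\SS^2}=\sum_{1\le i<j\le 3}\Omega_{ij}^2$ with $\Omega_{ij}=v_i\pa_j-v_j\pa_i$, and compute the commutator of $\Omega_{ij}$ with the translation operator $T_h$. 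Since $\Omega_{ij}(f(v+h))=((v_i+h_i)\pa_j-(v_j+h_j)\pa_i)f(v+h)-(h_i\pa_j-h_j\pa_i)f(v+h)=(T_h\Omega_{ij}f)(v)-(h_i\pa_j-h_j\pa_i)(T_hf)(v)$, one gets $\Omega_{ij}T_h=T_h\Omega_{ij}+(\text{first-order constant-coefficient operator})$, so $\|\Omega_{ij}T_hf\|_{L^2}\lesssim \|\Omega_{ij}f\|_{L^2}+|h|\,\|\nabla f\|_{L^2}\lesssim \langle h\rangle(\|(-\triangle_{\SS^2})^{1/2}f\|_{L^2}+\|f\|_{H^1})$. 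Summing over $i<j$ and invoking \eqref{equ1} of Lemma \ref{antin8} gives the $s=1$ bound; iterating the same commutator computation yields $s=2$ (and any integer $s$).

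\textbf{Interpolation step.} With the endpoints in hand, the heart of the argument is to interpolate between $s=0$ and $s=1$ (resp. higher integers) while keeping the translation $T_h$ fixed. The operators at play are $A=$ (something generating $(-\triangle_{\SS^2})^{1/2}$) and the rotation fields $B_i=\Omega_{jk}$; the obstruction to a naive interpolation is precisely that $-\triangle_{\SS^2}$ does not commute with $T_h$ or with $\nabla$. This is exactly the situation Theorem \ref{intp} is designed for: the commutation relations $[\Omega_{ij},\Omega_{mn}]$ among the three rotation generators, after a suitable linear change of basis, fit the pattern $[B_i,B_j]=0$, $[A,B_1]=-B_2$, $[A,B_2]=-B_1$, $[A,B_3]=0$ required by \eqref{comuope}. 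Concretely, I would use Lemma \ref{antin2}, which identifies $\|(-\triangle_{\SS^2})^{s/2}f\|_{L^2}^2+\|f\|_{L^2}^2$ with $\sum_{i<j}\|f\|^2_{\mathcal{D}_{\Omega_{ij}}(s,2)}$ for $s\in[0,1]$ (and with the $\Omega_{ij}^2$ version for $s\in[0,2]$), to rephrase the target inequality as a statement about the real interpolation spaces $\mathcal{D}_{\Omega_{ij}}(s,2)$. The operator $T_h$ maps $L^2\to L^2$ boundedly with norm $1$ and maps $\mathcal{D}(\Omega_{ij})\to \mathcal{D}(\Omega_{ij})+\mathcal{D}(\nabla)$ with norm $\lesssim \langle h\rangle$ by the $s=1$ computation; hence by the interpolation functor it maps $\mathcal{D}_{\Omega_{ij}}(s,2)$ into the corresponding intersection space with norm $\lesssim \langle h\rangle^s$, and Theorem \ref{intp} identifies that intersection space with $\mathcal{D}_{(-\triangle_{\SS^2})^{1/2}}(s,2)\cap(\text{Sobolev})$, which by Lemma \ref{antin2} is controlled by $\|(-\triangle_{\SS^2})^{s/2}f\|_{L^2}+\|f\|_{H^s}$.

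\textbf{From $[0,1]$ to all $s\ge 0$.} For $s>1$ I would bootstrap: writing $s=m+\theta$ with $m\in\N$ and $\theta\in[0,1]$, one commutes $m$ copies of rotation fields (or of $\langle D\rangle$) through $T_h$, each commutation costing a factor $\langle h\rangle$ and producing lower-order Sobolev error terms, and then applies the $[0,1]$ result to the remaining $(-\triangle_{\SS^2})^{\theta/2}$ piece; the algebraic identities \eqref{equ2} and \eqref{equ3} of Lemma \ref{antin8} are exactly what is needed to reassemble $\|(-\triangle_{\SS^2})^{s/2}T_hf\|_{L^2}$ from the pieces $\|(-\triangle_{\SS^2})^{\theta/2}\pa^\alpha \Omega_{ij}T_hf\|_{L^2}$. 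The main obstacle is the interpolation step: one must verify carefully that the three rotation generators, after the linear recombination, genuinely satisfy the hypotheses \eqref{clooper} and \eqref{comuope} of Theorem \ref{intp} (closedness, resolvent bounds, and the precise commutator structure), and that $T_h$ is an admissible morphism of the interpolation couple with the claimed $\langle h\rangle$-dependence uniform in $h$. Once that bookkeeping is done, the estimate follows by patching together the localized pieces $\vartheta_{1m\pm}$ as in the proof of Lemma \ref{antin2} and summing.
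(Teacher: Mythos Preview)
Your overall architecture---endpoint estimates at integers, interpolation via Theorem \ref{intp} and Lemma \ref{antin2}, then bootstrap for $s>1$ using the identities of Lemma \ref{antin8}---matches the paper exactly. The genuine gap is in the \emph{identification of the operators} plugged into Theorem \ref{intp}. You propose to take the $B_i$ to be the rotation fields $\Omega_{jk}$ and claim that ``after a suitable linear change of basis'' they satisfy $[B_i,B_j]=0$. This cannot work: the three $\Omega_{ij}$ span the Lie algebra $\mathfrak{so}(3)$, which is simple and has no abelian subalgebra of dimension larger than one. No linear recombination of rotation generators will ever produce three mutually commuting operators, so hypothesis \eqref{comuope} fails for your choice, and Theorem \ref{intp} is inapplicable as you have set it up.

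The paper's fix is to treat one rotation field at a time: for each fixed pair $(i,j)$ take $A=\Omega_{ij}$ and $B_k=\pa_k$, $k=1,2,3$. The $\pa_k$ do commute, and the commutators $[\Omega_{ij},\pa_k]$ are (up to sign and labeling) exactly of the shape \eqref{comuope}: for instance $[\Omega_{12},\pa_1]=-\pa_2$, $[\Omega_{12},\pa_2]=\pa_1$, $[\Omega_{12},\pa_3]=0$. Then $T_h$ is bounded from $L^2$ to $L^2$ and, by your own $s=1$ computation, from $\mathcal{D}(\Omega_{ij})\cap H^1$ to itself with norm $\lesssim\langle h\rangle$; real interpolation gives norm $\lesssim\langle h\rangle^s$ on $(L^2,\mathcal{D}(\Omega_{ij})\cap H^1)_{s,2}$, and Theorem \ref{intp} identifies this space with $\mathcal{D}_{\Omega_{ij}}(s,2)\cap H^s$. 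Summing over $i<j$ and invoking Lemma \ref{antin2} finishes the case $0\le s\le 1$. Once you correct the role assignment, the remainder of your plan (the $1<s\le2$ step and the induction for $s>2$) goes through as in the paper.
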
 
\begin{proof}We begin with the case $0\le s\le1$. Thanks to Lemma \ref{antin2}, we have
\beno\|(-\triangle_{\SS^2})^{s/2} T_h f\|_{L^2(\R^3)}&\lesssim& \sum_{1\le i<j\le 3}  \|  T_h f\|_{\mathcal{D}_{\Omega_{ij}}(s,2)}. \eeno
Since \beno \| \Omega_{ij}T_h f\|_{L^2}\lesssim \langle h\rangle(\|f\|_{H^1}+\| \Omega_{ij}  f\|_{L^2}),\qquad  \|  T_h f\|_{L^2}= \|f\|_{L^2},\eeno
then applying  Lemma \ref{intp} with $A=\Omega_{ij}$ and $B_k=\pa_k$, we get
\ben\label{transes}   \|  T_h f\|_{\mathcal{D}_{\Omega_{ij}}(s,2)}\lesssim  \langle h\rangle^s(\|f\|_{H^s}+\| f\|_{\mathcal{D}_{\Omega_{ij}}(s,2)}).\een
From which together with Lemma \ref{antin2}, we obtain the desired result.

Next we turn to the case $1<s\le 2$. Suppose $s=1+a$. Then by Lemma \ref{antin8}, we have
\beno \| (-\triangle_{\SS^2})^{s/2}f  \|_{L^2(\R^3)}&\lesssim& \sum_{1\le i<j\le 3}(\|f\|_{L^2}+ \|(-\triangle_{\SS^2})^{a/2}\Omega_{ij}f\|_{L^2}),\\
&\lesssim& \sum_{1\le i<j\le 3} \big( \|f\|_{L^2}+\|\Omega_{ij}  f\|_{\mathcal{D}_{\Omega_{ij}}(a,2)}\big).  \eeno
Therefore,
\beno \|(-\triangle_{\SS^2})^{s/2} T_h f\|_{L^2(\R^3)}&\lesssim& \|f\|+\sum_{1\le i<j\le 3}  \|\Omega_{ij}  T_hf\|_{\mathcal{D}_{\Omega_{ij}}(a,2)}
\\ &\lesssim& \|f\|+\sum_{1\le i<j\le 3} \big( \| T_h \Omega_{ij} f\|_{\mathcal{D}_{\Omega_{ij}}(a,2)}\\&&\quad+\langle h\rangle (\|T_h\pa_i f\|_{\mathcal{D}_{\Omega_{ij}}(a,2)}+\|T_h\pa_j f\|_{\mathcal{D}_{\Omega_{ij}}(a,2)})\big). \eeno
Thanks to \eqref{transes} and Lemma \ref{antin2}, we are led to
\beno &&\|(-\triangle_{\SS^2})^{s/2} T_h f\|_{L^2(\R^3)}\\
&&\lesssim \langle h\rangle^{s} \sum_{1\le i<j\le 3} \big( \|(-\triangle_{\SS^2})^{a/2} \Omega_{ij} f\|_{L^2(\R^3)}+\|\Omega_{ij}  f\|_{H^a(\R^3)}+\|f\|_{H^s}\\
&&\quad+
 \|(-\triangle_{\SS^2})^{a/2} \pa_{i} f\|_{L^2(\R^3)}+ \|(-\triangle_{\SS^2})^{a/2} \pa_{j} f\|_{L^2(\R^3)}\big).  \eeno
Due to Lemma \ref{antin8}, we deduce
\beno \|(-\triangle_{\SS^2})^{s/2} T_h f\|_{L^2(\R^3)}\lesssim \langle h\rangle^s (\|(-\triangle_{\SS^2})^{s/2} f\|_{L^2(\R^3)}+\|f\|_{H^s}). \eeno

Finally we use the inductive method to handle the case $s>2$.   We assume the result holds for $s\le 2N$. Suppose $s\in(2N,2N+2]$. Then
\beno \|(-\triangle_{\SS^2})^{s/2} T_h f\|_{L^2(\R^3)}&=&\sum_{1\le i<j\le 3}\|(-\triangle_{\SS^2})^{s/2-1} \Omega_{ij}^2 T_h f\|_{L^2(\R^3)}. \eeno
It is easy to check that
\beno  \Omega_{ij}^2T_h f&=&T_h(\Omega_{ij}^2f)-h_iT_h(\pa_j\Omega_{ij}f)+h_jT_h(\pa_i \Omega_{ij}f)-h_i
T_h(\Omega_{ij}\pa_jf)\\&&+h_i^2T_h(\pa^2_jf)-h_j^2T_h(\pa_i^2 f)+h_jT_h (\Omega_{ij} \pa_if).\eeno
Then by the inductive assumption and Lemma \ref{antin8}, we deduce that
\beno  &&\|(-\triangle_{\SS^2})^{s/2} T_h f\|_{L^2(\R^3)}\\
&&\lesssim \langle h\rangle^s\sum_{1\le i,j\le 3} \big( \|(-\triangle_{\SS^2})^{s/2-1}\Omega_{ij}^2 f\|_{L^2(\R^3)}
+\|\Omega_{ij}^2f\|_{H^{s-2}}+\|(-\triangle_{\SS^2})^{s/2-1}\pa_j\Omega_{ij} f\|_{L^2(\R^3)}
+\|\pa_j\Omega_{ij}f\|_{H^{s-2}}\\&&\quad+\|(-\triangle_{\SS^2})^{(s-1)/2}\pa_{i} f\|_{L^2(\R^3)}
+\|\Omega_{ij}\pa_{i}f\|_{H^{s-2}}\big) +\|(-\triangle_{\SS^2})^{(s-2)/2}\pa_{i}^2 f\|_{L^2(\R^3)}
+\| \pa^2_{i}f\|_{H^{s-2}}\big)\\
&&\lesssim \langle h\rangle^s (\|(-\triangle_{\SS^2})^{s/2} f\|_{L^2(\R^3)}+\|f\|_{H^s}), \eeno
which completes the inductive argument to get the result.
\end{proof}

\setcounter{equation}{0}
\section{Conclusions and Perspectives}
In this paper, by making full use of two types of the decomposition performed in phase and frequency spaces and the geometric decomposition, we successfully establish several lower and upper bounds for Boltzmann collision operator in weighted Sobolev spaces and in anisotropic spaces. By comparing with the behavior of the linearized operator, we show that all the bounds are sharp. We further show that  the strategy of the proof is  so robust that we can apply it to the rescaled Boltzmann collision operator (see assumption {\bf (B1)}). Finally we obtain sharp bounds for the Landau collision operator by so-called grazing collision limit. 
 \medskip
 
 It is very interesting to see whether our method used here can be applied or not  to capture the exact behavior of the operator under the assumption  \eqref{abc2} or  \eqref{ab2}. In Section 4, we make an attempt to analyze the Boltzmann collision operator in the process of the grazing collision limit (see Lemma \ref{lbplim}). We conjecture that if $\mathcal{L}^\epsilon_B$ is the linearized Boltzmann collision operator with the rescaled kernel under the assumption {\bf (B1)}, then 
\ben\label{conject} \langle\mathcal{L}^\epsilon_B f, f\rangle_v+\|f\|_{L^2_{\gamma/2}}^2\sim \|W^\epsilon(D)f\|_{L^2_{\gamma/2}}^2+ \|W^\epsilon((-\triangle_{\SS^2})^{\f12})f\|_{L^2_{\gamma/2}}^2+\|W^\epsilon f\|_{L^2_{\gamma/2}}^2,\een
where the symbol function $W^\epsilon$ is defined in \eqref{symboloflimit}.  Based on the conjecture, we may ask:
\begin{enumerate}
\item How to establish a unified framework to solve the Boltzmann and Landau equations in the perturbation regime and   prove the asymptotic formula \eqref{asymformula};
\item How to describe the behavior of the spectrum of the operator $\mathcal{L}^\epsilon_B$ in the limit $\epsilon\rightarrow0$ for $\gamma\in [-2,-2s)$; we  recall  that the spectrum gap exists for $\mathcal{L}_B$ if and  only if $\gamma\ge -2s$ while it exists for $\mathcal{L}_L$ if and only if $\gamma\ge-2$.
\end{enumerate}
The similar conjecture can be questioned on the operator with the assumption \eqref{abc2} or with the Coulomb potential. Then  the asympototics of the Boltzmann equation from short-range interactions to long-range interactions and  the Landau approximation for Coulomb potential can be investigated.
 
   \bigskip

{\bf Acknowledgments:}  Ling-Bing He is supported by NSF of China under Grant 11001149 and 11171173 and the Importation and Development of High-Caliber Talents Project of Beijing Municipal Institutions.  The author would like to express his gratitude to colleagues Xu-Guang Lu and Pin Yu for  profitable discussions.

\end{document}